\pdfoutput=1
\documentclass[11pt, a4paper, twoside,reqno]{amsart}
\usepackage[centering, totalwidth = 390pt, totalheight = 600pt]{geometry}
\usepackage{amssymb, amsmath, amsthm,scalerel}
\usepackage{microtype, stmaryrd, url, lmodern, eucal}
\usepackage[shortlabels]{enumitem}
\usepackage[latin1]{inputenc}
\usepackage{color}
\definecolor{darkgreen}{rgb}{0,0.45,0}
\usepackage[colorlinks,citecolor=darkgreen,linkcolor=darkgreen]{hyperref}
\usepackage[british]{babel}


\lineskiplimit=-2pt
\makeatletter
\def\@cite#1#2{[{#1\if@tempswa ,~#2\fi}]}
\makeatother


%
%
%
%


\DeclareMathAlphabet{\mathbf}{OT1}{cmr}{b}{n}


\usepackage[arrow, matrix, tips, curve, graph, rotate]{xy}
\SelectTips{cm}{10}

\makeatletter
\def\matrixobject@{%
  \edef \next@{={\DirectionfromtheDirection@ }}%
  \expandafter \toks@ \next@ \plainxy@
  \let\xy@@ix@=\xyq@@toksix@
  \xyFN@ \OBJECT@}
\let\xy@entry@@norm=\entry@@norm
\def\entry@@norm@patched{%
  \let\object@=\matrixobject@
  \xy@entry@@norm }
\AtBeginDocument{\let\entry@@norm\entry@@norm@patched}
\makeatother

\newcommand{\twocong}[2][0.5]{\ar@{}[#2] \save ?(#1)*{\cong}\restore}
\newcommand{\twoeq}[2][0.5]{\ar@{}[#2] \save ?(#1)*{=}\restore}
\newcommand{\rtwocell}[3][0.5]{\ar@{}[#2] \ar@{=>}?(#1)+/l 0.2cm/;?(#1)+/r 0.2cm/^{#3}}
\newcommand{\ltwocell}[3][0.5]{\ar@{}[#2] \ar@{=>}?(#1)+/r 0.2cm/;?(#1)+/l 0.2cm/^{#3}}
\newcommand{\ltwocello}[3][0.5]{\ar@{}[#2] \ar@{=>}?(#1)+/r 0.2cm/;?(#1)+/l 0.2cm/_{#3}}
\newcommand{\dtwocell}[3][0.5]{\ar@{}[#2] \ar@{=>}?(#1)+/u  0.2cm/;?(#1)+/d 0.2cm/^{#3}}
\newcommand{\dltwocell}[3][0.5]{\ar@{}[#2] \ar@{=>}?(#1)+/ur  0.2cm/;?(#1)+/dl 0.2cm/^{#3}}
\newcommand{\drtwocell}[3][0.5]{\ar@{}[#2] \ar@{=>}?(#1)+/ul  0.2cm/;?(#1)+/dr 0.2cm/^{#3}}
\newcommand{\dthreecell}[3][0.5]{\ar@{}[#2] \ar@3{->}?(#1)+/u  0.2cm/;?(#1)+/d 0.2cm/^{#3}}
\newcommand{\utwocell}[3][0.5]{\ar@{}[#2] \ar@{=>}?(#1)+/d 0.2cm/;?(#1)+/u 0.2cm/_{#3}}
\newcommand{\dtwocelltarg}[3][0.5]{\ar@{}#2 \ar@{=>}?(#1)+/u  0.2cm/;?(#1)+/d 0.2cm/^{#3}}
\newcommand{\utwocelltarg}[3][0.5]{\ar@{}#2 \ar@{=>}?(#1)+/d  0.2cm/;?(#1)+/u 0.2cm/_{#3}}

\newdir{(}{{}*!<0em,-.14em>-\cir<.14em>{l^r}}
\newdir{ (}{{}*!/-5pt/\dir{(}}
\newdir{ >}{{}*!/-5pt/\dir{>}}
\newcommand{\sh}[2]{**{!/#1 -#2/}}


\DeclareMathOperator{\ob}{ob}

\newcommand{\cat}[1]{\mathrm{\mathcal #1}}
\newcommand{\thg}{{\mathord{\text{--}}}}

\newcommand{\abs}[1]{{\left|{#1}\right|}}
\newcommand{\dbr}[1]{\left\llbracket{#1}\right\rrbracket}
\newcommand{\res}[2]{\left.{#1}\right|_{#2}}

\newcommand{\spn}[1]{{\langle{#1}\rangle}}

\newcommand{\defeq}{\mathrel{\mathop:}=}
\newcommand{\cd}[2][]{\vcenter{\hbox{\xymatrix#1{#2}}}}


\renewcommand{\phi}{\varphi}
\newcommand{\A}{{\mathcal A}}
\newcommand{\B}{{\mathcal B}}
\newcommand{\C}{{\mathcal C}}
\newcommand{\D}{{\mathcal D}}
\newcommand{\E}{{\mathcal E}}

\newcommand{\M}{{\mathcal M}}
\newcommand{\N}{{\mathcal N}}

\renewcommand{\P}{{\mathcal P}}

\let\sec=\S
\renewcommand{\S}{{\mathcal S}}

\newcommand{\V}{{\mathcal V}}
\newcommand{\W}{{\mathcal W}}


\newcommand{\xtor}[1]{\cdl[@1]{{} \ar[r]|-{\object@{|}}^{#1} & {}}}

\makeatletter

\def\hookleftarrowfill@{\arrowfill@\leftarrow\relbar{\relbar\joinrel\rhook}}
\def\twoheadleftarrowfill@{\arrowfill@\twoheadleftarrow\relbar\relbar}
\def\leftbararrowfill@{\arrowdoublefill@{\leftarrow\mkern-5mu}\relbar\mapstochar\relbar\relbar}
\def\Leftbararrowfill@{\arrowdoublefill@{\Leftarrow\mkern-2mu}\Relbar\Mapstochar\Relbar\Relbar}
\def\leftringarrowfill@{\arrowdoublefill@{\leftarrow\mkern-3mu}\relbar{\mkern-3mu\circ\mkern-2mu}\relbar\relbar}
\def\lefttriarrowfill@{\arrowfill@{\mathrel\triangleleft\mkern0.5mu\joinrel\relbar}\relbar\relbar}
\def\Lefttriarrowfill@{\arrowfill@{\mathrel\triangleleft\mkern1mu\joinrel\Relbar}\Relbar\Relbar}

\def\hookrightarrowfill@{\arrowfill@{\lhook\joinrel\relbar}\relbar\rightarrow}
\def\twoheadrightarrowfill@{\arrowfill@\relbar\relbar\twoheadrightarrow}
\def\rightbararrowfill@{\arrowdoublefill@{\relbar\mkern-0.5mu}\relbar\mapstochar\relbar\rightarrow}
\def\Rightbararrowfill@{\arrowdoublefill@{\Relbar\mkern-2mu}\Relbar\Mapstochar\Relbar\Rightarrow}
\def\rightringarrowfill@{\arrowdoublefill@\relbar\relbar{\mkern-2mu\circ\mkern-3mu}\relbar{\mkern-3mu\rightarrow}}
\def\righttriarrowfill@{\arrowfill@\relbar\relbar{\relbar\joinrel\mkern0.5mu\mathrel\triangleright}}
\def\Righttriarrowfill@{\arrowfill@\Relbar\Relbar{\Relbar\joinrel\mkern1mu\mathrel\triangleright}}

\def\leftrightarrowfill@{\arrowfill@\leftarrow\relbar\rightarrow}
\def\mapstofill@{\arrowfill@{\mapstochar\relbar}\relbar\rightarrow}

\renewcommand*\xleftarrow[2][]{\ext@arrow 20{20}0\leftarrowfill@{#1}{#2}}
\providecommand*\xLeftarrow[2][]{\ext@arrow 60{22}0{\Leftarrowfill@}{#1}{#2}}
\providecommand*\xhookleftarrow[2][]{\ext@arrow 10{20}0\hookleftarrowfill@{#1}{#2}}
\providecommand*\xtwoheadleftarrow[2][]{\ext@arrow 60{20}0\twoheadleftarrowfill@{#1}{#2}}
\providecommand*\xleftbararrow[2][]{\ext@arrow 10{22}0\leftbararrowfill@{#1}{#2}}
\providecommand*\xLeftbararrow[2][]{\ext@arrow 50{24}0\Leftbararrowfill@{#1}{#2}}
\providecommand*\xleftringarrow[2][]{\ext@arrow 10{26}0\leftringarrowfill@{#1}{#2}}
\providecommand*\xlefttriarrow[2][]{\ext@arrow 80{24}0\lefttriarrowfill@{#1}{#2}}
\providecommand*\xLefttriarrow[2][]{\ext@arrow 80{24}0\Lefttriarrowfill@{#1}{#2}}

\renewcommand*\xrightarrow[2][]{\ext@arrow 01{20}0\rightarrowfill@{#1}{#2}}
\providecommand*\xRightarrow[2][]{\ext@arrow 04{22}0{\Rightarrowfill@}{#1}{#2}}
\providecommand*\xhookrightarrow[2][]{\ext@arrow 00{20}0\hookrightarrowfill@{#1}{#2}}
\providecommand*\xtwoheadrightarrow[2][]{\ext@arrow 03{20}0\twoheadrightarrowfill@{#1}{#2}}
\providecommand*\xrightbararrow[2][]{\ext@arrow 01{22}0\rightbararrowfill@{#1}{#2}}
\providecommand*\xRightbararrow[2][]{\ext@arrow 04{24}0\Rightbararrowfill@{#1}{#2}}
\providecommand*\xrightringarrow[2][]{\ext@arrow 01{26}0\rightringarrowfill@{#1}{#2}}
\providecommand*\xrighttriarrow[2][]{\ext@arrow 07{24}0\righttriarrowfill@{#1}{#2}}
\providecommand*\xRighttriarrow[2][]{\ext@arrow 07{24}0\Righttriarrowfill@{#1}{#2}}

\providecommand*\xmapsto[2][]{\ext@arrow 01{20}0\mapstofill@{#1}{#2}}
\providecommand*\xleftrightarrow[2][]{\ext@arrow 10{22}0\leftrightarrowfill@{#1}{#2}}
\providecommand*\xLeftrightarrow[2][]{\ext@arrow 10{27}0{\Leftrightarrowfill@}{#1}{#2}}

\makeatother


\numberwithin{equation}{section}

\theoremstyle{plain}
\newtheorem{Thm}{Theorem}
\newtheorem*{Thm*}{Theorem}
\numberwithin{Thm}{section}
\newtheorem{Prop}[Thm]{Proposition}
\newtheorem{Cor}[Thm]{Corollary}
\newtheorem{Lemma}[Thm]{Lemma}

\theoremstyle{definition}
\newtheorem{Defn}[Thm]{Definition}
\newtheorem{Not}[Thm]{Notation}
\newtheorem{Ex}[Thm]{Example}
\newtheorem{Exs}[Thm]{Examples}
\newtheorem{Rk}[Thm]{Remark}

\newcommand{\CLA}{\mathrm{c\ell}k\cat{Lin}}
\newcommand{\CDiff}{\mathrm{c}\D\mathrm{iff}}
\newcommand{\cpdots}{\makebox[0.8em][c]{.\hfil.\hfil.}}

\newcommand{\faa}{\mathsf{Fa\grave a}}
\newcommand{\kmod}{k\text-\cat{Mod}}
\newcommand{\kmodw}{k\text-\cat{Mod}_w}
\newcommand{\phom}[1]{\llbracket{#1}\rrbracket}

\newcommand{\ots}{\mathord\otimes}
\newcommand{\XB}{\mathrm{B}} \newcommand{\AB}{\mathrm{H}}
\newcommand{\XA}{\mathrm{A}}

\begin{document}
\leftmargini=2em 
\title[Cartesian differential categories as skew enriched categories]{Cartesian differential categories\\as skew enriched categories}
\author{Richard Garner} 
\address{Centre of Australian Category Theory, Macquarie University, NSW, Australia} 
\email{richard.garner@mq.edu.au}
\author{Jean-Simon Pacaud Lemay} 
\address{University of Oxford, Department of Computer Science, UK} 
\email{jean-simon.lemay@kellogg.ox.ac.uk}

\subjclass[2000]{Primary: }
\date{\today}

\thanks{The first author was supported by Australian Research Council
  grants FT160100393 and DP190102432; the second author acknowledges
  the support of Kellogg College, the Clarendon Fund, and the Oxford
  Google-DeepMind Graduate Scholarship.}

\begin{abstract}
  We exhibit the cartesian differential categories of Blute, Cockett
  and Seely as a particular kind of enriched category. The base for
  the enrichment is the category of commutative monoids---or in a
  straightforward generalisation, the category of modules over a
  commutative rig $k$. However, the tensor product on this category is
  not the usual one, but rather a \emph{warping} of it by a certain
  monoidal comonad $Q$. Thus the enrichment base is not
  a monoidal category in the usual sense, but rather a \emph{skew
    monoidal} category in the sense of Szlach\'anyi. Our first main
  result is that cartesian differential categories are the same as
  categories with finite products enriched over this skew monoidal
  base.

  The comonad $Q$ involved is, in fact, an example of a differential
  modality. Differential modalities are a kind of comonad on a
  symmetric monoidal $k$-linear category with the characteristic
  feature that their co-Kleisli categories are cartesian differential
  categories. Using our first main result, we are able to prove our
  second one: that every small cartesian differential category admits
  a full, structure-preserving embedding into the cartesian
  differential category induced by a differential modality (in fact, a
  monoidal differential modality on a monoidal closed category---thus, a
  model of intuitionistic differential linear logic). This
  resolves an important open question in this area.
\end{abstract}

\maketitle

\tableofcontents 

\section{Introduction}
\label{sec:introduction}

This paper brings together two active strands of research in current
category theory. The first is concerned with a certain categorical
axiomatics for differential structure; it originates in the work of
Ehrhard and Regnier on the differential
$\lambda$-calculus~\cite{Ehrhard2003The-differential}, with the
definitive notions of \emph{tensor differential category} and
\emph{cartesian differential category} being identified by Blute,
Cockett and Seely in~\cite{Blute2006Differential,Blute2009Cartesian},
and further studied by the Canadian school of category
theorists~\cite{Blute2015Cartesian,Blute2016Derivations,
  Blute2019Differential, Cockett2011The-Faa-di-Bruno,
 Cruttwell2021Integral,Cockett2020Tangent,Lemay2018A-tangent}. This
has led to novel applications in computer
science~\cite{Bucciarelli2010Categorical, Clift2020Cofree, 
  Cockett2019Categorical,Ehrhard2018An-introduction,
  Fiore2007Differential,Manzonetto2012What} and in other areas such as
abelian functor calculus~\cite{Bauer2018Directional}.

The second strand which informs this paper is the study of \emph{skew
  monoidal categories}, a certain generalisation of Mac~Lane's
monoidal categories. Skew monoidal categories were introduced by
Szlach\'anyi~\cite{Szlachanyi2012Skew-monoidal} with motivation from
quantum algebra, and their general theory has been developed by the
Australian school of category theorists~\cite{Lack2012Skew,
  Lack2014Triangulations, Lack2015Skew-monoidal,
  Street2013Skew-closed}. This has led to novel applications in operad
theory~\cite{Lack2018Operadic}, two-dimensional category theory and
abstract homotopy theory~\cite{Bourke2017Skew}, and computer
science~\cite{Altenkirch2015Monads}.

These two strands meet in the first main result of this paper, which
for the purposes of this introduction we will term the
\emph{enrichment theorem}: it states that the cartesian differential
categories of~\cite{Blute2009Cartesian} are exactly the categories
with finite products \emph{enriched} over a certain skew monoidal
category $\V$. While the notion of a category enriched over a monoidal
category~\cite{Eilenberg1966Closed} is classical, and has been studied
extensively---see, for example,~\cite{Kelly1982Basic}---enrichment
over a skew monoidal base is much less well-developed, having been
considered only in~\cite{Campbell2018Skew-enriched,
  Street2013Skew-closed}, and with fewer compelling examples. We feel
our result clinches the argument for the value of skew enrichment, and
should serve as a useful test-bed for developing the theory further.

Of course, knowing that a certain structure can be exhibited as a kind
of enriched category is not \emph{a priori} useful. However, in
particular cases, it typically is so, and often because it makes
available the presheaf construction, allowing any instance of the
structure at issue to be embedded into a particularly well-behaved
one. This is this case here. Using the presheaf construction for our
enrichment base, we will prove our second main theorem, the
\emph{embedding theorem}, which states that every
cartesian differential category admits a full, structure-preserving
embedding into one induced via the co-Kleisli construction from a
tensor differential category. This answers an important open question
in the area.

In order to describe our results further, we now recall some more
details of the notions involved. We begin on the side of the
differential structures. The key tension here, reflective of the
subject's origins in linear logic, is between axiomatising a
category of \emph{non-linear} (smooth) maps, and a category of
\emph{linear} maps.

The first axiomatisation is perhaps more intuitive, and leads to the
\emph{cartesian differential categories} of~\cite{Blute2009Cartesian}:
these are categories with finite products $\A$ which are endowed with
a \emph{differential} operator providing for each
$f \colon A \rightarrow B$ a new map
$\mathrm{D}f \colon A \times A \rightarrow B$. This $\mathrm{D}f$ is
thought of as assigning to an input pair $(x,v)$ the directional
derivative of $f$ at $x$ in the direction of $v$. To express the
desired linearity of this operation in $v$ needs further structure on
$\A$: we ask that it be \emph{left additive}, meaning that each
hom-set of $\A$ has a commutative monoid structure $(+,0)$ which is
preserved by precomposition, but not necessarily by
postcomposition---this is reasonable since, after all, $\A$ is
supposed to be a category of \emph{non}-linear maps. With the
appropriate axioms, this is the notion of cartesian differential
category.

The second axiomatisation, in terms of a category of linear maps,
leads to the \emph{tensor differential categories}
of~\cite{Blute2006Differential} (there called merely
\emph{differential categories}; we say ``tensor'' to avoid ambiguity).
These are symmetric monoidal, additively enriched categories $\A$
equipped with a \emph{differential modality} ${!}$\,---a comonad on
$\A$ endowed with certain extra structure. Much as in linear logic,
this ${!}$ is intended to allow ``smooth maps'' from $X$ to~$Y$ to be
encoded as ``linear maps''---i.e., $\A$-maps---from ${!}X$ to~$Y$. The
extra structure of ${!}$ which allows this interpretation is
cocommutative coalgebra structure on each ${!}X$, modelling discard
and duplication of non-linear inputs; and a \emph{deriving
 transformation} $\mathsf{d} \colon {!}X \otimes X \rightarrow {!}X$,
precomposition with which implements the differential operator. This
interpretation is justified by the key result that, in the presence of
finite products, \emph{the co-Kleisli category $\cat{Kl}(!)$ of the
 differential modality on a tensor differential category is a
 cartesian differential~category}.

An important refinement of these notions makes explicit the connection
with linear logic. A differential modality is called \emph{monoidal}
if its underlying endofunctor ${!} \colon \A \rightarrow \A$ is (lax)
monoidal, in a manner which is compatible with the rest of the
structure. This makes ${!}$ a model of the exponential modality of
linear logic; if moreover the monoidal structure of $\A$ is
\emph{closed}, then we have a model of 
\emph{intuitionistic differential linear logic}~\cite{Ehrhard2018An-introduction}. In
this case, the co-Kleisli category $\cat{Kl}(!)$ is a cartesian \emph{closed}
differential category, and so a model of the \emph{differential
 $\lambda$-calculus}~\cite{Bucciarelli2010Categorical,
 Ehrhard2003The-differential}. 

With the refinement just noted, the embedding theorem can be stated as
follows:

%

\begin{Thm*}
  Any small cartesian differential category has a full,
  structure-preserving embedding into the co-Kleisli category
  $\cat{Kl}({!})$ of the monoidal differential modality associated to
  a model of intuitionistic differential linear logic.
\end{Thm*}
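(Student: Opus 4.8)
The plan is to deduce the embedding theorem from the enrichment theorem by means of an enriched Yoneda embedding, followed by a recognition of the resulting presheaf category as a co-Kleisli category. By the enrichment theorem, the given small cartesian differential category $\A$ may be regarded as a small $\V$-category with finite products, $\V = (\kmod, \ots_Q, k)$ being our skew monoidal base; and since that correspondence sends cartesian differential functors to finite-product-preserving $\V$-functors, it suffices to produce a fully faithful, finite-product-preserving $\V$-functor out of $\A$ landing in a $\V$-category which, read as a cartesian differential category, is the co-Kleisli category $\cat{Kl}({!})$ of a monoidal differential modality ${!}$ on a monoidal closed $k$-linear category.

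The natural candidate is the $\V$-enriched Yoneda embedding $y \colon \A \to \P\A$ into the $\V$-category $\P\A = [\A^{\mathrm{op}}, \V]$ of $\V$-presheaves; here one invokes the theory of enrichment over a skew monoidal base developed by Campbell and Street, using that $\V$ is skew closed and appropriately cocomplete, as established earlier. The $\V$-functor $y$ is fully faithful by the enriched Yoneda lemma; it preserves finite products, since $\V$-representables preserve $\V$-enriched limits and products in $\P\A$ are computed pointwise; and $\P\A$ itself has finite products. For size reasons, or to make the target coincide with a recognisable model of differential linear logic, one may need to replace $\P\A$ by the full sub-$\V$-category spanned by a suitable small class of presheaves, in which case I would check that $y$ still factors fully faithfully through it and that the structure described below restricts appropriately.

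The crux is to exhibit $\P\A$ as a co-Kleisli category. From the $\V$-category $\A$ I would first construct an ordinary $k$-linear category $\B$ by ``linearising'' $\A$ --- transporting its $Q$-warped composition law through the comonad $Q$ --- and observe that $\B$ carries finite biproducts; this is the enriched shadow of the classical fact that the linear maps in a cartesian differential category form a category with biproducts. The functor category $\L = [\B^{\mathrm{op}}, \kmod]$ is then symmetric monoidal closed under Day convolution for the biproduct promonoidal structure on $\B$, and on it one defines a comonad ${!}$, built from $Q$ together with the Day structure, and equips it with the cocommutative-coalgebra data and deriving transformation of a monoidal differential modality. The remaining work --- which I expect to be the main obstacle --- is twofold: first, to verify that ${!}$ genuinely satisfies the axioms of a \emph{monoidal} differential modality, in particular the Seely/storage compatibility with the Day-convolution closed structure; and second, to establish a comparison equivalence $\cat{Kl}({!}) \simeq \P\A$ of $\V$-categories, which I anticipate proving by checking it on representables and extending by cocontinuity, using that the $Q$-warping defining $\V$ agrees with the warping implicit in the formation of $\cat{Kl}({!})$. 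Once this is done, transporting $y$ back through the enrichment theorem presents it as the desired full, structure-preserving embedding of $\A$ into $\cat{Kl}({!})$, the co-Kleisli category of a monoidal differential modality on a monoidal closed category and hence a model of intuitionistic differential linear logic.
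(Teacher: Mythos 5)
Your overall strategy---invoke the enrichment theorem, take the skew-enriched Yoneda embedding $y \colon \A \rightarrow \cat{Psh}(\A)$, check it is fully faithful and product-preserving, and then recognise the presheaf category as a co-Kleisli category---is exactly the route the paper takes, and the first half of your proposal is sound (modulo the point that ``product-preserving'' must be formulated for \emph{tight} $\V$-functors, i.e.\ ones acting on $\V$-linear maps, so that it transports to structure-preservation of cartesian differential functors; and the size worry is a non-issue, since $\A$ is small and $\kmod^Q$ is complete and left closed, so all presheaf homs exist).

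The genuine gap is in your ``crux''. The tensor differential category hosting the modality is \emph{not} $[\B^{\mathrm{op}}, \kmod]$ with Day convolution for $\B$ the biproduct category of linear maps of $\A$; it is $\cat{Psh}_\ell(\A)$, the category of $\V$-enriched presheaves on $\A$ itself and $\V$-\emph{linear} presheaf maps. These are different categories: a $\V$-presheaf on $\A$ is a functor on \emph{all} of $\A^{\mathrm{op}}$ valued in $\kmod$ together with a differential operator (a ``differential presheaf''), and this extra data---the action of the non-linear maps of $\A$ and the operator $\mathrm{D}$---is not recoverable from a presheaf on the linear subcategory $\B$; already the representable $yA$, with components $\A(B,A)$ for all $B$ and all maps acting, is not $\B({\thg},A)$. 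Consequently your anticipated equivalence $\cat{Kl}({!}) \simeq \cat{Psh}(\A)$ has no clear proof (and ``extend by cocontinuity'' is not available, since a co-Kleisli comparison is not a colimit-theoretic construction). The paper's resolution is structurally different and is the key idea you are missing: the underlying \emph{ordinary} category of the skew-enriched presheaf category $\cat{Psh}(\A)$ is \emph{automatically} the co-Kleisli category of the comonad $I \ast ({\thg})$ on $\cat{Psh}_\ell(\A)$, because $\lambda$ is not invertible in the skew base; this comonad is pointwise $Q$, and one then only has to lift the symmetric monoidal closed structure (the \emph{pointwise} tensor $(X \otimes Y)A = XA \otimes YA$, which is monoidal precisely because each hom $Q\A(A,B)$ is a cocommutative comonoid---not Day convolution) and the differential modality structure of $Q$ from $\kmod$ to $\cat{Psh}_\ell(\A)$, the last step using the ``monoidal axiom'' for monoidal differential modalities. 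Without this identification, both of the steps you flag as ``the main obstacle'' remain open, and the second of them, as stated, is aimed at the wrong category.
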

We will obtain this using our other main result, the
enrichment theorem, and to
describe that we must now turn to the other side of our story: \emph{skew
monoidal categories}. Recall that monoidal
structure~\cite{Mac-Lane1963Natural} on a category $\V$ involves a
unit object $I$, a tensor product functor $\otimes$, and invertible
\emph{coherence constraints}
$\alpha \colon (A \otimes B) \otimes C \rightarrow A \otimes (B
\otimes C)$, $\lambda \colon I \otimes A \rightarrow A$ and
$\rho \colon A \rightarrow A \otimes I$, subject to suitable axioms.
Skew monoidal structure~\cite{Szlachanyi2012Skew-monoidal} generalises
this by dropping invertibility of $\alpha$, $\lambda$ and
$\rho$---being careful to give them the stated orientations and no other.

Many aspects of the theory of monoidal categories can be adapted to
the skew context; in particular, the classical
notion~\cite{Eilenberg1966Closed} of enrichment over a monoidal
category. Following~\cite{Street2013Skew-closed}, a category $\A$
\emph{enriched} over a skew monoidal category $\V$ involves a set of
objects; a hom-object $\A(A,B)$ in $\V$ for every pair of such
objects; and composition and identities morphisms
$\A(B,C) \otimes \A(A,B) \rightarrow \A(A,C)$ and
$I \rightarrow \A(A,A)$ in $\V$, subject to the three usual
associativity and identity axioms---where suitable attention now has
to be paid to orienting these axioms correctly.

We will be interested in enrichment over skew monoidal categories arising in a
particular way. Given a genuine monoidal category
$\V = (\V, \otimes, I)$, one can \emph{warp} it~\cite[\sec
7]{Szlachanyi2012Skew-monoidal} by a monoidal comonad ${!}$ on $\V$ to
obtain a skew monoidal category $\V^! \defeq (\V, \otimes^{!}, I)$, where
$A \otimes^{!} B \defeq A \otimes {!}B$, and where the constraint maps
$\alpha, \lambda, \rho$ for $V^{!}$ come from those for $\V$ and  the structure maps of the monoidal comonad ${!}$.

A first indicator of the relevance of these ideas to cartesian
differential categories is the following observation, made by Cockett
and Lack in 2012, and recorded in passing in~\cite[\sec
5.1]{Blute2015Cartesian}. Consider the monoidal category $\cat{C\M on}$ of
commutative monoids with its usual tensor product. There is a monoidal
comonad $K$ induced by the (monoidal) forgetful--free adjunction
$\cat{C\M on} \leftrightarrows \cat{Set}$, with action on objects 
\begin{equation*}
  K(A) = \textstyle\bigoplus_{a \in A} \mathbb{N}\rlap{ ,}
\end{equation*}
and it is not hard to see that a category enriched over the
skew-warping $\cat{C\M on}^K$ is exactly a \emph{left additive}
category. Our enrichment theorem takes this observation further. It
turns out that to get from left additive to cartesian differential
structure, the key step is to replace $K$ with a more elaborate
monoidal comonad $Q$ on $\cat{C\M on}$, which acts on objects by
\begin{equation*}
  Q(A) = \textstyle\bigoplus_{a \in A} \mathrm{Sym}(A)
\end{equation*}
where $\mathrm{Sym}(A)$ is the free commutative rig (=semiring) on the 
commutative monoid~$A$. This $Q$ is not just a monoidal comonad, but
also a monoidal differential modality; in fact, it turns out to be the \emph{initial}
monoidal differential modality on $\cat{C\M on}$, and so our
enrichment theorem can be stated as:

\begin{Thm*}
  To give a cartesian differential category is equally to give a
  $\cat{C\M on}^Q$-enriched category with finite products, where $Q$
  is the initial monoidal differential modality on $(\cat{C\M on},
  \otimes, \mathbb{N})$.
\end{Thm*}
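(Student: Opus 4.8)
The plan is to prove the equivalence by a direct structural comparison: unwind the notion of a $\cat{C\M on}^Q$-enriched category with finite products using the explicit description $Q(A) = \bigoplus_{a \in A}\mathrm{Sym}(A)$, and match the resulting data against the definition of a cartesian differential category. The precedent to keep in mind is the observation of Cockett and Lack recalled above: for the simpler comonad $K(A) = \bigoplus_{a\in A}\mathbb{N}$, a $\cat{C\M on}^K$-enriched category unwinds to exactly a left additive category, because the enriched composition $\A(B,C)\otimes K\A(A,B)\to\A(A,C)$ is, by the freeness of $K\A(A,B)$, the same thing as a choice, for each element $f$ of the hom-set, of an additive map $\A(B,C)\to\A(A,C)$ -- precomposition by $f$ -- and the three skew-enrichment axioms collapse to the ordinary category axioms together with left additivity. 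Our task is the analogue of this computation with $K$ replaced by $Q$, together with the interpretation of the extra structure of $Q$ as a differential.

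First, the left additive skeleton. For any $\cat{C\M on}^Q$-enriched category $\A$, the hom-objects $\A(A,B)$ are commutative monoids, so the underlying ordinary category (obtained as usual by homming out of the unit $\mathbb{N}$, since $\cat{C\M on}(\mathbb{N},\thg)$ is the forgetful functor) has commutative monoid hom-sets; and since the enriched composition $c\colon\A(B,C)\otimes Q\A(A,B)\to\A(A,C)$ is a $\cat{C\M on}$-morphism, it is additive in the $\A(B,C)$-variable, so precomposition is additive -- whereas postcomposition need not be, since the insertion $\A(A,B)\to Q\A(A,B)$ used to recover ordinary composition is not a monoid homomorphism. Together with the enriched finite products, which supply the cartesian structure and its compatibility with the additive one, this is exactly a cartesian left additive category; what remains is to produce the differential combinator and check the axioms \emph{[CD.1]}--\emph{[CD.7]}.

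For this, decompose $c$ along $\mathrm{Sym}(A) = \bigoplus_{n\geq 0}\mathrm{Sym}^n(A)$ and use the universal properties in play ($\bigoplus$ over $a\in A$ is free on a set; $\mathrm{Sym}$ is free as a commutative rig; $\mathrm{Sym}^n$ is the symmetric $n$th power). Then $c$ is equivalent to a family, indexed by a ``base point'' $f\in\A(A,B)$ and a degree $n\geq 0$, of maps that are linear in the $\A(B,C)$-variable and symmetric $n$-linear in $n$ further $\A(A,B)$-variables. The degree-$0$ component is ordinary composition $g\mapsto g\circ f$; I would \emph{define} $\mathrm{D}$ from the degree-$1$ component, declaring $\mathrm{D}[g]\circ\langle f,h\rangle$ to be its value at base point $f$, function $g$ and direction $h$. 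Then \emph{[CD.1]} (additivity of $\mathrm{D}$ in the function, $\mathrm{D}[0]=0$) is $\cat{C\M on}$-linearity of $c$ in its first factor; \emph{[CD.2]} (additivity and zero-preservation of $\mathrm{D}[g]$ in the direction) is linearity of maps out of $\mathrm{Sym}^1 = \mathrm{id}$; \emph{[CD.3]}--\emph{[CD.4]} (derivatives of identities, projections and pairings) come from the enriched unit axioms and the universal property of enriched products; the chain rule \emph{[CD.5]} comes from extracting the degree-$1$ part of the enriched associativity axiom -- this is the main computation on this side; and the two symmetry axioms \emph{[CD.6]}--\emph{[CD.7]} come from the symmetry of $\mathrm{Sym}^2$ together with the compatibility of $c$ with the comultiplication $\delta\colon Q\to QQ$, which links the degree-$2$ component to the iterated derivative $\mathrm{D}[\mathrm{D}[\thg]]$.

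The real content is bijectivity. In one direction one must verify that the $\mathrm{D}$ extracted from a $\cat{C\M on}^Q$-enriched category with finite products satisfies all of \emph{[CD.1]}--\emph{[CD.7]} as indicated, and -- the crucial point -- that the higher components ($n\geq2$) contain no additional information: the enriched associativity and comonad coherence axioms force the degree-$n$ component to be precisely the $n$-fold iterated directional derivative $\mathrm{D}^n$ built from $\mathrm{D}$. In the other direction, from a cartesian differential category one defines the degree-$n$ components by iterating $\mathrm{D}$ and must verify the entire list of skew-enrichment axioms (associativity, units, and naturality of $c$ with respect to the counit and comultiplication of $Q$) for the resulting composition morphisms. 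Carrying this out amounts to re-deriving, inside an arbitrary cartesian differential category, the Fa\`a di Bruno calculus of iterated differentials -- the higher chain rule, the compatibility of $\mathrm{D}^n$ with composition, the symmetry of higher mixed partials -- all of which follows from \emph{[CD.1]}--\emph{[CD.7]}. This bookkeeping, and not any single conceptual step, is the principal obstacle; moreover, because the associator and unitors of $\cat{C\M on}^Q$ are genuinely non-invertible -- being built from the monoidal-comonad structure of $Q$ -- one must thread them through the translation carefully, so that the correct \emph{asymmetric} form of left additivity and the correct orientations of \emph{[CD.1]}--\emph{[CD.7]} emerge rather than symmetrised or wrongly-oriented variants. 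Finally, the reformulation in terms of the \emph{initial} monoidal differential modality requires only the separate verification that $Q$, with the structure maps it inherits from $\bigoplus$ and $\mathrm{Sym}$, is the initial object among monoidal differential modalities on $(\cat{C\M on},\otimes,\mathbb{N})$.
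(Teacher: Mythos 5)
Your plan is essentially the proof the paper gives, and it is a sound one: unwind the enriched composition $\A(B,C)\otimes Q\A(A,B)\to\A(A,C)$ using $Q(A)=\bigoplus_{a}\mathrm{Sym}(A)$ into degree-$n$ components that are symmetric multilinear in $n$ arguments with a base point, recover the left additive skeleton from degree $0$, define $\mathrm{D}$ from degree $1$, and show the higher components carry no extra information. The difference is one of organisation. The paper does not pass directly between the $\mathrm{D}$-based axioms of Definition~\ref{def:8} and the unwound enriched axioms; it first establishes (Section~\ref{sec:faa-di-bruno}) that the forgetful functor $\CDiff\to\CLA$ is comonadic for the Fa\`a di Bruno comonad, which yields Corollary~\ref{cor:2}: an equivalent presentation of cartesian differential categories purely in terms of the higher-order operators $(\thg)^{(n)}$ and the Fa\`a di Bruno composition and iteration formulae. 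The enrichment theorem (Theorem~\ref{thm:3}) is then a term-by-term match of Corollary~\ref{cor:2} against Proposition~\ref{prop:8}, which is the unwinding of the enriched axioms you describe. What you call ``re-deriving the Fa\`a di Bruno calculus inside an arbitrary cartesian differential category'' is exactly the content of that earlier section, so the total work is the same; the modularisation buys a cleaner final comparison and reuses the calculus for the embedding theorem later.

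Two points in your plan need sharpening before it would go through. First, the degree-$n$ component of the composition is \emph{not} the $n$-fold iterated differential $\mathrm{D}^n f\colon A^{2^n}\to B$ (wrong arity), but the $n$th derivative $f^{(n)}=(\mathrm{D}_1)^n f\colon A\times A^n\to B$; the two are related by a sum over partitions (Lemma~\ref{lem:3}), and conflating them would derail both the forcing argument and the verification of associativity. Your own description of the components as ``symmetric $n$-linear in $n$ further variables with a base point'' shows you have the right object, but the phrase ``$n$-fold iterated directional derivative $\mathrm{D}^n$'' should be replaced by $(\mathrm{D}_1)^n$. Second, there is no separate axiom of ``naturality of $c$ with respect to $\varepsilon$ and $\delta$'': the comonad structure enters only through the unit and associativity constraints of $\otimes^Q$, and in particular the skew associator is built from the fusion operator $H\colon QX\otimes QY\to Q(X\otimes QY)$. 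Computing $H$ explicitly (Lemma~\ref{lem:1}) is an unavoidable step---it is what produces the double sum over partitions and partial isomorphisms in the unwound associativity axiom---and your plan presupposes it without naming it. Finally, you are right that the theorem's phrasing in terms of the \emph{initial} monoidal differential modality would require a separate initiality proof; note, though, that the paper explicitly defers that verification and uses only the concrete description of $Q$, so it is not needed for the equivalence itself.
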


We derive the embedding theorem from the enrichment theorem via the mechanism
advertised above: enriched presheaves. As explained
in~\cite{Street2013Skew-closed}, the presheaf
construction for enrichment
over a monoidal category~\cite{Kelly1982Basic, Street1983Enriched} generalises without difficulty to the skew
monoidal case. Thus, for a small cartesian differential category $\A$,
seen as a $\cat{C\M on}^Q$-enriched category, its enriched Yoneda
embedding $\A \rightarrow \cat{Psh}(\A)$ corresponds to a full
structure-preserving embedding of cartesian differential categories. A
deeper analysis shows that, in fact, the cartesian differential
category $\cat{Psh}(\A)$ is induced from a monoidal differential modality on a
symmetric monoidal closed additively enriched category, so yielding our
embedding theorem. Since the proof is entirely constructive, we are
able to compute a concrete description of all aspects of the embedding
so obtained; and these are delicate enough that there seems to be little
chance of having arrived at them by any other means---so justifying
our approach.

Let us also say a few words about the proof of the enrichment theorem.
Perhaps the most interesting point is the manner in which the initial
monoidal differential modality $Q$ comes into the picture. One point
of reference is that the formula
$QA = \bigoplus_{a \in A} \mathrm{Sym}(A)$ is the same formula as for
the cofree cocommutative coalgebra over an algebraically closed field
$k$; see~\cite{Clift2020Cofree,Sweedler1969Hopf}. However, our
motivation comes from the striking~\cite{Cockett2011The-Faa-di-Bruno},
which proves that the forgetful functor from cartesian differential
categories to cartesian left additive categories has a \emph{right}
adjoint. The value of this right adjoint at $\A$ is the so-called
\emph{Fa\`a di Bruno category} $\faa(\A)$, whose objects are those of
$\A$; whose maps $f^{(\bullet)} \colon A \rightsquigarrow B$ are
$\mathbb{N}$-indexed families of maps
$f^{(n)} \colon A \times A^n \rightarrow B$ in $\A$ which are
symmetric multilinear in their last $n$ variables; and whose
composition law is given by the higher-order chain rule, the so-called
\emph{Fa\`a di Bruno} formula. 
This is analogous to the fact that the forgetful functor to
commutative rings from \emph{differential rings}---commutative rings
endowed with a derivation---has a right adjoint, which sends a ring
$R$ to its ring of Hurwitz series~\cite{Keigher1997On-the-ring}; this is the
ring whose elements are $\mathbb{N}$-indexed families of elements $r^{(n)} \in R$, endowed with
a suitable multiplication.

In particular, we may look at one of the most basic cartesian left
additive categories, the category $\cat{C\M on}_w$ of commutative
monoids and arbitrary functions, and construct its cofree cartesian
differential category $\faa(\cat{C\M on}_w)$. A natural question is
whether this is induced as the co-Kleisli category of a differential
modality on a symmetric monoidal additively enriched category. The
answer turns out to be \emph{yes}---with the differential modality
involved being precisely the initial differential modality $Q$ on the
category $\cat{C\M on}$ of commutative monoids.

We conclude this introduction with a brief overview of the contents of
the paper. In Section~\ref{sec:cart-diff-categ}, we recall the notion
of cartesian differential category, and give a range of examples. In
Section~\ref{sec:faa-di-bruno}, we describe the Fa\`a di Bruno
construction of~\cite{Cockett2011The-Faa-di-Bruno}, and give new
proofs of the main results of \emph{loc.~cit.} In
Section~\ref{sec:cokleisli}, we recall the notion of tensor
differential category and its relation to cartesian differential
structure, before showing that every Fa\`a di Bruno category
$\faa(\A)$ arises via a co-Kleisli construction (this will later
follow from the embedding theorem). Section~\ref{sec:skew-enrichment}
develops some of the basics of skew monoidal categories and enrichment
over a skew monoidal base, before Section~\ref{sec:cart-diff-categ-2}
provides the proof of our first main result, the enrichment theorem.
Section~\ref{sec:second-interl-enrich} then develops the theory of
\emph{presheaves} for categories enriched over a skew monoidal base;
before, finally, Section~\ref{sec:appl-an-embedd} exploits this and
our first main result to prove the embedding theorem for cartesian
differential categories.

\section{Cartesian differential categories}
\label{sec:cart-diff-categ}

In this purely expository section, we recall the necessary background
from~\cite{Blute2009Cartesian} on cartesian differential categories. As
explained in the introduction, a cartesian differential category is a
category endowed with an abstract notion of differentiation; the
motivating example is the category whose objects are Euclidean spaces
$\mathbb{R}^n$ and whose maps are smooth functions, but there are
other examples coming from algebraic geometry and linear logic, which
we will recall below.

\subsection{Cartesian \texorpdfstring{left-$k$-linear}{left-k-linear} categories}
\label{sec:left-k-linear-1}

In the introduction, we saw that cartesian differential structure on a
category involved commutative monoid structure $(+,0)$ on each
hom-set. For examples coming from differential or algebraic geometry,
this can generally be enhanced to the structure of a real or complex
vector space, or at least that of an $R$-module over a commutative
ring $R$. This is by contrast with examples coming from linear logic,
where negatives may not exist all.

To account for these distinctions, we will incorporate into the
definitions that follow the parameter of a commutative
\emph{rig} (or \emph{semiring}) of scalars
$k$. 
When $k = \mathbb{N}$ we re-find the original
definitions of~\cite{Blute2006Differential, Blute2009Cartesian}; when
$k=\mathbb{Z}$ we get variants with negatives; when $k$ is a field
we get versions involving $k$-vector spaces; and so on.

For the rest of the paper, then, $k$ will be a fixed commutative rig.
We write $\kmod$ for the category of modules over $k$, whose objects
are commutative monoids $M$ (written additively) with a multiplicative
action $k \times M \rightarrow M$ that respects addition in each
variable, and whose maps are \emph{$k$-linear} maps, i.e., functions
respecting the additive structure and the $k$-action. As with modules
over a commutative ring, we have a tensor product $\otimes$ on $\kmod$
which classifies bilinear maps, and this forms part of a symmetric
monoidal structure on $\kmod$ with unit $k$. We also have all limits
and colimits, but in particular \emph{finite biproducts} $M \oplus N$,
computed as cartesian products at the underlying set level.

The following is the basic notion on which the definition of
($k$-linear) cartesian differential category will be built.

\begin{Defn}
 \label{def:4}
 A \emph{left-$k$-linear category} is a category $\A$ in which each
 hom-set $\A(A,B)$ is endowed with $k$-module structure, and for each
 $f \in \A(A,B)$, the function
 $(\thg) \circ f \colon \A(B, C) \rightarrow \A(A,C)$ is $k$-linear.
 A left-$k$-linear category $\A$ is \emph{cartesian} if its
 underlying category has finite products, and the binary product
 isomorphisms as below are $k$-linear
 \begin{equation}\label{eq:1}
  \A(A,B \times C) \rightarrow \A(A,B) \oplus \A(A,C)\rlap{ .}
 \end{equation}
\end{Defn}

The notion of left-$k$-linear category should be compared with that of
\emph{$k$-linear category}, in which we also require that each
function $g \circ (\thg) \colon \A(A,B) \rightarrow \A(A,C)$ is
$k$-linear. While the basic example of a $k$-linear category is
$\kmod$, the basic example of a left-$k$-linear category is $\kmodw$,
the category of $k$-modules and \emph{arbitrary} maps. In this case,
the $k$-module structure on homs is given pointwise, and is still
preserved by precomposition, but not necessarily by postcomposition.

In fact, those maps of $\kmodw$ by which postcomposition does preserve
the $k$-module structure are precisely the ones lying in $\kmod$. This
motivates:

\begin{Defn}
 \label{def:5}
 A map $g \colon B \rightarrow C$ in a left-$k$-linear category $\A$
 is called $k$-\emph{linear} if, for every $A \in \A$, the function
 $g \circ (\thg) \colon \A(A,B) \rightarrow \A(A,C)$ is $k$-linear.
 More generally, a map
 $g \colon B_1 \times \dots \times B_n \rightarrow C$ in a cartesian
 left-$k$-linear category $\A$ is said to be \emph{$k$-linear in the
  $i$th variable} if, for each $A \in \A$, the function
 \begin{equation*}
  \A(A, B_1) \times \dots \times \A(A, B_n) \xrightarrow{\cong} 
  \A(A, B_1 \times \dots \times B_n) \xrightarrow{g \circ (\thg)} \A(A,C)
 \end{equation*}
 is $k$-linear in its $i$th argument. If $g$ is linear in all $n$ of
 its arguments separately, we may say that it is
 \emph{$k$-multilinear} or simply \emph{multilinear}.
\end{Defn}

We make three remarks. Firstly, for a map
$f \colon A_1 \times A_2 \times A_3 \rightarrow B$, say, we could ask
that it be $k$-linear in the first variable, and also $k$-linear in
the third variable---so a kind of bilinearity---but also that it be
linear in variables $1$ and $3$ taken together, i.e., that
$f \circ (g_0+g_1, h, k_0+k_1) = f \circ (g_0, h, k_0) + f \circ (g_1,
h, k_1)$. In the latter situation, we may say that $f$ is
\emph{jointly $k$-linear} in variables $1$ and $3$.

Secondly, we note that in~\cite{Blute2009Cartesian}, ``additive'' is
used for what we call ``$k$-linear'', while ``linear'' is reserved for
a stronger concept which we call ``$\mathrm{D}$-linear''; see
Definition~\ref{def:6}.\footnote{This change of nomenclature will be
  justified later, when we see that both $k$-linearity and
  $\mathrm{D}$-linearity of a map are examples of a general notion of
  ``linearity'' with respect to a skew enrichment; see
  Example~\ref{ex:15} and Theorem~\ref{thm:3}.} Finally, we can now equate our
notion of \emph{cartesianness} for a left-$k$-linear category with
that in~\cite{Blute2009Cartesian}. Indeed, to require the
$k$-linearity of the product isomorphisms in~\eqref{eq:1} is equally
to require the $k$-linearity of the two product projections
$\pi_0 \colon B \times C \rightarrow B$ and
$\pi_1 \colon B \times C \rightarrow C$---which,
by~\cite[Lemma~2.4]{Lemay2018A-tangent}, is equivalent to the original
definition of ``cartesian''.
 
We conclude this section with some examples of cartesian
left-$k$-linear categories.

\begin{Exs}
 \begin{enumerate}[(i),itemsep=0.35\baselineskip]
 \item As already noted, $\kmodw$ is a left-$k$-linear category; in
  fact, it is also cartesian, with finite products computed as in
  $\kmod$. Note that in $\kmodw$, these finite products are
  \emph{not} biproducts, and so we will write them as $A \times B$
  rather than $A \oplus B$. The $k$-multilinear maps in $\kmodw$ are
  multilinear maps in the usual sense.
   
\item A $k$-linear category with finite biproducts, such as $\kmod$,
  is the same thing as a cartesian left-$k$-linear category in which
  every map is $k$-linear. On the other hand, the only $k$-multilinear
  maps in such a category are the zero maps.

 \item The category $\cat{Smooth\E uc}$, whose objects are the
  Euclidean spaces $\mathbb{R}^n$, and whose maps are smooth
  functions, is cartesian left-$\mathbb{R}$-linear. Once again, the
  $\mathbb{R}$-linear and $\mathbb{R}$-multilinear maps take on
  their usual meaning, and finite products are simply cartesian
  products.


 \item \label{item:1} If $k$ is a commutative rig, we define the category
  $\cat{Poly}_k$ to have natural numbers as objects,
  maps $f \colon n \rightarrow m$ given by $m$-tuples of polynomials
  $f_1, \dots, f_m \in k[x_1, \dots, x_n]$, and composition given by
  polynomial substitution. $\cat{Poly}_k$ is left-$k$-linear under the
  pointwise addition of polynomials; it is moreover cartesian, with
  finite products given by addition of natural numbers, and
  ($k$-linear) projection maps ${n \leftarrow n+m \rightarrow m}$
  given by $\pi_0 = (x_1, \dots, x_n)$ and
  $\pi_1 = (x_{n+1}, \dots, x_m)$. The $k$-linear maps
  $f \colon n \rightarrow m$ in $\cat{Poly}_k$ are those for which
  each $f_i$ is of the form $\lambda_1 x_1 + \dots + \lambda_m x_m$
  for some $\lambda_1, \dots, \lambda_m \in k$; the $k$-bilinear maps
  $f \colon n+m \rightarrow r$ are likewise those for which each $f_i$
  is a $k$-linear combination of monomials $x_i x_j$ with $1 \leqslant
  i \leqslant n < j \leqslant n+m$.

 %
 \item \label{item:3} Generalising~\ref{item:1}, we have a category $\cat{Gen\P oly}_k$ with
  $k$-modules $M,N,\dots$ as objects, and maps from $N$ to $M$ being
  $k$-module maps $M \rightarrow \mathrm{Sym}(N)$, where
  \begin{equation}\label{eq:13}
   \mathrm{Sym}(N) = k \oplus N \oplus (N \otimes N) / \mathfrak{S}_2
   \oplus (N \otimes N \otimes N) / \mathfrak{S}_3 \oplus \cdots
  \end{equation}
  is the free symmetric $k$-algebra on $N$. Since $\mathrm{Sym}$ is a monad on
  $\kmod$, composition in $\cat{Gen\P oly}_k$ can be described as
  Kleisli composition for $\mathrm{Sym}$. Proceeding as before, \emph{mutatis
   mutandis}, yields a cartesian left-$k$-linear structure on
  $\cat{Gen\P oly}_k$, whose $k$-linear maps from $N$ to $M$ are maps
  $M \rightarrow \mathrm{Sym}(N)$ in $\kmod$ which factor through the unit
  $\eta \colon N \rightarrow \mathrm{Sym}(N)$; and whose bilinear maps from
  $N \times M$ to $R$ are maps in $\kmod$ of the form
  \begin{equation*}
   R \rightarrow N \otimes M \xrightarrow{\eta \otimes \eta}
   \mathrm{Sym}(N) \otimes \mathrm{Sym}(M) \cong \mathrm{Sym}(N
   \oplus M)\rlap{ .}
  \end{equation*}
 \end{enumerate}
 \label{ex:5}
\end{Exs}

\subsection{Cartesian differential categories}
\label{sec:cart-diff-categ-1}

We now recall the key notion from~\cite{Blute2009Cartesian}. Note that
we omit in (iii) the condition
$\mathrm{D}(f,g) = (\mathrm{D}f, \mathrm{D}g)$ which was originally taken
as axiomatic, since by~\cite[Lemma~2.8]{Lemay2018A-tangent}, this
follows from (iii) and (iv).

\begin{Defn}
 \label{def:8}
 A \emph{$k$-linear cartesian differential category} is a cartesian
 left-$k$-linear category $\A$ equipped with operators
 $\mathrm{D} \colon \A(A,B) \rightarrow \A(A \times A, B)$ such that:
 \begin{enumerate}[(i)]
 \item Each $\mathrm{D}$ is $k$-linear;
 \item Each $\mathrm{D}f \colon A \times A \rightarrow B$ is
  $k$-linear in its second argument;
 \item $\mathrm{D}(\pi_i) = \pi_i \pi_1 \colon (A_0 \times A_1)
  \times (A_0 \times A_1)
  \rightarrow A_i$ for $i = 0,1$;
 \item $\mathrm{D}(1_A) = \pi_1 \colon A \times A \rightarrow A$ for all $A \in \A$;
 \item $\mathrm{D}(gf) = \mathrm{D}g\bigl(f\pi_0,\mathrm{D}f\bigr)
  \colon A \times A \rightarrow C$ for
  all $f \colon A \rightarrow B$ and
  $g \colon B \rightarrow C$.
 \item $\mathrm{D}(\mathrm{D}f)(x,r,0,v) = \mathrm{D}f(x,v) \colon Z
  \rightarrow B$ for all
  $x,r,v \colon Z \rightarrow A$, $f \colon A \rightarrow B$;
 \item
  $\mathrm{D}(\mathrm{D}f)(x,r,s,0) =
  \mathrm{D}(\mathrm{D}f)(x,s,r,0) \colon Z \rightarrow B$ for all
  $x,r,s \colon Z \rightarrow A$, $f \colon A \rightarrow B$.
 \end{enumerate}
\end{Defn}


In the examples that follow, we emphasise the ground rig
$k$. However, subsequently we will typically write ``cartesian
differential category'' to mean
``$k$-linear cartesian differential category'', leaving the parameter
$k$ implicit.

\begin{Exs}
 \begin{enumerate}[(i),itemsep=0.35\baselineskip]
 \item $\cat{Smooth\E uc}$ is an $\mathbb{R}$-linear cartesian differential category,
  where for a smooth map
  $f \colon \mathbb{R}^n \rightarrow \mathbb{R}^m$, we take
  $\mathrm{D}f \colon \mathbb{R}^n \times \mathbb{R}^n \rightarrow
  \mathbb{R}^m$ to be the directional derivative
  \begin{equation*}
   \mathrm{D}f(\mathbf x, \mathbf v) = (\boldsymbol \nabla f)(\mathbf{x}) \cdot \mathbf{v}
  \end{equation*}
  where $(\boldsymbol \nabla f)(\mathbf{x})$ is the (vector-valued)
  gradient
  $(\res{\smash{\tfrac{\partial f}{\partial x_1}}}{\mathbf{x}}\ \dots\
  \res{\smash{\tfrac{\partial f}{\partial x_n}}}{\mathbf{x}})$.
 \item \label{item:2} $\cat{Poly}_k$ is a $k$-linear cartesian differential
  category, where for each map $f \colon n \rightarrow m$ we define
  $\mathrm{D}f \colon n+n \rightarrow m$ by
  \begin{equation*}
   (\mathrm{D}f)(x_1, \dots, x_n, v_1, \dots, v_n) = \Bigl(\textstyle\sum_{j=1}^n
   \tfrac{\partial f_1}{\partial x_j} v_j, \dots, \sum_{j=1}^n
   \tfrac{\partial f_m}{\partial x_j} v_j\Bigr)
  \end{equation*}
 \item $\cat{Gen\P
   oly}_k$ has a $k$-linear cartesian differential structure defined in a
  manner which extends~\ref{item:2}; we will obtain it
  rigorously in Examples~\ref{ex:7}\ref{item:6} below.
\item \label{item:4} Every $k$-linear category with finite biproducts
  is a $k$-linear cartesian differential category, where for a map
  $f\colon A \rightarrow B$ we define
  $\mathrm{D}f\colon A \oplus A \rightarrow B$ by
  $\mathrm{D}f = f \pi_1$. While this example may seem trivial, it
  plays an important role in \cite{Cockett2020Tangent}.
 \end{enumerate}
 \label{ex:10}
\end{Exs}

The axioms for a cartesian differential category express axiomatically
the key properties of the motivating example of $\cat{Smooth\E uc}$.
(i) expresses the linearity of taking derivatives, and (iii) the
compatibility of $\mathrm{D}$ with products; (iv) and (v) are the
nullary and binary chain rules; while (vii) gives symmetry of
second-order partial derivatives. As for (ii) and (vi), \emph{both}
express the linearity of the operation
$(\boldsymbol \nabla f)(\mathbf{x}) \cdot (\thg)$, but in different
ways. We have already discussed $k$-linearity, but in the differential
context, we also have a notion of \emph{$\mathrm{D}$-linearity}. In
the definition, and henceforth, we write $0^m$ for a sequence of $m$
zeroes.

\begin{Defn}
 \label{def:6}
 A map $f \colon A \rightarrow B$ in a cartesian differential
 category is \emph{$\mathrm{D}$-linear} if $\mathrm{D}f(x,v) = fv$
 for all $x,v \colon Z \rightarrow A$. More generally, a map
 $f \colon A_1 \times \dots \times A_n \rightarrow B$ is
 \emph{$\mathrm{D}$-linear in the $i$th variable} if for all suitable
 $v, x_1, \dots, x_n$ we have
 \begin{equation*}
  \mathrm{D}f(x_1, \dots, x_n,0^{i-1}, v, 0^{n-i}) = f(x_1, \dots,
  x_{i-1}, v, x_{i+1}, \dots, x_n)\rlap{ .}
 \end{equation*}
\end{Defn}

In these terms, axiom (vi) above says exactly that $\mathrm{D}f$ is
$\mathrm{D}$-linear in its first argument. In the motivating example,
$\mathrm{D}$-linearity and $k$-linearity coincide, but in the general
case, the former implies the latter, but not vice versa;
see~\cite[Corollary~2.3.4]{Blute2009Cartesian}.

The notion of $\mathrm{D}$-linearity in one variable is conveniently
repackaged using \emph{partial derivatives}, which will be important
later. In terms of the following definition,
$f \colon A_1 \times \dots \times A_n \rightarrow B$ is
$\mathrm{D}$-linear in its $i$th variable just when we have
$\mathrm{D}_i f(x_1, \dots, x_n, v) = f(x_1, \dots, x_{i-1}, v,
x_{i+1}, \dots, x_n)$.

\begin{Defn}
 \label{def:11}
 Given a map $f \colon A_1 \times \dots \times A_n \rightarrow B$ in
 a cartesian differential category and $1 \leqslant i \leqslant n$,
 its \emph{$i$th partial derivative} is the map
 $\mathrm{D}_i f \colon A_1 \times \dots \times A_n \times A_i
 \rightarrow B$ characterised by
 $\mathrm{D}_i f(x_1, \dots, x_n, v) = \mathrm{D}f(x_1, \dots, x_n,
 0^{i-1}, v, 0^{n-i})$.
\end{Defn}

For example, in $\cat{Poly}_k$ the partial derivative
$\mathrm{D}_i f \colon n+1 \rightarrow m$ of a map
$f \colon n \rightarrow m$ is given by
$(\mathrm{D}_if)(x_1, \dots, x_n, v) = (\tfrac{\partial f_1}{\partial
 x_i} v, \dots, \tfrac{\partial f_m}{\partial x_i} v)$. Comparing
this with Examples~\ref{ex:10}\ref{item:2}, we see that in this case
the derivative $\mathrm{D}f$ is the sum of the partial derivatives.
This is true in general, as the first part of the following lemma
shows.

\begin{Lemma}
 \label{lem:5}
 Let $f \colon A_1 \times \dots \times A_n \rightarrow B$ and
 $g \colon B \rightarrow C$ be maps in a cartesian differential
 category.
 \begin{enumerate}[(i)]
 \item We have
  $\mathrm{D}f = \mathrm{D}_1 f + \dots + \mathrm{D}_n f$;
 \item We have
  $\mathrm{D}_i(gf)(x_1, \dots, x_n, v) = \mathrm{D}g\bigl(f(x_1,
  \dots, x_n), \mathrm{D}_if(x_1, \dots, x_n, v)\bigr) $ for all
  suitable $x_1, \dots, x_n, v$.
 \end{enumerate}
\end{Lemma}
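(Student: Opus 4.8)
The plan is to derive both identities straight from the axioms of Definition~\ref{def:8}, working throughout with generalised elements: for (i) it suffices to check the claimed equality of maps $A \times A \to B$ (with $A \defeq A_1 \times \dots \times A_n$) after evaluating at an arbitrary pair $x = \langle x_1, \dots, x_n\rangle$, $v = \langle v_1, \dots, v_n\rangle \colon Z \to A$, while for (ii) the statement is already in generalised-element form.

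\emph{Part (i).} The crux is a decomposition of tuples valid in any cartesian left-$k$-linear category: for $v = \langle v_1, \dots, v_n\rangle \colon Z \to A_1 \times \dots \times A_n$ one has $v = \sum_{i=1}^{n} \langle 0^{i-1}, v_i, 0^{n-i}\rangle$ in the $k$-module $\A(Z, A_1 \times \dots \times A_n)$. I would prove this by applying the $n$-fold product isomorphism $\A(Z, A_1 \times \dots \times A_n) \xrightarrow{\cong} \A(Z,A_1) \oplus \dots \oplus \A(Z,A_n)$, which is obtained by iterating the binary isomorphisms of Definition~\ref{def:4} and is therefore $k$-linear, hence injective: both sides are carried to $(v_1, \dots, v_n)$. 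Granting this, axiom (ii) — $k$-linearity of $\mathrm{D}f$ in its second argument, in particular its additivity over finite sums — gives
\[ \mathrm{D}f(x, v) = \mathrm{D}f\bigl(x, \textstyle\sum_i \langle 0^{i-1}, v_i, 0^{n-i}\rangle\bigr) = \textstyle\sum_i \mathrm{D}f\bigl(x, \langle 0^{i-1}, v_i, 0^{n-i}\rangle\bigr) = \textstyle\sum_i \mathrm{D}_i f(x_1, \dots, x_n, v_i), \]
the last step being the definition of $\mathrm{D}_i f$ (Definition~\ref{def:11}). Reading $\mathrm{D}_1 f + \dots + \mathrm{D}_n f$ with the evident reindexing — each $\mathrm{D}_i f \colon A_1 \times \dots \times A_n \times A_i \to B$ precomposed with $1_A \times \pi_i \colon A \times A \to A_1 \times \dots \times A_n \times A_i$ — this is precisely the assertion.

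\emph{Part (ii).} Here I would merely unfold definitions and invoke the chain rule. By Definition~\ref{def:11}, $\mathrm{D}_i(gf)(x_1, \dots, x_n, v) = \mathrm{D}(gf)\bigl(\langle x_1, \dots, x_n\rangle, \langle 0^{i-1}, v, 0^{n-i}\rangle\bigr)$, and axiom (v) — applied with the single object $A \defeq A_1 \times \dots \times A_n$ playing the role of the intermediate domain — rewrites the right-hand side as $\mathrm{D}g\bigl(f(x_1, \dots, x_n), \mathrm{D}f(\langle x_1, \dots, x_n\rangle, \langle 0^{i-1}, v, 0^{n-i}\rangle)\bigr)$; the inner term is $\mathrm{D}_i f(x_1, \dots, x_n, v)$ by Definition~\ref{def:11} once more. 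So (ii) follows immediately, and in fact independently of (i).

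The only delicate point — the ``main obstacle'', modest as it is — is the coordinate-decomposition identity used in (i): one must be sure that the iterated product isomorphism is genuinely $k$-linear, so that the sum in $\bigoplus_i \A(Z,A_i)$ transports to the sum in $\A(Z, A_1 \times \dots \times A_n)$, which is exactly what the cartesianness hypothesis of Definition~\ref{def:4} provides, and that it is injective, which is automatic. Everything else is a direct appeal to the axioms and the definition of partial derivative.
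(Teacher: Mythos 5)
Your proof is correct. Part (ii) is verbatim the paper's computation (unfold the definition of the partial derivative, apply the chain rule axiom, fold the definition back in), while for part (i) the paper simply cites Lemma~4.5.1 of Blute--Cockett--Seely rather than proving it; your direct argument --- decomposing $v$ as $\sum_i \langle 0^{i-1}, v_i, 0^{n-i}\rangle$ via the $k$-linear (hence additive and injective) iterated product isomorphism and then invoking additivity of $\mathrm{D}f$ in its second argument --- is exactly the standard proof of that cited fact, and your explicit handling of the reindexing $1_A \times \pi_i$ implicit in the statement is a welcome precision.
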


\begin{proof}
 Part (i) is~\cite[Lemma~4.5.1]{Blute2009Cartesian}. For (ii), we
 calculate using the chain rule that
 \begin{align*}
  \mathrm{D}_i(gf)(x_1, \dots, x_n, v) &=
  \mathrm{D}(gf)(x_1, \dots, x_n, 0^{i-1}, v, 0^{n-i}) \\
  &= \mathrm{D}g\bigl(f(x_1, \dots, x_n), \mathrm{D}f(x_1, \dots, x_n,
  0^{i-1}, v, 0^{n-i})\bigr) \\& = \mathrm{D}g\bigl(f(x_1, \dots,
  x_n), \mathrm{D}_if(x_1, \dots, x_n, v)\bigr)\text{ .}\qedhere
 \end{align*}
\end{proof}

Finally, we record the definition of cartesian \emph{closed}
differential category. In~\cite{Bucciarelli2010Categorical},
this structure is called a ``differential $\lambda$-category'', and is shown to
admit an interpretation of the differential
$\lambda$-calculus of~\cite{Ehrhard2003The-differential}.

\begin{Defn}
 \label{def:37}
 A cartesian differential category $\A$ is a \emph{cartesian closed
  differential category} if its underlying category is cartesian
 closed, and one of the following two equivalent conditions holds
 (where bar indicates exponential transpose):
 \begin{enumerate}[(i)]
 \item For all $f \colon A \times B \rightarrow C$ in $\A$, we have
  $\mathrm{D}\overline f = \overline{\mathrm{D}_1 f} \colon A \times A
  \rightarrow C^B$;
 \item For all $B,C \in \A$, the counit $\mathrm{ev} \colon C^B \times B
  \rightarrow C$ is $\mathrm{D}$-linear in its first argument.
 \end{enumerate}
\end{Defn}
For the equivalence of these conditions,
see~\cite[Lemma~4.10]{Cockett2019Categorical}.

\section{The Fa\`a di Bruno construction}
\label{sec:faa-di-bruno}

In this section, we describe the striking main result
of~\cite{Cockett2011The-Faa-di-Bruno}. This says that the forgetful
functor $U \colon \CDiff \rightarrow \CLA$ from cartesian differential
categories to cartesian left-$k$-linear categories---with the obvious
strict structure-preserving maps in each case---has a right adjoint
and is comonadic. 
As in~\cite{Cockett2011The-Faa-di-Bruno}, we will denote the value of
this right adjoint at a cartesian left-$k$-linear category $\A$ by
$\faa(\A)$, and call it the \emph{Fa\`a di Bruno category} of $\A$.

The calculations which describe the Fa\`a di Bruno category, and
exhibit its universal property, are so closely aligned to what we need
in this paper that it will be worth going through them thoroughly. In
fact, this will not be pure revision: we give \emph{new} proofs of the
main results of~\cite{Cockett2011The-Faa-di-Bruno} that avoid the term
calculus for cartesian differential categories, and which sidestep
some of the more involved calculations.

\subsection{Objects and morphisms}
\label{sec:faa-di-bruno-1}
The notions of cartesian left-$k$-linear category, and of cartesian
differential category, are \emph{essentially algebraic} in the sense
of Freyd~\cite{Freyd1972Aspects}, and the 
functor $U \colon \CDiff \rightarrow \CLA$ is given by forgetting certain essentially-algebraic
structure. 
It follows by the standard theory~\cite{Gabriel1971Lokal} of
locally finitely presentable categories that $U$ has a left adjoint
$F$, and is monadic.

By contrast, the fact that $U$ has a \emph{right} adjoint $\faa$ does
not follow from any standard theory; however, to discover the values
that such a right adjoint would have to take, we can employ a standard
methodology---namely, that of ``probing'' from suitable free objects
and using adjointness. In this section, we use this approach to find
out what the objects and morphisms of $\faa(\A)$ must be.


First, let $\mathbf{1}$ be the free cartesian left-$k$-linear category
on an object, and $F(\mathbf{1})$ the free cartesian differential
category on that. Objects of $\faa(\A)$ are in bijection with maps
$F(\mathbf{1}) \rightarrow \faa(\A)$ in $\CDiff$, and so with maps
$UF(\mathbf{1}) \rightarrow \A$ in $\CLA$. But since the only
morphisms of $\mathbf{1} \in \CLA$ are ones which must be
$\mathrm{D}$-linear in any cartesian differential category,
$F(\mathbf{1})$ is simply $\mathbf{1}$ with the trivial differential
of Example \ref{ex:10}\ref{item:4}, and $UF(\mathbf{1})$ is again just
$\mathbf{1}$. Therefore \emph{objects of $\faa(\A)$ are the same as
  those of $\A$}.

Now let $\mathbf{2}$ be the free cartesian left-$k$-linear category
on an arrow $f \colon A \rightarrow B$. Arguing as before, arrows of
$\faa(\A)$ are in bijection with maps $UF(\mathbf{2}) \rightarrow \A$
in $\CLA$; to identify these, we must give a presentation of
$F(\mathbf{2})$ \emph{qua} cartesian left-$k$-linear category. Of
course, part of this presentation is the arrow
$f \colon A \rightarrow B$; but we also have
$\mathrm{D}f \colon A^2 \rightarrow B$ and
$\mathrm{D}^2f \colon A^4 \rightarrow B$, and so on. It turns
out\footnote{We will not prove this directly, since this discussion is
  really only by way of motivation.} that the totality of the maps
$\mathrm{D}^n f \colon A^{2^n} \rightarrow B$ generate $F(\mathbf{2})$
as a cartesian left-$k$-linear category; as such, arrows
$A \rightarrow B$ of $\faa(\A)$ can be identified with families of
maps
\begin{equation*}
  (f_0 \colon A \rightarrow B, f_1 \colon A^2 \rightarrow B, \dots,
  f_n \colon A^{2^n} \rightarrow B, \dots)
\end{equation*}
in $\A$ subject to axioms expressing the relations between
$f, \mathrm{D}f, \mathrm{D}^2f, \dots$ in $F(\mathbf{2})$. This is, in
fact, the description of maps of $\faa(\A)$ given
in~\cite{Lemay2018A-tangent}, but \emph{not} the original one
of~\cite{Cockett2011The-Faa-di-Bruno}. To reconstruct the latter, we
must look more closely at the relations holding between the iterated
differentials of a map.


As motivation, we observe that, for the second iterated differential,
we have by axiom (vi) and Lemma~\ref{lem:5} that:
\begin{equation*}
  (\mathrm{D}^2f)(x,r,s,v) = (\mathrm{D}_1\mathrm{D}f)(x,r,s) +
  (\mathrm{D}_2\mathrm{D}f)(x,r,v) = (\mathrm{D}_1\mathrm{D}f)(x,r,s) + \mathrm{D}f(x,v)\text{ ;}
\end{equation*}
this abstracts away the expression of $\mathrm{D}^2f$ in
$\cat{Smooth\E uc}$ in terms of the Jacobian and the Hermitian:
$(\mathrm{D}^2f)(\mathbf{x},\mathbf{r},\mathbf{s},\mathbf{v}) =
\mathbf{r}^\top \cdot (\mathbf{H}f)(\mathbf{x}) \cdot \mathbf{s} +
(\boldsymbol\nabla f)(\mathbf{x}) \cdot \mathbf{v}$. More generally,
we can decompose iterated differentials $\mathrm{D}^n f$ as sums of
\emph{higher-order derivatives}:

\begin{Defn}
  \label{def:12}(\cite[\sec 3.1]{Cockett2011The-Faa-di-Bruno}) For any
  map $f \colon A \rightarrow B$ in a cartesian differential category,
  we define its \emph{$n$th derivative} as the map
  $f^{(n)} = (\mathrm{D}_1)^n f \colon A \times A^n \rightarrow B$.
\end{Defn}

We now give the decomposition of
$\mathrm{D}^n f \colon A^{2^n} \rightarrow B$ in terms of the
$f^{(n)}$'s. In what follows, given a map
$x \colon X \rightarrow A^{2^n}$ and a subset
$I \subseteq [n] = \{1, \dots, n\}$, we write
$x_I \colon X \rightarrow A$ for the projection of $x$ associated to
the characteristic function $\chi_I \in 2^n$. 

\begin{Lemma}
  \label{lem:3}
  Let $f \colon A \rightarrow B$ be a map in a cartesian differential
  category and $n \geqslant 0$.
  \begin{enumerate}[(i)]
  \item $f^{(n)} \colon A \times A^n \rightarrow B$ is symmetric and
    $\mathrm{D}$-linear in its last $n$ variables.
  \item \label{item:7} For any $x \colon X \rightarrow A^{2^n}$, we have
    \begin{equation}\label{eq:4}
      (\mathrm{D}^n f)(x) = \sum_{[n] = A_1 \mid \dots \mid A_k}
      f^{(k)}(x_{\emptyset},x_{A_1}, \dots, x_{A_k})
    \end{equation}
    where the sum is over all (unordered) partitions of $[n]$ into
    non-empty subsets $A_1, \dots, A_k$; in particular, when $n = 0$,
    the unique partition of $[0] = \emptyset$ is the \emph{empty}
    partition with $k = 0$.
  \item \label{item:8} For any $y \colon X \rightarrow A \times A^n$, we have
       \begin{equation}\label{eq:4.1}
    f^{(n)}(y) =  (\mathrm{D}^n f)(y^\circ)
    \end{equation}
    where $y^\circ \colon X \rightarrow A^{2^n}$ is the unique map
    with $y^\circ_\emptyset = \pi_0 y$, 
    $y^\circ_{\lbrace k \rbrace} = \pi_k y$ and
    $y^\circ_I = 0$ for any $I \subseteq [n]$ of cardinality
    $\geqslant 2$.
  \end{enumerate}
\end{Lemma}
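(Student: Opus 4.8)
The plan is to prove the three parts essentially by induction on $n$, using the chain rule (axiom (v)) and Lemma~\ref{lem:5}(i) to unfold one more differential each time, and exploiting axioms (vi) and (vii) for the symmetry and $\mathrm{D}$-linearity statements. For part (i), I would argue by induction on $n$: the base case $n = 0$ is vacuous ($f^{(0)} = f$) and $n=1$ is axiom (vi) (that $\mathrm{D}f$ is $\mathrm{D}$-linear in its first argument gives $\mathrm{D}_1 f$ what we want, though here we are asserting linearity of $f^{(1)} = \mathrm{D}_1 f$ in its single extra variable, which is just the defining property of $\mathrm{D}_1$ together with (vi)). For the inductive step, $f^{(n+1)} = \mathrm{D}_1(f^{(n)})$; $\mathrm{D}$-linearity in the new variable follows from axiom (vi) applied to $f^{(n)}$, $\mathrm{D}$-linearity in the variables inherited from $f^{(n)}$ follows because $\mathrm{D}_1$ commutes appropriately with the zero-substitutions (using (ii) for $\mathrm{D}$ to see $\mathrm{D}_1$ of a $\mathrm{D}$-linear-in-$v$ map stays linear in $v$), and symmetry among the last $n+1$ variables is got by combining the inductive symmetry hypothesis with axiom (vii), which swaps two adjacent derivative directions; the standard fact that adjacent transpositions generate the symmetric group then closes the argument.

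For part (ii), the formula~\eqref{eq:4} is proved by induction on $n$. The base case $n=0$ reads $f = f^{(0)}(x_\emptyset)$, correct with the empty partition. For the step, write $\mathrm{D}^{n+1}f = \mathrm{D}(\mathrm{D}^n f)$ and apply Lemma~\ref{lem:5}(i) to expand $\mathrm{D}(\mathrm{D}^n f)$ as the sum of its partial derivatives $\mathrm{D}_j(\mathrm{D}^n f)$ over the $2^n$ input slots of $\mathrm{D}^n f$. Feeding in the inductive expression for $\mathrm{D}^n f$ as a sum over partitions of $[n]$, each $\mathrm{D}_j$ differentiates each summand $f^{(k)}(x_\emptyset, x_{A_1}, \dots, x_{A_k})$; since $f^{(k)}$ is $\mathrm{D}$-linear in its last $k$ arguments by part~(i), differentiating in one of those slots just replaces that argument by the new perturbation, while differentiating in the $x_\emptyset$ slot produces $f^{(k+1)}$ with a fresh last argument (via the chain rule and $\mathrm{D}_1 f^{(k)} = f^{(k+1)}$). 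Re-indexing the inputs of $\mathrm{D}^{n+1}f$ by $2^{n+1} = 2^n \times 2$, one checks that the resulting bookkeeping is exactly: a partition of $[n+1]$ either adds $n+1$ to an existing block (coming from the linear slots) or creates the new singleton block $\{n+1\}$ (coming from the $x_\emptyset$ slot). The combinatorial identification of this recursion with "partitions of $[n+1]$" is the routine-but-fiddly core of the argument.

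For part (iii), the point is that $y^\circ$ is precisely the input pattern that kills every partition of $[n]$ except the one with all singletons: if a block $A_i$ has cardinality $\geqslant 2$ then $y^\circ_{A_i} = 0$, and $f^{(k)}$, being $\mathrm{D}$-linear (hence in particular additive, hence zero-preserving) in that slot, annihilates that summand. Thus applying~\eqref{eq:4} to $x = y^\circ$ collapses the sum to the single term $f^{(n)}(y^\circ_\emptyset, y^\circ_{\{1\}}, \dots, y^\circ_{\{n\}}) = f^{(n)}(\pi_0 y, \pi_1 y, \dots, \pi_n y) = f^{(n)}(y)$, as required. The main obstacle throughout is Part~(ii): getting the partition-recursion bookkeeping exactly right when one differential is unfolded, and being careful that the $\mathrm{D}$-linearity from Part~(i) is genuinely strong enough to turn "differentiate $f^{(k)}$ in a linear slot" into "substitute into that slot" rather than producing extra terms. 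Everything else is a short deduction from (i), (ii) and the cartesian differential axioms.
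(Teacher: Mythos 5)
Your proposal matches the paper's proof in essence: part (ii) is proved there by exactly the induction you describe---unfold one differential, use Lemma~\ref{lem:5} together with the $\mathrm{D}$-linearity from (i) to turn differentiation in a linear slot into substitution and differentiation in the base slot into $f^{(k+1)}$, and close with the same partition recursion (new singleton block versus adjoining $n$ to an existing block)---and part (iii) is the same collapse-to-the-singleton-partition argument via zero-preservation in the $\mathrm{D}$-linear slots. The only divergence is part (i), which the paper does not prove but cites from~\cite{Cockett2011The-Faa-di-Bruno}; your inductive sketch via axioms (vi) and (vii) is essentially how that cited result is established.
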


\begin{proof}
  (i) is~\cite[Lemma~3.1.1(iii) \&
  Corollary~3.1.2]{Cockett2011The-Faa-di-Bruno}. For (ii), consider
  maps $x_0, \dots, x_n, y_0, \dots, y_n \colon X \rightarrow A$;
  writing $\vec x = x_0, \dots, x_n$ and $\vec y = y_0, \dots, y_n$ we
  have
  \begin{equation}
    \label{eq:6}
    \mathrm{D}(f^{(n)})(\vec x, \vec y) =
    \textstyle \sum_{i=0}^n
    \mathrm{D}_{i+1}(f^{(n)})(\vec x, y_i) =
    f^{(n+1)}(\vec x, y_0) +
    \sum_{i=1}^n f^{(n)}(\vec x[y_i/x_i])
  \end{equation}
  ---where $\vec x[y_i/x_i]$ indicates the substitution of $y_i$ for
  $x_i$ in $\vec x$---by using Lemma~\ref{lem:5} at the first step, and
  the $\mathrm{D}$-linearity of $f^{(n)}$ in its last $n$ variables at
  the second. We now prove~\eqref{eq:4} by induction. The base cases
  $n = 0,1$ are clear. So let $n \geqslant 2$ and assume the result for
  $n-1$. We calculate $\mathrm{D}^n (f)(x)$ to be given by
  \begin{align*}
    &
    \sum_{[n\text-1] = A_1 \mid \dots \mid A_k} \!\!\!\!\!
    \mathrm{D}(f^{(k)})(x_\emptyset,x_{A_1}, \cpdots, x_{A_k}, x_{\{n\}}, x_{A_1
      \cup \{n\}}, \cpdots, x_{A_k \cup \{n\}}) \\
    &= \!\!\!\!\!
    \sum_{[n\text-1] = A_1 \mid \dots \mid A_k} \!\!\!\!\! \bigl(
    f^{(k+1)}(x_{\vec A}, x_{\{n\}}) +
    \textstyle\sum_{i=1}^k f^{(k)}(x_{\vec A}[x_{A_i \cup \{n\}} /
    x_{A_i}])\bigr)
    = \displaystyle\sum_{[n] = A_1 \mid \dots \mid A_k} \!\!\!\!\!
    f^{(k)}(x_{\vec A})
  \end{align*}
  as desired. Here, we write $x_{\vec A}$ for
  $x_\emptyset, x_{A_1}, \dots, x_{A_k}$, and argue as follows. At the
  first step we use the inductive hypothesis and axioms (i) and (iv);
  at the second step, we use~\eqref{eq:6}; and at the third step, we
  use that any partition of $[n]$ is obtained in a unique way from a
  partition of $[n-1]$ either by adding a new singleton part $\{n\}$,
  or by adjoining $n$ to an existing part.

  Finally, for (iii), consider a map
  $y \colon X \rightarrow A \times A^n$. By (ii) we have that
  \begin{align*}
  (\mathrm{D}^n f)(y^\circ) =  \sum_{[n] = A_1 \mid \dots \mid A_k}
    f^{(k)}(y^\circ_\emptyset,y^\circ_{A_1}, \dots, y^\circ_{A_k})\rlap{ ;}
  \end{align*}
  but since $y^{\circ}_{A_i}$ is zero whenever
  $\abs{A_i} \geqslant 2$, and since $f^{(k)}$ is $\mathrm{D}$-linear
   in its last
  $k$ variables, the only term in the sum to the right which is not
  zero is that involving the partition into singletons
  $[n] = \lbrace 1 \rbrace \mid
  \dots \mid \lbrace n \rbrace$. This yields the desired equality:
    \begin{align*}
    (\mathrm{D}^n f)(y^\circ) =
    f^{(n)}(y^\circ_\emptyset,y^\circ_{\lbrace 1 \rbrace}, \dots,
    y^\circ_{\lbrace n \rbrace}) = f^{(n)} (\pi_0y, \pi_1 y, \dots,
    \pi_n y) = f^{(n)} (y)\text{ .} \quad\ \ \  \qquad  \qedhere
      \end{align*}  
\end{proof}
  
It follows that the free cartesian differential category on an arrow
is generated \emph{qua} cartesian left-$k$-linear category by the maps
$f^{(n)} \colon A \times A^n \rightarrow B$ for $n \in \mathbb{N}$. In
fact\footnote{Again, we will not prove this.}, it is \emph{freely}
generated by them, subject to the requirement (mandated by part (i) of
the preceding lemma) that each $f^{(n)}$ is symmetric $k$-linear in
its last $n$ variables. Thus, \emph{maps
  $f \colon A \rightsquigarrow B$ of $\faa(\A)$ are the same as
  $\mathbb{N}$-indexed families
  $f^{(n)} \colon A \times A^n \rightarrow B$ in $\A$, where $f^{(n)}$
  is symmetric $k$-linear in its last $n$~variables}.

\subsection{Composition}
\label{sec:faa-di-bruno-2}

The next step is to understand composition in $\faa(\A)$. As
before, we can determine this by probing $\faa(\A)$, this time by maps from the
free cartesian differential category on a composable pair of arrows.
What this amounts in practice is determining a formula which expresses
the higher-order derivatives of a composite $g \circ f$ in a cartesian
differential category in terms of the derivatives of $g$
and $f$. This formula is the \emph{Fa\`a di Bruno} higher-order chain
rule---whence the nomenclature~$\faa(\A)$.

\begin{Defn}
  \label{def:13}
  Let $f \colon A \rightarrow B$ in a cartesian differential category,
  and suppose that $I = \{n_1 < \dots < n_i\} \subseteq [n]$. We write
  $f^{(I)} \colon A \times A^n \rightarrow B$ for the map
  determined~by
  \begin{equation}\label{eq:5}
    f^{(I)}(x_0, \dots, x_n) = f^{(i)}(x_0, x_{n_1}, \dots, x_{n_i})\rlap{ .}
  \end{equation}
  In particular, we have $f^{(\emptyset)}(x_0, x_1, \dots, x_n) = fx_0$.
\end{Defn}

\begin{Lemma}
  \label{lem:6}(\cite[Corollary~3.2.3]{Cockett2011The-Faa-di-Bruno})
  Let $f \colon A \rightarrow B$ and $g \colon B \rightarrow C$ in a
  cartesian differential category. For each $n \geqslant 0$ we have:
  \begin{equation}\label{eq:8}
    (g \circ f)^{(n)} = \sum_{[n] = A_1 \mid \cdots \mid A_k}
    g^{(k)} \circ (f^{(\emptyset)}, f^{(A_1)}, \dots, f^{(A_k)})\rlap{ .}
  \end{equation}
\end{Lemma}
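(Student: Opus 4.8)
The plan is to prove the Fa\`a di Bruno formula~\eqref{eq:8} by induction on $n$, using the binary chain rule (axiom (v)) together with the partial-derivative version from Lemma~\ref{lem:5}(ii), and the combinatorial bookkeeping already set up in the proof of Lemma~\ref{lem:3}. The base case $n = 0$ is immediate: the only partition of $[0]$ is the empty one with $k = 0$, and $(g\circ f)^{(0)} = g\circ f = g^{(0)}\circ f^{(\emptyset)}$ by Definition~\ref{def:13}. For the inductive step, I would take first partial derivatives. Recall from Definition~\ref{def:12} that $(-)^{(n+1)} = \mathrm{D}_1\bigl((-)^{(n)}\bigr)$, so I would apply $\mathrm{D}_1$ to both sides of~\eqref{eq:8} and evaluate at a generic point.

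\textbf{Computing the left-hand side.} By Lemma~\ref{lem:5}(ii) applied with the first variable singled out,
\begin{equation*}
  \mathrm{D}_1(g\circ f)^{(n)}(x_0,\dots,x_n,v) = \mathrm{D}g\Bigl((g\circ f)^{\text{-part}}\text{, }\mathrm{D}_1\bigl((g\circ f)^{(n)}\bigr)\Bigr),
\end{equation*}
but it is cleaner to differentiate the right-hand side of~\eqref{eq:8} termwise. Each summand is $g^{(k)}\circ(f^{(\emptyset)},f^{(A_1)},\dots,f^{(A_k)})$, a composite whose outer map $g^{(k)}$ is $\mathrm{D}$-linear in its last $k$ variables by Lemma~\ref{lem:3}(i). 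Differentiating in $x_0$ only (i.e.\ applying $\mathrm{D}_1$ of the composite) and using the chain rule Lemma~\ref{lem:5}(ii) together with the multilinearity of $g^{(k)}$, one obtains a sum of two groups of terms: one where the \emph{new} direction of differentiation enters the zeroth slot of $g^{(k)}$, producing $g^{(k+1)}$ applied to $(f^{(\emptyset)},f^{(A_1)},\dots,f^{(A_k)},f^{(\{n+1\})})$; and one where, for each $i$, the direction enters through $f^{(A_i)}$, turning it into $\mathrm{D}_1 f^{(A_i)} = f^{(A_i\cup\{n+1\})}$ (here one uses $\mathrm{D}$-linearity of $f^{(|A_i|)}$ in its last variables to see that differentiating $f^{(A_i)}$ in $x_0$ adjoins $n+1$ to $A_i$, exactly as in equation~\eqref{eq:6} in the proof of Lemma~\ref{lem:3}). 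Collecting, the derivative of the $[n] = A_1\mid\dots\mid A_k$ term is
\begin{equation*}
  g^{(k+1)}\circ(f^{(\emptyset)},f^{(A_1)},\dots,f^{(A_k)},f^{(\{n+1\})}) + \sum_{i=1}^k g^{(k)}\circ(f^{(\emptyset)},f^{(A_1)},\dots,f^{(A_i\cup\{n+1\})},\dots,f^{(A_k)}).
\end{equation*}

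\textbf{Reindexing the sum.} Now sum over all partitions $[n] = A_1\mid\dots\mid A_k$. Exactly as at the third step of the proof of Lemma~\ref{lem:3}(ii), every partition of $[n+1]$ arises uniquely either by adjoining $\{n+1\}$ as a new singleton block to a partition of $[n]$ (these are the $g^{(k+1)}$ terms) or by inserting $n+1$ into one existing block (these are the $g^{(k)}$ terms). Hence the total equals $\sum_{[n+1] = A_1\mid\dots\mid A_k} g^{(k)}\circ(f^{(\emptyset)},f^{(A_1)},\dots,f^{(A_k)})$, which is precisely $(g\circ f)^{(n+1)}$ after noting $\mathrm{D}_1(g\circ f)^{(n)} = (g\circ f)^{(n+1)}$. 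This closes the induction.

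\textbf{Where the difficulty lies.} The genuinely delicate point is the careful treatment of \emph{which} variable one differentiates in and how $\mathrm{D}$-linearity of the $f^{(m)}$ and $g^{(k)}$ is deployed: differentiating the composite $g^{(k)}\circ(f^{(\emptyset)},f^{(A_1)},\dots,f^{(A_k)})$ in its first variable $x_0$ propagates into \emph{all} the inner maps (since $x_0$ appears in each slot $f^{(A_j)}$), and one must use multilinearity of $g^{(k)}$ to split the resulting single directional derivative into the sum over $j$, and then $\mathrm{D}$-linearity of $f^{(|A_j|)}$ to identify $\mathrm{D}_1 f^{(A_j)}$ with $f^{(A_j\cup\{n+1\})}$ rather than with some more complicated expression. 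Getting the ordering of the newly adjoined index right (it becomes the largest element, consistent with Definition~\ref{def:13}) and checking that no sign or multiplicity discrepancies creep in is the main bookkeeping hazard; but all the needed combinatorial and differential-calculus facts are already in Lemmas~\ref{lem:3} and~\ref{lem:5}, so the proof is a controlled refinement of the argument given for Lemma~\ref{lem:3}(ii).
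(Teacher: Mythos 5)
Your proposal is correct, but it follows a genuinely different route from the paper's. You prove \eqref{eq:8} by direct induction on $n$: differentiate each summand $g^{(k)}\circ(f^{(\emptyset)},f^{(A_1)},\dots,f^{(A_k)})$ in its first variable via Lemma~\ref{lem:5}(ii), expand $\mathrm{D}(g^{(k)})$ using the $\mathrm{D}$-linearity of $g^{(k)}$ in its last $k$ slots (i.e.\ the mechanism of \eqref{eq:6}), identify $\mathrm{D}_1 f^{(A_i)}=f^{(A_i\cup\{n+1\})}$ and $\mathrm{D}_1 f^{(\emptyset)}=f^{(\{n+1\})}$, and then reindex via the bijection between partitions of $[n+1]$ and partitions of $[n]$ together with a choice of adjoining $n+1$ as a new singleton or inserting it into an existing block --- exactly the combinatorial step already used in the proof of Lemma~\ref{lem:3}\ref{item:7}. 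The paper instead proves by induction the intermediate identity \eqref{eq:7}, namely $(g\circ f)^{(n)}=\mathrm{D}^n g\circ f^{[n]}$ with $(f^{[n]})_I=f^{(I)}$; its inductive step uses only Lemma~\ref{lem:5}(ii) and no partition bookkeeping, and \eqref{eq:8} then follows in one line by applying the already-established decomposition \eqref{eq:4} to $\mathrm{D}^n g$. So the paper reuses the combinatorics of Lemma~\ref{lem:3}, whereas you re-run an analogous partition recursion at the level of the composite; both are valid, yours being more self-contained at the level of the formula, the paper's being shorter given Lemma~\ref{lem:3}. Two minor points: your displayed ``left-hand side'' equation is garbled, but harmless since you immediately discard it; and the identity $\mathrm{D}_1 f^{(A_i)}=f^{(A_i\cup\{n+1\})}$ is really just the chain rule and axiom (iii) applied to $f^{(A_i)}=f^{(\abs{A_i})}\circ(\pi_0,\pi_{a_1},\dots,\pi_{a_{\abs{A_i}}})$ together with the definition $f^{(m+1)}=\mathrm{D}_1 f^{(m)}$, rather than a consequence of $\mathrm{D}$-linearity of $f^{(\abs{A_i})}$; the fact you use is nonetheless the right one, and $\mathrm{D}$-linearity is genuinely needed only where you invoke it for $g^{(k)}$.
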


\begin{proof}
  For each $n \geqslant 0$, define a map
  $f^{[n]} \colon A \times A^n \rightarrow B^{2^n}$ by the rule
  $(f^{[n]})_I = f^{(I)}$. We claim that we have
  \begin{equation}\label{eq:7}
    (g \circ f)^{(n)} = \mathrm{D}^n g \circ f^{[n]} \colon A \times
    A^n \rightarrow C\rlap{ .}
  \end{equation}
  Given this, the desired result will follow immediately
  from~\eqref{eq:4}. We prove~\eqref{eq:7} by induction. The base case
  $n =0$ is trivial; and assuming the result for $n-1$, we verify it
  for $n$ by the following calculation, where we write $\vec v$ for
  $v_1, \dots, v_{n-1}$:
  \begin{align*}
    (g \circ f)^{(n)}(x, \vec v, v_n) &= \mathrm{D_1}\bigl((g\circ
    f)^{(n-1)}\bigr)(x, \vec v, v_n) =
    \mathrm{D_1}\bigl(\mathrm{D}^{n-1}g \circ f^{[n-1]})(x, \vec v,
    v_n)\\
    &= \mathrm{D}^n g\bigl(f^{[n-1]}(x,\vec v), \mathrm{D}_1
    f^{[n-1]}(x,\vec v, v_n)\bigr) = \mathrm{D}^n g(f^{[n]}(x, \vec v,
    v_n))\text{ .}
  \end{align*}
  Here, we use the definition of $(g \circ f)^{(n)}$; induction;
  Lemma~\ref{lem:5}(ii); and the obvious identity
  $(f^{[n-1]}(x,\vec v), \mathrm{D}_1 f^{[n-1]}(x,\vec v, v_n)) =
  f^{[n]}(x, \vec v, v_n)$.
\end{proof}

In a similar manner, we can characterise the identity maps in
$\faa(\A)$ by way of the following lemma, whose proof we leave as an easy
exercise to the reader.

\begin{Lemma}
  \label{lem:4}
  Let $A$ be an object in a cartesian differential category. We have
  that
  \begin{equation}
    \label{eq:10}
    (\mathrm{id}_A)^{(0)} = \mathrm{id}_A\text{, } \qquad 
    (\mathrm{id}_A)^{(1)} = \pi_1 \quad \text{and} \quad
    (\mathrm{id}_A)^{(n)} = 0 \text{ for $n \geqslant 2$.}
  \end{equation}
\end{Lemma}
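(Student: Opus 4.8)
The plan is a direct computation from the axioms of Definition~\ref{def:8}, together with the unwinding of the partial-derivative notation in Definitions~\ref{def:11} and~\ref{def:12}. The case $n = 0$ is immediate, since $(\mathrm{id}_A)^{(0)} = (\mathrm{D}_1)^0\,\mathrm{id}_A = \mathrm{id}_A$ by definition. For $n = 1$, note that $\mathrm{id}_A \colon A \rightarrow A$ is a map of a single variable, so Definition~\ref{def:11} makes its first partial derivative coincide with its full derivative: $(\mathrm{id}_A)^{(1)} = \mathrm{D}_1(\mathrm{id}_A) = \mathrm{D}(\mathrm{id}_A) = \pi_1$, the last equality being axiom~(iv).

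For $n = 2$ I would compute $(\mathrm{id}_A)^{(2)} = \mathrm{D}_1\bigl((\mathrm{id}_A)^{(1)}\bigr) = \mathrm{D}_1(\pi_1)$, where now $\pi_1 \colon A \times A \rightarrow A$ is regarded as a map of two variables. By the definition of the first partial derivative, $\mathrm{D}_1(\pi_1)(x, y, v) = \mathrm{D}(\pi_1)(x, y, v, 0)$; and by axiom~(iii), applied with $A_0 = A_1 = A$, we have $\mathrm{D}(\pi_1) = \pi_1\pi_1 \colon (A \times A) \times (A \times A) \rightarrow A$, so that $\mathrm{D}(\pi_1)(x, y, v, 0) = \pi_1(v, 0) = 0$. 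Hence $(\mathrm{id}_A)^{(2)} = 0$.

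Finally, for $n \geqslant 3$ we have $(\mathrm{id}_A)^{(n)} = (\mathrm{D}_1)^{n-2}\bigl((\mathrm{id}_A)^{(2)}\bigr) = (\mathrm{D}_1)^{n-2}(0)$; and since $\mathrm{D}$ is $k$-linear by axiom~(i) it sends the zero map to the zero map, whence so does each $\mathrm{D}_1$ (being $\mathrm{D}$ followed by a precomposition), and therefore $(\mathrm{id}_A)^{(n)} = 0$. There is no genuine obstacle here; the only point requiring a little care is keeping track of which input slots of $\mathrm{D}(\pi_1) \colon (A \times A) \times (A \times A) \rightarrow A$ receive the zero map when forming $\mathrm{D}_1$, and reading off axiom~(iii) correctly in the diagonal case $A_0 = A_1$.
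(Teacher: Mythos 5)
The paper leaves this lemma as an exercise for the reader, and your direct computation from Definitions~\ref{def:12} and~\ref{def:11} together with axioms (i), (iii) and (iv) of Definition~\ref{def:8} is correct and is precisely the intended verification. You have also correctly isolated the one point needing care: that $(\mathrm{id}_A)^{(2)} = \mathrm{D}_1(\pi_1)$ places the increment $v$ paired with a $0$ into the second slot of $\mathrm{D}(\pi_1) = \pi_1\pi_1$, which then projects to $0$, after which $k$-linearity of $\mathrm{D}$ and of precomposition kills all higher derivatives.
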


So far, then, we have shown that $\faa(\A)$ must be the following category.

\begin{Defn}
  \label{def:10}
  Let $\A$ be a cartesian left-$k$-linear category. The \emph{Fa\`a di
    Bruno} category $\faa(\A)$ has:
  \begin{itemize}
  \item \emph{Objects} those of $\A$;
  \item \emph{Morphisms} $f^{(\bullet)} \colon A \rightsquigarrow B$
    are families
    $(f^{(n)} \colon A \times A^n \rightarrow B)_{n \in \mathbb{N}}$
    of maps in $\A$ where each $f^{(n)}$ is symmetric and $k$-linear
    in its last $n$ variables;
  \item \emph{Identity} maps
    $\mathrm{id}^{(\bullet)} \colon A \rightsquigarrow A$ are given by
    the formula~\eqref{eq:10};
  \item \emph{Composition} of
    $f^{(\bullet)} \colon A \rightsquigarrow B$ and
    $g^{(\bullet)} \colon B \rightsquigarrow C$ is given by the
    formula~\eqref{eq:8}.
  \end{itemize}
\end{Defn}

Now by further probing of $\faa(\A)$, we discover that cartesian
left-$k$-linear structure must be given as follows:

\begin{Lemma}
  \label{lem:10}
  The category $\faa(\A)$ is cartesian left-$k$-linear when the
  hom-sets are endowed with the $k$-linear structure inherited from
  $\prod_{n \in \mathbb{N}} \A(A \times A^n, B)$.
\end{Lemma}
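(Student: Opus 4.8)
The plan is to verify the left-$k$-linear category axioms for $\faa(\A)$ directly, leveraging the fact that each hom-set $\faa(\A)(A,B)$ is, by construction, a $k$-submodule of $\prod_{n} \A(A \times A^n, B)$ --- the submodule of those families whose $n$th component is symmetric and $k$-linear in its last $n$ variables. (This submodule is closed under the pointwise operations because symmetry and $k$-linearity in fixed variables are linear conditions.) The two things to check are: (a) precomposition is $k$-linear, i.e. for fixed $f^{(\bullet)} \colon A \rightsquigarrow B$ the map $(\thg) \circ f^{(\bullet)} \colon \faa(\A)(B,C) \to \faa(\A)(A,C)$ is $k$-linear; and (b) $\faa(\A)$ has finite products with $k$-linear projections, equivalently that the product comparison maps~\eqref{eq:1} are $k$-linear.

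For (a), one inspects the Fa\`a di Bruno composition formula~\eqref{eq:8}: with $f^{(\bullet)}$ held fixed, $(g \circ f)^{(n)}$ is, for each partition $[n] = A_1 \mid \cdots \mid A_k$, the composite $g^{(k)} \circ (f^{(\emptyset)}, f^{(A_1)}, \dots, f^{(A_k)})$, and this is $k$-linear in $g^{(k)}$ because postcomposition with a \emph{tuple of fixed maps} is precomposition in $\A$, hence $k$-linear by left-$k$-linearity of $\A$; summing over the (finitely many) partitions preserves $k$-linearity. Thus $(g \circ f)^{(n)}$ depends $k$-linearly on the family $g^{(\bullet)}$, as required. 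For (b), I would take the product of $A$ and $B$ in $\faa(\A)$ to be the product $A \times B$ of $\A$, with projection maps the $\faa(\A)$-morphisms whose component families are $(\pi_0)^{(\bullet)}$ and $(\pi_1)^{(\bullet)}$ --- i.e. the images under a canonical embedding $\A \hookrightarrow \faa(\A)$ (sending $h$ to the family $h^{(0)} = h$, $h^{(1)} = h\pi_1$, $h^{(n)} = 0$ for $n \geqslant 2$, as in Lemma~\ref{lem:4} and Example~\ref{ex:10}\ref{item:4}; one checks this is functorial using~\eqref{eq:8}). The universal property follows componentwise: a cone $(u^{(\bullet)}, v^{(\bullet)})$ over $A \times B$ from $C$ induces, at level $n$, a unique $\langle u^{(n)}, v^{(n)} \rangle \colon C \times C^n \to A \times B$, these assemble into a morphism of $\faa(\A)$ (symmetry and multilinearity are inherited from $u^{(\bullet)}, v^{(\bullet)}$ since limits in $\A$ are computed with $k$-linear projections), and this is the unique mediating map because composition in $\faa(\A)$ with the projection families recovers the original cone --- here one uses that postcomposing a family with $(\pi_i)^{(\bullet)}$ via~\eqref{eq:8} just applies $\pi_i$ to each component, the higher terms $(\pi_i)^{(k)}$ for $k \geqslant 2$ vanishing against $f^{(\emptyset)}$ etc. Finally, $k$-linearity of the comparison map~\eqref{eq:1} for $\faa(\A)$ reduces, level by level, to $k$-linearity of the corresponding comparison maps in $\A$, which holds since $\A$ is cartesian left-$k$-linear.

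The main obstacle is the bookkeeping around the projection morphisms and the verification that the $\A$-product genuinely serves as the $\faa(\A)$-product: one must confirm that composing with $(\pi_0)^{(\bullet)}$, whose only nonzero components are in degrees $0$ and $1$, collapses the partition sum in~\eqref{eq:8} down to the single term $\pi_0 \circ f^{(n)}$ in each degree $n$ --- this is the point where the specific shape of the Fa\`a di Bruno formula, together with $f^{(\emptyset)}(x_0,\dots,x_n) = f x_0$ and $\pi_0^{(\emptyset)}$ etc., must be tracked carefully. Everything else is a routine transfer of the left-$k$-linear axioms through the componentwise description; in particular the closure of hom-sets under $k$-module operations, and the $k$-linearity of precomposition in $g^{(\bullet)}$, are immediate once the composition formula is read correctly.
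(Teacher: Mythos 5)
Your proposal is correct in substance and follows the same route as the paper's (very terse) proof: the hom-$k$-modules are the evident submodules of $\prod_n \A(A\times A^n,B)$, left-$k$-linearity is read off from the composition formula~\eqref{eq:8} using that precomposition in $\A$ is $k$-linear, the terminal object and binary products are those of $\A$, and the projections are the families $(\pi_i,\pi_i\pi_1,0,0,\dots)$. Your key computation is also right: composing an arbitrary $f^{(\bullet)}$ with $(\pi_i)^{(\bullet)}$ via~\eqref{eq:8} gives $\pi_i f^{(n)}$ in degree $n$, because the terms indexed by partitions with $k\geqslant 2$ parts vanish (they are of the form $0\circ u$, and precomposition is $k$-linear), while the single $k=1$ term is $\pi_i\pi_1\circ(f^{(\emptyset)},f^{([n])})=\pi_i f^{(n)}$ --- not, as you phrase it, because the higher $(\pi_i)^{(k)}$ "vanish against $f^{(\emptyset)}$". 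From this the universal property follows componentwise exactly as you say, and $k$-linearity of~\eqref{eq:1} is inherited from $\A$ level by level.

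Two points need correcting, though neither is load-bearing. First, there is no "canonical embedding $\A\hookrightarrow\faa(\A)$" sending an arbitrary $h$ to $(h,h\pi_1,0,\dots)$: for this family to be a morphism of $\faa(\A)$ at all, $h\pi_1$ must be $k$-linear in its second variable, i.e.\ $h$ must be a $k$-linear map of $\A$; and even on such maps, functoriality of the assignment needs $g\circ 0=0$, which again uses $k$-linearity of $g$ (postcomposition by a general map of a left-$k$-linear category need not preserve $0$). So what you actually have is a functor from the subcategory of $k$-linear maps of $\A$, which suffices here since the projections $\pi_i$ are $k$-linear by cartesianness of $\A$; but the parenthetical as stated is false and should be dropped or restricted. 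Second, "finite products" includes the terminal object, which your argument never mentions; as in the paper, one takes the terminal object of $\A$ and observes that each hom $\faa(\A)(C,1)$ is a singleton, which is immediate.
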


\begin{proof}
  Left-$k$-linearity is clear from~\eqref{eq:8}. For the cartesian
  structure, we take the terminal object to be that of $\A$, and the
  binary product of $A,B$ to be given by their product $A \times B$ in
  $\A$ endowed with the projections
  $\pi_0^{(\bullet)}, \pi_1^{(\bullet)}$ specified by
  \begin{equation*}
    (\pi_i)^{(0)} = \pi_i\text{, } \qquad 
    (\pi_i)^{(1)} = \pi_i\pi_1 \quad \text{and} \quad
    (\pi_i)^{(n)} = 0 \text{ for $n \geqslant 2$.} \qedhere
  \end{equation*}
\end{proof}

Note that
$f^{(\bullet)} \colon A_1 \times \dots \times A_k \rightsquigarrow B$
in $\faa(\A)$ is $k$-linear in its $i$th variable just when each
$f^{(n)} \colon (A_1 \times \dots \times A_k)^{n+1} \rightarrow B$ is
jointly $k$-linear in the $n+1$ copies of~$A_i$.

\subsection{Differential structure}
\label{sec:cart-diff-struct}

We now describe the differential operator making $\faa(\A)$ into a
cartesian differential category. Once again, the definition is forced,
and once again we can obtain it by reading off from what happens in a
cartesian differential category.

\begin{Lemma}
  \label{lem:7}
  Let $f \colon A \rightarrow B$ in a cartesian differential category
  and $n \geqslant 0$. We have
  \begin{equation}\label{eq:11}
    (\mathrm{D}f)^{(n)}(x_0, y_0, \dots, x_n, y_n) =
    f^{(n+1)}(\vec x, y_0) + \textstyle\sum_{i=1}^n f^{(n)}(\vec
    x[y_i/x_i])\rlap{ .}
  \end{equation}
\end{Lemma}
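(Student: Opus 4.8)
The plan is to deduce this from the decomposition formulas of Lemma~\ref{lem:3}, applied not to $f$ but to its derivative $\mathrm{D}f\colon A\times A\to B$, regarded as a map out of the single object $A\times A$.

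First I would invoke Lemma~\ref{lem:3}\ref{item:8} with $f$ replaced by $\mathrm{D}f$: writing the input to $(\mathrm{D}f)^{(n)}$ as $n+1$ pairs $(x_0,y_0),(x_1,y_1),\dots,(x_n,y_n)$ of elements of $A\times A$, this expresses $(\mathrm{D}f)^{(n)}(x_0,y_0,\dots,x_n,y_n)$ as $\mathrm{D}^n(\mathrm{D}f)$ evaluated at the sparse input carrying $(x_0,y_0)$ in the $\emptyset$-slot, $(x_i,y_i)$ in the $i$th singleton slot, and $0$ in every slot of cardinality $\geqslant 2$. The second step is to identify $\mathrm{D}^n(\mathrm{D}f)$ with $\mathrm{D}^{n+1}f$ and to track what the sparse input becomes: indexing the domain $A^{2^{n+1}}$ of $\mathrm{D}^{n+1}f$ by subsets of $[n+1]$, with the index $n+1$ recording the direction-variable of the differentiation turning $f$ into $\mathrm{D}f$ and the indices $1,\dots,n$ recording the $n$ subsequent partial differentiations, the identification $(A\times A)^{2^n}\cong A^{2^{n+1}}$ carries the sparse input above to the map $z\colon X\to A^{2^{n+1}}$ with $z_\emptyset=x_0$, $z_{\{n+1\}}=y_0$, $z_{\{i\}}=x_i$ and $z_{\{i,n+1\}}=y_i$ for $i\in[n]$, and $z_J=0$ for all other $J\subseteq[n+1]$.

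With $z$ in hand, I would apply Lemma~\ref{lem:3}\ref{item:7} to $\mathrm{D}^{n+1}f$, obtaining a sum over partitions $[n+1]=B_1\mid\dots\mid B_k$ of the terms $f^{(k)}(z_\emptyset,z_{B_1},\dots,z_{B_k})$. Since by Lemma~\ref{lem:3}(i) each $f^{(k)}$ is $\mathrm{D}$-linear, hence $k$-linear, hence zero as soon as one of the relevant arguments is $0$, in its last $k$ variables, a summand vanishes unless $z_{B_m}\neq0$ for every block, i.e.\ unless every block of the partition meets $\{1,\dots,n\}$ in at most one element. As the $n$ elements $1,\dots,n$ then lie in $n$ distinct blocks, such a partition has exactly one of two shapes: either $\{n+1\}$ is itself a block, forcing the remaining blocks to be the singletons $\{1\},\dots,\{n\}$; or $n+1$ lies in a block $\{i_0,n+1\}$ for a unique $i_0\in[n]$, forcing the remaining blocks to be the singletons $\{i\}$, $i\neq i_0$. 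Using the symmetry of $f^{(k)}$ in its last $k$ variables (Lemma~\ref{lem:3}(i)) to reorder arguments, the first shape contributes the single term $f^{(n+1)}(\vec x,y_0)$, while the second contributes, summed over $i_0\in[n]$, the terms $f^{(n)}(\vec x[y_{i_0}/x_{i_0}])$. Adding these up yields exactly~\eqref{eq:11}.

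The only delicate point is the second step: pinning down the bijection between $(A\times A)^{2^n}$ and $A^{2^{n+1}}$ precisely enough to read off $z$ in the stated form, after which everything is a bookkeeping exercise over set partitions, governed entirely by part (i) of Lemma~\ref{lem:3}. One can sidestep the explicit bijection by arguing by induction on $n$ instead, starting from $(\mathrm{D}f)^{(0)}=\mathrm{D}f=f^{(1)}$ and using $(\mathrm{D}f)^{(n+1)}=\mathrm{D}_1\bigl((\mathrm{D}f)^{(n)}\bigr)$ together with additivity of $\mathrm{D}$, the chain rule of Lemma~\ref{lem:5}(ii), the identity $\mathrm{D}_1f^{(m)}=f^{(m+1)}$, and again the symmetry and $\mathrm{D}$-linearity of the $f^{(m)}$; but this route requires a comparable amount of variable tracking.
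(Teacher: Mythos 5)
Your argument is correct, but it takes a genuinely different route from the paper's. The paper reduces~\eqref{eq:11} to the identity $(\mathrm{D}f)^{(n)}(x_0,y_0,\dots,x_n,y_n)=\mathrm{D}(f^{(n)})(\vec x,\vec y)$ --- that is, to the commutation of the $n$th-derivative operator with $\mathrm{D}$ up to the shuffle of variables --- since the right-hand side was already evaluated in~\eqref{eq:6} during the proof of Lemma~\ref{lem:3}; it then establishes this commutation by writing $f^{(n)}=\mathrm{D}^n f\circ\mathrm{id}_A^{[n]}$ via~\eqref{eq:7} and applying the chain rule together with the $\mathrm{D}$-linearity of $\mathrm{id}^{[n]}$. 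You bypass~\eqref{eq:7} altogether and instead compose the two halves of Lemma~\ref{lem:3}: part~\ref{item:8} applied to $\mathrm{D}f$ rewrites $(\mathrm{D}f)^{(n)}$ as $\mathrm{D}^{n+1}f$ at a sparse input, and part~\ref{item:7} expands this as a sum over partitions of $[n+1]$, of which exactly the $n+1$ that you identify survive. This trades the paper's short chain-rule computation for a partition-counting argument, and has the merit of exhibiting~\eqref{eq:11} as a purely formal consequence of the two decomposition formulas; your vanishing argument ($\mathrm{D}$-linear, hence $k$-linear, hence killed by a zero argument) is exactly the one the paper uses in proving Lemma~\ref{lem:3}\ref{item:8}, so nothing new is being assumed.

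The one step you rightly flag as delicate does deserve a sentence in a written-up version. Your labelling of the arguments of $\mathrm{D}^{n+1}f=\mathrm{D}^n(\mathrm{D}f)$ assigns the innermost differentiation to the index $n+1$, whereas the paper's iterative convention (under which~\eqref{eq:4} is proved) assigns the newest, i.e.\ outermost, differentiation to the largest index; the two labellings of $2^{[n+1]}$ differ by a cyclic permutation of $[n+1]$. This is harmless --- the right-hand side of~\eqref{eq:4} is manifestly invariant under relabelling $[n+1]$, so $\mathrm{D}^{n+1}f$ is invariant under the induced permutation of its arguments and~\eqref{eq:4} holds verbatim for your indexing --- but the observation should be made explicit, since otherwise you are applying~\eqref{eq:4} to a labelling for which it was not literally stated.
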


\begin{proof}
  By~\eqref{eq:6}, it suffices to prove that
  $(\mathrm{D}f)^{(n)}(x_0, y_0, \dots, x_n, y_n) =
  \mathrm{D}(f^{(n)})(\vec x, \vec y)$. For this, we calculate that
  \begin{align*}
    \mathrm{D}(f^{(n)})(\vec x, \vec y) &= \mathrm{D}(\mathrm{D}^n f \circ
    {\mathrm{id}_A}^{[n]})(\vec x, \vec y) 
    = \mathrm{D}^{n+1} f 
    \bigl(\mathrm{id}_A^{[n]} \vec x,
    \mathrm{D}(\mathrm{id}_A^{[n]})(\vec x, \vec y)\bigr)\\
    & = \mathrm{D}^{n+1} f 
    \bigl(\mathrm{id}_A^{[n]} \vec x, \mathrm{id}_A^{[n]} \vec y\bigr)
    = \mathrm{D}^{n+1} f 
    \circ \mathrm{id}_{A \times A}^{[n]} (\vec x, \vec y) =
    (\mathrm{D}f)^{(n)}(\vec x, \vec y)
  \end{align*}
  using, in turn: \eqref{eq:7}; the chain rule; the
  $\mathrm{D}$-linearity of $\mathrm{id}^{[n]}$; the easy calculation
  from~\eqref{eq:10} that
  $(\mathrm{id}_A^{[n]} \vec x, \mathrm{id}_A^{[n]} \vec y) =
  \mathrm{id}_{A \times A}^{[n]} (\vec x, \vec y)$; and~\eqref{eq:7}
  again.
\end{proof}

This indicates how we must define the differential operator on
$\faa(\A)$; it remains to check that doing so verifies the appropriate
axioms.

\begin{Prop}
  \label{prop:1}
  Let $\A$ be a cartesian left-$k$-linear category. $\faa(\A)$ is a
  cartesian differential category where for
  $f^{(\bullet)} \colon A \rightsquigarrow B$, we define
  $(\mathrm{D}f)^{(\bullet)} \colon A \times A \rightsquigarrow B$
  by~\eqref{eq:11}.
\end{Prop}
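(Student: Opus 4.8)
The plan is to verify directly that the category $\faa(\A)$ of Definition~\ref{def:10}, equipped with the cartesian left-$k$-linear structure of Lemma~\ref{lem:10} and the operator $\mathrm{D}$ of~\eqref{eq:11}, satisfies each of the conditions (i)--(vii) of Definition~\ref{def:8}; the proof should use nothing beyond the explicit descriptions of composition~\eqref{eq:8}, of identities~\eqref{eq:10}, and of $\mathrm{D}$~\eqref{eq:11}, together with the product structure from Lemma~\ref{lem:10}. The first step is to check that $\mathrm{D}f^{(\bullet)}$ really is a morphism of $\faa(\A)$, i.e.\ that each $(\mathrm{D}f)^{(n)}$ is symmetric and $k$-linear in its last $n$ arguments $(x_1,y_1),\dots,(x_n,y_n)$: symmetry follows from the symmetry of $f^{(n+1)}$ and $f^{(n)}$ in their own last arguments together with a reindexing of the sum in~\eqref{eq:11}, and $k$-linearity in $(x_i,y_i)$ holds because every summand of~\eqref{eq:11} depends on the pair $(x_i,y_i)$ only through one of its two coordinates, and linearly so, hence is $k$-linear for the product $k$-module structure. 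Granting this, axioms (i) and (ii) are immediate from the form of~\eqref{eq:11}: (i) because $\mathrm{D}$ acts by a fixed $\mathbb{N}$-linear combination of the components, which carry their $k$-module structure pointwise, and (ii) because each summand of~\eqref{eq:11} involves exactly one of $y_0,\dots,y_n$, linearly, so that $(\mathrm{D}f)^{(n)}$ is jointly $k$-linear in these $n+1$ variables---which, by the remark following Lemma~\ref{lem:10}, is precisely $k$-linearity of $\mathrm{D}f^{(\bullet)}$ in its second argument.

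Axioms (iii) and (iv) are short direct computations: one substitutes the descriptions of $\mathrm{id}^{(\bullet)}$ and of the $\pi_i^{(\bullet)}$ from~\eqref{eq:10} and Lemma~\ref{lem:10} into~\eqref{eq:11} and compares with the components of $\pi_1^{(\bullet)}$ and of the composite $\pi_i\pi_1$; the axiom $\mathrm{D}(f,g)=(\mathrm{D}f,\mathrm{D}g)$ omitted from Definition~\ref{def:8} then follows by~\cite[Lemma~2.8]{Lemay2018A-tangent}. Axioms (vi) and (vii) concern $\mathrm{D}(\mathrm{D}f)$, whose components are obtained by feeding the components~\eqref{eq:11} of $\mathrm{D}f^{(\bullet)}$ back through~\eqref{eq:11}, producing a manageable double sum. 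Stated without generalised elements, both axioms amount to composing $\mathrm{D}(\mathrm{D}f)$ on the right with a reshaping morphism built from projections and zero maps; here the composition rule~\eqref{eq:8} collapses sharply, since $(\pi_i)^{(m)}=0$ for $m\geqslant 2$ only the partitions of $[n]$ into singletons survive---exactly as in the computation of $f^{(\bullet)}\circ\pi_0^{(\bullet)}$---so the composite at degree $n$ is simply $(\mathrm{D}(\mathrm{D}f))^{(n)}$ with its arguments rearranged. Axiom (vi) then reduces, after substituting the zeroes, to annihilating all but the expected terms of the double sum by $k$-linearity of the $f^{(m)}$, and axiom (vii) reduces to the symmetry of the $f^{(m)}$ in their last $m$ variables.

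The chief obstacle is axiom (v), the chain rule $\mathrm{D}(gf)=\mathrm{D}g\circ\langle f\pi_0,\mathrm{D}f\rangle$, which I would treat last. The strategy is to expand both sides into $\mathbb{N}$-linear combinations of composites of the form $g^{(j)}\circ(f^{(\emptyset)},f^{(A_1)},\dots,f^{(A_j)})$ and to match them. On the left, substitute the Fa\`a di Bruno formula~\eqref{eq:8} for $(gf)^{(n)}$ and $(gf)^{(n+1)}$ into~\eqref{eq:11}. On the right, first compute the pairing $\langle f^{(\bullet)}\pi_0^{(\bullet)},\mathrm{D}f^{(\bullet)}\rangle$ using the product structure of Lemma~\ref{lem:10} (noting that $f^{(\bullet)}\circ\pi_0^{(\bullet)}$ collapses, as above, to applying the $f^{(m)}$ to first coordinates), then compose with $\mathrm{D}g^{(\bullet)}$ using~\eqref{eq:8} for $g$ and~\eqref{eq:11} once more. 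Both sides then become sums indexed by partitions of $[n]$ carrying an extra decoration recording which block---or the base point---receives the ``direction'' variable, and the identity follows from a bijection between these two indexing sets, invoking the symmetry and multilinearity of the families $f^{(\bullet)}$ and $g^{(\bullet)}$ and associativity of composition in $\A$. This combinatorial bookkeeping is the only laborious part; it can be tamed by observing, as in the proof of Lemma~\ref{lem:6} via~\eqref{eq:7}, that $(gf)^{(n)}=\mathrm{D}^n g\circ f^{[n]}$, together with the analogous formula expressing $(\mathrm{D}f)^{(n)}$ as $\mathrm{D}^{n+1}f$ precomposed with a reshaping morphism, so that the whole identity reduces to the commutativity of a diagram of reshaping morphisms between powers of $A$ together with a single use of~\eqref{eq:4}. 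With (i)--(vii) established, $\faa(\A)$ is a $k$-linear cartesian differential category, as claimed.
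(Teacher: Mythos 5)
Your proposal is correct and follows essentially the same route as the paper's proof: a direct check of axioms (i)--(vii) from the explicit formulas \eqref{eq:8}, \eqref{eq:10}, \eqref{eq:11}, with (vi) and (vii) read off from the expanded double sum for $(\mathrm{D}\mathrm{D}f)^{(n)}$, and the chain rule (v) established by expanding both sides into partition-indexed sums of composites $g^{(k)}\circ(f^{(\emptyset)},f^{(A_1)},\dots,f^{(A_k)})$ and matching them via a bijection on partitions decorated by where the direction variable lands---precisely the paper's reindexing through $[n]^{0'}$ and $[n]^{j'}$. One caveat: the suggested shortcut for (v) via $(gf)^{(n)}=\mathrm{D}^n g\circ f^{[n]}$ and \eqref{eq:4} is not directly available, since Lemmas~\ref{lem:3} and~\ref{lem:6} are proved only for maps in a cartesian differential category (which $\faa(\A)$ is not yet known to be, and $\A$ is not), so those identities would first have to be re-derived formally at the level of Fa\`a di Bruno data---at which point one is back to the same partition bookkeeping that your primary argument, like the paper's, carries out directly.
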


\begin{proof}
  We check the seven axioms. For (i), $k$-linearity of $\mathrm{D}$ is
  immediate from~\eqref{eq:11}, and it easy to see that~\eqref{eq:11}
  is also jointly linear in the variables $y_0, \dots, y_n$, as
  required for (ii). Next, (iii) follows from the componentwise nature
  of products in $\faa(\A)$, while (iv) is simply a matter of
  instantiating~\eqref{eq:11} with \eqref{eq:10} and comparing
  with~Lemma~\ref{lem:10}. Leaving (v) aside for the moment, we can
  dispatch (vi) and (vii) by computing
  $(\mathrm{D}\mathrm{D}f)^{(n)}(x_0, y_0, z_0, w_0, \dots, x_n, y_n,
  z_n, w_n)$ to be given by
  \begin{align*}
    &
    (\mathrm{D}f)^{(n+1)}(x_0,y_0,\dots, x_n,y_n, z_0,w_0) +
    \textstyle\sum_{i=1}^n f^{(n)}(x_0, y_0, \dots, x_n, y_n[z_i,w_i/x_i,y_i]) \\
    &=
    \textstyle f^{(n+2)}(\vec x, z_0, y_0) + \sum_i f^{(n+1)}(\vec x[y_i/x_i], z_0) +
    f^{(n+1)}(\vec x, w_0) \\
    &\textstyle + \sum_i f^{(n+1)}(\vec x[z_i/x_i], y_0) + \sum_{i \neq j} f^{(n)}(\vec
    x[z_i/x_i,y_j/x_j]) + \sum_i f^{(n)}(\vec x[w_i/x_i])\rlap{ ;}
  \end{align*}
  clearly, this is unaltered by interchanging the $y$'s and
  $z$'s---yielding (vii)---and reduces to
  $(\mathrm{D}f)^{(n)}(x_0, w_0, \dots, x_n, w_n)$ on zeroing each
  $y$---which gives (vi).

  It remains to prove the chain rule (v): thus, for all
  $f^{(\bullet)} \colon A \rightsquigarrow B$ and
  $g^{(\bullet)} \colon B \rightsquigarrow C$ in $\faa(\A)$ and
  $n \in \mathbb{N}$, we must prove
  $\bigl(\mathrm{D}(g \circ f)\bigr)^{(n)} = \bigl(\mathrm{D}g \circ
  (f\pi_0, \mathrm{D}f)\bigr)^{(n)}$ in $\A$. We have that
  $\bigl(\mathrm{D}g \circ (f\pi_0, \mathrm{D}f)\bigr)^{(n)}$ is given
  by
  \begin{align*}
    &\ \ \ \ \!\!\!\!\!\! \sum_{[n] = A_1 \mid \dots \mid A_k}\!\!\!\!\!\! (\mathrm{D}g)^{(k)}
    \bigl((f\pi_0)^{(\emptyset)}, (\mathrm{D}f)^{(\emptyset)},
    (f\pi_0)^{(A_1)}, (\mathrm{D}f)^{(A_1)}, \cpdots, (f\pi_0)^{(A_k)},
    (\mathrm{D}f)^{(A_k)}\bigr) \\&=\!\!\!\!\!\!
    \sum_{[n] = A_1 \mid \dots \mid A_k}\!\!\!\!\!\! \Bigl(g^{(k+1)}
    \bigl((f\pi_0)^{(\vec A)}, (\mathrm{D}f)^{(\emptyset)}\bigr)
    + \textstyle\sum_{i=1}^n g^{(k)}
    \bigl((f\pi_0)^{(\vec A)}[(\mathrm{D}f)^{(A_i)}/(f\pi_0)^{(A_i)}]\bigr)\Bigr)\text,
  \end{align*}
  where we write $(f\pi_0)^{(\vec A)}$ for
  $f\pi_0^{(\emptyset)}, f\pi_0^{(A_1)}, \dots, f\pi_0^{(A_k)}$. We now
  rewrite terms of the form $(f\pi_0)^{(I)}$ or $(\mathrm{D}f)^{(I)}$
  via the switch isomorphism
  $\sigma \colon A^2 \times (A^2)^n \cong (A \times A^n)^2$. To do so,
  let us write $\dbr{n} = \{1, \dots, n, 0', 1', \dots, n'\}$; now for
  any $I \subseteq [n]$, we write $I^{0'}$ for
  $I \cup \{0'\} \subseteq \dbr{n}$ and, for any $i \in I$ write
  $I^{i'}$ for $I \cup \{i'\} \setminus \{i\} \subseteq \dbr{n}$. Then:
  \begin{equation*}
    (f \pi_0)^{(I)} = f^{(I)}\sigma \qquad \text{and}\qquad (Df)^{(I)}
    = (f^{(I^{0'})} + \textstyle\sum_{i \in
      I} f^{(I^{i'})})\sigma \rlap{ .}
  \end{equation*}

  It follows that
  $\bigl(\mathrm{D}g \circ (f\pi_0, \mathrm{D}f)\bigr)^{(n)}$ is the
  sum
  \begin{align*}
    & \ \ \ \sum_{[n] = A_1 \mid \dots \mid A_k} \Bigl(g^{(k+1)}
    (f^{(\vec A,\{0'\})})
    + \sum_{i=1}^n g^{(k)}
    (f^{(\vec A[A_{i}^{0'} / A_i])}) + \sum_{i=1}^n \sum_{j \in A_i} g^{(k)}
    (f^{(\vec A[A_{i}^{j'} / A_i])})\Bigr)\sigma\\
    & = \sum_{[n]^{0'} = A_1 \mid \dots \mid A_k} g^{(k)}(f^{(\vec
      A)})\sigma +
    \sum_{\substack{1 \leqslant j \leqslant n\\ [n]^{j'} = A_1 \mid \dots \mid A_k}}
    g^{(k)}(f^{(\vec A)})\sigma\rlap{ .}
  \end{align*}
  We thus conclude that
  $\bigl(\mathrm{D}g \circ (f\pi_0, \mathrm{D}f)\bigr)^{(n)}(x_0, y_0,
  \dots, x_n, y_n)$ is given by
  \begin{align*}
    &  \ \ \ \ \sum_{[n+1] = A_1 \mid \dots \mid A_k} g^{(k)}(f^{(\vec
      A)})(\vec x, y_0) +
    \sum_{\substack{1 \leqslant j \leqslant n\\ [n] = A_1 \mid \dots \mid A_k}}
    g^{(k)}(f^{(\vec A)})(\vec x[y_j / x_j])\\
    &= (g \circ f)^{(n+1)}(\vec x, y_0) + \sum_{1 \leqslant j
      \leqslant n} (g \circ f)^{(n)}(\vec x[y_j / x_j]) = \bigl(\mathrm{D}(g
    \circ f)\bigr)^{(n)}(x_0, y_0, \dots, x_n, y_n)
  \end{align*}
  as desired.
\end{proof}

\subsection{Universal property}
\label{sec:high-order-deriv-1}

It remains to show that $\faa(\A)$ is the \emph{cofree} cartesian
differential category on $\A$. To do this, we will first need to
understand higher-order derivatives in $\faa(\A)$. Given a Fa\`a di
Bruno map $f^{(\bullet)} \colon A \rightsquigarrow B$, we may consider
not only the component $f^{(m)} \colon A \times A^m \rightarrow B$ in
$\A$, but also the $m$th order derivative in $\faa(\A)$, which we will
denote by $f^{(m, \bullet)} \colon A \times A^m \rightsquigarrow B$,
with components
$f^{(m,n)} \colon (A \times A^m) \times (A \times A^m)^n \rightarrow
B$. We now find an explicit formula for these components. 

\begin{Not}
  \label{not:5}
We write
$\theta \colon [m] \simeq [n]$ to denote a partial isomorphism between
$[m]$ and $[n]$, comprising subsets $I \subseteq [m]$ and
$J \subseteq [n]$ and an isomorphism $\theta \colon I \rightarrow J$.
If $k$ is the common cardinality of $I$ and $J$, then we define $\abs
\theta$ to be $n+m-k$, and given a family
$(x_{ij})_{0 \leqslant i \leqslant m, 0 \leqslant j \leqslant n}$, 
write $x_{\theta_{(1)}\theta_{(2)}}$ for the list of length
$\abs{\theta}+1$ given by
\begin{equation*}
  x_{\theta_{(1)}\theta_{(2)}} \defeq x_{00}, x_{i_1 \theta(i_1)}, \dots,
  x_{i_k \theta(i_k)}, x_{i'_1 0}, \dots, x_{i'_{m-k}0}, x_{0j'_1},
  \dots, x_{0j'_{n-k}}
\end{equation*}
where $i_1 < \dots < i_k$ enumerates $I$, $i_1' <
\dots < i'_{m-k}$ enumerates $[m] \setminus I$, and $j_1' < \dots <
j'_{n-k}$ enumerates $[n]\setminus J$. 
\end{Not}
For example, if $\theta \colon [3] \simeq [4]$ is the partial
isomorphism with graph $\{(1,2),(3,4)\}$ then we have
\begin{equation*}
  x_{\theta_{(1)}\theta_{(2)}} = x_{00}, x_{12}, x_{34}, x_{20},
  x_{01}, x_{03}\rlap{ .}
\end{equation*}
\begin{Lemma}
  \label{lem:12}
  For  $f^{(\bullet)} \colon A \rightsquigarrow B$ in $\faa(\A)$
  and $\vec x = x_{00}, \dots, x_{m0}, \dots, x_{nm} \colon X
  \rightarrow A$ in $\A$ we
  have that
  \begin{equation*}
    f^{(m,n)}(\vec x) =
    \sum_{\theta \colon [m] \simeq [n]}
    f^{(\abs{\theta})}(x_{\theta_{(1)}\theta_{(2)}})
  \end{equation*}
\end{Lemma}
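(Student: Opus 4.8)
The plan is to induct on $m$. The base case $m = 0$ is immediate, since $f^{(0,\bullet)}$ is $f^{(\bullet)}$ itself, so that $f^{(0,n)} = f^{(n)}$, while the unique partial isomorphism $\theta \colon [0] \simeq [n]$ is the empty one, for which $\abs{\theta} = n$ and $x_{\theta_{(1)}\theta_{(2)}} = x_{00}, x_{01}, \dots, x_{0n}$; both sides are then $f^{(n)}(x_{00}, x_{01}, \dots, x_{0n})$. For the inductive step, write $C_m = A \times A^m$ and express the $(m+1)$st derivative of $f^{(\bullet)}$ in $\faa(\A)$ through the $m$th: we have $f^{(m+1,\bullet)} = \mathrm{D}_1(f^{(m,\bullet)})$, differentiating the first $A$-factor of $C_m$, and equivalently $\mathrm{D}_1 g^{(\bullet)}$ is the $\faa(\A)$-composite of $\mathrm{D}g^{(\bullet)} \colon C_m \times C_m \rightsquigarrow B$ with the $\mathrm{D}$-linear map $h^{(\bullet)} \colon C_m \times A \rightsquigarrow C_m \times C_m$ lifting the $k$-linear map $h \colon (c,v) \mapsto (c, (v, 0^m))$ of $\A$. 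Since the Fa\`a di Bruno lift of a $k$-linear map is concentrated in degrees $0$ and $1$, the composition formula~\eqref{eq:8} retains only the all-singletons partition, so that $(\mathrm{D}_1 g)^{(n)}$ is $(\mathrm{D}g)^{(n)}$ with $h$ substituted into each of its $n+1$ slots. Combining this with the differential formula~\eqref{eq:11}, and writing the $j$th argument of $f^{(m+1,n)}$ as $(\eta_j, v_j)$ with $\eta_j = (x_{0j}, \dots, x_{mj})$ in $C_m$ and $v_j = x_{m+1,j}$ in $A$, one finds that $f^{(m+1,n)}(\vec x)$ equals
\begin{multline*}
  f^{(m,n+1)}\bigl(\eta_0, \dots, \eta_n, (v_0, 0^m)\bigr) \\
  {}+ \sum_{i=1}^n f^{(m,n)}\bigl(\eta_0, \dots, \eta_{i-1}, (v_i, 0^m), \eta_{i+1}, \dots, \eta_n\bigr)\rlap{ .}
\end{multline*}

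Next I would expand each summand by the inductive hypothesis. Each argument $(v_\ell, 0^m) \in C_m$ has all of its $A$-coordinates but the first equal to zero, while each component $f^{(r)}$ is $k$-linear and symmetric in its last $r$ arguments. Hence, in the expansion of $f^{(m,n+1)}(\eta_0, \dots, \eta_n, (v_0,0^m))$ the only partial isomorphisms $\theta \colon [m] \simeq [n+1]$ surviving are those with $n+1 \notin J$; each such is in particular a partial isomorphism $[m] \simeq [n]$, and its term is $f^{(\abs{\theta})}$ applied to $x_{\theta_{(1)}\theta_{(2)}}$ with $x_{m+1,0}$ appended at the end. Similarly, the $i$th term of the sum over $i$ retains only the $\theta \colon [m] \simeq [n]$ with $i \notin J$, each contributing $f^{(\abs{\theta})}$ applied to $x_{\theta_{(1)}\theta_{(2)}}$ with the entry $x_{0,i}$ replaced by $x_{m+1,i}$.

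It then remains to match these against the claimed sum $\sum_{\theta \colon [m+1] \simeq [n]} f^{(\abs{\theta})}(x_{\theta_{(1)}\theta_{(2)}})$, split according to whether $m+1 \in I$. If $m+1 \notin I$, then $\theta$ is already a partial isomorphism $[m] \simeq [n]$; as $m+1$ is the largest element of $[m+1]\setminus I$, the coordinate $x_{m+1,0}$ occurs as the last of the entries indexed by $[m+1]\setminus I$, and moving it to the very end (by symmetry of $f^{(\abs{\theta})}$) identifies its term with one from the expansion of $f^{(m,n+1)}(\cdots)$. If $m+1 \in I$, put $j^* = \theta(m+1) \in [n]$; deleting $m+1$ from $I$ yields a partial isomorphism $\theta_0 \colon [m] \simeq [n]$ with $j^* \notin J_{\theta_0}$, and comparing the two lists shows, again using symmetry, that the $\theta$-term equals the $\theta_0$-term with $x_{0,j^*}$ replaced by $x_{m+1,j^*}$, i.e. a term of the $j^*$th summand. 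As these two operations on $\theta$ are bijections onto the index sets of the surviving terms of the two expansions, summing yields the claimed identity.

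The step I expect to be the main obstacle is this last matching: one must keep track, for each $\theta$, of the precise position of every $A$-coordinate in the ordered list $x_{\theta_{(1)}\theta_{(2)}}$, of which entries are forced to vanish, and of the compatibility of the reindexing bijections with the reorderings licensed by the symmetry of the components $f^{(r)}$. None of this is conceptually hard, but it is where the real work lies.
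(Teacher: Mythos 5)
Your proof is correct and follows essentially the same route as the paper's: induction on $m$, using $f^{(m,\bullet)} = \mathrm{D}_1 f^{(m-1,\bullet)}$ together with the differential formula~\eqref{eq:11} to obtain the two-term recursion, then applying the inductive hypothesis, discarding the terms killed by the inserted zeroes via linearity, and recombining the survivors by the case split on whether the new row index is matched. Your extra justification of the partial derivative via composition with the degree-$\leqslant 1$ lift of the zero-insertion map, and your explicit tracking of the orderings up to symmetry, are just more detailed versions of steps the paper treats tersely.
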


\begin{proof}
  We proceed by induction on $m$. The base case is clear; so we now
  assume the result for $m-1$ and prove it for $m$. If we write
  ${\vec x}_{\hat m i}$ for $x_{0i}, \dots, x_{m-1,i}$, then we have
  $f^{(m,n)}(\vec x) = f^{(m,n)}({\vec x}_{\hat m 0}, x_{m0}, \dots,
  {\vec x}_{\hat m n}, x_{mn})$ given by
  \begin{align*}
    &f^{(m-1,n+1)}({\vec x}_{\hat m 0}, \cpdots, {\vec x}_{\hat m n},
    x_{m0}, 0^{m-1}) + \sum_{i=1}^n f^{(m-1,n)}\Bigl(({\vec x}_{\hat m 0},
    \cpdots, {\vec x}_{\hat m n})[(x_{mi},0^{m-1})/{\vec x}_{\hat m 
      i}]\Bigr)\\
    &= \sum_{\theta \colon [m-1] \simeq [n]} f^{(\abs{\theta}+1)}( {\vec
      x}_{\theta_{(1)}\theta_{(2)}}, x_{m0})
    + \!\!\!\!
    \sum_{\substack{1 \leqslant i \leqslant n\\ \theta \colon [m-1]
        \simeq [n] \setminus \{i\}}} f^{(\abs{\theta}+1)}(
    {x}_{\theta_{(1)}\theta_{(2)}}, x_{mi})
    \\
    &= \sum_{\theta \colon [m] \simeq [n]} f^{(\abs{\theta})}(
    x_{\theta_{(1)}\theta_{(2)}})
  \end{align*}
  as desired. Here, at the first step, we use that
  $f^{(m, \bullet)} = \mathrm{D}_1 f^{(m-1, \bullet)}$ together
  with~\eqref{eq:11}. At the second step, we use the inductive
  hypothesis: \emph{a priori}, this would yield for the $f^{(m-1,n+1)}$
  term a sum over isomorphisms $\theta \colon I \cong J$ with
  $I \subseteq [m-1]$ and $J \subseteq [n+1]$, but the $m-1$ trailing
  zeroes in the arguments of $f^{(m-1,n+1)}$ mean $n+1$ \emph{cannot}
  be in $J$; similarly, for the $i$th $f^{(m-1,n)}$ term, we
  \emph{cannot} have $i \in J$. Finally, the third step is easiest read
  backwards: the penultimate line is a case split of the final line on
  the cases where $m \notin I$, and where $m \in I$ with
  $\theta(m) = i$.
\end{proof}


We are now in a position to prove cofreeness of $\faa(\A)$. Let
$\varepsilon_\A \colon \faa(\A) \rightarrow \A$ be the functor which
is the identity on objects, and is given on morphisms by
$\varepsilon(f^{(\bullet)}) = f^{(0)}$. Clearly, $\varepsilon_\A$
preserves the $k$-linear structure on the homs, and preserves
cartesian products \emph{strictly}. It is thus a map in $\CLA$.

\begin{Thm}
  \label{thm:1}
  For any $\A \in \CLA$, the map
  $\varepsilon_\A\colon \faa(\A) \rightarrow \A$ exhibits $\faa(\A)$
  as the cofree cartesian differential category on $\A$. That is, for
  any $\B \in \CDiff$ and map $F \colon \B \rightarrow \A$ in $\CLA$,
  there is a unique $\tilde F \colon \B \rightarrow \faa(\A)$ in
  $\CDiff$ with $F = \varepsilon_\A \circ \tilde F$.
\end{Thm}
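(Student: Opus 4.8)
The plan is to define $\tilde F$ in the only way compatible with $\varepsilon_\A \circ \tilde F = F$, and then check it works, reading each verification off from the lemmas already proved. Since $\varepsilon_\A$ is the identity on objects we must take $\tilde F(B) = F(B)$; and on a morphism $g \colon B \to B'$ of $\B$ we set $\tilde F(g) \defeq (F(g^{(n)}))_{n \in \mathbb{N}}$, where $g^{(n)} = (\mathrm{D}_1)^n g$ is the $n$th derivative of $g$ in $\B$ (Definition~\ref{def:12}). As $F$ preserves finite products strictly, $F(g^{(n)})$ is a map $F(B) \times F(B)^n \to F(B')$ in $\A$; and as $g^{(n)}$ is symmetric and $\mathrm{D}$-linear, hence $k$-linear, in its last $n$ variables by Lemma~\ref{lem:3}(i), while $F$ is a map of $\CLA$, also $F(g^{(n)})$ is symmetric and $k$-linear in its last $n$ variables---using here that both properties are equivalent to equations between morphisms built from the cartesian left-$k$-linear structure (the fibrewise sum $\nabla_A = \pi_0 + \pi_1$, the scalings $\mu^\lambda_A = \lambda \cdot \pi_0$, and the symmetry isomorphisms), all of which a map of $\CLA$ preserves. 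So $\tilde F(g)$ is a well-defined morphism of $\faa(\A)$, and $\varepsilon_\A(\tilde F(g)) = F(g^{(0)}) = F(g)$, so $\varepsilon_\A \circ \tilde F = F$ as required.

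Next we check that $\tilde F$ is a functor. It preserves identities: by Lemma~\ref{lem:4}, $(\mathrm{id}_B)^{(\bullet)}$ is given by the formula~\eqref{eq:10}, and since $F$ preserves identities, projections and zero maps, applying $F$ to this yields the identity $\mathrm{id}^{(\bullet)}_{F(B)}$ of $\faa(\A)$. It preserves composition: for $f \colon B \to B'$ and $g \colon B' \to B''$ in $\B$, Lemma~\ref{lem:6} expresses $(g \circ f)^{(n)}$ as the finite sum over partitions of the terms $g^{(k)} \circ (f^{(\emptyset)}, f^{(A_1)}, \dots, f^{(A_k)})$ in $\B$. Applying $F$---which is $k$-linear on each hom and preserves composition and tupling---and noting that $F(f^{(I)}) = \tilde F(f)^{(I)}$, because $f^{(I)} = f^{(\abs{I})} \circ p_I$ for a tuple of projections $p_I$ (Definition~\ref{def:13}) and $F$ preserves projections, we land on exactly the formula~\eqref{eq:8} for the $n$th component of $\tilde F(g) \circ \tilde F(f)$ in $\faa(\A)$.

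It then remains to check that $\tilde F$ is a morphism of $\CDiff$. On hom-$k$-modules it is $k$-linear, since $(g+h)^{(n)} = g^{(n)} + h^{(n)}$ and $(\lambda g)^{(n)} = \lambda\, g^{(n)}$ in $\B$ by iterating axiom~(i) of Definition~\ref{def:8}, $F$ is $k$-linear on each hom, and the hom-module structure of $\faa(\A)$ is the componentwise one of Lemma~\ref{lem:10}. It preserves the terminal object (as $F$ does) and binary products: by Lemma~\ref{lem:10} the product of $B,B'$ in $\faa(\A)$ is $B \times B'$ with projections whose components are $\pi_i, \pi_i\pi_1, 0, 0, \dots$, and the projections of $\B$ are $\mathrm{D}$-linear by axiom~(iii), so their iterated derivatives are given by the very same formula, preserved by $F$. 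Finally it preserves the differential operator: by Lemma~\ref{lem:7}, $(\mathrm{D}g)^{(n)}(x_0, y_0, \dots, x_n, y_n) = g^{(n+1)}(\vec x, y_0) + \sum_{i=1}^n g^{(n)}(\vec x[y_i/x_i])$ in $\B$, and applying $F$ (which preserves products and is $k$-linear on homs) gives the same expression with each $g^{(m)}$ replaced by $\tilde F(g)^{(m)}$, which is exactly the formula~\eqref{eq:11} defining $\mathrm{D}(\tilde F(g))$ in $\faa(\A)$ (Proposition~\ref{prop:1}).

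For uniqueness, let $\tilde F' \colon \B \to \faa(\A)$ be any morphism of $\CDiff$ with $\varepsilon_\A \circ \tilde F' = F$; then $\tilde F' = F = \tilde F$ on objects, and on a morphism $g$ of $\B$, since $\tilde F'$ preserves strict products, the hom-module structure, and $\mathrm{D}$, it preserves the derived operation $\mathrm{D}_1$ (built from $\mathrm{D}$, the products and zero maps as in Definition~\ref{def:11}) and hence $(\mathrm{D}_1)^n$, giving $\tilde F'(g^{(n)}) = (\mathrm{D}_1)^n(\tilde F'(g)) = \tilde F'(g)^{(n,\bullet)}$, the $n$th derivative of $\tilde F'(g)$ in $\faa(\A)$. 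Applying $\varepsilon_\A$ and using Lemma~\ref{lem:12} with its index $n$ set to $0$ (which gives $h^{(m,0)} = h^{(m)}$) then yields $F(g^{(n)}) = \varepsilon_\A(\tilde F'(g^{(n)})) = \tilde F'(g)^{(n)}$, so $\tilde F'(g) = (F(g^{(n)}))_n = \tilde F(g)$. The one step calling for real care is the composition computation---matching Lemma~\ref{lem:6} to~\eqref{eq:8} and checking that $F$ commutes with the reindexing $(-)^{(I)}$; everything else is a short bookkeeping exercise, essentially because each ingredient of the structure of $\faa(\A)$ was defined precisely by reading off a corresponding identity valid in an arbitrary cartesian differential category, and a map of $\CLA$ preserves all the constituents (products, projections, zero maps, hom-module structure, fibrewise sum and scaling) from which those identities are built.
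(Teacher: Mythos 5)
Your proof is correct and follows essentially the same route as the paper's: the same definition $\tilde F(g) = (F(g^{(n)}))_n$, functoriality via Lemmas~\ref{lem:4} and~\ref{lem:6}, preservation of the differential via Lemma~\ref{lem:7}, and uniqueness via the identity $h^{(n,0)} = h^{(n)}$ from Lemma~\ref{lem:12} together with the fact that a $\CDiff$-map preserves $(\mathrm{D}_1)^n$. The only difference is that you spell out details (symmetric $k$-linearity of $F(g^{(n)})$, the compatibility $F(f^{(I)}) = \tilde F(f)^{(I)}$, preservation of products) which the paper dismisses as "easily seen", and these verifications are accurate.
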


\begin{proof}
  Given $F \colon \B \rightarrow \A$ as in the statement, we define
  $\tilde F$ to act as $F$ does on objects, and to be given on
  morphisms by $\tilde F(f) = (Ff, F(f^{(1)}), F(f^{(2)}), \dots)$.
  This assignment is functorial by Lemmas~\ref{lem:6} and~\ref{lem:4},
  and is easily seen to be (strict) cartesian left-$k$-linear.
  Furthermore, it preserves the differential by Lemma~\ref{lem:7}; so
  $\tilde F \colon \B \rightarrow \faa(\A)$ is a map in $\CDiff$, and
  clearly $\varepsilon_\A \circ \tilde F = F$.

  It remains to prove unicity of $\tilde F$. If
  $G \colon \B \rightarrow \faa(\A)$ in $\CDiff$ satisfies
  $\varepsilon_\A \circ G = F$, then it must agree with $F$, and hence
  with $\tilde F$ on objects; while on maps, given
  $f \colon X \rightarrow Y$ in $\B$, we have for each
  $n \in \mathbb{N}$ that
  \begin{equation*}
    (Gf)^{(n)} = (Gf)^{(n,0)} = \varepsilon_\A\bigl((Gf)^{(n,
      \bullet)}\bigr) = \varepsilon_\A(G(f^{(n)})) = F(f^{(n)}) =
    (\tilde F f)^{(n)}
  \end{equation*}
  using, in succession: Lemma~\ref{lem:12}; definition of
  $\varepsilon_\A$; that $G$ is a map of cartesian differential
  categories; that $\varepsilon_\A \circ G = F$; and definition of
  $\tilde F$.
\end{proof}

The composite of the cofree differential category functor
$\faa \colon \CLA \rightarrow \CDiff$ with its left adjoint
$U \colon \CDiff \rightarrow \CLA$ yields a comonad on $\CLA$, which
we also denote by $\faa$. By the previous theorem, we easily 
deduce the main results of~\cite{Cockett2011The-Faa-di-Bruno}.

\begin{Cor}
  \label{cor:1}
  The forgetful functor $\CDiff \rightarrow \CLA$ is strictly
  comonadic; that is, the comparison functor
  $\CDiff \rightarrow \cat{Coalg}(\faa)$ is an isomorphism over
  $\CLA$.
\end{Cor}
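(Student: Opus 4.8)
The plan is to derive this from Theorem~\ref{thm:1} together with the essentially-algebraic nature of the categories involved. By Theorem~\ref{thm:1}, $\faa$ is right adjoint to the forgetful functor $U \colon \CDiff \to \CLA$, so $U\faa$ (written $\faa$ for short) is a comonad on $\CLA$, and the induced comparison $K \colon \CDiff \to \cat{Coalg}(\faa)$ commutes strictly with the forgetful functors down to $\CLA$. To see that $K$ is an \emph{isomorphism}, I would invoke the dual of the precise (``crude'') monadicity theorem: since $U$ already has a right adjoint, it suffices that $U$ be conservative and \emph{strictly create} equalizers of those parallel pairs in $\CDiff$ whose image under $U$ carries a split equalizer. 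Both hold for a trivial reason: $U$ is the forgetful functor between the categories of models of two essentially-algebraic theories, the second obtained from the first by adjoining the operations $\mathrm{D}$ subject to the equational axioms (i)--(vii) of Definition~\ref{def:8}. Such a $U$ is conservative---a strict structure-preserving functor of cartesian differential categories is invertible as soon as its underlying cartesian left-$k$-linear functor is---and strictly creates \emph{all} limits, since a limit in $\CDiff$ is computed on underlying cartesian left-$k$-linear categories, with the differential operator on it then uniquely forced. It is worth stressing that the word \emph{strictly} is doing real work here: plain (dual) Beck monadicity would give only an equivalence, and the upgrade to an honest isomorphism over $\CLA$ is exactly what \emph{strict}, rather than up-to-isomorphism, creation of the relevant equalizers buys.

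For readers preferring the concrete style of this section, one can also write the inverse of $K$ by hand. A $\faa$-coalgebra is a cartesian left-$k$-linear category $\A$ equipped with a $\CLA$-map $\gamma \colon \A \to \faa(\A)$ satisfying $\varepsilon_\A \circ \gamma = \mathrm{id}$ and coassociativity; to it we assign the differential $\mathrm{D}f \defeq \gamma(f)^{(1)} \colon A \times A \to B$. The counit law forces $\gamma$ to be the identity on objects and gives $\gamma(f)^{(0)} = f$; an induction on $n$---using coassociativity in the form $\gamma(f^{(n)})^{(m)} = f^{(n,m)}$ with $f^{(\bullet)} = \gamma(f)$, the formula for $f^{(n,m)}$ from Lemma~\ref{lem:12}, and the $k$-linearity of Fa\`a di Bruno components in their last coordinates---then identifies $\gamma(f)^{(n)}$ with the iterated partial derivative $(\mathrm{D}_1)^n f$, so that $\gamma$ is recovered from $\mathrm{D}$. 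Running the same computations verifies the axioms of Definition~\ref{def:8}: (i)--(ii) are immediate from $k$-linearity and the symmetric $k$-linearity of Fa\`a di Bruno maps in their last variables; (iii)--(v) follow from $\gamma$ being an identity-on-objects, product-preserving functor together with the composition formula~\eqref{eq:8} in degree $1$; and (vi)--(vii) come from the degree-$(1,1)$ case of Lemma~\ref{lem:12}. On morphisms, a $\CLA$-map is a coalgebra morphism exactly when it commutes with the operators $\mathrm{D}$, since $\faa$ acts componentwise on Fa\`a di Bruno maps; this assignment is then visibly inverse to $K$ over $\CLA$.

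The substance in either route is bookkeeping, not concept. In the abstract route it is keeping ``strictly'' distinct from ``up to equivalence''. In the concrete route it is the index juggling in the degree-$(n,1)$ and degree-$(1,1)$ cases of Lemma~\ref{lem:12}: showing that the off-diagonal terms vanish by $k$-linearity, and that what survives is exactly axioms (vi) and (vii) together with the inductive step identifying the higher coalgebra components with iterated partial derivatives.
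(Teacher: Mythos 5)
Your abstract route is essentially the paper's own proof: the paper observes that $U$ forgets essentially-algebraic structure and is therefore strictly monadic---hence conservative and creating all limits---and then, since Theorem~\ref{thm:1} supplies the right adjoint, concludes strict comonadicity by the dual Beck theorem, which is exactly your conservativity-plus-strict-creation-of-$U$-split-equalizers argument. Your concrete second route is correct but redundant---it essentially reconstructs the explicit coalgebra description that the paper records afterwards as Corollary~\ref{cor:2}---and one terminological slip: the theorem you invoke is the precise (Beck) tripleability theorem, not the ``crude'' one, though nothing in your argument depends on this.
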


\begin{proof}
  The forgetful functor $U \colon \CLA \rightarrow \CDiff$ forgets
  essentially-algebraic structure, and so is strictly monadic. In
  particular, it creates all limits, is conservative, and has the
  isomorphism lifting property. Since it has a right adjoint, it also
  creates all colimits, and so by the Beck theorem, is strictly
  comonadic.
\end{proof}

Explicitly, for a cartesian differential category $\B$, its
$\faa$-coalgebra structure is obtained by applying Theorem \ref{thm:1}
to the identity functor $1_\B\colon \B \to \B$. The resulting
$\tilde 1_\B \colon \B \to \faa(\B)$ is the identity on objects, and
is given on maps by $\tilde 1_\B (f) = (f, f^{(1)}, f^{(2)}, \dots)$.
In particular, we re-find the comonad comultiplication
$\delta_\A \colon \faa(\A) \to \faa(\faa(\A))$---constructed in detail
in~\cite[\sec 2.2]{Cockett2011The-Faa-di-Bruno}---as
$\delta_\A = \tilde 1_{\faa(\A)}$. Given this description, we can
exploit Corollary~\ref{cor:1} to obtain an alternative
characterisation of cartesian differential categories.

\begin{Cor}
  \label{cor:2}
  Let $\A$ be a cartesian left-$k$-linear category. To endow $\A$ with
  cartesian differential structure is equally to give, for each
  $n \geqslant 0$, an $n$th-order differential operator
  $(\thg)^{(n)} \colon \A(A,B) \rightarrow \A(A \times A^n, B)$ such
  that:
  \begin{enumerate}[(i),itemsep=0.1\baselineskip]
  \item Each $(\thg)^{(n)}$ is $k$-linear;
  \item Each $f^{(n)}$ is $k$-linear and symmetric in its last $n$
    arguments;
  \item For all binary products $A_0 \times A_1$ we have
    $\pi_i^{(1)} = \pi_i\pi_1$ and
    $\pi_i^{(n)} = 0$ for $n \geqslant 2$.
  \item For all $A \in \A$ we have
    $\mathrm{id}_A^{(1)} = \pi_1$, and $\mathrm{id}_A^{(n)} = 0$ for
    $n \geqslant 2$;
  \item
    $(g f)^{(n)} = \sum_{[n] = A_1 \mid \cdots \mid A_k} g^{(k)}
    (f^{(\emptyset)}, f^{(A_1)}, \cpdots, f^{(A_k)})$ for all
    $f \colon A \rightarrow B$, ${g \colon B \rightarrow C}$;
  \item $f^{(0)} = f$ for all $f \in \A(A,B)$;
  \item
    $(f^{(n)})^{(m)}(\vec x) = \sum_{\theta \colon\! [m] \simeq [n]}
    f^{(\abs{\theta})}(x_{\theta_{(1)}\theta_{(2)}})$ for all
    $f \colon A \rightarrow B$ and $n, m \geqslant 0$.
  \end{enumerate}
\end{Cor}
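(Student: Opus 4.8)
The plan is to deduce the statement directly from Corollary~\ref{cor:1}, which exhibits $\CDiff$ as isomorphic over $\CLA$ to the category $\cat{Coalg}(\faa)$ of coalgebras for the comonad $\faa$. Under that isomorphism, a cartesian differential structure on a fixed $\A \in \CLA$ is the same as a $\faa$-coalgebra structure on $\A$: a map $\gamma \colon \A \to \faa(\A)$ in $\CLA$ satisfying the counit law $\varepsilon_\A \circ \gamma = \mathrm{id}_\A$ and the coassociativity law $\faa(\gamma) \circ \gamma = \delta_\A \circ \gamma$; and, as recorded after Corollary~\ref{cor:1}, the coalgebra corresponding to a given differential structure is $\gamma = \tilde 1_\A$, with $\gamma(f) = (f, f^{(1)}, f^{(2)}, \dots)$ and $f^{(n)} = (\mathrm D_1)^n f$. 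So it suffices to show that the data and axioms of such a $\gamma$ unwind exactly into a family of operators $(\thg)^{(n)} \colon \A(A,B) \to \A(A\times A^n, B)$ obeying (i)--(vii), the identification being $f^{(n)} = (\gamma f)^{(n)}$; since each step of the unwinding is an equivalence, this yields the asserted bijection.

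For the first six conditions the unwinding is routine. As $\varepsilon_\A$ fixes objects, so must $\gamma$, so its morphism part is precisely a family of functions $(\thg)^{(n)} \colon \A(A,B) \to \A(A\times A^n, B)$, subject only to the requirement that each tuple $(f^{(n)})_n$ be a genuine Fa\`a di Bruno map --- i.e.\ condition (ii). Granting this: functoriality of $\gamma$ amounts, via the identity and composition laws~\eqref{eq:10} and~\eqref{eq:8} of Definition~\ref{def:10}, to (iv) and (v); $k$-linearity of $\gamma$ on hom-objects amounts, since these are formed componentwise in $\faa(\A)$ (Lemma~\ref{lem:10}), to (i); strict preservation of finite products amounts --- $\gamma$ fixing objects, and the product of $A,B$ in $\faa(\A)$ having underlying object $A\times B$ --- to $\gamma$ carrying each chosen $\pi_i$ to the $\pi_i^{(\bullet)}$ of Lemma~\ref{lem:10}, that is, to $\pi_i^{(0)} = \pi_i$ (a special case of (vi)) together with condition (iii); and the counit law reads off as $f^{(0)} = f$, i.e.\ (vi).

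The one step demanding care, and which I expect to be the main obstacle, is matching the coassociativity law with (vii). For this I would first make explicit the action of $\faa$ on a $\CLA$-morphism, writing $R \colon \CLA \to \CDiff$ for the cofree functor so that $\faa = U R$: since $R\gamma$ is obtained from $\gamma \circ \varepsilon_\A$ by the construction in the proof of Theorem~\ref{thm:1}, the morphism $\faa(\gamma) = U(R\gamma)$ sends a Fa\`a di Bruno map $h^{(\bullet)}$ to the doubly-indexed family whose $m$th block is $\gamma$ applied to the $0$th component of the $m$th derivative (taken in $\faa(\A)$) of $h^{(\bullet)}$ --- and that $0$th component is just $h^{(m)}$, by Lemma~\ref{lem:12} with inner index $0$. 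Hence the $(m,n)$-component of $\faa(\gamma)(\gamma f)$ is $(f^{(m)})^{(n)}$, while (since $\delta_\A = \tilde 1_{\faa(\A)}$) the $(m,n)$-component of $\delta_\A(\gamma f)$ is the $n$th component $f^{(m,n)}$ of the $m$th derivative of $\gamma f$ in $\faa(\A)$, which Lemma~\ref{lem:12} evaluates as $\sum_{\theta \colon [m]\simeq[n]} f^{(\abs\theta)}(x_{\theta_{(1)}\theta_{(2)}})$. Comparing componentwise, the coassociativity law is exactly condition (vii), up to the relabelling $m \leftrightarrow n$ --- which is harmless, since inverting a partial isomorphism preserves $\abs\theta$ and merely permutes the arguments of the symmetric map $f^{(\abs\theta)}$.

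The care needed is confined entirely to that comparison: one must track which of the two superscript indices is the ``outer'' one (at the level of $\faa\faa$) and which is ``inner'' on each side of $\faa(\gamma)\circ\gamma = \delta_\A\circ\gamma$, and confirm that the variable-indexing convention of Notation~\ref{not:5} makes the two expressions literally coincide after the relabelling; I anticipate no real difficulty beyond this. Granting it, the equivalences collected above identify the families of operators obeying (i)--(vii) with the $\faa$-coalgebra structures on $\A$ --- hence, by Corollary~\ref{cor:1}, with the cartesian differential structures on $\A$ --- via the bijection sending a differential structure to its tuple of higher-order partial derivatives $(\thg)^{(n)} = (\mathrm D_1)^n(\thg)$.
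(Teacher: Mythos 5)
Your proposal is correct and is essentially the paper's own argument: the paper likewise deduces the corollary from Corollary~\ref{cor:1} by reading the data (i)--(vii) as exactly a $\faa$-coalgebra structure $D \colon \A \rightarrow \faa(\A)$, with (ii), (iv), (v) giving functoriality, (i) $k$-linearity, (iii) preservation of the cartesian structure, and the counit and coassociativity laws giving (vi) and (vii). Your careful unwinding of the coassociativity axiom via Lemma~\ref{lem:12}, including the harmless $m \leftrightarrow n$ relabelling absorbed by the symmetry of $f^{(\abs{\theta})}$, simply fills in a step the paper asserts without computation.
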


\begin{proof}
  These are exactly the data of a $\faa$-coalgebra structure
  $D \colon \A \rightarrow \faa(\A)$. (ii), (iv) and (v) express that
  $D$ is a well-defined functor, (i) that it is a map of
  left-$k$-linear categories, and (iii) that it preserves the
  cartesian structure. The counit axiom
  $\varepsilon_\A \circ D = 1$ and the coassociativity axiom
  $\faa(D) \circ D = \delta_\A \circ D$ are conditions (vi) and (vii)
  respectively.
\end{proof}

We conclude this section with a brief remark comparing the above
construction $\faa(\A)$ of the cofree cartesian differential category
with the one given in~\cite{Lemay2018A-tangent}, which we denote by
$\mathsf{D}(\A)$. Since both categories have the same universal
property, they must be isomorphic as cartesian differential
categories; but in fact, the work we have done allows us to construct
the isomorphism explicitly.

As discussed in Section~\ref{sec:faa-di-bruno-1} above,
$\mathsf{D}(\A)$ has the same objects as $\A$, while maps from $A$ to
$B$ are certain $\mathbb{N}$-indexed sequences of maps
$(f_n \colon A^{2^n} \rightarrow B)$ generalising the sequence
$(f, \mathrm{D}f, \mathrm{D}^2f, \dots)$ of iterated differentials of
a map in a cartesian differential category. Since, by contrast, maps
in $\faa(\A)$ generalise sequences of the form
$(f, f^{(1)}, f^{(2)}, \dots)$ in a cartesian differential category,
it is natural to construct the isomorphism
$\faa(\A) \cong \mathsf{D}(\A)$ using Lemma~\ref{lem:3}. In one
direction, we have $\faa(\A) \to \mathsf{D}(\A)$ which is the identity
on objects, and defined on morphisms via the formula of Lemma
\ref{lem:3}\ref{item:7}; while in the other, we have
$\mathsf{D}(\A) \to \faa(\A)$ which is again the identity on objects, and
defined on morphisms now using Lemma~\ref{lem:3}\ref{item:8}.

\section{Differential modalities and Fa\`a di Bruno}
\label{sec:cokleisli}

In this section we do two things. The first is to recall the link
between cartesian differential categories and the \emph{tensor
  differential categories} of~\cite{Blute2006Differential}. As
explained in the introduction, the latter are symmetric monoidal
$k$-linear categories $\V$ with a certain kind of comonad ${!}$ termed
a \emph{differential modality}; the link with cartesian differential
categories is that the co-Kleisli category $\cat{Kl}({!})$ of the
differential modality on a tensor differential category with finite
products is a cartesian differential category.

Many natural examples of cartesian differential categories are either
of the form $\cat{Kl}(!)$, or at least admit a full,
structure-preserving embedding into one of this form. An important
open question is whether \emph{every} cartesian differential category
arises in this way, and our second main result, given in
Section~\ref{sec:appl-an-embedd} below, will answer this in the
positive. The second objective of this section is to take a step
in that direction by proving the claim for cartesian differential
categories of the form $\faa(\A)$.

\subsection{Coalgebra modalities}
\label{sec:diff-categ}
Before recalling the notion of a differential modality, we first
recall some more basic kinds of structure which a comonad on a
symmetric monoidal category may bear.

\begin{Defn}
  \label{def:15}
  Let $(\V, \otimes, I)$ be a symmetric monoidal category and let
  ${!}$ be a comonad on $\V$, with counit $\varepsilon$ and
  comultiplication $\delta$.
  \begin{itemize}[itemsep=0.35\baselineskip]
  \item We call ${!}$ a \emph{coalgebra modality} if it comes endowed
    with maps
    \begin{equation}\label{eq:14}
      e_A \colon {!}A \rightarrow I \qquad \text{and}
      \qquad \Delta_A \colon {!}A \rightarrow {!}A \otimes {!}A\rlap{ ,}
    \end{equation}
    natural in $A$, which are such that each $({!}A, e_A, \Delta_A)$ is a
    cocommutative comonoid, and each $\delta_A$ is a map of comonoids
    $({!}A, e_A, \Delta_A) \rightarrow ({!}{!}A, e_{{!}A},
    \Delta_{{!}A})$.
  \item We call ${!}$ a \emph{monoidal comonad} if it comes endowed with
    maps
    \begin{equation}\label{eq:15}
      m_I \colon I \rightarrow {!}I
      \qquad \text{and} \qquad m_\otimes \colon {!}A \otimes {!}B \rightarrow {!}(A \otimes B)
    \end{equation}
    making ${!}$ into a symmetric monoidal functor, and $\varepsilon$ and
    $\delta$ into monoidal natural transformations; see, for
    example,~\cite[\sec 7]{Moerdijk2002Monads} for the conditions
    involved.
  \end{itemize}
\end{Defn}

If $!$ is a coalgebra modality, then every $!$-coalgebra
$(A, a \colon A \rightarrow {!}A)$ can be made into a cocommutative
comonoid via the maps:
\begin{equation}\label{eq:34}
 A
 \xrightarrow{a} {!}A \xrightarrow{e} I \qquad \text{and} \qquad A \xrightarrow{a} {!}A \xrightarrow{\Delta} {!}A \otimes {!}A
 \xrightarrow{\varepsilon \otimes \varepsilon} A \otimes A\rlap{ ;}
\end{equation}
these constitute the unique comonoid structure on $A$ for which $a \colon A \rightarrow {!}A$ is a comonoid morphism as well
as a ${!}$-coalgebra morphism.
In this way, we obtain a factorisation of the forgetful functor
$\cat{Coalg}({!}) \rightarrow \V$ through the category
$\cat{Cocomon}(\V)$ of cocommutative comonoids in $\V$, and in fact,
making $!$ into a coalgebra modality is \emph{equivalent} to giving
such a factorisation; see~\cite[Theorem~3.12]{Blute2016Derivations}.

On the other hand, if ${!}$ is a monoidal comonad, then we can lift
the symmetric monoidal structure of $\V$ to $\cat{Coalg}({!})$; the
unit is $\hat I = (m_I \colon I \rightarrow {!}I)$ and the 
binary tensor is:
\begin{equation*}
  (A \xrightarrow{a} {!}A) \mathbin{\hat\otimes} (B
  \xrightarrow{b} {!}B) = \bigl(A \otimes B \xrightarrow{m_\otimes (a \otimes b)} {!}(A \otimes B)\bigr)\rlap{ .}
\end{equation*}

If ${!}$ is both a monoidal comonad and a coalgebra modality, then
there are natural compatibilities we can impose between the two
structures. The resulting structure is exactly what is needed to model
the \emph{exponential} modality of linear logic; this explains the
origin of the notation ${!}$ for our comonads.

\begin{Defn}
  \label{def:18}
  Let $(\V, \otimes, I)$ be a symmetric monoidal category and
  $({!}, \varepsilon, \delta)$ a comonad on $\V$. We call ${!}$ a
  \emph{monoidal coalgebra modality}  if it has the structure of a
  monoidal comonad and of a coalgebra modality, in such a way that
  each map of~\eqref{eq:14} is a map of ${!}$-coalgebras and each
  map of~\eqref{eq:15} is a map of $\otimes$-comonoids.
\end{Defn}

Under mild side conditions, the two structures of a monoidal coalgebra
modality determine each other. On the one hand, if ${!}$ is a monoidal
comonad, then it is a monoidal coalgebra modality (in a unique way)
just when the lifted monoidal structure on $\cat{Coalg}({!})$ is
cartesian; see~\cite[Definition~1.17]{Maneggia2004Models}. On the
other hand, if ${!}$ is a coalgebra modality and $\V$ has finite
products, then ${!}$ is a monoidal coalgebra modality (in a unique
way) just when the following \emph{storage maps} are invertible
\begin{equation}\label{eq:3}
  \chi_1 \defeq {!}(1) \xrightarrow{e} I \text{ ,} \  \chi_{AB}
  \defeq {!}(A \times B) \xrightarrow{\Delta} {!}(A
  \times B) \otimes {!}(A \times B) \xrightarrow{{!}\pi_0 \otimes
    {!}\pi_1} {!} A \otimes {!}B\text{ .}
\end{equation}
Indeed, in this situation, the monoidal constraint maps $m_I$ and $m_\otimes$
are found as:
\begin{equation}
  \label{eq:42}
  \begin{gathered}
    \smash{I \xrightarrow{\chi_1^{-1}} {!}(1) \xrightarrow{\delta_1}
      {!}{!}(1) \xrightarrow{{!}\chi_1} {!}I \qquad \text{and}} \\
    {!}A \otimes {!}B \xrightarrow{\chi_{AB}^{-1}}
    {!}(A \times B) \xrightarrow{\delta} {!}{!}(A \times B)
    \xrightarrow{{!}\chi_{AB}} {!}({!}A \otimes {!}B)
    \xrightarrow{\!{!}(\varepsilon \otimes \varepsilon)\!} {!}(A \otimes B)\text{ ;}
  \end{gathered}
\end{equation}
see~\cite[Theorem~3.1.6]{Blute2015Cartesian} and the
references therein.

\subsection{Differential modalities}
\label{sec:diff-modal}
We are now ready for the definition of differential modality. We write
``symmetric monoidal $k$-linear category'' for a category $\V$ which
is symmetric monoidal and $k$-linear, and for which the action on homs
of the tensor product
$\V(A,B) \times \V(C,D) \rightarrow \V(A\otimes C, B\otimes D)$ is
bilinear.
\begin{Defn}
  \label{def:19} Let $(\V, \otimes, I)$ be a symmetric monoidal
  $k$-linear category and $({!}, \varepsilon, \delta)$ a comonad on
  the underlying ordinary category of $\V$.
  \begin{itemize}[itemsep=0.35\baselineskip]
  \item We call ${!}$ a \emph{differential modality} if it is a
    coalgebra modality, and comes endowed with a \emph{deriving
      transformation}: a natural family of maps
    \begin{equation*}
      \mathsf{d}_A \colon {!}A \otimes A \rightarrow {!}A
    \end{equation*}
    rendering commutative the following diagrams, known respectively as
    the \emph{product rule}, the \emph{linear rule}, the \emph{chain
      rule} and the \emph{interchange rule}.
    \begin{gather*}
      \cd[@!C@C-2em]{
        {!}A \otimes A \ar[d]_-{\mathsf{d}} \ar[r]^-{\Delta \otimes 1}
        & {!}A \otimes {!}A \otimes A \ar[d]^-{(1 \otimes \mathsf{d}) + (\mathsf{d} \otimes 1)(1
          \otimes \sigma)}  \\ 
        {!}A
        \ar[r]^-{\Delta} &
        {!}A \otimes {!}A} \quad
      \cd[@!C@C-2em]{
        {{!}A \otimes A} \ar[rr]^-{\mathsf{d}} \ar[dr]_-{e \otimes 1} & &
        {{!}A} \ar[dl]^-{\varepsilon}\\
        & {A}
      }\\
      \cd[@C-0.35em]{
        {!}A \otimes A \ar[d]_-{\Delta \otimes 1}
        \ar[rr]^-{\mathsf{d}} & & {!}A \ar[d]^-{\delta} \\
        {!}A \otimes {!}A \otimes A \ar[r]^-{\delta \otimes
          \mathsf{d}} &
        {!}{!}A \otimes {!}A \ar[r]^-{\mathsf{d}} & {!}{!}A}
      \qquad
      \cd[@C-0.35em]{
        {!}A \otimes A \otimes A \ar[r]^-{1
          \otimes \sigma} \ar[d]_-{\mathsf{d} \otimes 1} &
        {!}A \otimes A \otimes A
        \ar[r]^-{\mathsf{d} \otimes 1} & {!}A
        \otimes A \ar[d]^-{\mathsf{d}} \\
        {!}A \otimes A \ar[rr]^-{\mathsf{d}} && {!}A\rlap{ .}
      }
    \end{gather*}
    We call $\V$ endowed with its differential modality a
    \emph{tensor differential category}.
  \item We call ${!}$ a \emph{monoidal differential modality} if it is a
    monoidal coalgebra modality endowed with a deriving
    transformation\footnote{Note that in~\cite{Fiore2007Differential},
      Fiore requires a monoidal differential modality to satisfy an
      additional axiom relating $m_\otimes$ with $\mathsf{d}$; however,
      as shown in~\cite{Blute2019Differential}, this additional axiom is
      \emph{always} satisfied.}.
  \end{itemize}
\end{Defn}

The above notion of deriving transformation refines that
of~\cite{Blute2006Differential} in two standard ways. Firstly,
it drops the constant rule (``[d.1]'' in \emph{loc.~cit.}) since this
is derivable as in~\cite[Lemma~4.2]{Blute2019Differential}. Secondly,
it adds the interchange rule, which is necessary to ensure that the
following result holds without further side-conditions:

\begin{Prop}
  \label{prop:2} Let
  $\V$ be a symmetric monoidal category $k$-linear category with
  finite (bi)products. For any differential modality ${!}$ on $\V$,
  the co-Kleisli category $\cat{Kl}({!})$ has a structure of cartesian
  differential category. If $\V$ is monoidal closed and $!$ is monoidal, then
  $\cat{Kl}(!)$ is a cartesian closed differential category.
\end{Prop}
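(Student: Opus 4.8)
The strategy is to present $\cat{Kl}({!})$ explicitly and then check the seven axioms of Definition~\ref{def:8} by direct diagram chases, using the $k$-linear symmetric monoidal structure of $\V$ together with the deriving-transformation and coalgebra-modality axioms; for the differential part this is essentially the argument of~\cite{Blute2006Differential,Blute2009Cartesian}, adapted to the present axiomatisation. Recall that $\cat{Kl}({!})$ has the objects of $\V$, hom-sets $\cat{Kl}({!})(A,B) = \V({!}A,B)$, and composition of $f \colon A \to B$ and $g \colon B \to C$ given by $g \circ {!}f \circ \delta_A \colon {!}A \to C$. First I would record the underlying cartesian left-$k$-linear structure, which needs little: the hom $k$-modules are those of $\V$; precomposition by a fixed $f$ in $\cat{Kl}({!})$ is precomposition in $\V$ by the fixed map ${!}f \circ \delta_A$, hence $k$-linear; the terminal object and the finite (bi)products of $\V$ are again finite products in $\cat{Kl}({!})$ since $\V({!}X, A \times B) \cong \V({!}X,A) \times \V({!}X,B)$ naturally in $X$; and the projections $\pi_i \circ \varepsilon$ are $k$-linear, since by the counit law postcomposing a Kleisli map by $\pi_i \circ \varepsilon$ is just postcomposition in $\V$ by $\pi_i$.

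Next I would set, for $f\colon {!}A\to B$,
\begin{equation*}
  \mathrm{D}f \;\defeq\;\bigl({!}(A\times A)\xrightarrow{\Delta}{!}(A\times A)\otimes{!}(A\times A)\xrightarrow{\;{!}\pi_0\,\otimes\,(\varepsilon\,{!}\pi_1)\;}{!}A\otimes A\xrightarrow{\mathsf{d}_A}{!}A\xrightarrow{f}B\bigr)\rlap{ ,}
\end{equation*}
equivalently $\mathrm{D}f = f\circ\mathsf{d}_A\circ(1\otimes\varepsilon)\circ\chi_{A,A}$ with $\chi_{A,A}$ as in~\eqref{eq:3}. Axioms (i) and (ii) are then automatic from the $k$-linearity of $\V$: (i) because $\mathrm{D}$ is precomposition of $f$ with a fixed $\V$-morphism, and (ii) because, after unfolding Kleisli composites and using naturality of $\Delta$ and the counit laws, $\mathrm{D}f$ evaluated at a Kleisli pair $(x,v)$ equals $f\circ\mathsf{d}_A\circ\bigl(({!}x\circ\delta)\otimes v\bigr)\circ\Delta$, in which the argument $v$ occurs only through the bilinear slot $({-})\otimes v$. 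Axioms (iii) and (iv) are short chases using the linear rule $\varepsilon\circ\mathsf{d} = e\otimes 1$, the comonoid counit laws, naturality of $e,\varepsilon,\mathsf{d}$, and the componentwise description of products in $\cat{Kl}({!})$.

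The substantial point is the chain rule (v), $\mathrm{D}(gf) = \mathrm{D}g(f\pi_0,\mathrm{D}f)$. Starting from $\mathrm{D}(g\circ{!}f\circ\delta)$, I would unfold the Kleisli composite, move $\delta$ past $\mathsf{d}$ using the chain rule $\delta\circ\mathsf{d} = \mathsf{d}\circ(\delta\otimes\mathsf{d})\circ(\Delta\otimes 1)$, reorganise the several copies of ${!}A$ produced by the various $\Delta$'s using that $\delta$ is a comonoid morphism together with naturality of $\mathsf{d}$, and finally recognise the outcome as $\mathrm{D}g(f\pi_0,\mathrm{D}f)$. Axioms (vi) and (vii) then follow by expanding $\mathrm{D}(\mathrm{D}f)$ as an iterated instance of the defining formula: (vi) reduces, after zeroing a coordinate block, to the product rule $\Delta\circ\mathsf{d} = \bigl((1\otimes\mathsf{d})+(\mathsf{d}\otimes 1)(1\otimes\sigma)\bigr)(\Delta\otimes 1)$ and the linear rule; while (vii), the symmetry of second-order derivatives, is exactly the place where the interchange rule $\mathsf{d}\circ(\mathsf{d}\otimes 1) = \mathsf{d}\circ(\mathsf{d}\otimes 1)\circ(1\otimes\sigma)$ is used---which is precisely why that rule is part of the definition. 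I expect (v) to be the main obstacle: it is a lengthy chase whose difficulty is the bookkeeping of the many copies of ${!}A$ produced by $\Delta$ and $\delta$, and checking that they reassemble correctly after applying the deriving chain rule.

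For the closed case, assume in addition that $\V$ is monoidal closed, with internal homs $[{-},{-}]$, and that ${!}$ is a monoidal differential modality, so that the storage maps $\chi_1,\chi_{A,B}$ of~\eqref{eq:3} are invertible. The chain of natural bijections
\begin{equation*}
  \cat{Kl}({!})(A\times B,C) = \V({!}(A\times B),C)\cong\V({!}A\otimes{!}B,C)\cong\V\bigl({!}A,[{!}B,C]\bigr) = \cat{Kl}({!})\bigl(A,[{!}B,C]\bigr)
\end{equation*}
exhibits $C^B \defeq [{!}B,C]$ as an exponential, so the underlying category of $\cat{Kl}({!})$ is cartesian closed. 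It then remains to check condition~(ii) of Definition~\ref{def:37}, namely that $\mathrm{ev}\colon C^B\times B\to C$ is $\mathrm{D}$-linear in its first variable. For this one uses that the Kleisli map $\mathrm{ev}$ factors as $\mathrm{ev}_\V\circ(\varepsilon\otimes 1)\circ\chi_{C^B,B}$, so it is ``linear in its first $\V$-tensor-factor''; substituting this into the formula for $\mathrm{D}$ and collapsing the outer $\mathsf{d}$ by the linear rule yields exactly $\mathrm{ev}$ applied with the first block replaced by the variation slot, which is $\mathrm{D}$-linearity in the first argument.
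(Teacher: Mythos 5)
Your proposal is correct and follows the standard route: your formula for $\mathrm{D}f$ is exactly the paper's~\eqref{eq:17}, and your assignment of the deriving-transformation axioms to the cartesian-differential axioms (the chain rule for (v), the interchange rule for (vii), and the invertible storage maps plus the linear rule for the closed case) is precisely the one used in the literature. The paper itself does not re-prove this result but simply cites \cite{Blute2009Cartesian} and \cite{Blute2015Cartesian}, so your sketch is a faithful reconstruction of the same argument rather than a genuinely different one.
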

\begin{proof}
  The first assertion
  is~\cite[Lemmas~3.2.2~\&~3.2.3]{Blute2009Cartesian}. The claim in
  the final sentence is~\cite[Theorem~4.4.2]{Blute2015Cartesian}.
\end{proof}
While there is no need to recount the proof of this result, we will
need to know how the cartesian differential structure of $\cat{Kl}(!)$
is obtained. The cartesian left-$k$-linear structure is easy: the hom
$\cat{Kl}({!})(A,B)$ inherits $k$-module structure from $\V({!}A, B)$,
and finite products in $\cat{Kl}(!)$ are induced from those of $\V$
along the identity-on-objects right adjoint functor
$\V \rightarrow \cat{Kl}(!)$. As for the differential structure, if
$f \colon {!}A \rightarrow B$ is a map in $\cat{Kl}(!)(A,B)$, then
$\mathrm{D}f \in \cat{Kl}(!)(A \times A, B)$ is the composite
\begin{equation}\label{eq:17}
  {!}(A \times A) \xrightarrow{\ \ \chi\ \ } {!}A \otimes {!}A
  \xrightarrow{\ \ 1
    \otimes \varepsilon \ \ } {!}A \otimes A \xrightarrow{\ \
    \mathsf{d}\ \ } {!}A
  \xrightarrow{\ \ f\ \ } B
\end{equation}
whose first part is the storage map of~\eqref{eq:3}. When ${!}$ is
monoidal and $\V$ is monoidal closed, the exponentials
making $\cat{Kl}({!})$ cartesian closed are given by $B^A \defeq
[{!}A, B]$.

\begin{Defn}
  \label{def:25}
  If ${!}$ is the differential modality of a tensor differential
  category, and $\A$ is a cartesian differential category, then we say
  that $\A$ is \emph{induced} by ${!}$ if $\A \cong \cat{Kl}({!})$ as
  cartesian differential categories.
\end{Defn}

An extremely important source of differential modalities, and hence of
cartesian differential categories, is the following result:

\begin{Prop}
  \label{prop:3}
  Let $\V$ be a symmetric monoidal $k$-linear category with finite
  biproducts, and suppose the forgetful functor
  $\cat{Cocomon}(\V) \rightarrow \V$ has a right adjoint. The
  induced \emph{cofree cocommutative coalgebra} comonad $R$ on $\V$
  can be made into a differential modality which is terminal among
  differential modalities on $\V$.
\end{Prop}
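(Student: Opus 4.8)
The plan is to build the differential modality structure on $R$ in three layers, each obtained by exploiting a universal property. First, since $R$ is by construction the cofree cocommutative coalgebra comonad, the forgetful functor $\cat{Coalg}(R) \to \V$ factors through $\cat{Cocomon}(\V)$ by the very definition of $R$; by the characterisation recalled after Definition~\ref{def:15} (referencing \cite[Theorem~3.12]{Blute2016Derivations}), this factorisation equips $R$ canonically with the structure of a coalgebra modality, i.e.\ with comonoid maps $e_A \colon RA \to I$ and $\Delta_A \colon RA \to RA \otimes RA$. Second, I would promote this to a \emph{monoidal} coalgebra modality. Since $\V$ has finite biproducts, hence finite products, by the criterion recalled after Definition~\ref{def:18} (citing \cite[Definition~1.17]{Maneggia2004Models} and \cite[Theorem~3.1.6]{Blute2015Cartesian}) it suffices to check that the storage maps $\chi_1$ and $\chi_{AB}$ of~\eqref{eq:3} are invertible. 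This is where cofreeness does the real work: the inverse of $\chi_{AB} \colon R(A\times B) \to RA \otimes RB$ is constructed by using that $RA \otimes RB$ is a cocommutative comonoid (the tensor of two comonoids in a symmetric monoidal category), so the cofreeness of $R(A\times B)$ over the comonoid $A \times B$ yields a canonical comparison map $RA \otimes RB \to R(A \times B)$ provided one exhibits a comonoid map $RA \otimes RB \to A \times B$; one takes $(\varepsilon_A\,e_B \text{-paired component})$, i.e.\ the map with components $RA \otimes RB \xrightarrow{\varepsilon \otimes e} A \otimes I \cong A$ and symmetrically into $B$. A routine check using the universal properties shows this is a two-sided inverse to $\chi_{AB}$; similarly $\chi_1$ is invertible because $R(1)$ is cofree over the terminal comonoid, which is $I$. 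With the storage maps invertible, formulae~\eqref{eq:42} supply the monoidal comonad structure, and $R$ becomes a monoidal coalgebra modality.

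Third, and this is the heart of the matter, I would equip $R$ with a deriving transformation $\mathsf{d}_A \colon RA \otimes A \to RA$ and check the four rules. The natural approach is again to define $\mathsf{d}_A$ by cofreeness: to give a map $RA \otimes A \to RA$ is, since $RA$ is cofree over $A$ regarded as a comonoid, the same as giving a comonoid map $RA \otimes A \to A$ for a suitable comonoid structure on $RA \otimes A$ --- but this is too naive, since $\mathsf{d}$ is not a comonoid map into $A$; rather, one should think of $\mathsf{d}$ as the "universal derivation". Concretely, I would put the comonoid structure on $RA \otimes A$ whose counit is $e \otimes 0$ (using the zero map $A \to I$, available by $k$-linearity) and whose comultiplication combines $\Delta$ on $RA$ with the two ways of distributing the tensor factor $A$ --- precisely mirroring the product rule --- and then take as the defining comonoid map $RA \otimes A \to A$ the composite $RA \otimes A \xrightarrow{e \otimes 1} I \otimes A \cong A$. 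Cofreeness then produces a unique coalgebra-and-comonoid map $\mathsf{d}_A \colon RA \otimes A \to RA$ over this, and the product rule and linear rule become immediate from the construction. The chain rule and interchange rule require a further uniqueness argument: both sides of each are coalgebra morphisms out of a cofree object, so it suffices to check they agree after postcomposition with $\varepsilon$ and compatibility with the comonoid structure, which reduces each identity to a short diagram chase using naturality of $\mathsf{d}$ and the already-established product and linear rules.

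Finally, terminality: given any other differential modality ${!}$ on $\V$, each $!A$ carries a cocommutative comonoid structure via~\eqref{eq:14}, hence by cofreeness of $RA$ there is a unique comonoid-and-coalgebra map $!A \to RA$; naturality and compatibility with $\varepsilon$, $\delta$ are forced by the universal property, and one checks this comparison is a morphism of coalgebra modalities and commutes with the deriving transformations --- the latter again by a cofreeness-driven uniqueness argument, since both $\mathsf{d}^R \circ (\phi \otimes 1)$ and $\phi \circ \mathsf{d}^!$ are coalgebra maps $!A \otimes A \to RA$ agreeing after $\varepsilon$. I expect the main obstacle to be the verification of the chain rule and interchange rule for $\mathsf{d}^R$: these are the axioms with no purely formal justification, and making the cofreeness argument for them rigorous --- identifying exactly the right comonoid structures on the domains so that both sides become morphisms out of a cofree object --- is the delicate part. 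Everything else is an application of universal properties plus bookkeeping with biproducts and the zero maps furnished by $k$-linearity.
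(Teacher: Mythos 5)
Your overall architecture (Lafont-style cofreeness for the coalgebra and monoidal structure, then a universal-property construction of $\mathsf{d}$, then terminality by uniqueness of maps into a cofree object) is the right spirit --- indeed the paper itself disposes of this proposition by citation: the monoidal coalgebra modality structure is the well-known Lafont semantics, and the deriving transformation and its terminality are taken (in dual form) from \cite[\sec\sec 4, 6]{Blute2016Derivations}. Your first two layers are essentially correct modulo small misstatements (the universal property of $R'X$ is that \emph{comonoid} maps $C \rightarrow R'X$ correspond to plain $\V$-maps $UC \rightarrow X$; you do not need, and should not ask for, a comonoid map $RA \otimes RB \rightarrow A \times B$, since $A \times B$ carries no relevant $\otimes$-comonoid structure).

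The genuine gap is in your third layer. The comonoid structure you propose on $RA \otimes A$, with counit $e \otimes 0$, is not a comonoid: since the tensor is bilinear on homs, $e \otimes 0$ is the zero map $RA \otimes A \rightarrow I$, and no comonoid can have zero counit (the counit axiom $(\epsilon \otimes 1)\delta = 1$ would force the identity of $RA \otimes A$ to be zero). The same obstruction is why $\mathsf{d}$ is not a comonoid map in the first place, so the cofree-\emph{comonoid} universal property simply cannot produce $\mathsf{d}_A$, nor can it underwrite the uniqueness arguments you lean on for the chain rule, the interchange rule, and the compatibility $\mathsf{d}^R \circ (\phi \otimes 1) = \phi \circ \mathsf{d}^{!}$ in the terminality proof (none of these composites are comonoid maps either). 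The missing ingredient is a \emph{different} universal property of $RA$: dually to the fact that derivations out of a free commutative algebra into a module are determined by their restriction to generators, one must prove that coderivations $M \rightarrow RA$ out of a suitable $RA$-comodule $M$ correspond bijectively to $\V$-maps $M \rightarrow A$; then $\mathsf{d}_A$ is the coderivation on the comodule $RA \otimes A$ corresponding to $e \otimes 1$, the product rule is the statement that it is a coderivation, and the remaining axioms and the terminality comparison are pinned down by uniqueness of coderivations. Establishing that coderivation universal property is precisely the content of \cite[\sec 4]{Blute2016Derivations}, and it does not follow formally from cofreeness among comonoid maps; as written, your construction of $\mathsf{d}$ does not get off the ground.
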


\begin{proof}
  It is well-known that $R$ is a monoidal coalgebra modality; indeed,
  this is the basis for Lafont's semantics for the exponential
  modality of linear logic. The construction of a deriving
  transformation for $\V$ is given (in dual form) in~\cite[\sec
  4]{Blute2016Derivations}, while \sec 6 of \emph{loc.\,cit.}~proves
  its terminality among differential
  modalities\footnote{In~\cite{Blute2016Derivations} there is an
    additional assumption which---in our dual case---amounts to the
    existence of coreflexive equalisers preserved by tensor in each
    variable. However, it is easily checked that this is not necessary
    for the proof of terminality.}.
\end{proof}

\begin{Exs}
  \begin{enumerate}[(i), itemsep=0.35\baselineskip]
  \item \label{item:6}Taking $\V = \kmod^\mathrm{op}$ in the preceding result, we see
    that the free symmetric algebra monad $\mathrm{Sym}$ of~\eqref{eq:13} endows
    $\kmod^\mathrm{op}$ with a differential modality. The induced
    cartesian differential category is exactly the cartesian
    differential category $\cat{Gen\P oly}_k$ of
    Examples~\ref{ex:5}\ref{item:3}, while $\cat{Poly}_k$ is its full
    subcategory on the finitely generated free $k$-modules.
  \item When $\V = \cat{Rel}$, the category of sets and relations, the
    cofree cocommutative comonoid on $X$ is given by the set of finite
    multisets of elements of $X$. So the finite multiset comonad on
    $\cat{Rel}$ is a differential modality; the induced cartesian
    differential category is
      described explicitly in~\cite[\sec 5.1]{Bucciarelli2010Categorical}.
    \item When $\V = \cat{Fin}$, the category of finiteness spaces and
      finitary relations~\cite{Ehrhard2005Finiteness}, the cofree
      cocommutative comonoid is again given by the set of finite
      multisets with finiteness structure as defined
      in~\cite{Mellies2018An-explicit}. The induced cartesian
      differential category is described in~\cite[\sec
      5.2]{Bucciarelli2010Categorical}.
  \item When $\V = \kmod$ for $k$ an algebraically closed field of
    characteristic zero, the cofree cocommutative coalgebra on a
    $k$-vector space $A$ is given by $\bigoplus_{x \in A} \mathrm{Sym}(A)$, 
    where $\mathrm{Sym}(A)$ is the symmetric algebra on $A$ as in~\eqref{eq:13}.
    This follows from results of~\cite{Sweedler1969Hopf}, and is spelt
    out in~\cite{Murfet2015On-Sweedlers}. In this case, the
    differential modality structure, and the induced cartesian
    differential category, are discussed in~\cite{Clift2020Cofree}. \label{item:5}
  \end{enumerate}
  \label{ex:7}
\end{Exs}

Note that Proposition~\ref{prop:3} produces \emph{monoidal}
differential modalities which, in the case of (ii), (iii) and (iv),
reside on monoidal \emph{closed} categories. Thus, by
Proposition~\ref{prop:2}, the co-Kleisli categories of these latter
examples are cartesian \emph{closed} differential categories. There
are also important differential modalities which are not monoidal, for
example on the category of $C^\infty$-rings; see~\cite[\sec
3]{Blute2006Differential} and \cite{Cruttwell2021Integral}.

Lastly, it may be worth mentioning that monoidal differential
modalities have an alternative axiomatisation as monoidal coalgebra
modalities equipped with a \emph{codereliction}; this is a natural
transformation ${\eta\colon A \to {!}A}$ satisfying certain identities
which correspond to evaluating the differential at zero
\cite{Blute2019Differential,Blute2006Differential,
  Ehrhard2018An-introduction, Fiore2007Differential}. These identities
involve the canonical maps
\begin{equation}
\begin{gathered}
  u_A \defeq I \xrightarrow{m_I} {!}I \xrightarrow{{!}0} {!}A \qquad
  \text{and} \\ \nabla_A \defeq {!} A \otimes {!} A
  \xrightarrow{\delta \otimes \delta} {!}{!}A \otimes {!}{!}A
  \xrightarrow{m_\otimes} {!}({!}A \otimes {!}A)
  \xrightarrow{{!}(\varepsilon \otimes e + e \otimes \varepsilon)} !A
\end{gathered}\label{eq:45}
\end{equation}
definable in any monoidal coalgebra modality on a symmetric monoidal
$k$-linear category, which together with $\Delta$ and $e$ endow each object ${!}A$ with
\emph{bialgebra} structure.
These same bialgebra maps are involved in the bijective correspondence
between deriving transformations and coderelictions, due
to~\cite[Theorem 4]{Blute2019Differential}. Indeed, the deriving
transformation corresponding to a codereliction
$\eta\colon A \to {!}A$ is given by:
\begin{equation*}
  {!}A \otimes A \xrightarrow{1 \otimes \eta} {!}A \otimes {!}A
  \xrightarrow{\nabla} {!}A\rlap{ ;}
\end{equation*}
while the codereliction of a deriving transformation $\mathsf{d}\colon {!}A
\otimes A \to {!}A$ is given by
\begin{equation*}
  A \xrightarrow{u} {!}A \otimes A \xrightarrow{\mathsf{d}} {!}A\rlap{ .}
\end{equation*}

While the formulation in terms of a codereliction is more common in
the literature on differential linear logic, for the purposes of the
present paper it will be deriving transformations which are the focus;
the algebra structure maps $u$ and $\nabla$ and codereliction $\eta$
will play no subsequent role.
 
\subsection{\texorpdfstring{$\faa(\kmodw)$}{Faa(kModw)} as a co-Kleisli construction}
\label{sec:faakmodw}

In this section, we show that, for the primordial left-$k$-linear
category $\kmodw$, its cofree cartesian differential category
$\faa(\kmodw)$ is induced by a particular (monoidal)
differential modality $Q$ on $\kmod$. To obtain $Q$, we could work
backwards from $\faa(\kmodw)$ using the results
of~\cite{Blute2015Cartesian}, but it will be more illuminating to
describe it directly.

In fact, we have already seen the formula for $Q$ in
Examples~\ref{ex:10}\ref{item:5}; there, $k$ was an algebraically
closed field of characteristic zero, and the formula
$\bigoplus_{x \in A} \mathrm{Sym}(A)$ in question gave the \emph{terminal} differential
modality on $\kmod$. For more general $k$, it turns out that this
formula still describes a differential modality $Q$ on $\kmod$, but the
universal property is different: it is the \emph{initial} monoidal
differential modality.

For the purposes in this paper, we will not actually require this
universal property, and so we reserve the proof of $Q$'s initiality
for a follow-up paper---where it will be considered in a more general
context---and content ourselves here with giving the explicit
formulae. Note that these extend the ones given in~\cite{Clift2020Cofree}
for $k$ an algebraically closed field of characteristic~zero.


\begin{Defn}
  \label{def:16}
  The \emph{initial monoidal differential modality} $Q$ on $\kmod$ is
  given as follows.
  \begin{itemize}
  \item On objects, we have $QA = \textstyle \bigoplus_{x \in A} \mathrm{Sym}(A)$.
      We will write $\spn{x_0, \dots, x_n} \in QA$ for the image of the
      pure tensor $x_1 \otimes \cdots \otimes x_n \in A^{\otimes n}$
      under the composite
      \begin{equation*}
        A^{\otimes n} \twoheadrightarrow A^{\otimes n} / \mathfrak{S}_n
        \xrightarrow{\iota_n} S(A) \xrightarrow{\iota_{x_0}} \textstyle\bigoplus_{x \in A} \mathrm{Sym}(A)
      \end{equation*}
      of the quotient map and two coproduct injections. In particular, when
      $n = 0$, we write $\spn{x_0}$ for the image of $1 \in A^{\otimes 0}$
      under the displayed composite. Note
      $x_0, \dots, x_n \mapsto \spn{x_0, \dots, x_n}$ is symmetric
      multilinear in $x_1, \dots, x_n$ but \emph{not} $x_0$.\vskip0.25\baselineskip
    \item On a map $f \colon A \rightarrow B$, we determine
      $Qf \colon QA \rightarrow QB$ by
      \begin{equation*}
        \spn{x_0, \dots, x_n} \mapsto \spn{f(x_0), \dots, f(x_n)}\rlap{ .}
      \end{equation*}
    \item For the comonad structure, the counit
      $\varepsilon_A \colon QA \rightarrow A$ is determined by
      \begin{equation*}
        \spn{x_0} \mapsto x_0\text{ ,} \qquad \spn{x_0, x_1} \mapsto x_1
        \qquad\text{and} \qquad 
        \spn{x_0, \dots, x_n} \mapsto 0 \text{ if $n \geqslant 2$.}
      \end{equation*}
      and the comultiplication $\delta_A \colon QA \rightarrow QQA$ is
      determined by
      \begin{equation*}
        \spn{x_0, \dots, x_n} \mapsto \sum_{[n] = A_1 \mid \cdots \mid
          A_k} \spn{ \spn{x_0}, \spn{x_{A_1}}, \dots, \spn{x_{A_k}} }
      \end{equation*}
      for $n \geqslant 0$. Here, as in Section~\ref{sec:faa-di-bruno},
      the sum is over unordered partitions of $[n]$; and we
      write $\spn{x_{I}}$ for $\spn{x_0, x_{i_1}, \dots, x_{i_k}}$.
      Note in particular that $\delta_A(\spn{x_0}) = \spn{\spn{x_0}}$.

      \vskip0.25\baselineskip
    \item For the coalgebra modality structure, the comonoid counit
      $e_A \colon QA \rightarrow k$ is determined~by
      \begin{equation*}
        \spn{x_0} \mapsto 1 \qquad \text{and} \qquad
        \spn{x_0, \dots, x_n} \mapsto 0 \text{ if $n \geqslant 1$,}
      \end{equation*}
      while the comultiplication map
      $\Delta_A \colon QA \rightarrow QA \otimes QA$ is determined by
      \begin{equation*}
        \spn{x_0,\dots, x_n} \mapsto \textstyle\sum_{I \subseteq [n]}
        \spn{x_I} \otimes \spn{x_{[n] \setminus I}}\rlap{ .}
      \end{equation*}
    \item For the monoidal structure, the nullary constraint
      $m_I \colon k \rightarrow Qk$ is determined by
      $1 \mapsto \spn{1}$, while the binary constraint
      $m_\otimes \colon QA \otimes QB \rightarrow Q(A \otimes B)$ is
      determined as follows (using the conventions of
      Notation~\ref{not:5} above):
      \begin{equation*}
        \spn{x_0, \dots, x_n} \otimes \spn{y_0, \dots, y_m} \mapsto 
        \sum_{\theta \colon [n] \simeq [m]}
        \spn{x_{\theta_{(1)}} \otimes y_{\theta_{(2)}}}\rlap{ .}
    \end{equation*}
    \item Finally, the deriving transformation
      $\mathsf{d}_A \colon QA \otimes A \rightarrow QA$ is determined by
      \begin{equation*}
        \spn{x_0,\dots, x_n} \otimes y \mapsto \spn{x_0, \dots, x_n, y}\rlap{ .}
      \end{equation*}
    \end{itemize}
\end{Defn}

The reader should have no trouble checking the axioms showing that
$(Q, \varepsilon, \delta)$ as described above is a comonad; that the
maps $(e, \Delta)$ endow it with the structure of a coalgebra
modality; and that $\mathsf{d}$ satisfies the deriving transformation
axioms. It is then an interesting exercise to obtain the given form of
the monoidal structure maps by first showing that the storage
maps~\eqref{eq:3} for $Q$ are invertible, and then deriving $m_I$ and
$m_\otimes$ via the formulae~\eqref{eq:42}.
For the sake of completeness, we also note that:
\begin{itemize}
\item The bialgebra maps~\eqref{eq:45} have the unit 
  $u_A\colon k \rightarrow QA$ determined by
  $1 \mapsto \spn{0,1}$
  and the multiplication $\nabla\colon QA \otimes QA \rightarrow QA$ 
  determined by
  \begin{equation*}
    \spn{x_0,\dots, x_n} \otimes \spn{y_0,\dots, y_m} \mapsto \spn{x_0 + y_0,x_1, \dots, x_n, y_1, \dots, y_m}\rlap{ .}
  \end{equation*}
\item The codereliction map $\eta_A \colon A \rightarrow QA$
  is given by $x \mapsto \spn{0,x}$.
\end{itemize}

\begin{Prop}
  \label{prop:4}
  The cartesian differential category $\faa(\kmodw)$ is induced by the
  initial monoidal differential modality $Q$ on $\kmod$.
\end{Prop}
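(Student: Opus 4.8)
By Proposition~\ref{prop:2}, $\cat{Kl}(Q)$ is a cartesian differential category, the requisite differential-modality axioms for $Q$ having been noted above. The plan is to construct an isomorphism of cartesian differential categories $E \colon \cat{Kl}(Q) \to \faa(\kmodw)$ which is the identity on objects (both categories have $k$-modules as objects), and to define it on hom-sets via the observation that a morphism on either side carries literally the same data. Indeed, since $QA = \bigoplus_{x_0 \in A} \mathrm{Sym}(A)$ with $\mathrm{Sym}$ as in~\eqref{eq:13}, giving a $k$-linear map $g \colon QA \to B$ amounts to giving, for each $x_0 \in A$ and each $n \geqslant 0$, a symmetric $k$-multilinear map $A^n \to B$; packaging these, a morphism $A \to B$ of $\cat{Kl}(Q)$ is precisely an $\mathbb{N}$-indexed family of functions $g_n \colon A \times A^n \to B$, each symmetric and $k$-linear in its last $n$ arguments, related to $g$ by $g_n(x_0, \dots, x_n) = g(\spn{x_0, \dots, x_n})$. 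This is exactly the data of a morphism $A \rightsquigarrow B$ in $\faa(\kmodw)$ (Definition~\ref{def:10}), so we set $E(g) = (g_n)_n$; this is a bijection on hom-sets, $k$-linear since both $k$-module structures are computed pointwise.

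It then remains to verify that $E$ is a functor preserving the cartesian left-$k$-linear structure and the differential; being an identity-on-objects bijection on hom-sets, it will then automatically be an isomorphism in $\CDiff$. For functoriality: the identity on $A$ in $\cat{Kl}(Q)$ is the counit $\varepsilon_A$, and reading off its values on the $\spn{x_0, \dots, x_n}$ from Definition~\ref{def:16} gives the family $(\mathrm{id}_A, \pi_1, 0, 0, \dots)$, which is $(\mathrm{id}_A)^{(\bullet)}$ by Lemma~\ref{lem:4}. For composition, one evaluates the co-Kleisli composite $g \circ Qf \circ \delta_A$ of $f \colon QA \to B$ and $g \colon QB \to C$ at $\spn{x_0, \dots, x_n}$: the explicit formula for $\delta_A$ produces a sum over partitions $[n] = A_1 \mid \dots \mid A_k$ of terms $\spn{\spn{x_0}, \spn{x_{A_1}}, \dots, \spn{x_{A_k}}}$, onto which $Qf$ acts componentwise and $g$ then evaluates, yielding $\sum_{[n] = A_1 \mid \dots \mid A_k} g^{(k)}\bigl(f^{(\emptyset)}, f^{(A_1)}, \dots, f^{(A_k)}\bigr)(x_0, \dots, x_n)$ in the notation of Definition~\ref{def:13} --- exactly the Fa\`a di Bruno composition formula~\eqref{eq:8}.

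For the cartesian structure, the terminal object of $\cat{Kl}(Q)$ is the zero module, as in $\kmodw$, and the binary product of $A, B$ is the biproduct $A \oplus B$ of $\kmod$, whose underlying set is the product $A \times B$ formed in $\kmodw$; its projections in $\cat{Kl}(Q)$ are $\pi_i \circ \varepsilon$, which $E$ sends to the family $(\pi_i, \pi_i \pi_1, 0, \dots)$, matching the description of products in $\faa(\kmodw)$ from Lemma~\ref{lem:10}. Finally, for the differential: given $f \colon QA \to B$, one unwinds the co-Kleisli derivative $\mathrm{D}f \colon Q(A \oplus A) \to B$ of~\eqref{eq:17} on a generator $\spn{(x_0, y_0), \dots, (x_n, y_n)}$. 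Applying the storage map $\chi$ of~\eqref{eq:3} --- which is $(Q\pi_0 \otimes Q\pi_1) \circ \Delta$ --- then $1 \otimes \varepsilon$, then $\mathsf{d}$, the counit $\varepsilon$ annihilates all but the terms in which the second tensor factor is a generator of length $\leqslant 1$, and $\mathsf{d}$ appends that entry; the upshot is $f^{(n+1)}(\vec x, y_0) + \sum_{i=1}^n f^{(n)}(\vec x[y_i/x_i])$ with $\vec x = x_0, \dots, x_n$, which is precisely the formula~\eqref{eq:11} defining the differential of $\faa(\kmodw)$ (Proposition~\ref{prop:1}). Hence $E$ preserves all the structure and is the desired isomorphism.

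I expect the differential computation of the last paragraph to be the main obstacle --- not because it is deep, but because it requires carefully tracking the interaction of the storage map (itself built from the comultiplication $\Delta$) with the counit $\varepsilon$ and the deriving transformation $\mathsf{d}$, and matching the resulting combinatorics of subsets and substitutions against~\eqref{eq:11}, while also keeping straight that the object ``$A \times A$'' is interpreted as the same $k$-module on both sides.
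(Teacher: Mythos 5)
Your proposal is correct and follows essentially the same route as the paper's proof: the same identity-on-objects correspondence, the same bijection on hom-sets via $f^{(n)}(x_0,\dots,x_n) = f(\spn{x_0,\dots,x_n})$, the same reading-off of identities and co-Kleisli composition from $\varepsilon$, $\delta$ and $Qf$ against~\eqref{eq:8}, and the same element-chase of~\eqref{eq:17} through $\chi$, $1\otimes\varepsilon$ and $\mathsf{d}$ to recover~\eqref{eq:11}. The only (harmless) difference is that you spell out the cartesian structure and projections explicitly where the paper leaves that as an easy exercise.
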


\begin{proof}
  Clearly, objects of $\cat{Kl}(Q)$ and $\faa(\kmodw)$ are the same.
  On maps, since
  \begin{equation*}
    QA = \textstyle\bigoplus_{a \in A} SA =
    \bigoplus_{a \in A}(\bigoplus_{n \in \mathbb{N}}A^{\otimes n} /
    \mathfrak{S}_n) \cong \bigoplus_{n \in \mathbb{N}}(\bigoplus_{a \in A}A^{\otimes n} /
    \mathfrak{S}_n)\rlap{ ,}
  \end{equation*}
  we have a bijection between maps $A \rightarrow B$ in $\cat{Kl}(Q)$
  and in $\faa(\kmodw)$ by sending the $k$-linear map
  $f \colon QA \rightarrow B$ to the family of functions
  $f^{(\bullet)} \colon A \times A^n \rightarrow B$ with
  \begin{equation}\label{eq:16}
    f^{(n)}(x_0, \dots, x_n) = f(\spn{x_0, \dots, x_n})\rlap{ .}
  \end{equation}
  It is clear from the formula for
  $\varepsilon_A \colon QA \rightarrow A$ that identity maps correspond
  under this bijection. As for composition, we can read off from the
  formulae for $\delta_A$ and $Qf$ that the co-Kleisli composite of
  $f \colon QA \rightarrow B$ and $g \colon QB \rightarrow C$ is given
  by
  \begin{equation*}
    \spn{x_0, \dots, x_n} \quad \mapsto \quad \sum_{[n] = A_1 \mid \cdots \mid
      A_k} g\spn{ \,f\spn{x_0}, f\spn{x_{A_1}}, \dots,
      f\spn{x_{A_k}}\,}\rlap{ .}
  \end{equation*}
  Transforming this via the formula~\eqref{eq:16} and comparing
  with~\eqref{eq:8}, we conclude that this co-Kleisli composite
  corresponds to the Fa\'a di Bruno composite
  $(g \circ f)^{(\bullet)}$. So we have an isomorphism of categories
  $\cat{Kl}(Q) \cong \faa(\kmodw)$ which it is an easy exercise to
  check is an isomorphism of cartesian left-$k$-linear categories.

  Finally, we compare the differentials $\mathrm{D}$. For
  $\cat{Kl}(Q)$, this is computed via formula~\eqref{eq:17}; taking the
  image of a basis element of $Q(A \oplus A)$ under each of the maps in
  this composite in succession yields:
  \begin{align*}
    \spn{(x_0, y_0), \dots, (x_n, y_n)} & {} \mapsto
    \textstyle\sum_{I \subseteq [n]}
    \spn{x_I} \otimes \spn{y_{[n] \setminus I}}\\
    & {} \mapsto
    \spn{x_0, \dots, x_n} \otimes y_0 + \textstyle\sum_{i=1}^n
    \spn{x_0, \dots, x_{i-1}, x_{i+1}, \dots, x_n}
    \otimes y_i \\
    & {} \mapsto
    \spn{x_0, \dots, x_n, y_0} + \textstyle\sum_{i=1}^n
    \spn{x_0, \dots, x_{i-1}, y_i, x_{i+1}, \dots x_n}
  \end{align*}
  Transforming this via~\eqref{eq:16} and comparing with~\eqref{eq:11},
  we conclude that $\cat{Kl}(Q) \cong \faa(\kmodw)$ as cartesian
  differential categories.
\end{proof}
We remark in passing that, since $\kmod$ is symmetric monoidal closed,
and $Q$ is a monoidal differential modality, 
$\faa(\kmodw) \cong \cat{Kl}(Q)$ is a cartesian \emph{closed}
differential category, with the exponential $B^A$ of
$A,B \in \faa(\kmodw)$ given by the $k$-module of Fa\`a di Bruno maps
$A \rightsquigarrow B$.

\subsection{\texorpdfstring{$\faa(\A)$}{Faa(A)} as a co-Kleisli construction}
\label{sec:presheaves-left-k}

We now show that, for a general left-$k$-linear category $\A$,
there is a full structure-preserving embedding of $\faa(\A)$ into a
cartesian differential category induced by a (monoidal) differential modality.

\begin{Defn}
  \label{def:14}
  Let $\A$ be a left-$k$-linear category. We write
  $\cat{Psh}_\ell(\A)$ for the category $[\A^\mathrm{op}, \kmod]$ of
  $\kmod$-valued presheaves on the underlying ordinary category
  of~$\A$ (the nomenclature will be explained in
  Section~\ref{sec:presh-skew-mono} below). The left-$k$-linearity of $\A$ endows each representable
  $\A(\thg, A) \colon \A^\mathrm{op} \rightarrow \cat{Set}$
  with a lifting through the forgetful functor
  $\kmod \rightarrow \cat{Set}$, and we write
  $yA \in \cat{Psh}_\ell(\A)$ for the object so~obtained.
\end{Defn}

We view $\cat{Psh}_\ell(\A)$ as a symmetric monoidal $k$-linear
category, where both the monoidal structure and the $k$-linear
structure on the homs is given componentwise. With respect to this
structure, we have a monoidal differential modality on
$\cat{Psh}_\ell(\A)$ which is induced by postcomposition with the
initial monoidal differential modality $Q$ on $\kmod$. We call this
the \emph{pointwise initial monoidal differential modality}, and
denote it by abuse of notation by $Q$.

\begin{Defn}
  \label{def:23}
  Let $\A$ be a cartesian left-$k$-linear category. We write
  $\cat{Kl}_\A(Q)$ for the full subcategory of the co-Kleisli
  category of the pointwise initial monoidal differential modality $Q$
  on $\cat{Psh}_\ell(\A)$ on those objects of the form $yA$.
\end{Defn}

Note that we have $y(A \times B) \cong yA \times yB$, so that
$\cat{Kl}_\A(Q)$ is closed under finite products in the full
co-Kleisli category, and so is itself a cartesian differential
category. Our objective is to show it is isomorphic to
$\faa(\A)$; the key to which is a characterisation of objects of the
form $Q(yA) \in \cat{Psh}_\ell(\A)$. Towards this, we give:

\begin{Defn}
  \label{def:24}
  Let $\A$ be a cartesian left-$k$-linear category, and let
  $X \in \cat{Psh}_\ell(\A)$.
  \begin{enumerate}[(i),itemsep=0.35\baselineskip]
  \item We say that $x \in XA$ is \emph{$k$-linear} if each of the
    following functions is $k$-linear:
    \begin{equation}\label{eq:18}
      \begin{aligned}
        \A(B,A) & \rightarrow XB \\
        f & \mapsto Xf(x)\rlap{ .}
      \end{aligned}
    \end{equation}
    In other words, writing $x \cdot f$ for $Xf(x)$, we have
    $x \cdot (\lambda f+\mu g) = \lambda (x\cdot f) + \mu (x \cdot g)$.
  \item We say that $x \in X(A_1 \times \dots \times A_n)$ is
    \emph{$k$-linear in the $i$th variable} if it is $k$-linear as an
    element of the presheaf
    $X(A_1 \times \dots \times A_{i-1} \times (\thg) \times A_{i+1}
    \times \dots \times A_n)$.
  \item We say that $x \in X(A \times B^n)$ is \emph{symmetric in the
      last $n$ variables} if it is fixed by $X(1 \times \sigma)$ for all
    permutations $\sigma \colon B^n \rightarrow B^n$ of the product
    factors.
  \item A \emph{Fa\`a di Bruno sequence of $X$ at stage $A$} is a family
    of elements
    \begin{equation*}
      \bigl(x^{(n)} \in X(A \times A^n) : n \in \mathbb{N}\bigr)
    \end{equation*}
    such that each $x^{(n)}$ is symmetric and multilinear in its last $n$
    variables.
  \end{enumerate}
\end{Defn}

\begin{Lemma}
  \label{lem:13}
  Let $\A$ be a cartesian $k$-linear category, and let $A \in \A$. The
  object $Q(yA) \in \cat{Psh}_\ell(\A)$ classifies Fa\`a di Bruno sequences at
  stage $A$. More precisely, the following family of elements is a
  Fa\`a di Bruno sequence
  \begin{equation}\label{eq:20}
    \spn{\pi_0, \dots, \pi_n} \in Q\bigl(\A(A \times A^n,
    A)\bigr) = Q(yA)(A \times A^n)\rlap{ ,}
  \end{equation}
  and for any $X \in \cat{Psh}_\ell(\A)$ and Fa\`a di Bruno sequence $x^{(\bullet)}$
  of $X$ at stage $A$, there is a unique
  $\xi \colon Q(yA) \rightarrow X$ in $\cat{Psh}_\ell(\A)$ such that
  $\xi(\spn{\pi_0, \dots, \pi_{n}}) = x^{(n)}$ for each
  $n$.
\end{Lemma}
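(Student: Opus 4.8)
The plan is to compute the hom-set $\cat{Psh}_\ell(\A)(Q(yA), X)$ directly --- using the componentwise definition of $Q$ together with the Yoneda lemma --- and to identify it with the set of Fa\`a di Bruno sequences of $X$ at stage $A$, via the assignment $\xi \mapsto (\xi(\spn{\pi_0, \dots, \pi_n}))_{n \in \mathbb{N}}$. This gives at once the existence and the uniqueness of the $\xi$ in the statement; and the first assertion --- that $(\spn{\pi_0, \dots, \pi_n})_n$ really is a Fa\`a di Bruno sequence of $Q(yA)$ at stage $A$ --- comes out as the sequence corresponding under this bijection to $\mathrm{id}_{Q(yA)}$, applied with $X = Q(yA)$.

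First I would unpack a natural transformation $\xi \colon Q(yA) \to X$. Since $Q$ acts pointwise, $\xi$ amounts to a family of $k$-linear maps $\xi_B \colon Q(\A(B,A)) \to XB$ natural in $B$. Using $Q(M) = \bigoplus_{m_0 \in M} \mathrm{Sym}(M)$ with $\mathrm{Sym}(M) = \bigoplus_n M^{\otimes n}/\mathfrak{S}_n$, and that $\spn{m_0, \dots, m_n}$ is the image of $m_1 \otimes \dots \otimes m_n$ in the $m_0$-th summand of $M^{\otimes n}/\mathfrak{S}_n$, the universal property of the symmetric powers identifies a $k$-linear map out of $Q(\A(B,A))$ with a family of functions $(\phi^{(n)}_B \colon \A(B,A) \times \A(B,A)^n \to XB)_{n \in \mathbb{N}}$, each symmetric and $k$-multilinear in its last $n$ arguments, with $\phi^{(n)}_B(h_0, \dots, h_n) = \xi_B(\spn{h_0, \dots, h_n})$. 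By the formula for $Qf$ in Definition~\ref{def:16}, the action of $Q(yA)$ on $g \colon B' \to B$ carries $\spn{h_0, \dots, h_n}$ to $\spn{h_0 g, \dots, h_n g}$, so naturality of $\xi$ becomes $\phi^{(n)}_{B'}(h_0 g, \dots, h_n g) = Xg\bigl(\phi^{(n)}_B(h_0, \dots, h_n)\bigr)$ for all such $g$.

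Now identify $\A(B,A)^{n+1}$ with $\A(B, A \times A^n)$: the naturality condition just obtained says exactly that each $\phi^{(n)}$, regarded at the level of underlying sets (forgetting the module structure on $X$), is a natural transformation $\A(\thg, A \times A^n) \Rightarrow X$. By the Yoneda lemma these correspond bijectively to elements of $X(A \times A^n)$, via $\phi^{(n)} \leftrightarrow x^{(n)} \defeq \phi^{(n)}_{A \times A^n}(\pi_0, \dots, \pi_n)$ (note $\langle \pi_0, \dots, \pi_n\rangle = 1_{A \times A^n}$), with inverse $\phi^{(n)}_B(h_0, \dots, h_n) = X\langle h_0, \dots, h_n\rangle(x^{(n)})$ (using $\pi_i\langle h_0, \dots, h_n\rangle = h_i$). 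Under this bijection $\phi^{(n)}$ is symmetric and $k$-multilinear in its last $n$ arguments precisely when $x^{(n)}$ is symmetric and multilinear in its last $n$ variables in the sense of Definition~\ref{def:24}(ii),(iii); hence, over all $n$, the families $\phi^{(\bullet)}$ that assemble into some $\xi$ correspond exactly to the Fa\`a di Bruno sequences $x^{(\bullet)}$. Composing the two bijections yields a bijection between $\cat{Psh}_\ell(\A)(Q(yA), X)$ and the set of Fa\`a di Bruno sequences of $X$ at stage $A$, sending $\xi$ to $(\xi(\spn{\pi_0, \dots, \pi_n}))_n$; from this the preimage of a prescribed $x^{(\bullet)}$ is the desired $\xi$, uniqueness is immediate, and (taking $X = Q(yA)$) the first assertion follows as explained above.

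The step I expect to be the main obstacle --- though it is routine rather than deep --- is the verification of the equivalence in the preceding paragraph: that symmetry and $k$-linearity of the $\phi^{(n)}$ match symmetry and multilinearity of $x^{(n)}$. Symmetry is handled using the permutation maps $1 \times \sigma \colon A \times A^n \to A \times A^n$ together with the identity $\langle h_0, h_{\sigma(1)}, \dots, h_{\sigma(n)}\rangle = (1 \times \sigma') \circ \langle h_0, \dots, h_n\rangle$. For $k$-linearity in the $i$-th of the last $n$ variables, one factors $\langle h_0, \dots, h_n \rangle$ through the diagonal of $B$ and combines two facts: precomposition with a projection is $k$-linear (left-$k$-linearity of $\A$), and applying $X$ to a product map which is the identity in the relevant coordinate sends $k$-linear elements of a presheaf to elements $k$-linear in that coordinate. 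This bookkeeping with the product structure of $\A$ and the functoriality of $X$ carries essentially all the content of the proof.
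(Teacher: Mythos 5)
Your proposal is correct and is essentially the paper's own argument in different packaging: the paper defines $\xi$ directly by $\spn{f_0,\dots,f_n}\mapsto x^{(n)}\cdot(f_0,\dots,f_n)$ (your inverse Yoneda formula), gets well-definedness from the symmetric multilinearity of $x^{(n)}$ exactly as you do, and obtains uniqueness from naturality together with $\spn{\pi_0,\dots,\pi_n}\cdot(f_0,\dots,f_n)=\spn{f_0,\dots,f_n}$, which is precisely your degree-wise evaluation-at-the-identity Yoneda step. Your explicit attention to matching $k$-multilinearity of the $\phi^{(n)}$ with multilinearity of $x^{(n)}$ in the presheaf sense (using left-$k$-linearity of $\A$) is a point the paper leaves implicit, but it is routine and does not change the substance.
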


\begin{proof}
  We first verify that the elements~\eqref{eq:20} constitute a Fa\`a
  di Bruno sequence. For any sequence of maps
  $(f_0, \dots, f_n) \colon B \rightarrow A^n$ in $\A$, we have that
  \begin{equation}\label{eq:21}
    \spn{\pi_0,\dots, \pi_{n}} \cdot (f_0, \dots, f_n) =
    \spn{f_0,\dots, f_n} \in Q (yA)(B)\rlap{ ;}
  \end{equation}
  Since for each $f_0$ the assignment
  $f_1, \dots, f_n \mapsto \spn{f_0, \dots, f_n}$ is symmetric
  multilinear, we see that $\spn{\pi_0, \dots, \pi_{n}}$ is
  symmetric multilinear in its last $n$ variables, as desired.

  We now show universality of~\eqref{eq:20}. Given $x^{(\bullet)}$ a
  Fa\`a di Bruno sequence of $X \in \cat{Psh}_\ell(\A)$ at stage $A$, we define
  $\xi \colon Q(yA) \rightarrow X$ in $\cat{Psh}_\ell(\A)$ to have components
  \begin{align*}
    Q\A(B, A) & \rightarrow XB \\
    \spn{f_0, \dots, f_n} & \mapsto x^{(n)} \cdot (f_0, f_1, \dots, f_n)\rlap{ .}
  \end{align*}
  These are well-defined linear maps because of
  the symmetric multilinearity of $x$ in its last $n$ variables, and
  $\xi(\spn{\pi_0, \dots, \pi_n}) = x^{(n)} \cdot (\pi_0, \pi_1,
  \dots, \pi_n) = x^{(n)} \in X(A \times A^n)$. Moreover, if
  $\gamma \colon Q(yA) \rightarrow X$ in $\cat{Psh}_\ell(\A)$ satisfies
  $\gamma(\spn{\pi_0, \dots, \pi_n}) = x^{(n)}$ for each $n$,
  then we have
  $\gamma(\spn{f_0, \dots,  f_n}) = \gamma(\spn{\pi_0, \dots, \pi_n}) \cdot (f_0, \dots, f_n) = x^{(n)} \cdot (f_0, f_1, \dots, f_n)$
  by~\eqref{eq:21} and naturality of $\gamma$, so that $\gamma = \xi$
  as required.
\end{proof}

Given this result, it is now straightforward to prove:

\begin{Prop}
  \label{prop:5}
  Let $\A$ be a left-$k$-linear category. We have an isomorphism of cartesian differential
  categories $\faa(\A) \cong \cat{Kl}_\A(Q)$, so that $\faa(\A)$ admits a full structure-preserving embedding
  into the cartesian differential category induced by the pointwise
  initial monoidal differential modality $Q$
  on $\cat{Psh}_\ell(\A)$.
\end{Prop}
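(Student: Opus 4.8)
The plan is to build an explicit isomorphism of cartesian differential categories $\Phi \colon \faa(\A) \to \cat{Kl}_\A(Q)$ which is the identity on objects, via the assignment $A \mapsto yA$, and which reads off all of the structure from Lemma~\ref{lem:13}, exactly as the proof of Proposition~\ref{prop:4} did for the special case $\A = \kmodw$. First note that, since $\A$ is cartesian left-$k$-linear, for each $A, B \in \A$ the presheaf $yB$ has $yB(A \times A^n) = \A(A \times A^n, B)$ as a $k$-module with presheaf action given by precomposition; comparing Definition~\ref{def:10} with Definition~\ref{def:24}, a morphism $f^{(\bullet)} \colon A \rightsquigarrow B$ of $\faa(\A)$ is \emph{precisely} a Fa\`a di Bruno sequence of $yB$ at stage $A$. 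Lemma~\ref{lem:13} therefore provides a bijection between $\faa(\A)(A,B)$ and $\cat{Psh}_\ell(\A)(Q(yA), yB) = \cat{Kl}_\A(Q)(yA, yB)$, sending $f^{(\bullet)}$ to the unique $\xi \colon Q(yA) \to yB$ with $\xi(\spn{\pi_0, \dots, \pi_n}) = f^{(n)}$, equivalently $\xi(\spn{g_0, \dots, g_n}) = f^{(n)} \circ (g_0, \dots, g_n)$ for all composable tuples; this defines $\Phi$ on hom-sets, and it is a bijection.

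Next I would check that $\Phi$ is a functor. Identities agree since $\varepsilon_{yA}(\spn{\pi_0, \dots, \pi_n})$ is $\pi_0 = \mathrm{id}_A$, $\pi_1$ or $0$ according as $n$ is $0$, $1$ or ${\geqslant 2}$, which is the formula~\eqref{eq:10} of Lemma~\ref{lem:4}. For composition, applying the explicit formulae of Definition~\ref{def:16} for $\delta$ and for $Q$ on maps, the co-Kleisli composite $\eta \circ Q\xi \circ \delta_{yA}$ of the maps $\xi,\eta$ corresponding to $f^{(\bullet)}, g^{(\bullet)}$ sends $\spn{\pi_0, \dots, \pi_n}$ to $\sum_{[n] = A_1 \mid \cdots \mid A_k} g^{(k)} \circ (f^{(\emptyset)}, f^{(A_1)}, \dots, f^{(A_k)})$, because $\xi(\spn{\pi_0}) = f^{(\emptyset)}$ and $\xi(\spn{\pi_{A_i}}) = f^{(A_i)}$ in the notation of Definition~\ref{def:13}; this is the Fa\`a di Bruno formula~\eqref{eq:8} for $(g\circ f)^{(n)}$, so $\Phi$ preserves composition.

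It then remains to match the cartesian left-$k$-linear and differential structures. The hom-bijection is manifestly $k$-linear, being evaluation of a natural transformation, and it identifies the module structure on $\cat{Kl}_\A(Q)(yA,yB) = \cat{Psh}_\ell(\A)(Q(yA), yB)$---inherited componentwise from $\kmod$---with the one on $\prod_{n} \A(A\times A^n, B)$ used in Lemma~\ref{lem:10}; together with left-$k$-linearity of both sides and the compatibility with composition already verified, this makes $\Phi$ a map of left-$k$-linear categories. For the cartesian structure, the product isomorphism~\eqref{eq:1} for $\A$ yields $y(A\times B) \cong yA\times yB$ in $\cat{Psh}_\ell(\A)$, so products in $\cat{Kl}_\A(Q)$ are carried to products in $\faa(\A)$, the projections $y\pi_i \circ \varepsilon$ corresponding under $\Phi$ to the families $\pi_i^{(\bullet)}$ of Lemma~\ref{lem:10}. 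Finally, the differential of $\cat{Kl}_\A(Q)$ is computed by the componentwise version of~\eqref{eq:17}; tracking $\spn{(\pi_0,\pi_0'),\dots,(\pi_n,\pi_n')}$ through the storage map, $1\otimes\varepsilon$ and $\mathsf{d}$ of $Q$ reproduces, now inside $\cat{Psh}_\ell(\A)$, the three-step calculation ending the proof of Proposition~\ref{prop:4}, so that $(\mathrm{D}f)^{(n)} = f^{(n+1)}(\vec x, y_0) + \sum_{i=1}^n f^{(n)}(\vec x[y_i/x_i])$, which is~\eqref{eq:11}. Hence $\Phi$ is an isomorphism of cartesian differential categories. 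The embedding statement is then immediate: by Definition~\ref{def:23} and the remark after it, $\cat{Kl}_\A(Q)$ is a full subcategory of $\cat{Kl}(Q)$ on $\cat{Psh}_\ell(\A)$ closed under finite products, so the inclusion is a full structure-preserving embedding of cartesian differential categories, and composing with $\Phi$ gives the stated embedding of $\faa(\A)$.

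I do not anticipate a deep obstacle here: all of the real content already lives in Lemma~\ref{lem:13} and in the computations performed for Proposition~\ref{prop:4}, and what remains is essentially transport of those along representables. The one thing that needs genuine care is that the modality $Q$ on $\cat{Psh}_\ell(\A)$ is the \emph{pointwise} one, so that at each object of $\A^{\mathrm{op}}$ every structure map---$\delta$, $\varepsilon$, the storage map $\chi$, the deriving transformation $\mathsf{d}$, and the comonoid maps $e, \Delta$---is given by the formulae of Definition~\ref{def:16}; a secondary, minor point is the use of cartesianness of $\A$ to know that $y$ preserves finite products, which is exactly what lets the cartesian structures be matched.
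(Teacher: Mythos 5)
Your proposal is correct and follows essentially the same route as the paper: identify hom-sets via Lemma~\ref{lem:13} (morphisms of $\faa(\A)$ being exactly Fa\`a di Bruno sequences of $yB$ at stage $A$), observe that the correspondence is given by $f^{(n)}(x_0,\dots,x_n) = \xi(\spn{x_0,\dots,x_n})$, and then transcribe the computations of identities, composition, linear/cartesian structure and the differential from the proof of Proposition~\ref{prop:4}, using that the modality on $\cat{Psh}_\ell(\A)$ is pointwise. The only difference is that you carry out the transcription explicitly where the paper simply cites it, which is harmless.
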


\begin{proof}
  Clearly, objects of $\faa(\A)$ and $\cat{Kl}_\A(Q)$ are the same.
  Now maps from $A$ to $B$ in $\cat{Kl}_\A(Q)$ are maps
  $f \colon Q yA \rightarrow yB$ in $\cat{Psh}_\ell(\A)$; by
  Lemma~\ref{lem:13}, these correspond to Fa\`a di Bruno sequences of
  $yB$ at stage $A$, whose data is precisely that of a map
  $f^{(\bullet)} \colon A \rightsquigarrow B$ in $\faa(\A)$. Observe that
  this $f^{(\bullet)}$ is characterised by 
  \begin{equation*}
    f^{(n)}(x_0, \dots, x_n) = f(\spn{x_0, \dots, x_n}) \in \A(B,A)
  \end{equation*}
  for all $X \in \A$ and $x_0, \dots, x_n \colon X \rightarrow A$;
  noting the formal similarity to~\eqref{eq:16}, we can thus conclude
  by transcribing the remainder of the proof of
  Proposition~\ref{prop:4}.
\end{proof}

\section{Enrichment over skew monoidal categories}
\label{sec:skew-enrichment}

We have just seen how to use the initial monoidal differential
modality $Q$ on $\kmod$ to construct cofree cartesian differential
categories. This suggests that the notion of cartesian differential
category is somehow controlled by the comonad $Q$; and the main result
of the paper, to be proved in the next section, will show that this is
indeed the case. There, we will see that that cartesian differential
categories arise as categories \emph{enriched} over $\kmod$ for a
monoidal structure which is not the usual one, but rather a certain
``warping'' of it controlled by the comonad $Q$.

As explained in the introduction, a slight complication is that this
warping is no longer a monoidal structure in the usual sense, but
rather a \emph{skew monoidal} structure in the sense of
Szlach\'anyi~\cite{Szlachanyi2012Skew-monoidal}. Since skew monoidal
structures, and categories enriched over them, are likely to be
unfamiliar to many, we devote this section to developing the necessary
notions. We note that Sections~\ref{sec:skew-mono-categ}
and~\ref{sec:enrichm-skew-mono} are revision from~\cite{
  Campbell2018Skew-enriched,Street2013Skew-closed,Szlachanyi2012Skew-monoidal},
but Sections~\ref{sec:change-base}--\ref{sec:finite-products} are
novel (though straightforward).

\subsection{Skew monoidal categories}
\label{sec:skew-mono-categ}
A skew monoidal category generalises a monoidal category by dropping
the requirement that the associativity and unitality constraint maps
be invertible. Of course, it then matters how we choose to orient
these maps, and ``skew monoidal'' refers to the following particular choice.

\begin{Defn}
  \label{def:1}\cite{Szlachanyi2012Skew-monoidal}
  A \emph{skew monoidal category} is a category $\V$ endowed
  with a tensor product $\otimes \colon \V \times \V
  \rightarrow \V$ and unit object $I \in \V$, together with natural
  families
  \begin{equation*}
    \alpha_{ABC} \colon (A \otimes B) \otimes C \rightarrow A \otimes
    (B \otimes C) \qquad \lambda_A \colon I \otimes A \rightarrow A
    \qquad \rho_A \colon A \rightarrow A \otimes I
  \end{equation*}
  of maps satisfying the Mac Lane associativity pentagon, the condition
  $\lambda_I \circ \rho_I = 1_I$, and the three unit conditions (where we
  omit tensor symbols for compactness):
  \begin{equation*}
    \cd{
      {A B} \ar[r]^-{1} \ar@{<-}[d]_{\lambda_A 1} &
      {A B} \ar@{<-}[d]^{\lambda_{A B}} &
      {A B} \ar[r]^-{1} \ar[d]_{\rho_A 1} &
      {A B} \ar@{<-}[d]^{1 \lambda_B} &
      {A B} \ar[r]^-{1} \ar[d]_{\rho_{AB}} &
      {A B} \ar@{<-}[d]^{1 \rho_B} \\
      {(I A) B} \ar[r]_-{\alpha_{IAB}} &
      {I (A B)} &
      {(A I) B} \ar[r]_-{\alpha_{AIB}} &
      {A (I B)} &
      {(A B) I} \ar[r]_-{\alpha_{ABI}} &
      {A (B I)}\rlap{ .}
    }
  \end{equation*}
  A skew monoidal category is said to be \emph{left closed} if each
  functor $(\thg) \otimes A \colon \V \rightarrow \V$ has a right
  adjoint $[A, \thg]$.
\end{Defn}
More precisely, we have just defined a \emph{left-skew} monoidal
category; a \emph{right-skew} monoidal category reverses the
directions of all three maps, but our convention will always be that
``skew'' means ``left-skew''.

\begin{Ex}
  \label{ex:4}
  Let $(M, +, 0)$ be a monoid. There is a left closed skew monoidal
  structure on $\cat{Set}$ with unit $1 = \{\ast\}$, tensor product
  $A \times^M B = A \times M \times B$, and constraint
  maps 
  given as follows (where juxtaposition denotes cartesian product):
  \begin{align*}
    \lambda \colon 1MB &\rightarrow B & \rho \colon A & \mapsto AM1 &
    \alpha \colon (AMB)MC & \rightarrow AM(BMC) \\
    (\ast, m, b) & \mapsto b & a & \mapsto (a,0,\ast) & ((a,m,b),n,c) &
    \mapsto (a, m+n, (b, n, c))\rlap{ .}
  \end{align*}
  The associated internal hom is given by
  $B \Rightarrow^M C = C^{M \times B}$.
\end{Ex}



This example is an instance of the following important construction,
which builds skew monoidal structures from monoidal ones:


\begin{Defn}
  \label{def:3}(\cite[Proposition~7.2]{Szlachanyi2012Skew-monoidal}).
  Let $(\V, \otimes, I)$ be a monoidal category endowed with a
  monoidal comonad $({!}, \varepsilon, \delta, m_\otimes, m_I)$. The
  \emph{fusion operator}~\cite[\sec 2.6]{Bruguieres2011Hopf} of ${!}$,
  which we use repeatedly in what follows, is the natural
  transformation $H$ with components
  \begin{equation}\label{eq:39}
    H \defeq {!}A \otimes {!}B \xrightarrow{1 \otimes \delta} {!}A
    \otimes {!}{!}B \xrightarrow{m_\otimes} {!}(A \otimes {!}B)\rlap{ .}
  \end{equation}
  The
  \emph{skew-warping} of $\otimes$ with respect to ${!}$ is the skew
  monoidal structure on $\V$ with unit $I$, with tensor
  $A \otimes^{!} B = A \otimes {!}B$ and with constraint cells
  \begin{gather*}
    (A \!\otimes\! {!}B) \!\otimes\! {!}C
    \xrightarrow{\alpha} A \!\otimes\! ({!}B \!\otimes\! {!}C)
    \xrightarrow{1 \otimes H} A \!\otimes\! {!}(B \!\otimes\! {!}C)\\
    I \otimes {!}A \xrightarrow{\lambda} {!}A
    \xrightarrow{\varepsilon} A \quad \text{and} \quad 
    A \xrightarrow{\rho} A \otimes I \xrightarrow{1
      \otimes m_I} A \otimes {!}I \rlap{ .}
  \end{gather*}
  If the monoidal structure $(\otimes, I)$ is left closed, then so
  too is $(\otimes^{!}, I)$, with the associated internal hom given by
  $[B, C]^{!} = [{!}B, C]$.
  We will often write\footnote{Note that we will \emph{never} use $\V^!$ to denote the co-Kleisli category of $!$.} $\V^{!}$ to denote $\V$ endowed with
  its warped skew monoidal structure $(\otimes^!, I)$.
\end{Defn}

For example, if $(M,+,0)$ is a monoid, then $M \times (\thg)$ is a
monoidal comonad on $\cat{Set}$, where $\varepsilon$ and $\delta$
involve projection and duplication, and $m_\otimes$ and $m_I$ involve
the monoid structure of $M$. Instantiating Definition~\ref{def:3} at
this monoidal comonad recovers the skew monoidal structure of
Example~\ref{ex:4}.

The following further example provides a first indication of the relevance of
skew monoidal structure to cartesian differential categories.
\begin{Ex}
  \label{ex:2}
  Let $\V$ be a monoidal category with all copowers
  $X \cdot I \defeq \sum_{x \in X} I$ of the unit $I$, whose tensor
  product preserves these copowers in the second variable. We have a monoidal
  adjunction
  \begin{equation}\label{eq:2}
    \cd[@C+2em]{
      {\V} \ar@<-4.5pt>[r]_-{\V(I, \thg)} \ar@{<-}@<4.5pt>[r]^-{(\thg)
        \cdot I} \ar@{}[r]|-{\bot} &
      {\cat{Set}} 
    }
  \end{equation}
  inducing a monoidal comonad $K$ on $\V$; in fact, it is the
  \emph{initial} monoidal comonad on $\V$. The warped monoidal
  structure $\otimes^{K}$ is characterised by the fact that maps
  $A \otimes KB \rightarrow C$ are the same as functions
  $\V(I, B) \rightarrow \V(A,C)$. In particular, when $\V = \kmod$, maps $A \otimes^K B \rightarrow C$ are
  exactly \emph{left-$k$-linear} maps $A \times B \rightarrow C$.
\end{Ex}

\subsection{Enrichment in a skew monoidal category}
\label{sec:enrichm-skew-mono}

We now turn to the notion of a category enriched in a skew monoidal
category. Our definition follows Street~\cite[\sec
10]{Street2013Skew-closed}, and mimics exactly the notion of
enrichment over a genuine monoidal category. We remark
that~\cite{Campbell2018Skew-enriched} gives a subtler notion of
\emph{skew-enriched category}, involving further extra data beyond the
obvious. While there are good reasons to require these extra data, for
our purposes it will prove unnecessary; though see Remark~\ref{rk:1}
below.

\begin{Defn}
  \label{def:2}
  Let $(\V, \otimes, I)$ be a skew monoidal category. A
  \emph{$\V$-enriched category} $\A$ comprises the data of a set
  $\mathrm{ob}(\A)$ of objects; hom-objects $\A(A,B) \in \V$ for each
  $A,B \in \mathrm{ob}(\A)$; and composition and identity morphisms in $\V$
  \begin{equation*}
    m_{ABC} \colon \A(B,C) \otimes \A(A,B) \rightarrow \A(A,C) \qquad
    \text{and} \qquad i_A \colon I \rightarrow \A(A,A)\rlap{ ;}
  \end{equation*}
  for all $A,B,C \in \mathrm{ob}(\A)$. These data are required to
  satisfy the axioms expressed by the commutativity of the diagrams
  \begin{equation*}
    \cd[@-1em@C-1em]{
      \sh{l}{2em}{\bigl(\A(C,D)\otimes\A(B,C)\bigr)\otimes\A(A,B)}
      \ar[r]^-{m \otimes 1} \ar[dd]_{\alpha} &
      \sh{r}{1em}{\A(B,D)\otimes\A(A,B)} \ar[dr]^-{m} \\
      & & \A(A,D)\\
      \sh{l}{2em}{\A(C,D)\otimes\bigl(\A(B,C)\otimes\A(A,B)\bigr)}
      \ar[r]^-{1 \otimes m} &
      \sh{r}{1em}{\A(C,D)\otimes\A(A,C)} \ar[ur]^-{m}
    }
  \end{equation*}
  \begin{equation*}
    \cd[@!C@C-6em]{
      {I\otimes\A(A,B)} \ar[rr]^-{i \otimes 1} \ar[dr]_-{\lambda} & &
      {\A(B,B)\otimes\A(A,B)} \ar[dl]^-{m} \\ &
      {\A(A,B)}
    } \qquad 
    \cd[@!C@C-2.6em]{
      {\A(A,B)\otimes I} \ar[r]^-{1  \otimes i} \ar@{<-}[d]_-{\rho} & 
      {\A(A,B)\otimes\A(A,A)} \ar[d]^-{m} \\ 
      {\A(A,B)} \ar[r]^-{1} & {\A(A,B)}\rlap{ .}
    }
  \end{equation*}
  The \emph{underlying ordinary category} $\A_0$ has the same
  objects as $\A$, hom-sets $\A_0(A,B) = \V(I, \A(A,B))$, and composition of
  $f \in \A_0(A,B)$ and $g \in \A_0(B,C)$ given by
  \begin{equation}\label{eq:32}
    I \xrightarrow{\rho_I} I \otimes I \xrightarrow{g \otimes f}
    \A(B,C) \otimes \A(A,B) \xrightarrow{m} \A(A,C)\rlap{ .}
  \end{equation}
  We may speak of a \emph{map} in  $\A$ to
  mean a map in the underlying category $\A_0$.
\end{Defn}

\begin{Ex}
  \label{ex:3}
  If the monoidal category $\V$ admits an initial monoidal comonad $K$
  as in Example~\ref{ex:2}, then we can consider categories enriched
  in the skew-warping $\V^{K}$. Such a category $\A$
  involves a set of objects, hom-$\V$-objects $\A(A,B)$, identities
  $I \rightarrow \A(A,A)$ and composition maps of the form
  \begin{equation*}
    \A(B,C) \otimes^K \A(A,B) \rightarrow \A(A,C)\rlap{ .}
  \end{equation*}
  In light of Example~\ref{ex:2}, it follows that 
  when $\V = \kmod$ we recapture exactly the notion of
  \emph{left-$k$-linear category}; this was observed in passing, and
  without any details being given, in~\cite[\sec 5.1]{Blute2015Cartesian}.

  In general, to give a $\V^K$-category is to give what
  we might call a \emph{left-$\V$-enriched category}: this is an
  ordinary category $\A_0$ together with, for each $B \in \A_0$, a
  lifting of the hom-functor
  $\A_0(\thg, B) \colon \A_0^\mathrm{op} \rightarrow \cat{Set}$
  through $\V(I, \thg)$:
 \begin{equation*}
 \cd[@+1em@C+0.5em]{
 & \V \ar[d]^-{\V(I,\thg)} \\
 \A_0^\mathrm{op} \ar[ur]^-{\A(\thg, B)}
 \ar[r]_-{\A_0(\thg, B)} & \cat{Set}\rlap{ .}
 }
 \end{equation*}
\end{Ex}

\subsection{Change of enrichment base}
\label{sec:change-base}
In classical enriched category theory, change of base allows us to
turn a $\V$-enriched category into a $\W$-enriched one via a monoidal
functor $\V \rightarrow \W$. This works just as well in the skew
context. 
\begin{Defn}
  \label{def:28}
  A \emph{monoidal functor}
  $(F, m_I, m_\otimes) \colon \V \rightarrow \W$ between skew monoidal
  categories comprises a functor $F \colon \V \rightarrow \W$ together
  with a map $m_I \colon I \rightarrow FI$ and a natural family of
  maps $m_\otimes \colon FA \otimes FB \rightarrow F(A \otimes B)$,
  rendering commutative each of the diagrams:
  \begin{gather*}
    \cd{
      {I \otimes FA} \ar[r]^-{m_I \otimes 1} \ar[d]_{\lambda} &
      {FI \otimes FA} \ar[d]^{m_\otimes} &
      {FA} \ar[r]^-{\rho} \ar[d]_{F\rho} &
      {FA \otimes I} \ar[d]^{1 \otimes m_I} 
      \\
      {FA} \ar@{<-}[r]^-{F\lambda} &
      {F(I \otimes A)} & F(A \otimes I) \ar@{<-}[r]^-{m_\otimes} &
      {FA \otimes FI}
    }\\
    \cd{
      (FA \otimes FB) \otimes FC \ar[r]^-{\alpha} \ar[d]_-{m_\otimes
        \otimes 1} & FA \otimes (FB \otimes FC) \ar[r]^-{m_\otimes} &
      FA \otimes F(B \otimes C) \ar[d]^-{m_\otimes} \\
      F(A \otimes B) \otimes FC \ar[r]^-{m_\otimes} & F((A \otimes B)
      \otimes C) \ar[r]^-{F\alpha} & F(A \otimes (B \otimes C))\rlap{ .}
    }
  \end{gather*}

  If $\A$ is a $\V$-enriched category, then its \emph{base change}
  $F_\ast \A$ is the $\W$-enriched category with the same objects,
  hom objects $(F_\ast \A)(A,B) = F(\A(A,B))$, and identities and
  composition given by
  \begin{gather*}
    I \xrightarrow{m_I} FI \xrightarrow{Fi}F\A(A,A) \quad \text{ and}
    \\
    F\A(B,C)
    \otimes F\A(A,B) \xrightarrow{m_\otimes} F(\A(B,C) \otimes
    \A(A,B)) \xrightarrow{Fm} F\A(A,C)\rlap{ .}
  \end{gather*}
\end{Defn}

\begin{Exs}
  \label{ex:18}
  \begin{enumerate}[(i),itemsep=0.25\baselineskip]
  \item For any skew monoidal category $\V$ the functor
    ${\V(I, \thg) \colon \V \rightarrow \cat{Set}}$ is monoidal, where
    $m_I \colon 1 \rightarrow \V(I,I)$ picks out the identity, and
    where the map
    $m_\otimes \colon \V(I,A) \times \V(I,B) \rightarrow \V(I,A
    \otimes B)$ takes $f,g$ to the composite
    \begin{equation*}
      I \xrightarrow{\rho_I} I \otimes I \xrightarrow{f \otimes g} A
      \otimes B\rlap{ .}
    \end{equation*}
    Base change along $\V(I,\thg)$ sends a $\V$-category $\A$ to its
    underlying category $\A_0$.
  \item If $\V$ is a monoidal category and $P,Q$ are
    monoidal comonads on $\V$, then each map of monoidal comonads
    $\gamma \colon P
    \rightarrow Q$ gives a monoidal functor $\mathrm{id} \colon \V^Q \rightarrow \V^P$
    with nullary constraint $m_I$ the identity, and binary
    constraints given by
    \begin{equation*}
      \mathrm{id}(A) \otimes^P \mathrm{id}(B) = A \otimes PB \xrightarrow{1 \otimes \gamma_B} A
      \otimes QB = \mathrm{id}(A \otimes^Q B)\rlap{ .}
    \end{equation*}
    This induces a base change operation
    $\A \mapsto \gamma^\ast(\A)$ from $\V^Q$- to
    $\V^P$-categories; its only effect is to turn the
    composition maps $\A(B,C) \otimes Q\A(A,B) \rightarrow \A(A,C)$
    into ones $\A(B,C) \otimes Q\A(A,B)
    \rightarrow \A(A,C)$ by precomposing with $1 \otimes \gamma$.

    For example, if $\V$ supports the initial monoidal comonad $K$,
    then the unique map to the terminal monoidal comonad
    $\mathrm{id}$, namely
    $\varepsilon \colon K \rightarrow \mathrm{id}$, gives a change
    of base functor from $\V$-enriched categories to
    left-$\V$-enriched categories. In particular, this is how
    $k$-linear categories can be viewed as left-$k$-linear.
  \end{enumerate}
\end{Exs}


\subsection{\texorpdfstring{$\V$}{V}-linear maps in an enriched category}
\label{sec:v-linear-maps}

Since left-$k$-linear categories can be seen as categories enriched over
a skew monoidal base, it is reasonable to ask if there is an analogue
for a general skew monoidal enrichment of the notion of
\emph{$k$-linear map}. The answer is ``yes'', but some care is needed:
in full generality, being ``linear'' may be structure on a map, rather
than a property of it.

\begin{Defn}
  \label{def:26}
  Let $\V$ be a skew monoidal category, $\A$ a $\V$-enriched category
  and $A,B \in \A$. A \emph{$\V$-linear map}
  $f \colon A \rightarrow_\ell B$ comprises maps
  $f_X \colon \A(X,A) \rightarrow \A(X,B)$ in $\V$ which are
  $\V$-natural, in the sense of rendering commutative each diagram
  \begin{equation}\label{eq:30}
    \cd{
      \A(Y, A) \otimes \A(X,Y) \ar[d]_-{m} \ar[r]^-{f_Y \otimes 1} & \A(Y,B) \otimes
      \A(X,Y) \ar[d]^-{m} \\
      \A(X,A) \ar[r]^-{f_X} & \A(X,B)\rlap{ .}
    }
  \end{equation}
  The \emph{underlying map} $f_0 \colon A \rightarrow B$ of $f$ is
  given by the composite
  \begin{equation*}
    I \xrightarrow{i} \A(A,A) \xrightarrow{f_A} \A(A,B)\rlap{ .}
  \end{equation*}
  The objects and $\V$-linear maps form a category $\A_\ell$, and the
  assignment $f \mapsto f_0$ is the action on maps of an
  identity-on-objects functor $\A_\ell \rightarrow \A_0$.
\end{Defn}

When $\V$ is genuinely monoidal, a weak form of the Yoneda lemma shows
that $\V$-linear maps are in bijection with maps of the underlying
category, i.e., $\A_\ell \rightarrow \A_0$ is an isomorphism. However,
in the skew case, it may not even be true that a $\V$-linear map $f$
is determined by its underlying map $f_0$. The following is the
diagram which would usually be drawn to prove this statement:
\begin{equation}\label{eq:25}
  \cd{
    I \otimes \A(X,A) \ar@/^2em/[rr]^-{f_0 \otimes 1} \ar[dr]_-{\lambda} \ar[r]^-{i \otimes 1} & \A(A, A) \otimes \A(X,A) \ar[d]_-{m} \ar[r]^-{f_A \otimes 1} & \A(A,B) \otimes
    \A(X,A) \ar[d]^-{m} \\ {} & 
    \A(X,A) \ar[r]^-{f_X} & \A(X,B) \rlap{ ;}
  }
\end{equation}
but in the skew context, $\lambda$ need not be invertible, so
invalidating the formula
$f_X = m \circ (f_0 \otimes 1) \circ \lambda^{-1}$ which would
determine $f_X$ from $f_0$. 

\begin{Ex}
  \label{ex:16}
  Let $\V$ be the monoidal category of $\mathbb{N}$-graded $k$-modules with its
  usual tensor product
  $(A \otimes B)_n = \sum_{n = p+q}A_p \otimes B_q$, and let $\V^K$ be
  its skew-warping for the initial monoidal comonad. A category $\A$
  enriched over
  $\V^K$ has graded $k$-modules of maps
  $\A(A,B)$, identities $\mathrm{id}_A \in \A(A,A)_0$, and
  composition given by left-$k$-linear maps $\A(B,C)_n \times
  \A(A,B)_0 \rightarrow \A(A,C)_n$. The leading example is the
  category $\A$ whose objects are graded $k$-modules, and for which
  $\A(A,B)_n$ is the set of functions $A_0 \rightarrow B_n$. In this
  case, a map in the underlying category is simply a function $A_0
  \rightarrow B_0$, while a $\V^K$-linear map can be
  calculated to be a genuine map of graded $k$-modules $A \rightarrow
  B$; clearly, the former does not determine the latter.
\end{Ex}

This leads us to make the following definition, which captures the
situation in which $\V$-linearity is, in fact, a property rather than
a structure:
\begin{Defn}
  \label{def:27}
  A skew monoidal category $(\V, \otimes, I)$ is called \emph{left
    covering} if each map $\lambda_A \colon I \otimes A \rightarrow A$ is
  an epimorphism.
\end{Defn}
In this case, the standard Yoneda lemma argument
via~\eqref{eq:25} now proves:
\begin{Lemma}
  \label{lem:9}
  If $\V$ is a left covering skew monoidal category, and $\A$ is a
  $\V$-category, then to give a $\V$-linear map $f \colon A
  \rightarrow_\ell B$ is equally to give a map $f_0 \colon A \rightarrow
  B$ in (the underlying category of) $\A$ for
  which each of the following factorisations exists:
\begin{equation}\label{eq:26}
  \cd{
    I \otimes \A(X,A) \ar[r]^-{f_0 \otimes 1} \ar@{->>}[d]_-{\lambda} & \A(A,B) \otimes
    \A(X,A) \ar[d]^-{m} \\ \A(X,A) \ar@{-->}[r]^-{f_X} & \A(X,B)\rlap{ .}
  }
\end{equation}
In particular, when $\V$ is left covering, $\A_\ell \rightarrow \A_0$
is a faithful functor.
\end{Lemma}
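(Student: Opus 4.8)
The plan is to exhibit the stated bijection by showing that the assignment $f \mapsto f_0$ carries each $\V$-linear map $f \colon A \rightarrow_\ell B$ to an $\A_0$-map admitting all the factorisations~\eqref{eq:26}, and that this assignment is a bijection onto the set of all such maps; faithfulness of $\A_\ell \rightarrow \A_0$ is then the special case recording injectivity. The forward direction, and with it injectivity, is read straight off the diagram~\eqref{eq:25}: its left-hand triangle is the left unit axiom for the $\V$-category $\A$ (so that $m \circ (i \otimes 1) = \lambda$), its right-hand square is the instance $Y = A$ of the $\V$-naturality condition~\eqref{eq:30} for $f$, and its top edge factors as $f_0 \otimes 1 = (f_A \otimes 1)(i \otimes 1)$ since $f_0 = f_A \circ i$ by definition. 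Chasing the perimeter yields $f_X \circ \lambda_{\A(X,A)} = m \circ (f_0 \otimes 1)$, which is commutativity of~\eqref{eq:26}; as $\lambda$ is epic this $f_X$ is the unique such map, so $f$ is reconstructible from $f_0$ and distinct $\V$-linear maps have distinct underlying maps.

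For surjectivity, suppose $f_0 \colon A \rightarrow B$ in $\A_0$ admits the factorisations~\eqref{eq:26}, and let $f_X$ denote the unique map with $f_X \circ \lambda_{\A(X,A)} = m_{XAB} \circ (f_0 \otimes 1)$. Two things remain: that $(f_X)_X$ is a $\V$-linear map, and that its underlying map is again $f_0$. The latter is a short calculation with $f_A \circ i_A$: using naturality of $\lambda$ to rewrite $\lambda_{\A(A,A)} \circ (1 \otimes i_A) = i_A \circ \lambda_I$, then the skew unit law $\lambda_I \circ \rho_I = 1_I$, then the defining factorisation at $X = A$, and finally that $i_A$ is the identity of $A$ in $\A_0$, the composite collapses via~\eqref{eq:32} to $f_0 \circ \mathrm{id}_A = f_0$.

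The substantive step, which I expect to be the main obstacle, is verifying the $\V$-naturality squares~\eqref{eq:30} for $(f_X)_X$. The plan is the skew analogue of the usual Yoneda manipulation: precompose both legs of~\eqref{eq:30} with an epimorphism manufactured from $\lambda$ — concretely $\lambda_{\A(Y,A)} \otimes 1_{\A(X,Y)}$, which by the skew unit condition $\lambda_{U \otimes V} \circ \alpha_{I,U,V} = \lambda_U \otimes 1_V$ is $\lambda_{\A(Y,A) \otimes \A(X,Y)} \circ \alpha_{I,\A(Y,A),\A(X,Y)}$ — and then rewrite. On the left leg one uses naturality of $\lambda$ together with the defining property of the $f_X$; on the right leg one uses the defining property of $f_Y$, then the associativity axiom for $\A$ in the form $m \circ (m \otimes 1) \circ \alpha = m \circ (1 \otimes m)$, then naturality of $\alpha$ in its first variable for $f_0 \colon I \rightarrow \A(A,B)$. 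Both legs reduce to the single composite $m_{XAB} \circ (f_0 \otimes m_{XYA}) \circ \alpha_{I,\A(Y,A),\A(X,Y)}$, so they agree after this precomposition; cancelling it is where left-coveringness enters. The delicate points are keeping the orientations of $\alpha,\lambda,\rho$ straight so that the associativity axiom applies in the direction required, and ensuring the cancellable map is genuinely epic. Assembling the forward assignment, injectivity, and surjectivity (the underlying-map identity together with $\V$-naturality) gives the asserted bijection, and in particular the faithfulness of $\A_\ell \rightarrow \A_0$.
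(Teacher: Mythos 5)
Your proposal reconstructs exactly the argument the paper has in mind: the printed ``proof'' of this lemma is the single sentence that the standard Yoneda lemma argument via~\eqref{eq:25} now proves it, and your unpacking of the forward direction (the perimeter chase of~\eqref{eq:25}), of injectivity (uniqueness of $f_X$ because $\lambda$ is epic), and of the identity $f_A \circ i_A = f_0$ (via naturality of $\lambda$, the law $\lambda_I\rho_I = 1_I$, and~\eqref{eq:32}) is correct and considerably more explicit than the source. Your reduction of both legs of the naturality square~\eqref{eq:30} to the common composite $m \circ (f_0 \otimes m) \circ \alpha_{I,\A(Y,A),\A(X,Y)}$ after precomposing with $\lambda_{\A(Y,A)} \otimes 1$ also checks out, modulo one orientation slip: the associativity axiom of a $\V$-category reads $m \circ (m \otimes 1) = m \circ (1 \otimes m) \circ \alpha$ (the domain of $\alpha$ is the left-bracketed triple), not $m \circ (m \otimes 1) \circ \alpha = m \circ (1 \otimes m)$ as you wrote; with the correct orientation, together with naturality of $\alpha$ in its first variable and the defining properties of $f_X$ and $f_Y$, the computation goes through as you describe.

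The genuine gap is the point you flag as delicate and then do not resolve: cancelling $\lambda_U \otimes 1_V$. Left-coveringness says that each $\lambda_W$ is an epimorphism; it does not say that $\lambda_U \otimes 1_V = \lambda_{U \otimes V} \circ \alpha_{I,U,V}$ is one, because $\alpha_{I,U,V}$ need not be epic and the functors $({-}) \otimes V$ need not preserve epimorphisms in a general skew monoidal category. A composite of an epimorphism after an arbitrary map is not epic, so rewriting $\lambda_U \otimes 1_V$ through $\lambda_{U\otimes V}$ does not rescue the cancellation; and the alternative of precomposing with $\lambda_{U \otimes V}$ itself is useless, since naturality of $\lambda$ just returns both legs in the form $\lambda \circ (1_I \otimes ({-}))$. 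This step therefore needs an extra hypothesis---for instance that $\otimes$ preserves epimorphisms in its first variable, which holds in every base the paper actually uses ($\kmod$, $\cat{Set}$, graded modules and their warpings)---or a different argument. To be fair, the paper's one-line proof silently elides the same point, so this is as much an elision in the source as in your reconstruction; but as written, ``cancelling it is where left-coveringness enters'' is an assertion, not a proof, of the one step that does not follow from the stated hypotheses.
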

\begin{Ex}
  \label{ex:15}
  If $!$ is a monoidal comonad on the monoidal category $\V$, then the
  skew-warping $\V^!$ is left covering just when each counit map
  $\varepsilon_X \colon !X \rightarrow X$ is epimorphic.
  In particular, if $\V$ admits the initial monoidal comonad $K$, then
  $\V^K$ is left covering precisely when the unit object $I$ is a
  generator for $\V$.

  In this case, if $\A$ is a $\V^K$-category, then a map
  $f \in \A_0(A,B)$ is $\V^K$-linear precisely when each composition
  function $f \circ (\thg) \colon \A_0(X,A) \rightarrow \A_0(X,B)$ is
  the image under the faithful functor
  $\V(I,\thg) \colon \V \rightarrow \cat{Set}$ (faithful since $I$ is
  a generator) of a map $\A(X,A) \rightarrow \A(X,B)$ in $\V$. In
  particular, when $\V = \kmod$, a $\V^K$-linear map is precisely a
  \emph{$k$-linear map} in the sense of Definition~\ref{def:5}.
\end{Ex}

\subsection{Finite products in an enriched category}
\label{sec:finite-products}

Since we are interested in cartesian differential categories, we will
need to understand the notion of \emph{finite product} in the
skew-enriched context. We conclude this section by discussing
this.
\begin{Defn}
  \label{def:9}
  Let $\V$ be a skew monoidal category and $\A$ a $\V$-enriched
  category.
  \begin{enumerate}[(i),itemsep=0.25\baselineskip]
  \item An object $1 \in \A$ is \emph{terminal} for $\A$ if
    $\A(X,1)$ is terminal in $\V$ for all $X \in \A$.
  \item A \emph{binary product} of $A,B \in \A$ comprises an object $A
    \times B \in \A$ and a span of
    $\V$-linear maps $\pi_0 \colon A
    \leftarrow_\ell A \times B \rightarrow_\ell B \colon \pi_1$, each of
    whose components
    $\A(X, A) \leftarrow \A(X, A \times B) \rightarrow
    \A(X, B)$ constitutes a product diagram in $\V$.
  \item $\A$ is \emph{cartesian} when it has a terminal object and all
    binary products.
  \end{enumerate}
\end{Defn}
From Lemma~\ref{lem:9} it follows immediately that:
\begin{Lemma}
  \label{lem:11}
  If $\V$ is a left covering skew monoidal category, then a
  $\V$-category $\A$ is cartesian just when its underlying
  ordinary category has finite products, and all binary product
  projections are $\V$-linear.
\end{Lemma}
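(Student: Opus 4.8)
The plan is to match up the data of Definition~\ref{def:9} for $\A$ with data of the underlying ordinary category $\A_0$, treating the terminal object as the nullary case and binary products as the binary case of a finite product.

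For the forward implication I would use the monoidal functor $\V(I,\thg)\colon\V\to\cat{Set}$ of Examples~\ref{ex:18}(i). Being representable, it preserves all limits that exist in $\V$. Hence an enriched-terminal $1$ --- one for which each $\A(X,1)$ is terminal in $\V$ --- is sent to a terminal object $\A_0(X,1)=\V(I,\A(X,1))$ of $\cat{Set}$, so $1$ is terminal in $\A_0$; and an enriched binary product $A\times B$, whose defining property is that each span $\A(X,A)\leftarrow\A(X,A\times B)\to\A(X,B)$ is a product in $\V$, is sent to a span $\A_0(X,A)\leftarrow\A_0(X,A\times B)\to\A_0(X,B)$ which is a product in $\cat{Set}$, naturally in $X$. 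A routine computation with the enriched axioms --- using the formula~\eqref{eq:32} for composition in $\A_0$ together with the $\V$-naturality square~\eqref{eq:30} --- shows that the legs of the latter span are postcomposition by the underlying maps of the enriched projections $\pi_0,\pi_1$; so $A\times B$ is the product of $A$ and $B$ in $\A_0$, with projections those underlying maps, and these are $\V$-linear since $\pi_0,\pi_1$ were assumed so.

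For the converse I would assume $\A_0$ has finite products and that its binary product projections are $\V$-linear, and invoke Lemma~\ref{lem:9}: for a left covering $\V$, a $\V$-linear map is exactly an $\A_0$-map admitting the factorisations~\eqref{eq:26}, so each $\A_0$-projection $\pi_i$ indeed underlies a $\V$-natural family $(\pi_i)_X$ of $\V$-maps out of $\A(X,A\times B)$. What then remains is to promote two $\cat{Set}$-level facts --- that $\A_0(X,1)$ is a singleton, and that the span $\A(X,A)\leftarrow\A(X,A\times B)\to\A(X,B)$ becomes a product span after applying $\V(I,\thg)$, both immediate since $\A_0$ is cartesian --- to the corresponding assertions inside $\V$, namely that $\A(X,1)$ is terminal in $\V$ and that the displayed span is a product span in $\V$. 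This is the same weak-Yoneda manoeuvre that proves Lemma~\ref{lem:9}: one tests morphisms and universal properties by precomposing with the epimorphisms $\lambda$, which collapses each assertion to its known image under $\V(I,\thg)$.

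The step I expect to be the main obstacle is exactly this last one: upgrading a limit diagram of \emph{hom-objects} from being a limit after applying $\V(I,\thg)$ to being a limit internally in $\V$. This is where the left-covering hypothesis is genuinely used, and it is also the reason the statement is recorded as following ``immediately'' from Lemma~\ref{lem:9} rather than proved afresh --- everything else is bookkeeping with the axioms of enriched categories and of $\V$-linear maps.
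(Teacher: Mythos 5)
Your forward direction is correct, and it supplies exactly the computation that the paper leaves implicit: the paper offers no separate argument for this lemma, deriving it ``immediately'' from Lemma~\ref{lem:9}, and your identification of the image under $\V(I,\thg)$ of the components $(\pi_i)_X$ with postcomposition by the underlying projections---via~\eqref{eq:32}, the naturality square~\eqref{eq:30}, the unit axiom $m(i \otimes 1) = \lambda$ and $\lambda_I\rho_I = 1$---is the right bookkeeping.

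The converse, however, has a genuine gap at precisely the step you flag, and the ``weak-Yoneda manoeuvre'' you propose cannot close it. Left covering, through Lemma~\ref{lem:9}, buys only \emph{cancellation}: since each $\lambda$ is epi, the components $f_X$ of a $\V$-linear map are uniquely determined and $\A_\ell \rightarrow \A_0$ is faithful. Precomposing with $\lambda$ lets you check equations between already-given parallel maps; it does not upgrade ``product cone after applying $\V(I,\thg)$'' to ``product cone in $\V$'', because the universal property required by Definition~\ref{def:9} must be tested against \emph{every} $V \in \V$, and nothing in your argument manufactures the mediating map $V \rightarrow \A(X, A \times B)$ from a cone $V \rightarrow \A(X,A)$, $V \rightarrow \A(X,B)$, nor even a single map $V \rightarrow \A(X,1)$ from the knowledge that $\V(I, \A(X,1))$ is a singleton. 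Indeed, no argument from left covering alone can exist: every genuine monoidal category is left covering and there all underlying maps are automatically $\V$-linear, so for $\V = \Cat$ the claim would say that a $2$-category has $\Cat$-enriched finite products as soon as its underlying category does---which fails, e.g.\ for the $2$-category with two objects $X, T$, with $\A(X,T)$ a one-object groupoid on a nontrivial group, $\A(T,X) = \emptyset$ and trivial endo-homs: its underlying category has finite products, but $\A(X,T)$ is not terminal in $\Cat$. What the converse actually needs---and what holds in the situations where the paper applies the lemma, namely $\V = \kmod^K$ and $\kmod^Q$---is that $\V(I,\thg)$ be conservative and preserve finite products, so that it \emph{reflects} terminal objects and binary product cones; the warping changes neither the underlying category, nor $I$, nor these limits, so for $\kmod$ this is just the fact that the forgetful functor to $\cat{Set}$ reflects finite products. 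Your proof should invoke that reflection property explicitly; left covering enters only via Lemma~\ref{lem:9}, to make $\V$-linearity a property of underlying maps rather than structure.
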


\begin{Ex}
  \label{ex:9}
  A left-$k$-linear category $\A$ is cartesian \emph{qua}
  $\kmod^K$-category precisely when it is cartesian
  left-$k$-linear.
\end{Ex}

%
%
%

\section{Cartesian differential categories as enriched
  categories}
\label{sec:cart-diff-categ-2}

In this section, we give the first main result of this paper,
exhibiting cartesian differential categories as cartesian
$\kmod^Q$-enriched categories, where $Q$ is the initial monoidal
differential modality of Definition~\ref{def:16}.

\subsection{Characterising \texorpdfstring{$\kmod^Q$}{kModQ}-categories}
\label{sec:kmodq-categories}
Before proving the main theorem, we identify general
$\kmod^Q$-categories; these turn out to be a variant of cartesian
differential categories which do away with the need for finite
products. An example of this notion would be the co-Kleisli category
of the differential modality on a tensor differential category
\emph{without} finite products.

We first record an explicit description of the fusion
map~\eqref{eq:39} for $Q$.

\begin{Lemma}
  \label{lem:1}
  The fusion map $H \colon QX \otimes QY \rightarrow Q(X \otimes QY)$
  for the initial monoidal differential modality $Q$ on $\kmod$ has
  action determined by
  \begin{equation*}
    \spn{x_0, \dots, x_m} \otimes \spn{y_0, \dots, y_n} \quad \mapsto
    \quad \sum_{\substack{[n] = A_1 \mid \dots \mid A_k \\ \theta \colon [m] \simeq [k]}} \spn{
      x_{\theta_{(1)}} \otimes
      \spn{y_{A_{\theta_{(2)}}}}}\rlap{ ,}
  \end{equation*}
  with the conventions of
  Notation~\ref{not:5}, and with $A_0 \defeq \emptyset$
  so that $\spn{y_{A_0}} = \spn{y_0}$.
\end{Lemma}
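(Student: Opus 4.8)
The plan is to compute $H$ directly from its definition \eqref{eq:39} as $m_\otimes \circ (1 \otimes \delta)$, using the explicit formulae for $\delta$ and $m_\otimes$ given in Definition~\ref{def:16}. First I would apply $1 \otimes \delta$ to a basis element $\spn{x_0,\dots,x_m} \otimes \spn{y_0,\dots,y_n}$ of $QX \otimes QY$; by the formula for $\delta_Y$, this gives
\begin{equation*}
  \spn{x_0,\dots,x_m} \otimes \sum_{[n] = A_1 \mid \cdots \mid A_k} \spn{\spn{y_0}, \spn{y_{A_1}}, \dots, \spn{y_{A_k}}} \in QX \otimes QQY\rlap{ ,}
\end{equation*}
an element lying in the summand indexed by $k$ of $QX \otimes QQY$, built from the pure tensor $\spn{y_{A_1}} \otimes \cdots \otimes \spn{y_{A_k}} \in (QY)^{\otimes k}$ with "basepoint" $\spn{y_0}$.

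Next I would apply $m_\otimes \colon QX \otimes QQY \to Q(X \otimes QY)$ to each term, using the formula
\begin{equation*}
  \spn{x_0,\dots, x_m} \otimes \spn{z_0, \dots, z_k} \mapsto \sum_{\theta \colon [m] \simeq [k]} \spn{x_{\theta_{(1)}} \otimes z_{\theta_{(2)}}}
\end{equation*}
with $z_j = \spn{y_{A_j}}$ for $1 \leqslant j \leqslant k$ and $z_0 = \spn{y_0} = \spn{y_{A_0}}$ (here using the convention $A_0 \defeq \emptyset$). Substituting, the right-hand side becomes $\sum_\theta \spn{x_{\theta_{(1)}} \otimes \spn{y_{A_{\theta_{(2)}}}}}$, and summing over all partitions $[n] = A_1 \mid \cdots \mid A_k$ as well yields exactly the claimed double sum. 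I should take a moment to check that the conventions of Notation~\ref{not:5} line up: in the outer application of $m_\otimes$ the partial isomorphism is $\theta \colon [m] \simeq [k]$, so $x_{\theta_{(1)}}$ ranges over a list of length $\abs\theta+1$ beginning with $x_0 \otimes$-paired against the basepoint $z_0 = \spn{y_0}$, and $\spn{y_{A_{\theta_{(2)}}}}$ picks out $\spn{y_{A_0}} = \spn{y_0}$ in precisely the basepoint slot — consistent with the stated formula.

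I do not expect any real obstacle here; the result is a routine (if bookkeeping-heavy) composition of two already-given formulae, and the only points requiring care are the linearity/well-definedness of the maps involved (which is inherited from $\delta$ and $m_\otimes$ being morphisms in $\kmod$) and the correct handling of the basepoint index via the $A_0 \defeq \emptyset$ convention. The proof is therefore essentially: "apply $1 \otimes \delta$, then $m_\otimes$, and collect terms," with the convention check above as the only subtlety worth spelling out.
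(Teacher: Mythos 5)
Your proposal is correct and is exactly the computation the paper intends: its proof of this lemma is the one-line remark that the formula ``can simply be read off from Definition~\ref{def:16}'', i.e.\ compose the stated formulae for $1 \otimes \delta$ and $m_\otimes$ as you do. Your handling of the basepoint slot via the convention $A_0 \defeq \emptyset$ is the only subtlety, and you have it right.
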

\begin{proof}
  This can simply be read off from Definition~\ref{def:16}.
\end{proof}
\begin{Prop}
  \label{prop:8}
  To give a $\kmod^Q$-enriched category $\A$ is equally to give
  a collection of objects; a $k$-module $\A(A,B)$ of maps between each
  pair of objects; identity elements $\mathrm{id}_A \in \A(A,A)$; and
  composition functions
  \begin{equation}
    \label{eq:19}
    \begin{aligned}
      \A(B,C) \times \A(A,B) \times \A(A,B)^n & \rightarrow \A(A,C) \\
      (g, f_0, \dots, f_n) & \mapsto g^{(n)}(f_0, \dots, f_n)
    \end{aligned}
  \end{equation}
  for each $A,B,C \in \A$ and each $n \geqslant 0$, subject to the
  following axioms:
  \begin{enumerate}[(i)]
  \item Each~\eqref{eq:19} is $k$-linear in $g$, and symmetric
    $k$-linear in $f_1, \dots, f_n$;
  \item We have $g^{(0)}(\mathrm{id_A}) = g$;
  \item We have $\mathrm{id}_B^{(0)}(f) = f$,
    $\mathrm{id}_B^{(1)}(f_0, f_1) = f_1$ and
    $\mathrm{id}_B^{(n)}(f_0, \dots, f_n) = 0$ for all $n \geqslant 2$;
  \item For all $f_0, \dots, f_n \colon A \rightarrow B$, $g_0, \dots, g_m
    \colon B \rightarrow C$ and $h \colon C \rightarrow D$ we have
    \begin{equation*}
      \displaystyle\bigl(h^{(m)}(g_0, \dots, g_m)\bigr)^{(n)}(\vec f)
      = \sum_{\substack{[n] = A_1 \mid \dots \mid A_k\\
          \theta \colon [m] \simeq [k]}}
      h^{(\abs{\theta})}\bigl(g_{\theta_{(1)}}^{(A_{\theta_{(2)}})}(\vec f)\bigr) \rlap{ ,}
    \end{equation*}
    where we define
    $g_i^{(A_{j})}(\vec f)$ as in~\eqref{eq:5}.
  \end{enumerate}
\end{Prop}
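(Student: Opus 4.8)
The plan is to show that the data and three axioms of a $\kmod^Q$-enriched category translate, essentially term by term, into the data and axioms (i)--(iv). By Definition~\ref{def:2}, a $\kmod^Q$-category $\A$ consists of a set of objects, hom-$k$-modules $\A(A,B)$, composition maps $m_{ABC} \colon \A(B,C) \otimes Q\A(A,B) \to \A(A,C)$ in $\kmod$ and identities $i_A \colon k \to \A(A,A)$, subject to the single associativity axiom and the two unit axioms, interpreted for the warped structure $(\otimes^Q, k)$ with its constraint cells $\alpha^Q = (1 \otimes H) \circ \alpha$, $\lambda^Q = \varepsilon \circ \lambda$ and $\rho^Q = (1 \otimes m_I) \circ \rho$ from Definition~\ref{def:3}. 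First I would observe, exactly as in the proof of Proposition~\ref{prop:4}, that since $QM = \bigoplus_{x \in M}\mathrm{Sym}(M)$ with $\mathrm{Sym}$ as in~\eqref{eq:13}, a $k$-linear map out of $\A(B,C) \otimes Q\A(A,B)$ is the same thing as a family of functions~\eqref{eq:19} which is $k$-linear in $g$ and symmetric $k$-linear in $f_1, \dots, f_n$, under the correspondence $g^{(n)}(f_0, \dots, f_n) \defeq m_{ABC}(g \otimes \spn{f_0, \dots, f_n})$. Thus the two kinds of structure are in bijection on data -- this is precisely axiom (i), the $k$-linearity in $g$ being automatic as $m_{ABC}$ is a morphism of $k$-modules -- and it remains only to match up the axioms.

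For the unit axioms I would simply unwind the explicit formulae for $\varepsilon$ and $m_I$ from Definition~\ref{def:16}. The left unit axiom $m \circ (i_B \otimes^Q 1) = \lambda^Q \colon k \otimes Q\A(A,B) \to \A(A,B)$ unwinds, under the correspondence above, to $\mathrm{id}_B^{(n)}(f_0, \dots, f_n) = \varepsilon_{\A(A,B)}(\spn{f_0, \dots, f_n})$, which is exactly axiom (iii) since $\varepsilon$ sends $\spn{x_0} \mapsto x_0$, $\spn{x_0,x_1} \mapsto x_1$ and $\spn{x_0, \dots, x_n} \mapsto 0$ for $n \geqslant 2$. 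Likewise the right unit axiom $m \circ (1 \otimes^Q i_A) \circ \rho^Q = 1_{\A(A,B)}$ unwinds, using $\rho^Q(g) = g \otimes \spn{1}$ and $Q(i_A)(\spn{1}) = \spn{\mathrm{id}_A}$, to $g^{(0)}(\mathrm{id}_A) = g$, which is axiom (ii).

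The substantive step, and the one I expect to be the main obstacle, is the associativity axiom $m \circ (m \otimes^Q 1) = m \circ (1 \otimes^Q m) \circ \alpha^Q$, both legs of which I would evaluate on a pure tensor $(h \otimes \spn{g_0, \dots, g_m}) \otimes \spn{f_0, \dots, f_n}$. The leg $m \circ (m \otimes^Q 1)$ gives $\bigl(h^{(m)}(g_0, \dots, g_m)\bigr)^{(n)}(\vec f)$ immediately, matching the left-hand side of axiom (iv). For the other leg, applying $\alpha^Q$ brings in the fusion operator $H$ for $Q$, whose action is recorded in Lemma~\ref{lem:1}; one then applies $1 \otimes^Q m = 1 \otimes Q(m_{ABC})$, using that $Q$ acts on maps by $\spn{z_0, \dots, z_p} \mapsto \spn{m_{ABC}(z_0), \dots, m_{ABC}(z_p)}$ together with the identification $m_{ABC}(g_i \otimes \spn{f_{A_t}}) = g_i^{(A_t)}(\vec f)$ coming from~\eqref{eq:5}; and finally applies $m_{ACD}$. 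Reading off the result, one obtains exactly $\sum_{[n] = A_1 \mid \dots \mid A_k,\ \theta \colon [m] \simeq [k]} h^{(\abs\theta)}\bigl(g_{\theta_{(1)}}^{(A_{\theta_{(2)}})}(\vec f)\bigr)$, the right-hand side of axiom (iv); here the symmetry of $h^{(\abs\theta)}$ in its last $\abs\theta$ arguments (part of axiom (i)) is what lets one ignore the order in which the list $g_{\theta_{(1)}}^{(A_{\theta_{(2)}})}(\vec f)$ is assembled. The difficulty here is purely one of bookkeeping -- keeping the partition and partial-isomorphism indexing of Notation~\ref{not:5} and Lemma~\ref{lem:1} straight throughout -- and there is no genuine mathematical obstruction once Lemma~\ref{lem:1} is in hand; together with the bijection on data and the matching of the three axioms, this yields the claimed equivalence.
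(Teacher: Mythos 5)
Your proposal is correct and follows essentially the same route as the paper's own proof: identify $k$-linear maps out of $\A(B,C)\otimes Q\A(A,B)$ with families~\eqref{eq:19} satisfying (i), match the right and left unit axioms with (ii) and (iii) via the explicit $m_I$, $Qi$ and $\varepsilon$, and verify that the two legs of the associativity axiom, evaluated on $h \otimes \spn{g_0,\dots,g_m}\otimes\spn{f_0,\dots,f_n}$ and using the fusion-operator formula of Lemma~\ref{lem:1}, give exactly the two sides of (iv).
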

   
\begin{proof}
  A $\kmod^Q$-category $\A$ has
  objects $A,B,\dots$; homs $\A(A,B) \in \kmod$; identity maps
  $i \colon k \rightarrow \A(A,A)$ which pick out elements
  $\mathrm{id}_A \in \A(A,A)$; and composition maps
  $m \colon \A(B,C) \otimes Q\A(A,B) \rightarrow \A(A,C)$, which, on writing their action as:
  \begin{equation*}
    g \otimes \spn{f_0, \dots, f_n}  \qquad \mapsto \qquad g^{(n)}(f_0,
    \dots, f_n)\rlap{ ,}
  \end{equation*}
  correspond to families of maps~\eqref{eq:19} satisfying
  axiom (i) in the statement. Now the right identity axiom for $\A$ requires commutativity of:
  \begin{equation*}
    \cd{
      \A(A,B) \ar[r]^-{1 \otimes m_I}  \ar[d]_-{1} &
      \A(A,B) \otimes QI
      \ar[d]^-{1 \otimes Qi} \\
      \A(A,B)  & \A(A,B) \otimes Q\A(A,B)\rlap{ .} \ar[l]_-{m}
    }
  \end{equation*}
  Chasing $g \in \A(A,B)$ around the long side we get $g \mapsto g \otimes
  \spn{\mathrm{id}_A} \mapsto g^{(0)}(\mathrm{id}_A)$, 
  so that commutativity is exactly axiom (ii).  
  Next, the left identity axiom requires commutativity in:
  \begin{equation*}
    \cd{
      Q\A(A,B) \ar[dr]^-{\varepsilon} \ar[r]^-{i \otimes 1} & \A(B,B) \otimes
      Q\A(A,B) \ar[d]^-{m} \\
      & \A(A,B)\rlap{ .}
    }
  \end{equation*}
  The upper composite takes $\spn{f_0, \dots, f_n}$ to
  $\mathrm{id}_B^{(n)}(f_0, \dots, f_n)$, and so comparing with the
  formula for $\varepsilon$, commutativity of this diagram is exactly axiom (iii).
  Finally, the associativity axiom requires commutativity in:
  \begin{equation*}
    \cd[@C-0.8em]{
      \A(C,D) \otimes Q\A(B,C) \otimes Q\A(A,B) \ar[d]_-{1 \otimes H} \ar[rr]^-{m \otimes 1} & &
      \sh{l}{3em}\A(B,D) \otimes Q\A(A,B) \ar[d]_-{m} \\
      \A(C,D) \otimes Q(\A(B,C) \otimes Q\A(A,B)) \ar[r]^-{1 \otimes Qm} &
      \sh{r}{1.5em}\A(C,D) \otimes Q\A(A,C) \ar[r]^-{m} &
      \A(A,D)\rlap{ .}
    }
  \end{equation*}
  Chasing a generating element $h \otimes \spn{g_0, \dots, g_m}
  \otimes \spn{f_0, \dots, 
        f_n}$ around the top composite yields $(h^{(m)}(g_0,
      \dots, g_m))^{(n)}(\vec f)$. On the other hand, chasing this
      generator around the lower composite yields in succession (using
      Lemma~\ref{lem:1} at the first step):
      \begin{align*}
        & \ \ \ \ h \otimes \spn{g_0, \dots, g_m} \otimes
        \spn{f_0, \dots, f_n} \\
      & \mapsto \sum_{\substack{[n] = A_1 \mid \dots \mid A_k \\ \theta \colon [m] \simeq [k]}}  h \otimes \spn{
          g_{\theta_{(1)}} \otimes
          \spn{f_{A_{\theta_{(2)}}}}}      \\
        & \mapsto \sum_{\substack{[n] = A_1 \mid \dots \mid A_k \\ \theta \colon [m] \simeq [k]}}  h \otimes \spn{
          g_{\theta_{(1)}}^{(A_{\theta_{(2)}})}(\vec f)}
      \mapsto \sum_{\substack{[n] = A_1 \mid \dots \mid A_k \\ \theta
          \colon [m] \simeq [k]}}  h^{(\abs \theta)} \bigl(
      g_{\theta_{(1)}}^{(A_{\theta_{(2)}})}(\vec f)\bigr)\text{, }
    \end{align*}
    so that the associativity axiom is equivalent
    to axiom (iv) in the statement.
\end{proof}

\subsection{Characterising cartesian \texorpdfstring{$\kmod^Q$}{kModQ}-categories}
\label{sec:kmodq-categ-with}

We now consider what it means for a $\kmod^Q$-category to have finite
products; this will bridge the gap with cartesian differential
categories. Note first that, since the counit maps
$\varepsilon_X \colon QX \rightarrow X$ of Definition~\ref{def:16} are
visibly epimorphic, $\kmod^Q$ is a left covering skew monoidal
category. Thus, by Lemma~\ref{lem:11}, a $\kmod^Q$-category is
cartesian just when its underlying ordinary category has finite
products, and product projections are $\kmod^Q$-linear. The following
result characterises this notion of linearity.

\begin{Prop}
  \label{prop:9}
  A map $g \colon A \rightarrow B$ of a $\kmod^Q$-category $\A$ is
  $\kmod^Q$-linear just when, for all $X \in \A$ and $f_0, f_1, \ldots
  \in \A(X,A)$ we have:
  \begin{equation}
    g^{(1)}(f_0, f_1) = g^{(0)}(f_1) \qquad \text{and} \qquad g^{(n)}(f_0,
    \dots, f_n) = 0 \text{ for all $n \geqslant 2$.}\label{eq:27}
  \end{equation}
\end{Prop}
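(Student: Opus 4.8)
The plan is to deduce this directly from Lemma~\ref{lem:9}. Since $\kmod^Q$ is left covering---its constraint maps $\lambda$ are, up to the genuine unitor of $\kmod$, the counit maps $\varepsilon_X\colon QX\to X$, which are epimorphic---that lemma tells us that $g\colon A\to B$ is $\kmod^Q$-linear precisely when, for every $X\in\A$, the factorisation~\eqref{eq:26} of $m\circ(g\otimes1)$ through $\lambda$ exists in $\kmod$. So the first step is to make~\eqref{eq:26} completely explicit for the base $\kmod^Q$. Using the genuine left unitor of $\kmod$ to identify $I\otimes^Q\A(X,A)=k\otimes Q\A(X,A)$ with $Q\A(X,A)$, the map $\lambda$ becomes the counit $\varepsilon_{\A(X,A)}\colon Q\A(X,A)\to\A(X,A)$, which by Definition~\ref{def:16} sends $\spn{f_0}\mapsto f_0$, $\spn{f_0,f_1}\mapsto f_1$ and $\spn{f_0,\dots,f_n}\mapsto 0$ for $n\geqslant 2$; while the composite $m\circ(g\otimes1)\colon Q\A(X,A)\to\A(X,B)$ sends $\spn{f_0,\dots,f_n}$ to $g^{(n)}(f_0,\dots,f_n)$, by the description of $m$ read off in the proof of Proposition~\ref{prop:8}.

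Next I would unpack what it means for a $k$-linear map $g_X\colon\A(X,A)\to\A(X,B)$ to satisfy $g_X\circ\varepsilon_{\A(X,A)}=m\circ(g\otimes1)$. Since $Q\A(X,A)$ is spanned as a $k$-module by the elements $\spn{f_0,\dots,f_n}$, and both sides are $k$-linear, this equation is equivalent to the three families of identities obtained by evaluating at such elements: (a) $g_X(f_0)=g^{(0)}(f_0)$ for all $f_0$; (b) $g_X(f_1)=g^{(1)}(f_0,f_1)$ for all $f_0,f_1$; and (c) $g^{(n)}(f_0,\dots,f_n)=0$ for all $n\geqslant 2$ and all $f_i$.

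From here the equivalence with~\eqref{eq:27} is immediate in both directions. If $g$ is $\kmod^Q$-linear then, fixing $X$, a $g_X$ as above exists; condition (a) forces $g_X=g^{(0)}(\thg)$, and substituting this into (b) yields the first equation of~\eqref{eq:27}, while (c) is the second. Conversely, assuming~\eqref{eq:27}, I would define $g_X\colon\A(X,A)\to\A(X,B)$ by $g_X(f)\defeq g^{(1)}(0,f)$; this is $k$-linear by the symmetric $k$-linearity of $g^{(1)}$ in its last variable (Proposition~\ref{prop:8}(i)), it agrees with $g^{(0)}(\thg)$ by the first equation of~\eqref{eq:27} taken with $f_0=0$, so that (a) holds; (b) then follows from the first equation of~\eqref{eq:27} again, and (c) is its second equation. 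Hence the factorisation~\eqref{eq:26} exists for every $X$, and $g$ is $\kmod^Q$-linear.

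I do not expect a genuine obstacle here: the only point requiring care is the bookkeeping in the first step---correctly identifying $\lambda$ for $\kmod^Q$ with the comonad counit and reading off the action of $m$---after which everything reduces to evaluating a $k$-linear identity on the spanning elements $\spn{f_0,\dots,f_n}$. The one mildly subtle observation is that the factorising map $g_X$ is forced to be $g^{(0)}(\thg)$, so that $g^{(0)}(\thg)$ must itself be $k$-linear for linearity to hold; this is why in the converse direction one exhibits $g_X$ as $g^{(1)}(0,\thg)$, whose $k$-linearity is part of the axioms, rather than naively as $g^{(0)}(\thg)$.
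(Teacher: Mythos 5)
Your argument is correct and follows the paper's own proof essentially verbatim: both apply Lemma~\ref{lem:9}, identify $\lambda$ for $\kmod^Q$ with the counit $\varepsilon$, evaluate the factorisation condition on the spanning elements $\spn{f_0,\dots,f_n}$, and establish sufficiency by observing that $g_X(f)=g^{(1)}(0,f)$ is $k$-linear by axiom (i) of Proposition~\ref{prop:8}. The subtlety you flag at the end is precisely the point the paper makes in its final sentence.
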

\begin{proof}
  By Lemma~\ref{lem:9}, $g$ is $\kmod^Q$-linear just when, for each
  $X \in \A$ there is a factorisation in $\kmod$ of the form
  \begin{equation*}
    \cd{
      Q\A(X,A) \ar[r]^-{g \otimes 1} \ar@{->>}[d]_-{\varepsilon} & \A(A,B) \otimes
      \A(X,A) \ar[d]^-{m} \\ \A(X,A) \ar@{-->}[r]^-{g_X} & \A(X,B)\rlap{ .}
    }
  \end{equation*}
  Evaluating both ways around the diagram at $\spn{f_0} \in Q\A(X,A)$, we
  must have $g_X(f_0) = g^{(0)}(f_0)$; evaluating at $\spn{f_0, f_1}$,
  we must have $g_X(f_1) = g^{(1)}(f_0, f_1)$; and evaluating at
  $\spn{f_0, \dots, f_n}$ for $n \geqslant 2$, we must have $0 =
  g^{(n)}(f_0, \dots, f_n)$. This shows the necessity
  of~\eqref{eq:27}; the sufficiency follows on noting that $g_X$
  defined thus is $k$-linear, since $g_X(f) =
  g^{(1)}(0,f)$ and $g^{(1)}$ is $k$-linear in its second argument.
\end{proof}

We are now ready to prove the first main result of the paper.

\begin{Thm}
  \label{thm:3}
  To give a cartesian differential category is equally to give a
  cartesian $\kmod^Q$-category; under this correspondence, the
  $\mathrm{D}$-linear maps correspond to the $\kmod^Q$-linear ones.
\end{Thm}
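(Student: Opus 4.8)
The plan is to assemble the three preceding results into a single identification. First, I would observe that $\kmod^Q$ is left covering, since the counit maps $\varepsilon_A \colon QA \to A$ of Definition~\ref{def:16} are visibly epimorphic; so Lemma~\ref{lem:11} applies, and a cartesian $\kmod^Q$-category is precisely a $\kmod^Q$-category whose underlying ordinary category has finite products and all of whose product projections are $\kmod^Q$-linear. Unravelling this via Propositions~\ref{prop:8} and~\ref{prop:9}, such a structure amounts to: a collection of objects; $k$-modules $\A(A,B)$ of maps; identities $\mathrm{id}_A$; operations $g^{(n)}(f_0, \dots, f_n)$ satisfying axioms~(i)--(iv) of Proposition~\ref{prop:8}; finite products in the underlying category $\A_0$, whose composite of $f$ and $g$ is $g^{(0)}(f)$ by~\eqref{eq:32}; together with the requirement that each product projection $\pi_i$ satisfy $\pi_i^{(1)}(f_0, f_1) = \pi_i^{(0)}(f_1)$ and $\pi_i^{(n)}(f_0, \dots, f_n) = 0$ for $n \geqslant 2$.

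Next I would convert this ``operations'' data into the ``morphisms'' data of Corollary~\ref{cor:2}. In the presence of finite products in $\A_0$, each operation $g^{(n)}(\thg, \dots, \thg) \colon \A(X,A)^{n+1} \to \A(X,B)$ is natural in $X$ --- an instance of Proposition~\ref{prop:8}(iv) (the outer index being $0$) --- and is $k$-linear and symmetric in its last $n$ arguments by Proposition~\ref{prop:8}(i); so by the Yoneda lemma it is represented by a unique map $g^{(n)} \colon A \times A^n \to B$ with $g^{(n)}(f_0, \dots, f_n) = g^{(n)} \circ \langle f_0, \dots, f_n\rangle$, depending $k$-linearly on $g$. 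The underlying category $\A_0$ is cartesian, with $\kmod^Q$-linear --- hence, by Example~\ref{ex:15}, $k$-linear --- projections, so it is cartesian left-$k$-linear; and one then checks that the operators $(\thg)^{(n)}$ satisfy conditions~(i)--(vii) of Corollary~\ref{cor:2}. Conditions~(i), (ii) and (vi) are read off from Proposition~\ref{prop:8}(i)--(ii); (iii) is Proposition~\ref{prop:9} applied to the projections; (iv) is Proposition~\ref{prop:8}(iii); while (v) and (vii) are the two specialisations of the ``nested composition'' axiom Proposition~\ref{prop:8}(iv) obtained by taking the inner index $m = 0$ (yielding the Fa\`a di Bruno formula~\eqref{eq:8}), and by taking $h = f^{(n)} = f^{(n)}(\pi_0, \dots, \pi_n)$ and using the linearity of the projections to annihilate the terms indexed by partitions containing a block of size $\geqslant 2$ (yielding the higher-order formula of Lemma~\ref{lem:12}). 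Conversely, starting from a cartesian differential category, Corollary~\ref{cor:2} supplies operators $(\thg)^{(n)}$, and setting $g^{(n)}(f_0, \dots, f_n) \defeq g^{(n)} \circ \langle f_0, \dots, f_n\rangle$ one deduces axioms~(i)--(iv) of Proposition~\ref{prop:8} from conditions~(i)--(vii) of Corollary~\ref{cor:2} by the same bookkeeping; the finite products are those of $\A$, and their projections are $\kmod^Q$-linear by Proposition~\ref{prop:9} and Corollary~\ref{cor:2}(iii). The two passages are manifestly mutually inverse.

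For the final clause, under this correspondence $\mathrm{D}g = g^{(1)}$, so by Definition~\ref{def:6} the map $g$ is $\mathrm{D}$-linear exactly when $g^{(1)}(x,v) = g^{(0)}(v)$ for all $x, v$. This condition already forces $g^{(n)} = 0$ for $n \geqslant 2$: if it holds then $\mathrm{D}g = g\pi_1$, whence by the chain rule and axiom~(iii) of Definition~\ref{def:8} we have $\mathrm{D}(\mathrm{D}g)(x,v,w,z) = \mathrm{D}(g\pi_1)(x,v,w,z) = \mathrm{D}g(v,z) = gz$, so that $g^{(2)}(x,v,w) = \mathrm{D}(\mathrm{D}g)(x,v,w,0) = 0$, and then $g^{(n)} = 0$ for larger $n$ by induction. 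Comparing with the criterion of Proposition~\ref{prop:9}, being $\mathrm{D}$-linear is thus exactly being $\kmod^Q$-linear.

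I expect the main obstacle to be the Yoneda-representability step together with the attendant combinatorial bookkeeping: confirming that Proposition~\ref{prop:8}(iv) does specialise to both~\eqref{eq:8} and the formula of Lemma~\ref{lem:12} requires careful tracking of partitions, partial isomorphisms and the conventions of Notation~\ref{not:5}. However, all of the genuinely combinatorial content has already been dealt with in the proofs of Lemmas~\ref{lem:6} and~\ref{lem:12}, so this is organisational rather than conceptually new.
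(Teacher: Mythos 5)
Your proposal is correct and takes essentially the same route as the paper's proof: it characterises cartesian $\kmod^Q$-categories via Propositions~\ref{prop:8} and~\ref{prop:9} and Lemma~\ref{lem:11}, defines the higher-order operators by evaluating the composition operations at the tuples of projections (your Yoneda-representability step amounts to exactly the paper's formula $f^{(n)} \defeq f^{(n)}(\pi_0, \dots, \pi_n)$ together with~\eqref{eq:28}), and matches the axioms of Corollary~\ref{cor:2} against those of Proposition~\ref{prop:8} using the same specialisations ($m=0$ for the chain rule, projections for the coassociativity axiom). Your explicit check that $\mathrm{D}$-linearity forces $g^{(n)} = 0$ for $n \geqslant 2$ fills in a small step the paper leaves implicit in its appeal to Proposition~\ref{prop:9}, and is a worthwhile addition.
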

\begin{proof}
  Consider first a cartesian differential category $\A$, presented as
  in Corollary~\ref{cor:2}. The corresponding $\kmod^Q$-category
  $\underline \A$
  will have the same objects, hom-modules, and identity maps; while its
  composition functions~\eqref{eq:19} are defined using the higher-order
  derivatives and composition in $\A$. We obtain the various axioms of
  Proposition~\ref{prop:8} as follows:
  \begin{enumerate}[(i)]
  \item This follows from axioms (i) and (ii) of Corollary~\ref{cor:2};
  \item This follows from the category axioms for $\A$;
  \item This follows from axiom (iv) of Corollary~\ref{cor:2};
  \item This follows from the combination of axioms (v) and (vii) of Corollary~\ref{cor:2}.
  \end{enumerate}
  To see that $\underline \A$ is cartesian, note that its underlying
  ordinary category---which is $\A$---admits finite products, and that
  the product projection maps $\pi_i$ are $\kmod^Q$-linear by axiom
  (iii) of Corollary~\ref{cor:2}.

  Suppose conversely that $\underline \A$ is a cartesian
  $\kmod^Q$-category. By changing base along the unique monoidal
  comonad morphism $\gamma \colon K \rightarrow Q$, we see that
  $\underline \A$ has an underlying left-$k$-linear category $\A$,
  with the same objects, hom-objects and identities, and with
  composition law $g \circ f = g^{(0)}(f)$. Moreover, the finite
  products of $\underline \A$ yield finite products in $\A$, which is
  thus \emph{cartesian} left-$k$-linear.

  For each $n \geqslant 0$ and $A,B \in \A$, we now define the
  higher-order derivative
  \begin{equation}
    \label{eq:29}
    \begin{aligned}
      (\thg)^{(n)} \colon \A(A,B) & \rightarrow \A(A \times A^n, B) \\
      f & \mapsto f^{(n)}(\pi_0, \pi_1, \dots, \pi_n)
    \end{aligned}
  \end{equation}
  on $\A$. We claim that these satisfy the axioms of
  Corollary~\ref{cor:2}, so yielding a cartesian differential
  structure on $\A$. Observe first that, given
  $f_0, \dots, f_n \colon A \rightarrow B$ and
  $g \colon B \rightarrow C$, we can form
  $f = (f_0, \dots, f_n) \colon A \rightarrow B \times B^n$ and now
  have 
  \begin{equation}
    \label{eq:28}
    \begin{aligned}
      g^{(n)} \circ f &= (g^{(n)}(\pi_0, \pi_1, \dots, \pi_n))^{(0)}(f)
      \\ &= g^{(n)}(\pi_0^{(0)}(f), \dots, \pi_n^{(0)}(f)) =
      g^{(n)}(f_0, \dots, f_n)\rlap{ ,}
    \end{aligned}
  \end{equation}
  recapturing the composition operation in $\underline \A$. In
  verifying the axioms of Corollary~\ref{cor:2}, we first translate
  them under~\eqref{eq:28} to axioms on the composition
  law~\eqref{eq:19} in $\underline{\A}$. On doing so, we find that:
  \begin{enumerate}[(i)]
  \item[(i)--(ii)] follow from condition (i) of
    Proposition~\ref{prop:8};
  \addtocounter{enumi}{2}
  \item follows from the $\kmod^Q$-linearity of product projections in
  $\underline\A$;
\item follows from axiom (iii) of
  Proposition~\ref{prop:8};
\item follows from axiom (iv) of Proposition~\ref{prop:8} on taking $m
  = 0$;
\item follows from axiom (ii) of Proposition~\ref{prop:8};
\item  follows from axiom (iv) of Proposition~\ref{prop:8} on
  taking $(g_0, \dots, g_m) = (\pi_0, \dots, \pi_m)$.
\end{enumerate}

We have thus shown that there are assignments
$\A \mapsto \underline \A$ and $\underline \A \mapsto \A$ between
cartesian differential categories and cartesian $\kmod^Q$-categories,
and it is now easy to see that these are mutually inverse: in one
direction, we use~\eqref{eq:28}, and in the other, the fact that
$(\pi_0, \dots, \pi_n) = \mathrm{id} \colon A \times A^n \rightarrow A
\times A^n$.

Finally, Proposition~\ref{prop:9} shows that,
under the correspondence just given, the $\kmod^Q$-linear maps correspond
to the $\mathrm{D}$-linear ones.
\end{proof}

It is probably worth recording in as concrete a form as possible the
two directions of our main correspondence. On the one hand, for a
cartesian differential category $\A$, the corresponding
$\kmod^Q$-category has the same objects, the same hom-$k$-modules, and
the same identities; and has composition laws
\begin{align*}
  \A(B,C) \otimes Q\A(A,B) & \rightarrow \A(A,C) \\
  g \otimes \spn{f_0, \dots, f_n} & \mapsto g^{(n)}(f_0, \dots, f_n)\rlap{ ,}
\end{align*}
where $g^{(n)}$ denotes the $n$th derivative of
Definition~\ref{def:12}. On the other hand, for a cartesian
$\kmod^Q$-category $\A$, the corresponding cartesian differential
category has the same objects, the same hom-$k$-modules and the same
identities; while its composition and its differential
operator are defined from the $\kmod^Q$-composition maps $m \colon \A(B, C) \otimes
Q\A(A,B) \rightarrow \A(A,C)$ via
\begin{equation*}
  g \circ f = m(g \otimes \spn{f}) \qquad \text{and} \qquad
  \mathrm{D}f = m(f \otimes \spn{\pi_0, \pi_1})\rlap{ .}
\end{equation*}

\section{Presheaves over a skew monoidal base}
\label{sec:second-interl-enrich}

In the rest of the paper, we will make use of our first main theorem
to prove our second one: that every small cartesian differential
category has a full structure-preserving embedding into one induced by
a monoidal differential modality on a symmetric monoidal closed
$k$-linear category. The embedding in question will be the Yoneda
embedding into an enriched presheaf category, and so in this section
we develop the appropriate notions in the skew context. We note that
the definitions in this section draw largely
on~\cite{Street2013Skew-closed} with some novelties (the notions of
\emph{tight} $\V$-functor and the identification of the $\V$-linear
presheaf maps); while the lemmas and propositions are all new.

\subsection{Enriched functors}
\label{sec:enriched-functors}
So far, we have discussed categories enriched over a skew monoidal
category in isolation. We will now need to discuss also
\emph{functors} between enriched categories. The obvious definition is
the following one:
\begin{Defn}
  \label{def:30}
  Let $\V$ be a skew monoidal category, and let $\A, \B$ be
  $\V$-categories. A \emph{$\V$-functor} $F \colon \A \rightarrow \B$
  comprises an assignment $A \mapsto FA$ from objects of $\A$ to
  those of $\B$, together with maps $F_{A,B} \colon \A(A,B)
  \rightarrow \B(FA,FB)$ in $\V$ for all $A,B \in \A$, which render
  commutative all  diagrams of the following form:
  \begin{equation*}
    \cd{
      I \ar[r]^-{i} \ar@{=}[d]_-{}& \A(A,B) \ar[d]_-{F_{AB}} &
      {\A(B,C) \otimes \A(A,B)} \ar[r]^-{m} \ar[d]_{F_{BC} \otimes F_{AB}} &
      {\A(A,C)} \ar[d]^{F_{AC}} \\
      I \ar[r]^-{i} & \B(FA,FB) &
      {\B(FB,FC) \otimes \B(FA,FB)} \ar[r]^-{m} &
      {\B(FA,FC)}\rlap{ .}
    }
  \end{equation*}
\end{Defn}
However, for various purposes, this definition is insufficient. For
example, we would like to say that a $\V$-functor preserves finite
products just when it sends product cones to product cones. However, a
$\V$-functor as defined above does not even induce a mapping on product
cones, because it has no action on $\V$-linear maps. This motivates:
\begin{Defn}
  \label{def:31}
  Let $\V$ be a skew monoidal category, and let $\A, \B$ be
  $\V$-categories. A \emph{tight $\V$-functor} $F \colon \A
  \rightarrow_t \B$ is a $\V$-functor $F \colon \A \rightarrow \B$
  together with an ordinary functor $F_\ell \colon \A_\ell \rightarrow
  \B_\ell$ on categories of linear maps, such that for each $f \colon
  B \rightarrow_\ell C$ in $\A_\ell$ and $A \in \A$, the
  following diagram commutes:
  \begin{equation*}
    \cd{
      {\A(A,B)} \ar[r]^-{F_{AB}} \ar[d]_{f_A} &
      {\B(FA,FB)} \ar[d]^{(F_\ell f)_{FA}} \\
      {\A(A,C)} \ar[r]_-{F_{AC}} &
      {\B(FA,FC)}\rlap{ .}
    }
  \end{equation*}
\end{Defn}
\begin{Rk}
  \label{rk:1}
  This definition is based on ideas
  of~\cite{Campbell2018Skew-enriched}. If $\V$ is a skew monoidal
  category, then Campbell in \emph{loc.\,cit.}~defines a
  \emph{skew-enriched $\V$-category} to comprise an ordinary category
  $\A_\ell$, together with a functor
  $\A(\thg, \thg) \colon \A_\ell^\mathrm{op} \times \A_\ell
  \rightarrow \V$ and maps $I \rightarrow \A(A,A)$ and
  $\A(B,C) \otimes \A(A,B) \rightarrow \A(A,C)$ which are
  natural in $A,B, C \in \A_\ell$, and satisfy the axioms of
  Definition~\ref{def:2}. If $\A$ and $\B$ are $\V$-categories in our sense,
  then they become skew-enriched $\V$-categories in Campbell's sense on
  taking $\A_\ell$ and $\B_\ell$ to be the categories of $\V$-linear maps; on doing
  so, the skew-enriched $\V$-functors between them, in Campbell's
  sense, are precisely our tight $\V$-functors.
\end{Rk}
While tightness is, in general, extra structure on a $\V$-functor, in
the left covering case which is of primary interest to us, it is a
mere property.

\begin{Lemma}
  \label{lem:2}
  Let $\V$ be a left covering skew monoidal category, and let $\A,\B$
  be $\V$-categories. To give a tight $\V$-functor $F \colon \A
  \rightarrow_t \B$ is equally to give a $\V$-functor $F \colon \A
  \rightarrow \B$ whose underlying functor $F_0 \colon \A_0
  \rightarrow \B_0$ preserves $\V$-linear maps.
\end{Lemma}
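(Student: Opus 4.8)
The plan is to exploit the left covering hypothesis through Lemma~\ref{lem:9}: since every $\lambda_{\A(X,A)} \colon I \otimes \A(X,A) \to \A(X,A)$ is epimorphic, the functor $\A_\ell \to \A_0$ is faithful, a $\V$-linear map $f \colon A \to_\ell B$ is determined by its underlying map $f_0 \colon A \to B$, its components $f_X$ are pinned down by the identity $f_X \circ \lambda_{\A(X,A)} = m \circ (f_0 \otimes 1)$ coming from~\eqref{eq:26}, and $f_0$ is $\V$-linear exactly when such a factorisation exists for every $X$. All of the argument is then a matter of feeding these facts into diagram chases.

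First I would treat the forward implication. Given a tight $\V$-functor $(F, F_\ell)$ and a $\V$-linear $f \colon B \to_\ell C$, I claim that the underlying map of $F_\ell f \colon FB \to_\ell FC$ is exactly $F_0(f_0)$; this exhibits $F_0(f_0)$ as $\V$-linear, so $F_0$ preserves $\V$-linear maps. The claim is a two-line chase: $(F_\ell f)_0 = (F_\ell f)_{FB}\circ i_{FB} = (F_\ell f)_{FB}\circ F_{BB}\circ i_B$ by the unit axiom for the $\V$-functor $F$, and this equals $F_{BC}\circ f_B\circ i_B = F_{BC}\circ f_0 = F_0(f_0)$ by the tightness square of Definition~\ref{def:31} taken at the object $A := B$. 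The same computation shows that in the left covering case $F_\ell$ is \emph{forced} by $F$: for each $\V$-linear $f$ there is at most one $\V$-linear map with underlying map $F_0(f_0)$, by faithfulness of $\B_\ell \to \B_0$.

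Conversely, starting from a $\V$-functor $F$ for which $F_0$ preserves $\V$-linear maps, I would \emph{define} $F_\ell f$, for $f \colon A \to_\ell B$, to be the unique $\V$-linear map $FA \to_\ell FB$ with underlying map $F_0(f_0)$ (existence by hypothesis, uniqueness as above). Functoriality of $F_\ell$ is then automatic, since it preserves identities and composites after applying the faithful functors to $\A_0$ and $\B_0$, where everything reduces to functoriality of $F_0$ and of $\A_\ell \to \A_0$, $\B_\ell \to \B_0$. It remains to verify the tightness square $F_{AC}\circ f_A = (F_\ell f)_{FA}\circ F_{AB}$ for $f \colon B \to_\ell C$ and $A \in \A$. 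Since $\lambda_{\A(A,B)}$ is epimorphic it suffices to precompose with it: the left-hand side becomes $F_{AC}\circ m\circ(f_0\otimes 1) = m\circ (F_{BC}\otimes F_{AB})\circ(f_0\otimes 1) = m\circ\bigl(F_0(f_0)\otimes F_{AB}\bigr)$, using the composition axiom for the $\V$-functor $F$ and the characterisation of $f_A$, while the right-hand side becomes, after naturality of $\lambda$ and the characterisation of $(F_\ell f)_{FA}$, equal to $m\circ\bigl((F_\ell f)_0\otimes 1\bigr)\circ(1\otimes F_{AB}) = m\circ\bigl(F_0(f_0)\otimes F_{AB}\bigr)$ as well.

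Finally I would note that the two assignments are mutually inverse: the tight structure $F_\ell$ recovered from a tight $(F, F_\ell)$ coincides with the original by the uniqueness remark above, and the $F_\ell$ built from a property-satisfying $F$ trivially has $F_0$ preserving linearity. The one genuinely fiddly point I expect is the verification of the tightness square in the converse direction, where one has to marry correctly the explicit description of the components of $\V$-linear maps from Lemma~\ref{lem:9}, the naturality of $\lambda$, and the two $\V$-functor axioms before cancelling the epimorphism $\lambda$; everything else is formal bookkeeping with faithful functors.
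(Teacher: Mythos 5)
Your argument is correct and is exactly the ``straightforward argument using Lemma~\ref{lem:9}'' that the paper leaves to the reader: you use left-coveringness to pin down the components of a $\V$-linear map via $f_X\circ\lambda = m\circ(f_0\otimes 1)$, derive $(F_\ell f)_0 = F_0(f_0)$ from the unit axiom and the tightness square, and verify the tightness square in the converse direction by cancelling the epimorphism $\lambda$. Nothing to add.
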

\begin{proof}
  A straightforward argument using Lemma~\ref{lem:9}.
\end{proof}

\begin{Ex}
  \label{ex:21}
  Let $\A$ and $\B$ be left-$k$-linear categories. A $\kmod^K$-functor
  between them is an ordinary functor $F \colon \A \rightarrow \B$
  whose action on homs preserves the $k$-module structure. Such a
  functor is tight just when it preserves $k$-linear maps.
\end{Ex}

Using the notion of tightness, we can now describe what it means for a
functor to preserve cartesian structure.
\begin{Defn}
  \label{def:32}
  Let $\V$ be a skew monoidal category, and let $\A, \B$ be cartesian
  $\V$-categories. A tight $\V$-functor $F \colon \A \rightarrow_t \B$
  is said to be \emph{cartesian} if it sends terminal objects to
  terminal objects, and $F_\ell$ sends binary product cones
  $A \leftarrow_\ell A \times B \rightarrow_\ell B$ to binary product
  cones.
\end{Defn}

\begin{Ex}
  \label{ex:23}
  Let $\A$ and $\B$ be cartesian differential categories, seen as
  cartesian $\kmod^Q$-categories. It is a straightforward calculation
  to see that a $\kmod^Q$-functor from $\A$ to $\B$ is an ordinary
  functor $F \colon \A \rightarrow \B$ which preserves addition on the
  homs, and for which each diagram of the following form commutes in $\B$:
  \begin{equation*}
    \cd{
      F(A \times A) \ar[d]_-{(F\pi_1, F\pi_2)}
      \ar[r]^-{F(\mathrm{D}f)} &
      FB\rlap{ .} \\
      FA \times FA \ar[ur]_-{\mathrm{D}(Ff)} 
    }
  \end{equation*}
  Such a $\kmod^Q$-functor is tight precisely when it preserves
  $\mathrm{D}$-linearity of maps; and it is cartesian when, in
  addition, it preserves finite products in the usual sense. In fact,
  preservation of $\mathrm{D}$-linearity \emph{automatically} implies
  preservation of finite products by the argument
  of~\cite[Lemma~1.3.2]{Blute2009Cartesian} (so in this sense finite
  products in $\kmod^Q$-categories are \emph{absolute} limits).
\end{Ex}

\subsection{Enriched presheaves}
\label{sec:presh-skew-mono}

We now describe the notion of \emph{presheaf} on a category enriched
over a skew monoidal category.

\begin{Defn}
  \label{def:17}(\cite[\sec 5]{Street2013Skew-closed})
  Let $\V$ be a skew monoidal category and $\A$ a $\V$-enriched
  category. A \emph{presheaf} $X$ on $\A$ is given by objects $XA \in
  \V$ for each $A \in \A$, and maps
  $m \colon XB \otimes \A(A,B) \rightarrow XA$ for each $A,B \in \A$, rendering
 commutative each~diagram
  \begin{equation*}
    \cd[@-1em@C-1em]{
      \sh{l}{0.2em}{\bigl(XC\otimes\A(B,C)\bigr)\otimes\A(A,B)}
      \ar[r]^-{m \otimes 1} \ar[dd]_{\alpha} &
      \sh{r}{1em}{XB\otimes\A(A,B)} \ar[dr]^-{m} \\
      & & XA\\
      \sh{l}{0.2em}{XC\otimes\bigl(\A(B,C)\otimes\A(A,B)\bigr)} \ar[r]^-{1
        \otimes m} &
      \sh{r}{1em}{XC\otimes\A(A,C)} \ar[ur]^-{m}
    } \ \ \      \cd[@!C@C-2.8em]{
      {XA\otimes I} \ar[r]^-{1 \otimes i} \ar@{<-}[d]_-{\rho} & 
      {XA\otimes\A(A,A)} \ar[d]^-{m} \\ 
      {XA} \ar[r]^-{1} & {XA}\rlap{ .}
    }
  \end{equation*}
  If $X,Y$ are presheaves on $\A$, then a \emph{$\V$-linear presheaf
    map} $f \colon X \rightarrow Y$ comprises families of maps
  $f_A \colon XA \rightarrow YA$ which commute with the $\A$-action,
  in the sense of rendering commutative each square
  \begin{equation*}
    \cd{
      {XB \otimes \A(A,B)} \ar[r]^-{m} \ar[d]_{f_B \otimes 1} &
      {XA} \ar[d]^{f_A} \\
      {YB \otimes \A(A,B)} \ar[r]_-{m} &
      {YA}\rlap{ .}
    }
  \end{equation*}
  We write $\cat{Psh}_{\ell}(\A)$ for the category of presheaves on
  $\A$ and $\V$-linear presheaf maps.
\end{Defn}

\begin{Ex}
  \label{ex:17}
  If $\A$ is a $\V$-category, then for each $A \in \A$, we have a
  presheaf $yA \in \cat{Psh}_\ell(\A)$ with components
  $yA(B) = \A(B,A)$ and action given by composition in $\A$. Moreover,
  $\V$-linear presheaf maps
  $yA \rightarrow yB$ are \emph{precisely}
  $\V$-linear maps $A \rightarrow_\ell B$ in $\A$.
\end{Ex}

\begin{Ex}
  \label{ex:19}
  Let $\V$ be a monoidal category which supports the initial monoidal
  comonad $K$, and let $\A$ be a $\V^K$-enriched category. In this
  case, $\cat{Psh}_\ell(\A)$ is the ordinary functor category
  $[\A_0^\mathrm{op}, \V]$. In particular, if $\A$ is a
  left-$k$-linear category, then $\cat{Psh}_\ell(\A)$ is
  $[\A_0^\mathrm{op}, \kmod]$---so justifying the notation of
  Definition~\ref{def:14} above.
\end{Ex}

\subsection{Enriched presheaves as an enriched category}
\label{sec:v-categ-presh}

Example~\ref{ex:17} offers a partial justification of the nomenclature
``$\V$-linear'' for the maps of $\cat{Psh}_\ell(\A)$. We will now justify
it more fully by showing that such maps are in fact the $\V$-linear
maps of a $\V$-category of presheaves $\cat{Psh}(\A)$. Towards this,
we make the following key definition.




\begin{Defn}
  \label{def:20}
  Let $\V$ be a skew monoidal category and $\A$ a $\V$-category.
  Given a presheaf $X$ on $\A$ and $V \in \V$, we define $V \ast X$
  to be the presheaf with components $(V \ast X)(A) = V
  \otimes XA$ and action maps
  \begin{equation*}
    (V \otimes XB) \otimes \A(A,B) \xrightarrow{\alpha} V \otimes \bigl(XB
    \otimes \A(A,B)\bigr) \xrightarrow{1 \otimes m} V \otimes XA\rlap{ .}
  \end{equation*}
  This construction is easily seen to underlie a functor $\ast \colon
  \V \times \cat{Psh}_\ell(\A) \rightarrow \cat{Psh}_\ell(\A)$.
\end{Defn}
\begin{Defn}
  \label{def:29}
  If $X,Y$ are presheaves on $\A$, then a \emph{presheaf hom} from $X$
  to $Y$ is a representation for the functor
  $\cat{Psh}_\ell(\A)(\thg \ast X, Y) \colon \V^\mathrm{op}
  \rightarrow \cat{Set}$, comprising an object $\phom{X,Y} \in \V$ and
  hom-set isomorphisms
  \begin{equation*}
    \cat{Psh}_\ell(\A)(V \ast X, Y) \cong \V(V, \dbr{X,Y})\rlap{ ,}
  \end{equation*}
  natural in $V \in \V$. We write
  $\mathsf{ev} \colon \phom{X,Y} \ast X \rightarrow Y$ for the counit
  of this representation.
\end{Defn}

\begin{Ex}
  \label{ex:12}(Yoneda lemma). For any presheaf $X$ on $\A$ and any
  $A \in \A$, there is a $\V$-linear map
  $m \colon XA \ast yA \rightarrow X$ whose components are
  given by the action of $\A$ on $X$. This map exhibits $XA$ as
  $\phom{yA, X}$; for indeed, if
  $\gamma \colon V \ast yA \rightarrow X$ is any other $\V$-linear
  presheaf map, then the composite
\begin{equation*}
  V \xrightarrow{\rho} V \otimes I \xrightarrow{1 \otimes i} V \otimes
  \A(A,A) \xrightarrow{\gamma} XA
\end{equation*}
is the unique factorisation of $\gamma$ through $m$.
\end{Ex}
We will be interested in the situation where \emph{all} presheaf homs
$\phom{X,Y}$ on a given $\A$ exist. This will certainly be the case if
$\A$ is small, and $\V$ is left closed and complete; see~\cite[\sec
5]{Street2013Skew-closed} for the construction. Note that in this
case, the assignment $X, Y \mapsto \phom{X,Y}$ becomes a functor
$\cat{Psh}_\ell(\A)^\mathrm{op} \times \cat{Psh}_\ell(\A) \rightarrow
\V$ such that the bijections $\cat{Psh}_\ell(\A)(V \ast X, Y)
\cong \V(V, \phom{X,Y})$ are natural in $X$ and $Y$ as well as $V$.

\begin{Defn}
  \label{def:21}
  Let $\V$ be a skew monoidal category and $\A$ a $\V$-category. Suppose
  for all presheaves $X,Y$ on $\A$ that the presheaf hom $\phom{X,Y}$
  exists. We define the $\V$-category $\cat{Psh}(\A)$ to have
  presheaves on $\A$ as objects, and hom-objects the $\phom{X,Y}$'s.
  The identities $I \rightarrow \phom{X,X}$ are induced by universality of $\phom{X,X}$
    applied to the $\V$-linear presheaf map $I \ast X \rightarrow X$ with components
    \begin{equation}\label{eq:31}
      \lambda_{XA} \colon I \otimes XA \rightarrow XA\rlap{ ;}
    \end{equation}
  while composition $\phom{Y,Z} \otimes \phom{X,Y} \rightarrow
    \phom{X,Z}$ is induced by universality of $\phom{X,Z}$ applied to
    the $\V$-linear presheaf map $(\phom{Y,Z} \otimes \phom{X,Y}) \ast X \rightarrow
    Z$ with components
    \begin{equation*}
      (\phom{Y,Z} \otimes \phom{X,Y}) \otimes XA \xrightarrow{\alpha}
      \phom{Y,Z} \otimes (\phom{X,Y} \otimes XA) \xrightarrow{1 \otimes \mathsf{ev}}
      \phom{Y,Z} \otimes YA \xrightarrow{\mathsf{ev}} ZA\rlap{ .}
    \end{equation*}
\end{Defn}

\begin{Ex}
  \label{ex:13}
  If $\A$ is a left-$k$-linear category, then $\cat{Psh}(\A)$ is the
  left-$k$-linear category whose objects are functors
  $\A_0^\mathrm{op} \rightarrow \kmod$, and whose morphisms
  $X \rightarrow Y$ are families of functions (not necessarily
  $k$-linear) $XA \rightarrow YA$ satisfying the usual naturality
  condition. The $k$-module structure on the homs is given
  componentwise.
\end{Ex}

We now justify the name ``$\V$-linear'' for the morphisms of
$\cat{Psh}_\ell(\A)$.

\begin{Prop}
  \label{prop:10}
  Let $\A$ be a $\V$-category for which the functor $\V$-category
  $\cat{Psh}(\A)$ exists. The $\V$-linear maps in $\cat{Psh}(\A)$
  correspond bijectively with maps of $\cat{Psh}_\ell(\A)$.
\end{Prop}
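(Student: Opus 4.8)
The plan is to unwind the two notions and match them directly, using the Yoneda-type calculation of Example~\ref{ex:12} as the crucial bridge. A $\V$-linear map in $\cat{Psh}(\A)$ between presheaves $X$ and $Y$ is, by Definition~\ref{def:26}, a $\V$-natural family of maps $f_Z \colon \phom{Z,X} \rightarrow \phom{Z,Y}$ in $\V$; whereas a map of $\cat{Psh}_\ell(\A)$ is just a $\V$-linear presheaf map $g \colon X \rightarrow Y$ in the sense of Definition~\ref{def:17}. First I would construct the passage from the latter to the former: given such a $g$, postcomposition induces maps $\cat{Psh}_\ell(\A)(V \ast Z, X) \rightarrow \cat{Psh}_\ell(\A)(V \ast Z, Y)$, natural in $V$, and hence by the universal property of $\phom{Z,X}$ and $\phom{Z,Y}$ a map $f_Z \colon \phom{Z,X} \rightarrow \phom{Z,Y}$; its $\V$-naturality with respect to the composition maps of $\cat{Psh}(\A)$ follows because composition in $\cat{Psh}(\A)$ is itself defined via the representing property of the presheaf homs (Definition~\ref{def:21}), so the required square commutes after pasting on a universal cell, and then commutes outright.

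Next I would construct the reverse passage. Given a $\V$-linear map $f \colon X \rightarrow_\ell Y$ in $\cat{Psh}(\A)$, its underlying map $f_0 \colon X \rightarrow Y$ in $\cat{Psh}(\A)_0 = \V(I, \phom{X,Y})$ is, by Definition~\ref{def:29}, exactly the same datum as a $\V$-linear presheaf map $I \ast X \rightarrow Y$; composing with the isomorphism $X \cong I \ast X$ coming from the components $\lambda_{XA}^{-1}$—wait, $\lambda$ need not be invertible, so instead: the map $I \ast X \rightarrow Y$ together with the canonical $\V$-linear presheaf map $X \rightarrow I \ast X$ given componentwise by $\rho_{XA} \colon XA \rightarrow XA \otimes I \xrightarrow{\text{sym}}$—no. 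The cleaner route is to observe that by Example~\ref{ex:17} every $A \in \A$ yields a representable $yA$, and $\phom{yA, X} = XA$ by the Yoneda lemma of Example~\ref{ex:12}; so the components of a $\V$-linear $f \colon X \rightarrow_\ell Y$ at the representables, namely $f_{yA} \colon \phom{yA,X} = XA \rightarrow YA = \phom{yA,Y}$, assemble into a family $(g_A)_{A \in \A}$, and $\V$-naturality of $f$ against the composition maps $\phom{yA, X} \otimes \phom{yB, yA} \rightarrow \phom{yB,X}$—which by Example~\ref{ex:12} are precisely the $\A$-actions on $X$—forces exactly the compatibility square of Definition~\ref{def:17}. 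This produces a map $g \in \cat{Psh}_\ell(\A)$.

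Finally I would check these two assignments are mutually inverse. In one direction, starting from $g \in \cat{Psh}_\ell(\A)$, forming $f$ and then restricting to representables returns $g$ on components by construction (the induced $f_{yA}$ on $\phom{yA,X} = XA$ is literally postcomposition with $g_A$). In the other direction, starting from $\V$-linear $f$ in $\cat{Psh}(\A)$, reconstructing $f$ from its restriction to representables uses the density of representables: a general $X$ is a colimit of representables in the $\V$-linear sense, and a $\V$-linear map is determined by its components at the $yA$ together with $\V$-naturality, which is precisely what the presheaf-hom universal property encodes—so $f$ is recovered. The main obstacle is this last uniqueness/density step: I expect the cleanest argument is to note that, for each object $X$, the $\V$-linear presheaf maps $XA \ast yA \rightarrow X$ of Example~\ref{ex:12} are jointly ``$\V$-epimorphic'' in a suitable sense (they present $X$ as a $\V$-weighted colimit of representables), so that any $\V$-natural family agreeing with $f$ at all representables must equal $f$. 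I would prove just enough of this—namely that a $\V$-linear map out of $X$ is determined by precomposition with all the $m \colon XA \ast yA \rightarrow X$—directly from the universal property of presheaf homs, which suffices; the full colimit statement is not needed. Verifying that everything is compatible with composition and identities, so that the bijection is induced by an isomorphism of categories, is then routine from Definition~\ref{def:21}.
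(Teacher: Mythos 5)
Your proposal takes essentially the same route as the paper: one direction restricts a $\V$-linear map to the representables via the Yoneda identification $\phom{yA,X}=XA$ and reads off the compatibility square from $\V$-naturality, the other applies the functor $\phom{Z,\thg}$ (i.e.\ postcomposition under the representing isomorphism), and the paper likewise dismisses the mutual-inverse check as ``a standard Yoneda argument''. The only wobble is in your final paragraph: the uniqueness step is not about presenting the \emph{domain} $X$ as a colimit of representables, but about recovering the component $f_Z$ at an arbitrary probing object $Z$ --- instantiate the $\V$-naturality square~\eqref{eq:30} at $W=yA$, identify the composition map $\phom{Z,X}\otimes ZA \to XA$ with $\mathsf{ev}_A$, and conclude $f_Z = \phom{Z,g}$ from the injectivity of transposition along the universal property of $\phom{Z,Y}$; this is exactly the ``just enough of the universal property'' you say you would prove, so the gap is one of aim rather than substance.
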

\begin{proof}
  Let $X,Y$ be presheaves on $\A$. A $\V$-linear map
  $f \colon X \rightarrow_\ell Y$ of $\cat{Psh}(\A)$ comprises a
  family of maps $f_Z \colon \phom{Z,X} \rightarrow \phom{Z,Y}$ in $\V$
  satisfying the $\V$-naturality condition of Definition~\ref{def:26}.
  When $Z = yA$, we may by the Yoneda lemma take it that $\phom{yA, X}
  = XA$ and $\phom{yA,Y} = YA$, so that $f_{yA} \colon XA \rightarrow
  YA$ for each $A \in \A$. We may likewise take it that $\phom{yB, yA}
  = \A(B,A)$ so that instantiating~\eqref{eq:30} as to the left below
  yields the commuting diagram as to the right
  \begin{equation*}
      \cd[@C-0.6em]{
      \phom{yA, X} \otimes \phom{yB,yA} \ar[d]_-{m} \ar[r]^-{f_{yA}
        \otimes 1} & \phom{yA, Y} \otimes \phom{yB,yA}
      \ar[d]^-{m} &
      XA \otimes \A(B,A) \ar[d]_-{m} \ar[r]^-{f_{yA}
        \otimes 1} & YA \otimes \A(B,A)
      \ar[d]^-{m} 
      \\
      \phom{yB, X} \ar[r]^-{f_{yB}} & \phom{yB,Y} &
      XB \ar[r]^-{f_{yB}} & XA\rlap{ .}
    }
  \end{equation*}
  In this way, we obtain a map $f_{y\thg} \colon X \rightarrow Y$
  of $\cat{Psh}_\ell(\A)$.

  On the other hand, given a map $g \colon X \rightarrow Y$ of
  $\cat{Psh}_\ell(\A)$, we can for each $Z \in \cat{Psh}(\A)$ apply
  the functor $\phom{Z, \thg} \colon \cat{Psh}_\ell \rightarrow \V$ to
  obtain a map $g_Z \colon \phom{Z,X} \rightarrow \phom{Z,Y}$ in $\V$.
  By construction, $g_Z$ is unique such that the following square
  commutes:
  \begin{equation*}
    \cd{
      {\phom{Z,X} \ast Z} \ar[r]^-{g_Z \ast 1} \ar[d]_{\mathsf{ev}} &
      {\phom{Z,Y} \ast Z} \ar[d]^{\mathsf{ev}} \\
      {X} \ar[r]_-{g} &
      {Y}\rlap{ ,}
    }
  \end{equation*}
  and it is easy to prove from this that the family
  of maps $g_Z$ constitute a $\V$-linear map $\tilde g \colon X
  \rightarrow_\ell Y$ of $\cat{Psh}(\A)$.
  Finally, a standard Yoneda argument shows these two processes $f
  \mapsto f_{y\thg}$ and $g \mapsto \tilde g$ to be mutually inverse.
\end{proof}

The preceding result identifies the category of $\V$-linear maps in
$\cat{Psh}(\A)$; the next one identifies the underlying ordinary
category $\cat{Psh}_0(\A)$. In the case where $\V$ is genuinely
monoidal, these two categories coincide, but not in general.

\begin{Prop}
  \label{prop:11}
  Let $\A$ be a $\V$-enriched category for which the presheaf category
  $\cat{Psh}(\A)$ exists. The underlying ordinary category
  $\cat{Psh}_0(\A)$ is isomorphic to the co-Kleisli category of the comonad on
  $\cat{Psh}_\ell(\A)$ defined as follows:
  \begin{itemize}
  \item The underlying endofunctor is $I \ast (\thg) \colon
    \cat{Psh}_\ell(\A) \rightarrow \cat{Psh}_\ell(\A)$.
  \item The counit at $X$ is the map $I \ast X \rightarrow_\ell
    X$ with components~\eqref{eq:31};
  \item The comultiplication at $X$ is the map $I \ast X \rightarrow_\ell I
    \ast (I \ast X)$ with components
    \begin{equation*}
      I \otimes XA \xrightarrow{\rho_I \otimes 1} (I \otimes I)
      \otimes XA \xrightarrow{\alpha} I \otimes (I \otimes XA)\rlap{ .}
    \end{equation*}
  \end{itemize}
\end{Prop}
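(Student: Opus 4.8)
The plan is to establish an identity-on-objects isomorphism of categories between $\cat{Psh}_0(\A)$ and the co-Kleisli category of $(I \ast (\thg), \varepsilon, \delta)$, after first checking that this triple really is a comonad on $\cat{Psh}_\ell(\A)$. That verification is a routine chase through the skew monoidal axioms, carried out componentwise at each $A \in \A$: the counit laws $\varepsilon_{I \ast X} \circ \delta_X = 1$ and $(I \ast \varepsilon_X) \circ \delta_X = 1$ reduce to the unit conditions of Definition~\ref{def:1} (with $A := I$, $B := XA$) together with $\lambda_I \circ \rho_I = 1_I$, while coassociativity reduces to the Mac Lane pentagon plus naturality of $\alpha$ and $\rho$; one also checks, from the same unit axioms and naturality of the constraint cells, that $\varepsilon_X$ and $\delta_X$ are genuine $\V$-linear presheaf maps. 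I would relegate all of this to a sentence or two.

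The substance is the identification with $\cat{Psh}_0(\A)$. The key input is the hypothesis that $\cat{Psh}(\A)$ exists, so that by Definition~\ref{def:29} each presheaf hom $\phom{X,Y}$ comes with a bijection $\cat{Psh}_\ell(\A)(V \ast X, Y) \cong \V(V, \phom{X,Y})$ natural in $V$; instantiating at $V = I$ yields a bijection
\[
  \Phi_{X,Y} \colon \cat{Kl}(I \ast \thg)(X,Y) = \cat{Psh}_\ell(\A)(I \ast X, Y) \xrightarrow{\ \cong\ } \V(I, \phom{X,Y}) = \cat{Psh}_0(\A)(X,Y),
\]
sending $\gamma \colon I \ast X \rightarrow Y$ to the unique $\bar\gamma \colon I \rightarrow \phom{X,Y}$ with $\gamma = \mathsf{ev}_{X,Y} \circ (\bar\gamma \ast 1_X)$. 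Both categories have the presheaves on $\A$ as objects, so it remains to show $\Phi$ preserves identities and composition. Preservation of identities is immediate on comparing definitions: the co-Kleisli identity at $X$ is $\varepsilon_X$, whose $A$-components are the maps $\lambda_{XA}$ of~\eqref{eq:31}, and by Definition~\ref{def:21} the enriched identity $i_X \colon I \rightarrow \phom{X,X}$ of $\cat{Psh}(\A)$ is characterised as the unique factorisation of precisely that presheaf map through $\mathsf{ev}_{X,X}$; hence $i_X = \Phi_{X,X}(\varepsilon_X)$, and $i_X$ is the identity at $X$ in $\cat{Psh}_0(\A)$.

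Preservation of composition is the one step requiring real work. Given $\gamma \colon I \ast X \rightarrow Y$ and $\gamma' \colon I \ast Y \rightarrow Z$ with $\Phi$-images $f$ and $g$, I would show that the co-Kleisli composite $\gamma' \circ (I \ast \gamma) \circ \delta_X$ has $\Phi$-image the composite $m \circ (g \otimes f) \circ \rho_I$ prescribed by~\eqref{eq:32}, where $m$ is the composition morphism of $\cat{Psh}(\A)$ from Definition~\ref{def:21}; equivalently, that $\gamma' \circ (I \ast \gamma) \circ \delta_X = \mathsf{ev}_{X,Z} \circ \bigl( (m \circ (g \otimes f) \circ \rho_I) \ast 1_X \bigr)$. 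Expanding the right-hand side at a component $A \in \A$, using first the defining equation of $m$ (which rewrites $\mathsf{ev}_{X,Z} \circ (m \ast 1_X)$ via $\alpha$ and two copies of $\mathsf{ev}$), then sliding $f$ and $g$ past $\alpha$ by its naturality in all three arguments, and finally recognising $(\mathsf{ev}_{X,Y})_A \circ (f \otimes 1_{XA}) = \gamma_A$ and $(\mathsf{ev}_{Y,Z})_A \circ (g \otimes 1_{YA}) = \gamma'_A$, the right-hand side collapses to $\gamma'_A \circ (1_I \otimes \gamma_A) \circ \alpha_{I,I,XA} \circ (\rho_I \otimes 1_{XA})$ — which is exactly the $A$-component of the left-hand side, since $(I \ast \gamma)_A = 1_I \otimes \gamma_A$ and $(\delta_X)_A = \alpha_{I,I,XA} \circ (\rho_I \otimes 1_{XA})$. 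This exhibits $\Phi$ as the desired isomorphism $\cat{Psh}_0(\A) \cong \cat{Kl}(I \ast \thg)$. The hard part is not any single idea but the two-layered bookkeeping in this last step: equalities of morphisms in $\cat{Psh}_\ell(\A)$ must be verified componentwise in $\V$, whereas the data being matched — the enriched composition $m$ and the enriched identities $i_X$ — are pinned down by universal properties stated one level up in $\V$; the calculation only becomes mechanical once everything has been pushed down to components, where it is pure naturality of $\alpha$ together with the defining equations of the $\mathsf{ev}$ maps.
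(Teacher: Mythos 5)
Your proposal is correct and follows essentially the same route as the paper's proof: identify hom-sets via the transposition bijection $\cat{Psh}_\ell(\A)(I \ast X, Y) \cong \V(I,\phom{X,Y})$, observe that the enriched identity $i_X$ is by construction the transpose of the counit $\varepsilon_X$ with components $\lambda_{XA}$, and match the two composition laws by unwinding the defining equation of $m$ in $\cat{Psh}(\A)$ together with naturality of $\alpha$. The only (welcome) addition is your explicit remark that the comonad axioms for $(I \ast (\thg), \varepsilon, \delta)$ must themselves be checked against the skew monoidal axioms, a point the paper leaves tacit.
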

\begin{proof}
  Clearly, objects of $\cat{Psh}_0(\A)$ are presheaves on $\A$. Morphisms
  $X \rightarrow Y$ are, by definition, maps $f \colon I \rightarrow \phom{X,Y}$
  in $\V$, which by the universal property of $\phom{X,Y}$ are equally
  maps $\bar f \colon I \ast X \rightarrow Y$ in $\cat{Psh}_\ell(\A)$. The
  identity map at $X$ is $i \colon I \rightarrow \phom{X,X}$,
  which by definition corresponds to the map $I \ast Xf
  \rightarrow X$ with components~\eqref{eq:31}. Finally,
  composition of $\bar{f} \colon I \ast X \rightarrow Y$ and
  $\bar g \colon
  I \ast Y \rightarrow Z$ proceeds via the formula~\eqref{eq:32},
  which transposes to become the composite
  \begin{equation*}
    I \ast X \xrightarrow{\rho_I \ast 1} (I \otimes I) \ast X
    \xrightarrow{(f \otimes g) \ast 1} (\phom{Y,Z} \otimes \phom{X,Y})
    \ast X \xrightarrow{m \ast 1} \phom{X,Z} \ast X
    \xrightarrow{\mathsf{ev}} Z
  \end{equation*}
  in $\cat{Psh}_\ell(\A)$. By the definition of $m$ in
  $\cat{Psh}(\A)$, this is equally the composite
  \begin{equation*}
    I \ast X \xrightarrow{\rho_I \ast 1} (I \otimes I) \ast X
    \xrightarrow{\alpha} I \ast (I \ast X) \xrightarrow{f \ast (g \ast
      1)} \phom{Y,Z} \ast (\phom{X,Y} \ast
     X) \xrightarrow{\mathsf{ev}} \phom{Y,Z} \ast Y
    \xrightarrow{\mathsf{ev}} Z
  \end{equation*}
  which is, in turn, the composite
  \begin{equation*}
    I \ast X \xrightarrow{\rho_I \ast 1} (I \otimes I) \ast X
    \xrightarrow{\alpha} I \ast (I \ast X) \xrightarrow{1 \ast \bar g}
    I \ast Y \xrightarrow{\bar f} Z\rlap{ ,}
  \end{equation*}
  which is precisely the co-Kleisli composite of $\bar f$ and $\bar
  g$, as desired.
\end{proof}
\begin{Ex}
  \label{ex:20}
  Let $\A$ be a left-$k$-linear category. We saw in
  Example~\ref{ex:13} that $\cat{Psh}_0(\A)$ is the category whose
  objects are functors $\A_0^\mathrm{op} \rightarrow \kmod$ and whose
  maps are natural transformations with not-necessarily-linear
  components. By the preceding result, we can identify this
  with the co-Kleisli category of the comonad $[1, K]$ on
  $[\A_0^\mathrm{op}, \kmod]$ (i.e., the comonad which applies $K$ on
  $\kmod$ pointwise).
\end{Ex}

\subsection{The Yoneda embedding}
\label{sec:yoneda-embedding}
As in classical enriched category theory, we have a Yoneda embedding
into the category of presheaves.

\begin{Defn}
  \label{def:33}
  Let $\V$ be a skew monoidal category, and $\A$ a $\V$-category for
  which $\cat{Psh}(\A)$ exists. The
  \emph{Yoneda embedding} $y \colon \A \rightarrow \cat{Psh}(\A)$ is
  the $\V$-functor which on objects, sends $A$ to $yA$; and acts on
  homs via the maps $\A(B,C) \rightarrow \phom{yB, yC}$ induced by
  universality of $\phom{yB,yC}$ applied to the presheaf map
  $\A(B,C) \ast yB \rightarrow yC$ whose components are given by
  composition in $\A$.
\end{Defn}

By Example~\ref{ex:17}, $\V$-linear maps $A \rightarrow_\ell B$ in
$\A$ are the same as maps $yA \rightarrow yB$ of $\cat{Psh}_\ell(\A)$;
and by Proposition~\ref{prop:10}, these correspond bijectively with
$\V$-linear maps in $\cat{Psh}(\A)$. By way of this assignment on
linear maps, we can make the Yoneda embedding into a \emph{tight}
$\V$-functor $\A \rightarrow_t \cat{Psh}(\A)$. The next result shows
that this behaves as expected with respect to cartesian structure.

\begin{Lemma}
  \label{lem:14}
  Let $\V$ be skew monoidal with finite products, and let $\A$ be
  a $\V$-category for which $\cat{Psh}(\A)$ exists. $\cat{Psh}(\A)$ is
  cartesian, and the  Yoneda embedding
  $\A \rightarrow_t \cat{Psh}(\A)$ is cartesian and fully faithful
  (i.e., an isomorphism on homs).
\end{Lemma}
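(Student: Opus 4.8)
The plan is to build the cartesian structure on $\cat{Psh}(\A)$ ``pointwise'', check that it is genuinely a cartesian structure in the sense of Definition~\ref{def:9}, and then transport it back along $y$. First I would record that $\cat{Psh}_\ell(\A)$ has finite products computed componentwise: the constant presheaf $1$ with $1A = $ (terminal object of $\V$) is terminal, and for presheaves $X,Y$ the presheaf $X \times Y$ with $(X \times Y)A = XA \times YA$ and action obtained by pairing the two composites $(XB \times YB) \otimes \A(A,B) \xrightarrow{\pi_i \otimes 1} \cdot\,\otimes \A(A,B) \xrightarrow{m} XA,\,YA$ is their product. The presheaf axioms for $X \times Y$ follow from those for $X$ and $Y$ since a map into a product in $\V$ is determined by its composites with the projections; and a $\V$-linear presheaf map $W \to X \times Y$ decomposes, via the same principle applied to the defining naturality squares, into a pair of $\V$-linear presheaf maps $W \to X$, $W \to Y$. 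In particular the projections $\pi_i$ are $\V$-linear presheaf maps.

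The crucial point is then that each functor $\phom{Z, \thg} \colon \cat{Psh}_\ell(\A) \to \V$ preserves finite products; this is a Yoneda argument inside $\V$. For $V \in \V$ we have, naturally in $V$,
\begin{align*}
  \V(V, \phom{Z, X \times Y}) & \cong \cat{Psh}_\ell(\A)(V \ast Z,\, X \times Y) \\
  & \cong \cat{Psh}_\ell(\A)(V \ast Z,\, X) \times \cat{Psh}_\ell(\A)(V \ast Z,\, Y) \\
  & \cong \V(V, \phom{Z,X}) \times \V(V, \phom{Z,Y}) \cong \V\bigl(V,\, \phom{Z,X} \times \phom{Z,Y}\bigr)\rlap{ ,}
\end{align*}
using the defining isomorphism of the presheaf hom (Definition~\ref{def:29}), the fact that $X \times Y$ is a product in $\cat{Psh}_\ell(\A)$, and the finite products of $\V$. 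By the Yoneda lemma for $\V$ this identifies $\phom{Z, X \times Y}$ with $\phom{Z,X} \times \phom{Z,Y}$ compatibly with the maps induced by $\pi_0,\pi_1$; similarly $\phom{Z, 1}$ is terminal. Since by Proposition~\ref{prop:10} the $\V$-linear maps of $\cat{Psh}(\A)$ are precisely the maps of $\cat{Psh}_\ell(\A)$, it follows directly from Definition~\ref{def:9} that the pointwise terminal presheaf and the pointwise products of presheaves, equipped with the $\V$-linear projection maps, are a terminal object and binary products in the $\V$-category $\cat{Psh}(\A)$. Hence $\cat{Psh}(\A)$ is cartesian.

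For the Yoneda embedding, full faithfulness is exactly the Yoneda lemma of Example~\ref{ex:12}: taking $X = yC$ and $A = B$ there exhibits $\phom{yB, yC} = (yC)(B) = \A(B,C)$, and chasing the defining presheaf map $\A(B,C) \ast yB \to yC$ through the resulting factorisation shows, using the right unit axiom for $\A$, that the action of $y$ on homs is the identity map $\A(B,C) \to \phom{yB,yC}$. It remains to see that $y \colon \A \to_t \cat{Psh}(\A)$ is cartesian. If $1 \in \A$ is terminal then $(y1)A = \A(A,1)$ is terminal in $\V$ for every $A$, so $y1$ is the constant presheaf at the terminal object of $\V$, which is terminal in $\cat{Psh}(\A)$ by the above. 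If $A \leftarrow_\ell A \times B \rightarrow_\ell B$ is a binary product cone in $\A$, then its image under $y_\ell$ is $yA \leftarrow_\ell y(A \times B) \rightarrow_\ell yB$, whose component at each $X \in \A$ is, by Example~\ref{ex:17}, the diagram $\A(X,A) \leftarrow \A(X, A \times B) \rightarrow \A(X,B)$; these are product diagrams in $\V$ by hypothesis, so $y(A \times B)$ is isomorphic over its projections to the pointwise product $yA \times yB$, and therefore the image cone is a binary product cone in $\cat{Psh}(\A)$.

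The main obstacle is bookkeeping rather than anything conceptual: one must verify that the pointwise products genuinely carry presheaf structure with $\V$-linear projection maps, and---more delicately---that the isomorphism $\phom{Z, X \times Y} \cong \phom{Z,X} \times \phom{Z,Y}$ produced by the displayed Yoneda chain is exactly the one whose legs are the maps $\phom{Z,\pi_i}$ obtained from functoriality of $\phom{Z,\thg}$, so that the product cone realised in $\cat{Psh}(\A)$ is really $(\pi_0,\pi_1)$. Both are routine consequences of the principle that a map into a product in $\V$ is determined by its composites with the projections, combined with the naturality built into Definition~\ref{def:29}.
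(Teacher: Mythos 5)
Your proposal is correct and follows essentially the same route as the paper: construct finite products componentwise in $\cat{Psh}_\ell(\A)$, deduce that $\phom{Z,\thg}$ sends them to products in $\V$ (the paper phrases this as "$\phom{Z,\thg}$ is a right adjoint", which is exactly the representability chain you write out), and then handle full fidelity via Example~\ref{ex:12} and cartesianness of $y$ by checking the representable span is componentwise a product. The only difference is expository — you unwind the adjunction argument by hand — so there is nothing further to add.
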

\begin{proof}
  First note that $\cat{Psh}_\ell(\A)$ has finite products. The
  terminal presheaf of $\cat{Psh}_\ell(\A)$ has all its components
  terminal in $\V$, and the unique possible action maps. The binary
  product of $X,Y \in \cat{Psh}_\ell(\A)$ has components
  $(X \times Y)A = XA \times YA$ and action maps
  $\bigl(m(\pi_0 \otimes 1), m(\pi_1 \otimes 1)\bigr) \colon (XB
  \times YB) \otimes \A(A,B) \rightarrow XA \times YA$, while the
  projection maps $X \leftarrow X \times Y \rightarrow Y$ are given
  componentwise by those in $\V$.

  Now, the span
  $\pi_0 \colon X \leftarrow X \times Y \rightarrow Y \colon \pi_1$ in
  $\cat{Psh}_\ell(\A)$ corresponds under Proposition~\ref{prop:10} to
  a span of $\V$-linear maps
  ${\tilde{\pi}_0 \colon X \leftarrow_\ell X \times Y \rightarrow_\ell
    Y \colon \tilde{\pi}_1}$ of $\cat{Psh}(\A)$, whose components are
  the spans
  $\phom{Z, \pi_0} \colon \phom{Z,X} \leftarrow \phom{Z,X \times Y}
  \rightarrow \phom{Z, Y} \colon \phom{Z, \pi_1}$ in $\V$. Each such
  span is the image of a product span under a right adjoint functor
  $\phom{Z, \thg}$, and so is itself a product. So
  ${\tilde \pi}_0, {\tilde \pi}_1$ exhibit $X \times Y$ as a product
  of $X,Y$ in $\cat{Psh}(\A)$. Similarly, the terminal object of
  $\cat{Psh}_\ell(\A)$ is also a terminal presheaf in $\cat{Psh}(\A)$.

  Now it follows from the Yoneda lemma, Example~\ref{ex:12}, that
  $y \colon \A \rightarrow_t \cat{Psh}(\A)$ is fully faithful. To see
  that it is cartesian, we must show that a product span
  $A \leftarrow_\ell A \times B \rightarrow_\ell B$ in $\V$ induces a
  product span $yA \leftarrow_\ell y(A \times B) \rightarrow_\ell yB$
  in $\cat{Psh}(\A)$. By the preceding part of the argument, it will
  suffice to show that the corresponding span
  $yA \leftarrow y(A \times B) \rightarrow yB$ in $\cat{Psh}_\ell(\A)$
  is limiting: which is so since its components are the product
  diagrams $\A(X, A) \leftarrow \A(X,A \times B) \rightarrow \A(X,B)$
  in $\V$.
\end{proof}

\section{An embedding theorem for cartesian
  differential categories}
\label{sec:appl-an-embedd}

In this section, we prove our second main theorem: that every
small cartesian differential category has a full structure-preserving
embedding into one induced by a differential modality. In fact, we
will do better: our embedding will always be into a cartesian
differential category induced by a \emph{monoidal} differential
modality on a monoidal \emph{closed} category---so that our embedding
is into the cartesian closed
differential category associated to a model of intuitionistic
differential linear logic.


As explained at the start of the previous section, the basic strategy
will be to embed a small cartesian differential category $\A$ into its
$\kmod^Q$-enriched presheaf category $\cat{Psh}(\A)$. The
$\kmod^Q$-enrichment of $\cat{Psh}(\A)$ corresponds to a cartesian
differential structure on the underlying category $\cat{Psh}_0(\A)$,
and by Proposition~\ref{prop:11}, this latter category is the
co-Kleisli category of the comonad $I \ast (\thg)$ on
$\cat{Psh}_\ell(\A)$. We will show that this comonad
underlies a monoidal differential modality on $\cat{Psh}_\ell(\A)$
which induces the cartesian differential structure of
$\cat{Psh}_0(\A)$; this yields our result.

\subsection{Presheaves over a skew-warped base}
\label{sec:skew-warp-presh}

Most of the hard work will be in showing that $\cat{Psh}_\ell(\A)$
bears the appropriate structure: firstly, symmetric monoidal closed
$k$-linear structure, and secondly, a monoidal differential modality
whose underlying comonad agrees with $I \ast (\thg)$. In obtaining
these, it will be convenient to work more generally: thus, for the
rest of this section, we suppose that $\V$ is a symmetric monoidal
category, that $!$ is a symmetric monoidal comonad on $\V$, and that
$\A$ is a $\V^!$-category.

A key observation is that presheaves on the $\V^{!}$-category $\A$ can
be identified with presheaves on an associated $\V$-category, so
allowing us to make use of results from classical enriched category
theory. To obtain this associated $\V$-category, we will change base
along the composite symmetric monoidal functor
\begin{equation}\label{eq:43}
  ! = (\V, \otimes^!, I) \xrightarrow{C} (\cat{Coalg}(!), \hat\otimes,
  \hat I) \xrightarrow{U} (\V, \otimes, I)\rlap{ ,}
\end{equation}
where here $U$ is the strict monoidal forgetful functor, and
$C$ is the cofree functor, made monoidal via the structure
maps
\begin{equation*}
  m_I \colon (I, m_I)
  \rightarrow ({!}I, \delta_{I}) \ \ \text{and} \ \ m_\otimes \defeq H \colon ({!}A,
  \delta_A) \mathbin{\hat \otimes}
  ({!}B, \delta_B) \rightarrow \bigl({!}(A \otimes {!}B),
  \delta_{A \otimes {!}B}\bigr)\rlap{ ;}
\end{equation*}
it is a routine diagram chase with the axioms for a symmetric monoidal
comonad to show that these maps do indeed provide monoidal structure.


Now, base change along the composite functor ${!}$ of~\eqref{eq:43}
associates to the $\V^{!}$-enriched category $\A$ a 
$\V$-enriched category $!_\ast (\A)$, with the same objects as $\A$,
hom-objects ${!}\A(A,B)$, and identities and composition given by the composites
\begin{equation}\label{eq:35}
  i^{\sharp} \defeq I \xrightarrow{m_I} !I \xrightarrow{!i} !\A(A,A) \qquad \text{and}
\end{equation}
\begin{equation}\label{eq:36}
  m^{\sharp} \defeq {!}\A(B,C) \otimes {!}\A(A,B) \xrightarrow{H} 
  {!}(\A(B,C) \otimes {!}\A(A,B)) \xrightarrow{{!}m} {!}\A(A,C)\rlap{ .}
\end{equation}


We now show that the category of $\V^!$-presheaves on $\A$ is equally
well the category of $\V$-presheaves on $!_\ast (\A)$. This will allow
us to study $\V^!$-presheaves via the classical theory of presheaves
over a symmetric monoidal enrichment base.
\begin{Lemma}
  \label{lem:19}
  There is an equality of categories $\cat{Psh}_\ell(\A) = 
  \cat{Psh}_\ell(!_\ast (\A))$.
\end{Lemma}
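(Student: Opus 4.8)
The plan is to unwind both sides of the claimed equality directly from the definitions, and observe that the data and axioms coincide on the nose. Recall that a $\V^!$-presheaf $X$ on $\A$ consists of objects $XA \in \V$ together with action maps $XB \otimes^! \A(A,B) = XB \otimes {!}\A(A,B) \rightarrow XA$ satisfying the associativity and unit axioms of Definition~\ref{def:17}, in which the skew constraints $\alpha$, $\rho$ are those of the skew-warping $\V^!$; a $\V$-linear presheaf map is a family $f_A \colon XA \rightarrow YA$ commuting with these actions. On the other side, a $\V$-presheaf on $!_\ast(\A)$ consists of objects $XA \in \V$ together with action maps $XB \otimes {!}\A(A,B) \rightarrow XA$ satisfying the \emph{ordinary} (monoidal) presheaf axioms with respect to the given symmetric monoidal $\V$ and the identities $i^\sharp$ and composition $m^\sharp$ of~\eqref{eq:35}--\eqref{eq:36}. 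So the underlying data on objects and the ``shape'' of the action maps already agree, and the work is to match the two triangle/pentagon conditions.

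First I would fix notation and write out, side by side, the associativity pentagon for a $\V^!$-presheaf and that for a $\V$-presheaf on $!_\ast(\A)$. The former, expanded using the warped associator $\alpha^! = (1 \otimes H)\alpha$ of Definition~\ref{def:3}, is a diagram involving $(XC \otimes {!}\A(B,C)) \otimes {!}\A(A,B)$, the ordinary $\alpha$, the fusion map $H \colon {!}\A(B,C) \otimes {!}\A(A,B) \rightarrow {!}(\A(B,C) \otimes {!}\A(A,B))$, and ${!}m$ followed by the action. Comparing term-by-term with $m^\sharp = ({!}m)\circ H$, one sees that this is \emph{precisely} the ordinary associativity pentagon for the action of $!_\ast(\A)$. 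For the right unit axiom, the $\V^!$-version uses $\rho^!_X = (1 \otimes m_I)\rho_X$ together with the identity ${!}i \colon {!}I \rightarrow {!}\A(A,A)$; since $i^\sharp = ({!}i)\circ m_I$, this rewrites exactly as the ordinary right unit axiom for $!_\ast(\A)$. (There is no left unit axiom for presheaves, so nothing more is needed.)

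Then I would check that the notion of $\V$-linear presheaf map is literally unchanged: the commuting square in Definition~\ref{def:17} refers only to the action maps $m$, which are the same objects on both sides, so a family $f_A$ is a map in $\cat{Psh}_\ell(\A)$ if and only if it is one in $\cat{Psh}_\ell(!_\ast(\A))$. Identities and composition of presheaf maps are computed componentwise in $\V$ on both sides, hence agree. This establishes that the two categories have the same objects, the same morphisms, and the same composition — i.e.\ are equal, not merely isomorphic — which is exactly the statement.

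The only genuine point requiring care — and the step I expect to be the main obstacle — is verifying that the rewriting of the warped associativity pentagon into the ordinary one for $m^\sharp$ is exact, with no residual coherence cells. This amounts to a diagram chase using naturality of $\alpha$, the definition $m_\otimes = H$ of the monoidal structure on $C$ in~\eqref{eq:43}, and the monoidal-comonad axioms packaged into the fact (asserted just before Lemma~\ref{lem:19}) that $C \colon \V^! \rightarrow \cat{Coalg}(!)$ is a symmetric monoidal functor; concretely, the associativity coherence for the monoidal functor $C$ is \emph{the same diagram} relating $\alpha^!$, $\alpha$ and two instances of $H$ that appears in the presheaf pentagon, so once invoked it closes the argument. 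I would present this as a short explicit diagram, noting that it is ``a routine diagram chase with the monoidal-functor axioms for $C$'', in the same spirit as the remark preceding the lemma.
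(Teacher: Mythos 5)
Your proposal is correct and is essentially the paper's own proof, which simply notes that the underlying data coincide and that unfolding $i^{\sharp}={!}i\circ m_I$ and $m^{\sharp}={!}m\circ H$ turns the warped presheaf axioms into the ordinary ones for ${!}_\ast(\A)$. The final ``obstacle'' you anticipate in fact dissolves immediately: since $\alpha^{!}=(1\otimes H)\alpha$ and $1\otimes^{!}m=1\otimes{!}m$, functoriality of $\otimes$ gives $(1\otimes{!}m)(1\otimes H)=1\otimes m^{\sharp}$ on the nose, with no appeal to the monoidal-functor coherence for $C$ needed.
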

\begin{proof}
  The basic data of an $\A$-presheaf and a $!_\ast\A$-presheaf are the
  same: a family of components $XA \in \V$ and action maps
  $m \colon XB \otimes {!} \A(A,B) \rightarrow XA$. Moreover, by
  unfolding the definitions~\eqref{eq:35} and~\eqref{eq:36}, we see
  that the axioms coincide, too.
\end{proof}

\subsection{Lifting modalities to presheaves}
\label{sec:lift-mono-coalg}

We now exploit the preceding lemma to show that various kinds of
structure can be lifted from $\V$ to $\cat{Psh}_\ell(\A)$. Note first
that $\V^{\mathrm{ob}\,\A}$, the cartesian product of $\ob\, \A$
copies of $\V$, bears all the same structure as $\V$ does, defined
pointwise; over the next three propositions, we will show that this
structure can be lifted along the obvious forgetful functor
$\cat{Psh}_\ell(\A) \rightarrow \V^{\mathrm{ob}\,\A}$. In each case,
we will appeal to general results which allow us to do this without
needing to check coherence by hand; however, after the fact, we also
give an explicit description of the lifted structure so obtained.


\begin{Prop}
\label{prop:14}
If ${!}$ is a monoidal coalgebra modality, then the pointwise
symmetric monoidal structure of $\V^{\mathrm{ob}\,\A}$ lifts to a
symmetric monoidal structure on $\cat{Psh}_\ell(\A)$.  If $\V$ is
complete and monoidal closed, and $\A$ is small, then the monoidal
structure on $\cat{Psh}_\ell(\A)$ is closed; while 
if $\V$ is
symmetric monoidal $k$-linear, then so is $\cat{Psh}_\ell(\A)$.
\end{Prop}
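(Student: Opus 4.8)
The plan is to reduce to classical enriched category theory by means of Lemma~\ref{lem:19}, and then to lift the pointwise structure of $\V^{\mathrm{ob}\,\A}$ along the forgetful functor $U \colon \cat{Psh}_\ell(\A) \rightarrow \V^{\mathrm{ob}\,\A}$ sending a presheaf $X$ to the family $(XA)_{A \in \A}$. The first step is to notice that, because $!$ is a \emph{monoidal} coalgebra modality, the $\V$-enriched category $!_\ast\A$ of Lemma~\ref{lem:19} carries more structure than a bare $\V$-category: each hom-object $!\A(A,B)$ is equipped with the cocommutative comonoid structure $\bigl(e_{\A(A,B)}, \Delta_{\A(A,B)}\bigr)$ supplied by the coalgebra modality, and with respect to these the composition and unit maps~\eqref{eq:36} and~\eqref{eq:35} are comonoid morphisms. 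This last point is a short diagram chase: $\delta$ and $m_\otimes$ are comonoid morphisms by the definition of a monoidal coalgebra modality, so the fusion map $H$ of~\eqref{eq:39} is one too; and $!$ of any map is a comonoid morphism by naturality of $e$ and $\Delta$; hence $m^\sharp = {!}m \circ H$ and $i^\sharp = {!}i \circ m_I$ are comonoid morphisms. Thus $!_\ast\A$ is a category enriched over the cartesian monoidal category $\cat{Cocomon}(\V)$ — in the one-object case, $!\A(A,A)$ is a bimonoid in $\V$ and $\cat{Psh}_\ell(\A)$ is its category of modules — which is exactly the ingredient that makes a pointwise tensor product of $\V$-valued presheaves available.

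Concretely, the tensor of presheaves $X,Y$ would have $(X \otimes Y)A = XA \otimes YA$, with action obtained by inserting $\Delta_{\A(A,B)}$ and reshuffling; the unit presheaf has all components $I$, with action built from $e_{\A(A,B)}$; and the symmetry is pointwise, using cocommutativity of the $!\A(A,B)$. Rather than verify coherence of this by hand, I would invoke the (purely formal, needing no (co)completeness of $\V$) fact that the $\V$-valued presheaves on a $\cat{Cocomon}(\V)$-enriched category — equivalently, modules over a bimonoid in $\V$, suitably generalised to many objects — form a symmetric monoidal category whose forgetful functor to $\V^{\mathrm{ob}\,\A}$ is \emph{strict} monoidal; this yields the first assertion, with the lifted structure computed componentwise as above. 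For the closed clause, assuming $\V$ complete and monoidal closed and $\A$ small, the internal hom $[Y,Z]$ can be produced by the standard end construction, its value at $A$ being an end over $\A$ of objects of the form $[{!}\A(A,B) \otimes YB, ZB]$, with the hom-set adjunction checked by the usual coend/Fubini calculus.

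The $k$-linear refinement is then immediate: $\V^{\mathrm{ob}\,\A}$ is symmetric monoidal $k$-linear pointwise; the hom-sets of $\cat{Psh}_\ell(\A)$ are sub-$k$-modules of those of $\V^{\mathrm{ob}\,\A}$, since the compatibility condition defining a presheaf map is $k$-linear in the map by bilinearity of $\otimes$ and $k$-linearity of composition in $\V$; and the lifted tensor acts bilinearly on homs because it is computed componentwise. Hence $\cat{Psh}_\ell(\A)$ is symmetric monoidal $k$-linear.

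I expect the main obstacle to be the middle step: pinning down the precise general theorem that applies here and verifying its one substantive hypothesis. Two packagings suggest themselves — either cite a general result on $\V$-valued presheaves over a $\cat{Cocomon}(\V)$-enriched base, or check directly that $U$ is comonadic (it has the ``cofree presheaf'' right adjoint $V \mapsto (B \mapsto \prod_A [{!}\A(B,A), V_A])$, creates limits, and is conservative) and that the induced comonad on $\V^{\mathrm{ob}\,\A}$ is lax symmetric monoidal for the pointwise structure, then appeal to the theory of (op)monoidal (co)monads. In either case the work lies in showing that the comonoid comultiplications and counits on the hom-objects of $!_\ast\A$ organise into coherent monoidal-constraint data — precisely the coherence the proposition promises to sidestep. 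Once the right black box is in hand, the explicit descriptions of the lifted structure are obtained by unwinding definitions.
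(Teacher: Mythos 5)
Your proposal is correct and follows essentially the same route as the paper: reduce via Lemma~\ref{lem:19} to $\cat{Psh}_\ell({!}_\ast(\A))$, observe that the homs of ${!}_\ast(\A)$ are cocommutative comonoids whose composition and identity maps~\eqref{eq:35} and~\eqref{eq:36} are comonoid morphisms, then lift the pointwise (closed) monoidal structure by a general result, and handle the $k$-linear clause componentwise. The general ``black box'' you are seeking for the middle step is exactly what the paper cites, namely \cite[\sec 5]{Day1970On-closed}.
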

\begin{proof}
  Since $\cat{Psh}_\ell(\A) = \cat{Psh}_\ell(!_\ast(\A))$, it suffices
  to prove the claim for the latter category. Since ${!}$ is a
  monoidal coalgebra modality, each hom $!\A(A,B)$ of the
  $\V$-category $!_\ast(\A)$ is a cocommutative comonoid, and the
  identity and composition maps~\eqref{eq:35} and~\eqref{eq:36} are maps of
  cocommutative comonoids. As explained in~\cite[\sec
  5]{Day1970On-closed}, this implies that the pointwise monoidal
  structure on $\V^{\mathrm{ob}\,\A}$ lifts to
  $\cat{Psh}_\ell(!_\ast(\A))$; and that, under the stated further
  hypotheses, this lifted monoidal structure is \emph{closed}.

  Suppose now that $\V$ is symmetric monoidal $k$-linear. In this
  case, $\cat{Psh}_\ell(\A)$ becomes $k$-linear on endowing each
  hom-set with the pointwise $k$-module structure; note that this
  structure is preserved by pre- and post-composition because it is so
  in $\V$. Moreover, the tensor product of $\cat{Psh}_\ell(\A)$
  preserves in each variable the $k$-module structure on the homs
  because the same is true in $\V$.
\end{proof}
\begin{Rk}
  \label{rk:3}
  We can be quite explicit about the monoidal structure on
  $\cat{Psh}_\ell(\A)$. If $X, Y$ are presheaves on $\A$, then their
  componentwise tensor $XA \otimes YA$ becomes a presheaf via the
  structure maps
  \begin{equation}
    \label{eq:22}
    \begin{gathered}
      (XB \otimes YB) \otimes {!}\A(A,B) \xrightarrow{1 \otimes
        \Delta}
      (XB \otimes YB) \otimes \bigl({!}\A(A,B) \otimes {!}\A(A,B)\bigr)\\
      \xrightarrow{\cong} (XB \otimes {!}\A(A,B)) \otimes (YB \otimes
      {!}\A(A,B)) \xrightarrow{m \otimes m} XA \otimes YA\rlap{ ,}
    \end{gathered}
  \end{equation}
  where the unnamed isomorphism uses the associativity and symmetry
  maps in $\V$.
  The unit for this tensor is the presheaf constant at
  $I$, with structure maps
  \begin{equation}
    I \otimes {!}\A(A,B) \xrightarrow{1 \otimes e} I \otimes I
    \xrightarrow{\rho} I\rlap{ .}\label{eq:24}
  \end{equation}
  As for the internal hom of presheaves
  $[Y,Z]$, this has components given by the following hom-objects of the
  $\V$-category $\cat{Psh}(!_\ast(\A))$:
  \begin{equation*}
    [Y,Z]A = \cat{Psh}({!}_\ast(\A))({!}\A(\thg, A) \otimes Y, Z)
  \end{equation*}
  where ${!}\A(\thg, A)$ is the representable presheaf on
  $A \in {!_\ast}(\A)$ and the $\otimes$ is the tensor product of
  presheaves just defined. Recognising the right-hand side as the
  $\V$-presheaf hom $\phom{{!}\A(\thg, A) \otimes Y, Z}$,
  we obtain the structure map
  ${[Y,Z]B \otimes {!}\A(A,B) \rightarrow [Y,Z]A}$ by
  transposing the $\V$-linear presheaf map 
  $([Y,Z]B \otimes {!}\A(A,B)) \ast ({!}\A(\thg, A) \otimes Y)
  \rightarrow Z$ with $C$-component
  \begin{equation}\label{eq:44}
    [Y,Z]B \otimes {!}\A(A,B) \otimes {!}\A(C, A) \otimes
    YC \xrightarrow{1 \otimes m^{\sharp} \otimes 1} [Y,Z]B \otimes {!}\A(C,B) \otimes
    YC \xrightarrow{\mathsf{ev}} ZC\text{ .}
  \end{equation}
\end{Rk}

\begin{Prop}
\label{prop:15}
If ${!}$ is a monoidal coalgebra modality, then the pointwise monoidal
coalgebra modality ${!}^{\mathrm{ob}\,\A}$ on $\V^{\mathrm{ob}\,\A}$ lifts to a monoidal
coalgebra modality on $\cat{Psh}_\ell(\A)$. 
\end{Prop}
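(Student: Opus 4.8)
The plan is to reduce to the $\V$-enriched category ${!}_\ast(\A)$ via Lemma~\ref{lem:19}, and then to transport the pointwise structure using the observation---already in play in the proof of Proposition~\ref{prop:14}, but which we now push further---that ${!}_\ast(\A)$ is enriched not merely in cocommutative comonoids but in ${!}$-coalgebras. Concretely: since $C \colon \V^{!} \to \cat{Coalg}({!})$ in~\eqref{eq:43} is a monoidal functor, change of base along it turns $\A$ into a $\cat{Coalg}({!})$-enriched category $C_\ast(\A)$, and ${!}_\ast(\A) = U_\ast(C_\ast(\A))$; hence each hom ${!}\A(A,B)$ carries the cofree ${!}$-coalgebra structure $\delta$, and the identity and composition maps~\eqref{eq:35} and~\eqref{eq:36} are morphisms of ${!}$-coalgebras (as well as of comonoids, by Proposition~\ref{prop:14}). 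Given this, I would define the lifted endofunctor pointwise: $({!}X)A = {!}(XA)$, acting as ${!}$ componentwise on presheaf maps, with $\A$-action
\[
  {!}(XB) \otimes {!}\A(A,B) \xrightarrow{\ H\ } {!}\bigl(XB \otimes {!}\A(A,B)\bigr) \xrightarrow{\ {!}m\ } {!}(XA)\rlap{ ,}
\]
built from the fusion operator $H$ of~\eqref{eq:39} (which is where the coalgebra structure of the hom is used) followed by ${!}$ of the $X$-action.

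Next I would equip this endofunctor with the comonad structure $(\varepsilon, \delta)$, the coalgebra-modality structure $(e, \Delta)$---taken with respect to the convolution monoidal structure of Proposition~\ref{prop:14}, whose tensor, unit and constraint cells are all computed componentwise, as in Remark~\ref{rk:3}---and the monoidal-comonad constraints $(m_I, m_\otimes)$, in every case defined componentwise by the corresponding structure map of $\V$. The verification splits in two. First, each of these pointwise families must be shown to be a $\V$-linear presheaf map, that is, to commute with the $\A$-actions; in each case this is a short diagram chase combining the relevant naturality square, a monoidal-comonad axiom for ${!}$, and the fact that the composition and identities of ${!}_\ast(\A)$ are maps of ${!}$-coalgebras and comonoids. (For example, compatibility of the pointwise $\varepsilon$ with the actions reduces, using naturality of $\varepsilon$ and monoidality of the counit together with $H = m_\otimes(1 \otimes \delta)$, to a comonad triangle identity for ${!}$.) Secondly, the comonad, coalgebra-modality and monoidal-comonad axioms, and the compatibility conditions of Definition~\ref{def:18}, all hold because they hold objectwise in $\V$. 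As with Proposition~\ref{prop:14}, one organises this so that the coherence is handled by a general lifting result in the spirit of~\cite[\sec 5]{Day1970On-closed}---presheaves over a base enriched in ${!}$-coalgebras inheriting the modality pointwise---rather than checked by hand; afterwards one records the explicit pointwise description above. Since all of this structure is pointwise, the forgetful functor $\cat{Psh}_\ell(\A) \to \V^{\mathrm{ob}\,\A}$ is strict monoidal and strictly compatible with it, which is exactly the assertion that ${!}^{\mathrm{ob}\,\A}$ has been lifted.

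The one genuinely new point, and the step I expect to be the main obstacle, is verifying that ${!}m \circ H$ does define a presheaf and that the structure transformations respect it. The crux is that the associativity and unit laws for this new action unwind to the same identities that make the monoidal functor $C$ of~\eqref{eq:43} coherent: one needs the associativity and ``chain-rule'' squares for the fusion operator $H$, together with the hypothesis that the composition maps of ${!}_\ast(\A)$ are ${!}$-coalgebra morphisms---exactly the routine diagram chase alluded to after~\eqref{eq:43}. Once this is in place, everything else is pointwise and follows directly from the standing hypothesis that ${!}$ is a monoidal coalgebra modality on $\V$; for instance, cocommutativity of each $({!}X, e_X, \Delta_X)$ and the fact that each $\delta_X$ is a comonoid morphism are inherited verbatim from the corresponding facts about $({!}(XA), e_{XA}, \Delta_{XA})$ in $\V$.
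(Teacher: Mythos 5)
Your construction is correct, and its end product agrees with the paper's (the lifted comonad acts componentwise, with $\A$-action ${!}m \circ H$ exactly as in Remark~\ref{rk:4}), but the route is genuinely different. The paper never verifies the comonad, coalgebra-modality or monoidality axioms by hand: it lifts the whole cofree--forgetful adjunction $U \dashv C \colon \V \to \cat{Coalg}({!})$ to an adjunction $\tilde U \dashv \tilde C$ between $\cat{Psh}_\ell(C_\ast(\A))$ and $\cat{Psh}_\ell({!}_\ast(\A))$, upgrades it to a monoidal adjunction using Kelly's doctrinal adjunction result (a monoidal adjunction is determined by a strong monoidal structure on the left adjoint, and $\tilde U$ is strict monoidal since both tensors lift the pointwise one), and then concludes that the induced monoidal comonad $\tilde U \tilde C$ is a monoidal \emph{coalgebra} modality by Benton's theorem, since the lifted tensor on $\cat{Psh}_\ell(C_\ast(\A))$ is cartesian because $\hat\otimes$ on $\cat{Coalg}({!})$ is. Your route instead defines all structure componentwise and checks compatibility with the $\A$-actions directly via fusion-operator chases. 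This buys the explicit description immediately (which the paper only records afterwards), but be aware of two costs. First, the ``general lifting result in the spirit of Day'' that you invoke to avoid hand-checking does not exist off the shelf: Day's \S 5 lifts the \emph{monoidal structure} over a comonoid-enriched base (that is Proposition~\ref{prop:14}), not a modality, so in your approach the diagram chases you defer---that ${!}m \circ H$ satisfies the presheaf axioms, and that $\varepsilon$, $\delta$, $e$, $\Delta$, $m_I$, $m_\otimes$ are $\V$-linear presheaf maps, using the fusion identities such as $\delta \circ H = {!}H \circ H \circ (\delta \otimes 1)$ and the fact that the homs are cofree coalgebras---are the actual mathematical content and must be carried out. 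Second, the compatibilities of Definition~\ref{def:18} do reduce to componentwise equations as you say, but only \emph{after} all the structure maps are known to be presheaf morphisms; this is precisely the part that the paper's appeal to cartesianness of the coalgebra tensor (via Benton) renders automatic, and it is the most tedious portion of a direct verification.
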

\begin{proof}
  We will make use of the following construction. Given a
  monoidal adjunction $F \dashv G \colon \M \rightarrow \N$ between
  (genuine) monoidal categories and a $\N$-category $\C$, we obtain an
  adjunction $\tilde F \dashv \tilde G \colon
  \cat{Psh}_\ell(F_\ast(\C)) \rightarrow \cat{Psh}_\ell(\C)$
  on presheaves as follows. $\tilde F$ acts on a $\C$-presheaf $X$ by applying $F$
  componentwise, and endowing the result with the action maps
  \begin{equation*}
    FXB \otimes F\C(A,B) \xrightarrow{m_\otimes} F(XB \otimes
    \C(A,B)) \xrightarrow{Fm} FXA\rlap{ .}
  \end{equation*}
  On the other hand, $\tilde G$ acts on a $F_\ast(\C)$-presheaf $Y$ by
  applying $G$ componentwise, and endowing the result with the action
  maps
  \begin{equation*}
    GYB \otimes \C(A,B) \xrightarrow{1 \otimes \eta} GYB \otimes
    GF\C(A,B) \xrightarrow{m_\otimes} G(YB \otimes F\C(A,B))
    \xrightarrow{Gm} GYA\rlap{ ,}
  \end{equation*}
  where $\eta$ is the unit of $F \dashv G$.
  
  We now apply this construction to the cofree-forgetful monoidal adjunction
  $U \dashv C \colon (\V, \otimes, I) \rightarrow (\cat{Coalg}(!),
  \otimes, I)$ and the $\cat{Coalg}(!)$-category $C_\ast(\A)$. This
  yields an adjunction on presheaves as to the left below, which lifts
  the pointwise adjunction as to the right:
  \begin{equation}\label{eq:37}
    \cd{
      {\cat{Psh}_\ell(C_\ast(\A))} \ar@<-4.5pt>[r]_-{\tilde U}
      \ar@{<-}@<4.5pt>[r]^-{\tilde C} \ar@{}[r]|-{\top} &
      {\cat{Psh}_\ell(!_\ast(\A))} &
      {\cat{Coalg}(!)^{\mathrm{ob}\,\A}} \ar@<-4.5pt>[r]_-{U^{\mathrm{ob}\,\A}}
      \ar@{<-}@<4.5pt>[r]^-{C^{\mathrm{ob}\,\A}} \ar@{}[r]|-{\top} &
      {\V^{\mathrm{ob}\,\A}} \rlap{ .}
    }
  \end{equation}
  

  Now, we have already seen that the $\V$-category
  $\cat{Psh}_\ell(!_\ast(\A))$ bears a symmetric monoidal structure
  lifting that of $\V^{\mathrm{ob}\,\A}$, since every hom has a
  cocommutative comonoid structure which is respected by composition.
  Since ${!}$ is a monoidal coalgebra modality, the same properties
  hold for the $(\cat{Coalg}(!), \hat \otimes, \hat I)$-category $C_\ast(\A)$,
  and so we can also lift the pointwise monoidal structure of
  $\cat{Coalg}(!)^{\mathrm{ob}\,\A}$ to $\cat{Psh}(C_\ast(\A))$.

  In fact, the monoidal structure of the entire adjunction to the
  right in~\eqref{eq:37} lifts to the adjunction to the left. Indeed,
  the \emph{strict} monoidal structure of
  $U \colon (\cat{Coalg}(!), \hat \otimes, \hat I) \rightarrow (\V, \otimes, I)$
  clearly lifts to
  $\tilde U \colon \cat{Psh}_\ell(C_\ast(\A)) \rightarrow
  \cat{Psh}_\ell({!}_\ast(\A))$, since both monoidal structures lift
  that of $\V^{\mathrm{ob}\,\A}$; and moreover,
  by~\cite[Theorem~2.2]{Kelly1974Doctrinal}, any monoidal adjunction
  is uniquely determined by the underlying adjunction, and a strong
  monoidal structure on the left adjoint.

  It follows that $\tilde U \tilde C$ is a \emph{monoidal} comonad on
  $\cat{Psh}_\ell(!_\ast(\A)) = \cat{Psh}_\ell(\A)$ which lifts the
  pointwise monoidal comonad $!^{\mathrm{ob}\,\A}$ on
  $\V^{\mathrm{ob}\,\A}$. It remains to show this monoidal comonad is
  in fact a monoidal coalgebra modality.
  By~\cite[Theorem~3]{Benton1995A-mixed}, it suffices for this to show
  that the lifted monoidal structure on~$\cat{Psh}_\ell(C_\ast(\A))$
  is \emph{cartesian}---which follows immediately from the fact that
  the monoidal structure of $\cat{Coalg}(!)$ is itself cartesian
  (cf.~\cite[Example~5.2]{Day1970On-closed}). 
\end{proof}

\begin{Rk}
  \label{rk:4}
  Again, we can be quite explicit about the induced monoidal coalgebra
  modality on $\cat{Psh}_\ell(\A)$. The underlying functor acts on a
  presheaf $X \in \cat{Psh}_\ell(\A)$ by applying ${!}$ to all of its
  components, and equipping it with the following action maps, where
  $H$ is the fusion operator of~\eqref{eq:39}:
  \begin{equation}\label{eq:38}
    !XB \otimes {!}\A(A,B)
    \xrightarrow{H} 
    !(XB \otimes {!}\A(A,B))
    \xrightarrow{!m}
    {!}XA\rlap{ .}
  \end{equation}
  The remaining data $(\varepsilon, \delta, e, \Delta, m_I, m_\otimes)$
  are all given componentwise by the corresponding data for ${!}$ on
  $\V$.
\end{Rk}

  \begin{Prop}
  \label{prop:16}
  If $\V$ is a $k$-linear symmetric monoidal category, and
  ${!}$ is a monoidal differential modality on $\V$, then the pointwise
  monoidal differential modality $!^{\mathrm{ob}\,\A}$ on
  $\V^{\mathrm{ob}\,\A}$ lifts to a monoidal differential modality on
  $\cat{Psh}_\ell(\A)$.
\end{Prop}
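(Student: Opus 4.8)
The plan is to build directly on Propositions~\ref{prop:14} and~\ref{prop:15}. Those already furnish $\cat{Psh}_\ell(\A)$ with a symmetric monoidal $k$-linear structure and with a lifted monoidal coalgebra modality; by Remark~\ref{rk:4}, the latter applies ${!}$ to a presheaf $X$ componentwise, with action maps~\eqref{eq:38}, and has all of $\varepsilon, \delta, e, \Delta, m_I, m_\otimes$ given componentwise. So the only new ingredient required is a deriving transformation, and the obvious candidate is the componentwise one: for a presheaf $X$ on $\A$, let $\mathsf{d}_X \colon {!}X \otimes X \rightarrow {!}X$ have $A$-component the map $\mathsf{d}_{XA} \colon {!}(XA) \otimes XA \rightarrow {!}(XA)$ provided by the deriving transformation of ${!}$ on $\V$ at the object $XA$.

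The heart of the proof is to check that each $\mathsf{d}_X$ really is a morphism of $\cat{Psh}_\ell(\A)$, that is, that its components are compatible with the presheaf actions of ${!}X \otimes X$ (namely~\eqref{eq:22}, read with the hom-objects ${!}\A(A,B)$ of $!_\ast(\A)$, using Lemma~\ref{lem:19}) and of ${!}X$ (namely~\eqref{eq:38}). Since the forgetful functor $\cat{Psh}_\ell(\A) \rightarrow \V^{\mathrm{ob}\,\A}$ is faithful, this compatibility is the only thing that can obstruct the pointwise deriving transformation from lifting to $\cat{Psh}_\ell(\A)$. Concretely, it reduces to a diagram chase in $\V$ showing, for each $A,B \in \A$, that the square with sides $\mathsf{d}_{XB} \otimes 1$, the two presheaf actions, and $\mathsf{d}_{XA}$ commutes. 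The verification uses naturality of $\mathsf{d}$ together with the deriving transformation axioms---the chain rule enters because the action~\eqref{eq:38} on ${!}X$ is not merely ${!}$ applied to the action on $X$ but is built from the fusion map $H$, hence from $\delta$ and $m_\otimes$---and, crucially, the automatic compatibility of $\mathsf{d}$ with $m_\otimes$ recorded in the footnote to Definition~\ref{def:19} and established in~\cite{Blute2019Differential}.

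Once $\mathsf{d}_X$ is known to live in $\cat{Psh}_\ell(\A)$, the remaining checks are immediate. Naturality of $\mathsf{d}$ in the presheaf variable follows componentwise from naturality of $\mathsf{d}$ on $\V$, since both the tensor product and the comonad ${!}$ on $\cat{Psh}_\ell(\A)$ are computed componentwise. And each of the four deriving transformation axioms of Definition~\ref{def:19}---product, linear, chain and interchange rules---is an identity between morphisms of $\cat{Psh}_\ell(\A)$ all of whose building blocks ($\Delta$, $e$, $\varepsilon$, $\delta$, the symmetry $\sigma$, and $\mathsf{d}$ itself) act componentwise, while by Proposition~\ref{prop:14} the $k$-module structure on the homs---hence the sum appearing in the product rule---is componentwise too; so each axiom reduces to the corresponding axiom for ${!}$ on $\V$, which holds by hypothesis. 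This yields the lifted monoidal differential modality. I expect the diagram chase of the second paragraph to be the only real difficulty: there is no off-the-shelf doctrinal lifting theorem for deriving transformations as there was for the monoidal and coalgebra-modality structure, and the presence of $H$ in~\eqref{eq:38} means the $m_\otimes$--$\mathsf{d}$ interaction cannot be sidestepped.
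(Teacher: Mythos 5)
Your proposal is correct and follows essentially the same route as the paper: the deriving transformation is defined componentwise, the only substantive check is that each $\mathsf{d}_{XA}$ is compatible with the presheaf actions of ${!}X \otimes X$ and ${!}X$ (i.e.\ the components are $\V^!$-linear in $A$), and this diagram chase is settled by naturality of $\mathsf{d}$, the monoidal comonad axioms, and—exactly as you identify—the automatic compatibility of $\mathsf{d}$ with $m_\otimes$ (the ``monoidal axiom'' of~\cite[Theorem~6.12]{Blute2019Differential}), after which naturality and the four deriving-transformation axioms follow componentwise. The paper's proof is just the explicit form of the chase you describe.
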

\begin{proof}
  It suffices to show that, for each presheaf $X$ on $\A$, the family
  of maps $\mathsf{d}_{XA} \colon {!}XA \otimes XA \rightarrow {!}XA$
  are $\V^{!}$-linear in $A$, so comprising the components of a
  presheaf map $\mathsf{d}_X \colon {!}X \otimes X \rightarrow {!}X$.
  Once we have this, the fact that $\mathsf{d}$ is a natural
  transformation, and the axioms for a deriving transformation, follow
  componentwise from the corresponding facts in $\V$. The desired
  $\V^{!}$-linearity amounts to showing that, for all $A,B \in \A$,
  the following square commutes:
  \begin{equation*}
    \cd[@C+1em]{
      {{!}XB \otimes XB \otimes \A(A,B)} \ar[r]^-{m_{{!}X \otimes X}}
      \ar[d]_{\mathsf{d} \otimes 1} &
      {{!}XA \otimes XA} \ar[d]^{\mathsf{d}} \\
      {{!}XB \otimes \A(A,B)} \ar[r]^-{m_{{!}X}} &
      {{!}XA}
    }
  \end{equation*}
  where the top and bottom edges are the action maps of the presheaves
  ${!}X \otimes X$ and ${!}X$ respectively. Expanding these out, this
  is equally to show that the outside of the following diagram
  commutes, where to avoid an unwieldy presentation, we are writing
  $\mathrm{A}$, $\mathrm{B}$ and $\mathrm{H}$ as ciphers for $XA$,
  $XB$ and $\A(A,B)$:
\begin{equation*}
  \cd[@!C@C-7em]{
      & {!}\XB \ots \XB \ots {!}\AB \ar[rrr]^-{11\Delta}
      \ar[dl]_-{\mathsf{d}1} \ar[dr]^-{11 \delta} & & &
      {!}\XB \ots \XB \ots {!}\AB \ots {!}\AB \ar[rrr]^-{1 \sigma 1}
      \ar[dr]^-{11\delta\delta}&&&
      {!}\XB \ots {!}\AB \ots \XB \ots {!}\AB \ar[rrr]^-{1\delta11} \ar[dr]^-{1\delta1\delta}&&&
      {!}\XB \ots {!}{!}\AB \ots \XB \ots {!}\AB \ar[dr]^-{m_\otimes
        11} \\
      {!}\XB \ots {!}\AB \ar[dr]^-{1\delta} & &
      \sh{l}{0.6em}{!}\XB \ots \XB \ots {!}{!}\AB \ar[rrr]^-{11 \Delta}
      \ar[dl]^-{\mathsf{d}1}&&&
      \sh{l}{0.9em}{!}\XB \ots \XB \ots {!}{!}\AB \ots {!}{!}\AB
      \ar[rrr]^-{1\sigma1} &&&
      \sh{l}{0.8em}{!\XB} \ots {!}{!}\AB \ots \XB \ots {!}{!}\AB
      \ar[rrr]^-{m_{\otimes} 1 \varepsilon} &&&
      \sh{l}{0.1em}{!}(\XB \ots {!}\AB) \ots \XB \ots {!}\AB
      \ar[dl]^-{\mathsf{d}}
      \ar[dr]^-{{!}m \otimes m}\\
      & {!}\XB \ots {!}{!}\AB \ar[rrrrrrrrr]^-{m_\otimes} & & & &&& &&&
      {!}(\XB \ots {!}\AB) \ar[dr]^-{{!}m} &&
      {!}\XA \ots \XA\text{ .} \ar[dl]_-{\mathsf{d}} \\
      & & & & & & & & & & & {!}\XA
    }
  \end{equation*}
  Each of the small regions commutes easily using the axioms for a
  monoidal comonad plus naturality of $\mathsf{d}$. The large region
  is the so-called \emph{monoidal axiom}, which is shown
  in~\cite[Theorem~6.12]{Blute2019Differential} to hold for any
  monoidal differential modality.
\end{proof}

\subsection{The embedding theorem}
\label{sec:embedding-theorem}

We are finally ready for our second main result.

\begin{Thm}
  \label{thm:4}
  Every small cartesian differential category $\A$ admits a full
  structure-preserving embedding into the cartesian closed
  differential category induced by a monoidal differential modality on
  a symmetric monoidal closed $k$-linear category.
\end{Thm}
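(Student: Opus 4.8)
The plan is to run the strategy sketched at the start of Section~\ref{sec:appl-an-embedd}: embed $\A$ into its skew-enriched presheaf category and then recognise the target as the cartesian differential category induced by a monoidal differential modality. First I would invoke Theorem~\ref{thm:3} to regard the small cartesian differential category $\A$ as a cartesian $\kmod^Q$-enriched category. Since $\A$ is small and $\kmod$ is complete and symmetric monoidal closed---so that the skew-warping $\kmod^Q$ is left closed by Definition~\ref{def:3}---the presheaf $\kmod^Q$-category $\cat{Psh}(\A)$ exists; and by Lemma~\ref{lem:14} it is a cartesian $\kmod^Q$-category for which the Yoneda embedding $y \colon \A \rightarrow_t \cat{Psh}(\A)$ is a cartesian, fully faithful tight $\V$-functor. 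Translating this back through Theorem~\ref{thm:3} and Example~\ref{ex:23}, the underlying ordinary category $\cat{Psh}_0(\A)$ of $\cat{Psh}(\A)$ carries a cartesian differential structure, and $y$ becomes a functor of cartesian differential categories which is injective on objects and an isomorphism on homs---that is, a full structure-preserving embedding of $\A$ into $\cat{Psh}_0(\A)$. It remains only to exhibit $\cat{Psh}_0(\A)$ as the cartesian differential category induced by a monoidal differential modality on a symmetric monoidal closed $k$-linear category.

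For this I would apply the results of Sections~\ref{sec:skew-warp-presh}--\ref{sec:lift-mono-coalg} with $\V = \kmod$ and $! = Q$, the initial monoidal differential modality of Definition~\ref{def:16} (which in particular is a monoidal differential modality, hence a monoidal coalgebra modality). By Proposition~\ref{prop:14}, $\cat{Psh}_\ell(\A)$ is a symmetric monoidal $k$-linear category, and it is \emph{closed} because $\kmod$ is complete and monoidal closed and $\A$ is small. By Proposition~\ref{prop:16}, the pointwise monoidal differential modality $Q^{\mathrm{ob}\,\A}$ lifts to a monoidal differential modality $\hat Q$ on $\cat{Psh}_\ell(\A)$, with the explicit componentwise description of Remark~\ref{rk:4}. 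Hence, by Proposition~\ref{prop:2}, the co-Kleisli category $\cat{Kl}(\hat Q)$ is a cartesian \emph{closed} differential category induced by a monoidal differential modality on a monoidal closed $k$-linear category.

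The crux is then to identify $\cat{Kl}(\hat Q)$ with $\cat{Psh}_0(\A)$ \emph{as cartesian differential categories}. By Proposition~\ref{prop:11}, $\cat{Psh}_0(\A)$ is the co-Kleisli category of the comonad $I \ast (\thg)$ on $\cat{Psh}_\ell(\A)$, so I would first check that, up to the evident isomorphism, this comonad is the underlying comonad of $\hat Q$: unwinding Definition~\ref{def:20} one has $(I \ast X)(A) = I \otimes Q(XA) \xrightarrow{\ \lambda\ } Q(XA)$, and since the skew constraints of $\kmod^Q$ are built out of those of $\kmod$ together with the structure maps of $Q$ (Definition~\ref{def:3}), the action maps and the comonad structure of $I \ast (\thg)$ given in Proposition~\ref{prop:11} match the componentwise description of $\hat Q$ in Remark~\ref{rk:4}. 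Granting this, it remains to verify that the cartesian differential structure $\cat{Kl}(\hat Q)$ inherits via formula~\eqref{eq:17} coincides with the one carried by $\cat{Psh}_0(\A)$ as the underlying category of the cartesian $\kmod^Q$-category $\cat{Psh}(\A)$---equivalently, using the concrete descriptions at the end of Section~\ref{sec:cart-diff-categ-2}, that the $\kmod^Q$-composition $\phom{Y,Z} \otimes Q\phom{X,Y} \rightarrow \phom{X,Z}$ of $\cat{Psh}(\A)$, transported across this identification, reproduces both the co-Kleisli composition and the deriving-transformation recipe for $\hat Q$. This is a diagram chase using Definition~\ref{def:21} (composition via $\mathsf{ev}$) together with Remark~\ref{rk:4}.

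I expect this last step---reconciling the two \emph{a priori} different cartesian differential structures on $\cat{Psh}_0(\A)$, one produced by the skew-enrichment machinery of Theorem~\ref{thm:3} and one produced by the co-Kleisli recipe~\eqref{eq:17} for $\hat Q$---to be the main obstacle, since although both are forced, matching them requires tracking carefully how the fusion operator $H$ of $Q$ (which governs the skew associator of $\kmod^Q$, and hence the action maps of $I \ast (\thg)$) interacts with the deriving transformation and the storage maps~\eqref{eq:3}. Once this is done, the composite $\A \hookrightarrow \cat{Psh}_0(\A) \cong \cat{Kl}(\hat Q)$ is the required full, structure-preserving embedding of $\A$ into the cartesian closed differential category associated to a model of intuitionistic differential linear logic, completing the proof.
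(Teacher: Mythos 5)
Your proposal is correct and follows essentially the same route as the paper: Yoneda embedding into the $\kmod^Q$-enriched presheaf category via Theorem~\ref{thm:3} and Lemma~\ref{lem:14}, identification of $\cat{Psh}_0(\A)$ with the co-Kleisli category of the lifted comonad via Proposition~\ref{prop:11} and Remark~\ref{rk:4}, and then reconciliation of the two induced cartesian differential structures. The one step you leave as a sketched ``diagram chase''---matching the differential operators---is exactly where the paper does its only real work, carrying it out by an explicit element-level computation using the formula for the fusion operator (Lemma~\ref{lem:1}) and the identification $\cat{Kl}(Q) \cong \faa(\kmodw)$ (Proposition~\ref{prop:4}); note also that the paper observes it suffices to compare the differentials alone, since the underlying categories, additive structure and products visibly agree.
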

\begin{proof}
  Viewing $\A$ as a cartesian $\kmod^Q$-category, we can form the
  category of presheaves $\cat{Psh}(\A)$ and the  Yoneda
  embedding $y \colon \A \rightarrow_t \cat{Psh}(\A)$. By
  Lemma~\ref{lem:14} and Example~\ref{ex:23}, this corresponds to a
  full structure-preserving embedding of cartesian differential
  categories; so it suffices to show that the cartesian differential
  structure of $\cat{Psh}(\A)$ arises in the desired manner.

  The category on which this cartesian differential structure resides
  is the underlying ordinary category $\cat{Psh}_0(\A)$, which by
  Proposition~\ref{prop:11} is the co-Kleisli category of the comonad
  $I \ast (\thg)$ on $\cat{Psh}_\ell(\A)$. This comonad acts on a
  presheaf $X$ by sending it to the presheaf with components
  $(I \ast X)(A) = I \otimes^Q XA = I \otimes QXA$ and action
  \begin{equation*}
    I \otimes QXB \otimes Q\A(A,B) \xrightarrow{1 \otimes H} I \otimes Q(XB
    \otimes Q\A(A,B)) \xrightarrow{1 \otimes m} I \otimes QXA\rlap{ ;}
  \end{equation*}
  comparing with~\eqref{eq:38}, we see that upon transporting along
  the (invertible) left unit constraints of $\V$ we obtain precisely
  the lifted comonad on $\cat{Psh}_\ell(\A)$ of
  Proposition~\ref{prop:15}. Now by Propositions~\ref{prop:14},
  \ref{prop:15} and \ref{prop:16}, this lifted comonad is a monoidal
  differential modality on a symmetric monoidal closed $k$-linear
  category, and so its co-Kleisli category---which is
  $\cat{Psh}_0(\A)$---bears cartesian closed differential structure.

  All that remains is to show that the cartesian differential structure
  on $\cat{Psh}_0(\A)$ coming from this monoidal differential modality
  coincides with the cartesian differential structure coming from the
  $\kmod^Q$-enrichment. Clearly, it suffices to check that the
  differential operators coincide.

  Now, a map of $\cat{Psh}_0(\A)$ from $X$ to $Y$ is a $\V$-linear
  presheaf map $f \colon QX \rightarrow Y$ with components
  $f_A \colon QXA \rightarrow YA$ in $\kmod$. By 
  Proposition~\ref{prop:4}, each such component corresponds to a map
  $f_A^{(\bullet)} \colon XA \rightsquigarrow YA$ in $\faa(\kmodw)$
  upon defining
  \begin{equation*}
    f_A^{(n)}(x_0, \dots, x_n) \defeq f_A(\spn{x_0, \dots, x_n}) \rlap{ .}
  \end{equation*}
  %
  %

  Because the monoidal differential modality on $\cat{Psh}_\ell(\A)$
  is given pointwise by the initial differential modality $Q$ on
  $\kmod$, Proposition~\ref{prop:4} also tells us that
  that the differential
  $\mathrm{D}f \colon X \times X \rightarrow Y$ in $\cat{Psh}_0(\A)$
  associated to this monoidal differential modality
  sends an element $\spn{(x_0, y_0), \dots, (x_n, y_n)}$ of $Q(XA \times
  XA)$ to the element
  \begin{equation*}
    f^{(n+1)}(x_0, x_1, \dots, x_n, y_0) + \textstyle \sum_{i =1}^n f^{(n)}(x_0, \dots, x_{i-1}, y_i, x_{i+1}, \dots, x_n) \in YA\rlap{ .}
  \end{equation*}


  On the other hand, to compute $\mathrm{D}f$ via the
  $\kmod^Q$-enrichment of $\cat{Psh}(\A)$, we view $f$ as an element
  of the presheaf hom $\phom{X,Y} = \cat{Psh}_\ell(\A)(QX,Y)$, and
  apply the composition map
  $\phom{X,Y} \otimes Q\phom{X \times X, X} \rightarrow \phom{X \times
    X, Y}$ to this $f$ and to
  $\spn{p_0, p_1} \in Q\phom{X \times X, X}$. Here
  $p_0, p_1 \in \phom{X \times X, X} = \cat{Psh}_\ell(Q(X \times X), X)$ are the projection maps of the
  product in $\cat{Psh}(\A)$, with  components
\begin{equation}
     (p_i)_A \defeq Q(XA \times XA) \xrightarrow{\varepsilon} XA\times XA
     \xrightarrow{\pi_i} XA\rlap{ .}\label{eq:61}
   \end{equation}

   By definition of composition in $\cat{Psh}(\A)$, to compute
   $\mathrm{D}f$ in this way is equally well to partially evaluate the following map at $f$ and
   $\spn{p_0, p_1}$ in its first two arguments:
  \begin{equation*}
    \phom{X,Y} \otimes Q\phom{X^2,X} \otimes Q(XA)^2 \xrightarrow{1 \otimes H} 
    \phom{X,Y} \otimes Q(\phom{X^2,X} \otimes Q(XA)^2) 
    \xrightarrow{\mathsf{ev}
      (1 \otimes Q\mathsf{ev})} YA\text{.}
  \end{equation*}
  %
  %
  %
  We thus find the action of $\mathrm{D}f$ on
  an element $\spn{w_0, \dots, w_n} = \spn{(x_0, y_0), \dots, (x_n,
    y_n)}$ of $Q(XA
  \times XA)$ to be given as follows:
  \begin{align*}
    & \ \ \ \ \ \ \ f \otimes \spn{p_0, p_1} \otimes
    \spn{w_0, \dots, w_n} 
    \ \ \ \ \ \mapsto \sum_{\substack{[n] = A_1 \mid \dots \mid A_k \\ \theta \colon [1] \simeq [k]}} f \otimes \spn{
      p_{\theta_{(1)}} \otimes
      \spn{w_{A_{\theta_{(2)}}}}} \\
    & \mapsto\  \sum_{\substack{[n] = A_1 \mid \dots \mid A_k \\ \theta \colon [1] \simeq [k]}} f\Bigl(\spn{\,
      p_{\theta_{(1)}}(
      \spn{w_{A_{\theta_{(2)}}}})\,}\Bigr) 
    \ = \sum_{\theta \colon [1] \simeq [n]} f\Bigl(\spn{\,
      \pi_{\theta_{(1)}}(w_{\theta_{(2)}})\,}\Bigr) \\
    & = \ f^{(n+1)}(x_0, x_1, \dots, x_n, y_0) + \textstyle \sum_{1 \leqslant i \leqslant
      n} f^{(n)}(x_0, \dots, x_{i-1}, y_i, x_{i+1}, \dots, x_n)\rlap{ .}
  \end{align*}
  Here, we use Lemma~\ref{lem:1} at the
  first step, and for the first equality, use that $p_i =
  \pi_i \varepsilon$ and the description of $\varepsilon$ to see that
  each $A_i$ must be a singleton. This proves that the two
  differential operators on $\cat{Psh}_0(\A)$ coincide, as desired.
\end{proof}

\subsection{An explicit description of the embedding}
\label{sec:an-expl-descr}

We now give as explicit a description as possible of the structures
involved in the embedding theorem for cartesian differential
categories. We begin by describing the presheaves themselves.

\begin{Defn}
  \label{def:43}
  Let $\A$ be a cartesian differential category. A \emph{differential
    presheaf} on $\A$ is a (not necessarily additive) functor
  $X \colon \A^\mathrm{op} \rightarrow \kmod$ equipped with operators
  $\mathrm{D} \colon XA \rightarrow X(A \times A)$ such that:
  \begin{enumerate}[(i)]
  \item Each $\mathrm{D}$ is $k$-linear;
  \item Each $\mathrm{D}\xi \in X(A \times A)$ is
    $k$-linear in its second argument (as in Definition~\ref{def:24});
  \item $\mathrm{D}(\xi \cdot f) = \mathrm{D}(\xi) \cdot
    (f\pi_0,\mathrm{D}f) \in X(A \times A)$ for
    all $f \colon A \rightarrow B$ and
    $\xi \in XB$.
  \item $\mathrm{D}(\mathrm{D}\xi) \cdot (x,r,0,v) = \mathrm{D}(\xi) \cdot
    (x,v)$ for all
    $x,r,v \colon Z \rightarrow A$, $\xi \in XA$;
  \item
    $\mathrm{D}(\mathrm{D}\xi) \cdot (x,r,s,0) =
    \mathrm{D}(\mathrm{D}\xi) \cdot (x,s,r,0)$ for all
    $x,r,s \colon Z \rightarrow A$, $\xi \in XA$.
  \end{enumerate}
\end{Defn}
Here, as previously, we write $\xi \cdot f$ for $Xf(\xi)$.
\begin{Prop}
  \label{prop:22}
  Let $\A$ be a cartesian differential category. The
  $\kmod^Q$-enriched presheaves on $\A$ are exactly the differential
  presheaves.
\end{Prop}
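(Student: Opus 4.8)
The plan is to run, \emph{mutatis mutandis}, the same analysis that produced Proposition~\ref{prop:8} and Theorem~\ref{thm:3}, but now for presheaves ($=$ modules) over the $\kmod^Q$-category $\A$ rather than for the category itself. First I would unfold Definition~\ref{def:17} for $\V=\kmod^Q$, using the explicit form of the fusion operator $H$ for $Q$ from Lemma~\ref{lem:1} in exactly the way it was used in the proof of Proposition~\ref{prop:8}. A $\kmod^Q$-presheaf $X$ on $\A$ then amounts to: $k$-modules $XA$, together with action maps $m\colon XB\otimes Q\A(A,B)\to XA$ whose effect on a generator $\xi\otimes\spn{f_0,\dots,f_n}$ we write as $m^{(n)}(\xi;f_0,\dots,f_n)\in XA$, this being $k$-linear in $\xi$ and symmetric $k$-linear in $f_1,\dots,f_n$. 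The unit axiom for a presheaf unwinds (just as the right identity axiom did in Proposition~\ref{prop:8}) to $m^{(0)}(\xi;\mathrm{id}_A)=\xi$, and the associativity axiom unwinds to
\begin{equation*}
  m^{(n)}\bigl(m^{(m)}(\xi;g_0,\dots,g_m);f_0,\dots,f_n\bigr)=\sum_{\substack{[n]=A_1\mid\dots\mid A_k\\ \theta\colon [m]\simeq[k]}} m^{(\abs\theta)}\bigl(\xi;g_{\theta_{(1)}}^{(A_{\theta_{(2)}})}(\vec f)\bigr)\rlap{ ,}
\end{equation*}
with $g_i^{(A_j)}(\vec f)$ formed from the composition of $\A$ as in~\eqref{eq:5}. (One conceptual shortcut worth recording: a $\kmod^Q$-presheaf on $\A$ is the same as a $\kmod^Q$-category structure, restricting to the given one on $\A$, on the collage of $\A$ with a single freely adjoined object $\star$ whose only nontrivial hom-objects are $\A(A,\star)=XA$; Proposition~\ref{prop:8} then applies to the collage directly.)

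Next I would extract from this the differential-presheaf data of Definition~\ref{def:43}. The unit axiom makes $\xi\cdot f\defeq m^{(0)}(\xi;f)$ into the action on morphisms of a (not necessarily additive) functor $X\colon\A^{\mathrm{op}}\to\kmod$ — functoriality being the case $m=n=0$ of the associativity axiom — and I would set $\mathrm{D}\xi\defeq m^{(1)}(\xi;\pi_0,\pi_1)\in X(A\times A)$, and more generally $\mathrm{D}^{(n)}\xi\defeq m^{(n)}(\xi;\pi_0,\dots,\pi_n)\in X(A\times A^n)$, using the products of $\A$. Precisely as in the passage around~\eqref{eq:28}, the associativity axiom with outer operation $m^{(0)}$ (precomposing by $(f_0,\dots,f_n)\colon A\to B\times B^n$) gives $m^{(n)}(\xi;f_0,\dots,f_n)=(\mathrm{D}^{(n)}\xi)\cdot(f_0,\dots,f_n)$, so the presheaf structure is completely recorded by the functor $X$ together with the operators $\mathrm{D}^{(n)}$. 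Axioms (i) and (ii) of Definition~\ref{def:43} are then immediate from the $k$-linearity of $m$ and from the symmetric $k$-linearity of $m^{(1)}$ in its last argument.

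The substantive step is to show that the single operator $\mathrm{D}=\mathrm{D}^{(1)}$ already determines all the $\mathrm{D}^{(n)}$, and that the associativity axiom above — which a priori couples all of them — collapses to axioms (iii)--(v) of Definition~\ref{def:43}. This is where I would re-use, line for line, the computations already carried out in the proofs of Lemma~\ref{lem:7}, Proposition~\ref{prop:1} and Theorem~\ref{thm:3}: translating the associativity identity under $m^{(n)}(\xi;\vec f)=(\mathrm{D}^{(n)}\xi)\cdot\vec f$ and specialising the outer composition to orders $0$ and $1$ yields, respectively, the chain rule $\mathrm{D}(\xi\cdot f)=\mathrm{D}\xi\cdot(f\pi_0,\mathrm{D}f)$ (axiom (iii)) and the two second-order identities (axioms (iv), (v)), together with a recursion expressing $\mathrm{D}^{(n)}$ as an iterate of $\mathrm{D}$ — the exact analogue of $f^{(n)}=(\mathrm{D}_1)^n f$ and of Lemma~\ref{lem:7}. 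Conversely, given a differential presheaf I would define $\mathrm{D}^{(n)}$ by that same recursion, set $m^{(n)}(\xi;\vec f)\defeq(\mathrm{D}^{(n)}\xi)\cdot\vec f$, and verify the presheaf unit and associativity axioms by transcribing the same calculations; the two assignments are mutually inverse because $m^{(n)}(\xi;\pi_0,\dots,\pi_n)=\mathrm{D}^{(n)}\xi$ in one direction and the differential of the reconstructed presheaf is read off from the case $n=1$ in the other.

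The main obstacle is thus entirely the bookkeeping in that last reduction — confirming that axioms (iii)--(v) of Definition~\ref{def:43} suffice to control the higher operators $\mathrm{D}^{(n)}$ latent in a $\kmod^Q$-presheaf — but since the outer argument $\xi\in XB$ plays no structural role beyond being an element (in particular it is never decomposed through projections, which is why there are no analogues of axioms (iii) and (iv) of Definition~\ref{def:8}), this is a faithful copy of the argument already made in deducing Theorem~\ref{thm:3} from Corollary~\ref{cor:2}, and no genuinely new difficulty arises.
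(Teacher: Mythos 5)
Your proposal is correct and follows essentially the same route as the paper's own proof: unfold the $\kmod^Q$-presheaf data via the fusion operator of Lemma~\ref{lem:1} exactly as in Proposition~\ref{prop:8}, repackage the action maps as a functor $\A^{\mathrm{op}}\to\kmod$ with higher-order operators via the analogue of~\eqref{eq:28} as in Theorem~\ref{thm:3}, and then transcribe the argument of Corollary~\ref{cor:2} (i.e.\ the Fa\`a di Bruno machinery behind Lemma~\ref{lem:7} and Proposition~\ref{prop:1}) to collapse the higher operators to the single first-order $\mathrm{D}$ satisfying axioms (i)--(v) of Definition~\ref{def:43}, correctly noting that no analogues of the projection/identity axioms arise because the element $\xi$ is never decomposed. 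Your parenthetical collage observation is an inessential shortcut (and would require checking that the forced composites with the adjoined hom $\A(\star,\star)=k$ satisfy the remaining axioms); the paper, like the body of your argument, proceeds by the direct unfolding.
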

\begin{proof}
  To give a $\kmod^Q$-enriched presheaf $X$ on $\A$ is to give
  $k$-modules $XA$ for each $A \in \A$, together with action maps
  $ XB \otimes Q\A(A,B) \rightarrow XA$ for each $A,B \in \A$ obeying
  unit and associativity axioms. If we notate the action maps as
  $\xi \otimes \spn{f_0, \dots, f_n} \mapsto \xi^{(n)}(f_0, \dots,
  f_n)$ then by transcribing the proof of Proposition~\ref{prop:8}, we
  see that to give these is equally to give functions
\begin{equation}
  \label{eq:60}
    \begin{aligned}
      XB \times \A(A,B) \times \A(A,B)^n & \rightarrow XA \\
      (\xi, f_0, \dots, f_n) & \mapsto \xi^{(n)}(f_0, \dots, f_n)
    \end{aligned}
  \end{equation}
  for each $A,B \in \A$ and $n \geqslant 0$, satisfying the evident
  analogues of axioms (i)--(iv) of Proposition~\ref{prop:8}. Now by
  arguing as in the proof of Theorem~\ref{thm:3}, this is in turn
  equivalent to giving a functor $X \colon \A^\mathrm{op} \rightarrow
  \kmod$ together with higher-order derivative operators
  \begin{equation*}
    (\thg)^{(n)} \colon XA \rightarrow X(A \times A^n)
  \end{equation*}
  satisfying the analogues of axioms (i)--(ii) and (iv)--(vii) of
  Corollary~\ref{cor:2}. Finally, by transcribing the argument of
  Corollary~\ref{cor:2} itself, we see that giving these higher-order
  derivative operators is equivalent to giving the first-order
  differential operators $\mathrm{D} \colon XA \rightarrow X(A \times
  A)$ satisfying the axioms (i)--(v) above.
\end{proof}


Just as before, we can be quite concrete about the correspondence
between differential presheaves and $\kmod^Q$-presheaves on the
cartesian differential category $\A$. On the one hand, given a
differential presheaf $X$ on $\A$, the corresponding $\kmod^Q$-presheaf
has the same components, and action maps
\begin{align*}
  XB \otimes Q\A(A,B) & \rightarrow XA \\
  \xi \otimes \spn{f_0, \dots, f_n} & \mapsto \xi^{(n)} \cdot (f_0, \dots, f_n)\rlap{ ,}
\end{align*}
where $\xi^{(n)}$ denotes the $n$th derivative of $\xi$ defined in the
same manner as in 
Definition~\ref{def:12}. On the other hand, for a 
$\kmod^Q$-presheaf $X$, the corresponding  differential presheaf
has the same components, and action maps and differential
defined from the $\kmod^Q$-action maps $m \colon XB \otimes
Q\A(A,B) \rightarrow XA$ via
\begin{equation*}
  \xi \cdot f = m(\xi \otimes \spn{f}) \qquad \text{and} \qquad
  \mathrm{D}\xi = m(\xi \otimes \spn{\pi_0, \pi_1})\rlap{ .}
\end{equation*}

We now describe the category of differential presheaves and linear
maps on a cartesian differential category $\A$, along with its
symmetric monoidal closed $k$-linear structure and its differential modality.

\begin{Defn}
  \label{def:45}
  Let $\A$ be a cartesian differential category.
  \begin{itemize}[itemsep=0.25\baselineskip]
  \item   A \emph{linear map} $\alpha \colon X \rightarrow_\ell Y$ of
  differential presheaves on $\A$ is a
    natural transformation ${\alpha \colon X \Rightarrow Y \colon
    \A^\mathrm{op} \rightarrow \kmod}$ which preserves the
  differential, i.e.,
  \begin{equation*}
    \alpha_{A \times A}(\mathrm{D} \xi)
    = \mathrm{D}(\alpha_A(\xi)) \qquad \text{for all $A \in \A$ and $\xi \in XA$.}
  \end{equation*}
  We write $\cat{D\P sh}_\ell(\A)$ for the $k$-linear category of
  differential presheaves on $\A$ and linear maps, where the
  $k$-module structure on the homs is given componentwise.
\item   The \emph{pointwise tensor product} $X \otimes Y$ of differential presheaves $X$
  and $Y$ has underlying functor $X \otimes Y \colon \A^\mathrm{op}
  \rightarrow \kmod$ with values $(X \otimes Y)A = XA \otimes XY$ and
  $(X \otimes Y)f = Xf \otimes Yf$, and differential operator given by
  \begin{equation}
    \label{eq:9}
    \begin{aligned}
      \mathrm{D} \colon XA \otimes YA & \rightarrow X(A \times A)
      \otimes Y(A \times A) \\
      \xi \otimes \upsilon & \mapsto \mathrm{D}\xi \otimes (\upsilon
      \cdot \pi_0) + (\xi \cdot \pi_0) \otimes \mathrm{D}\upsilon\rlap{
        .}
    \end{aligned}
  \end{equation}
  \item The \emph{pointwise unit} is the differential presheaf whose underlying functor
  $I \colon \A^\mathrm{op} \rightarrow \kmod$ is constant at $k \in
  \kmod$, and whose differential operator $\mathrm{D}  \colon I(A) \rightarrow
  I(A\times A)$ is everywhere zero.
\item For a differential presheaf $X$, the differential presheaf
    $QX$ has underlying functor $Q \circ X \colon \A^\mathrm{op}
    \rightarrow \kmod$, where $Q$ is the initial differential modality
    on $\kmod$, and differential operator $\mathrm{D} \colon QX(A) \rightarrow QX(A \times A)$ given by
    \begin{equation*}
      \spn{\xi_0, \dots, \xi_n}  \mapsto \spn{\xi_0 \pi_0, \dots,
        \xi_n \pi_0, \mathrm{D}\xi_0} + \sum_{1 \leqslant i
        \leqslant n} \spn{\xi_0 \pi_0, \dots, \xi_{i-1} \pi_0,
        \mathrm{D}\xi_{i}, \xi_{i+1}\pi_0, \dots, \xi_n \pi_0}\rlap{ .}
    \end{equation*}
  \item The \emph{pointwise differential
    modality} $Q$ on $\cat{D\P sh}_\ell(\A)$ has the action on objects
  just described, and all its remaining data given pointwise by the
  corresponding data for the initial
    differential modality on $\kmod$.

  \item The \emph{pointwise internal hom} $[Y,Z]$ of differential
    presheaves $Y,Z$ is the functor
    $[Y,Z] \colon \A^\mathrm{op} \rightarrow \kmod$ with values 
    $[Y,Z]A = \cat{D\P sh}_\ell(Q\A(\thg, A) \otimes Y, Z)$ and
    $[Y,Z]f = (\thg) \circ (\tilde f \otimes 1)$; here, if $f \colon A \rightarrow
    B$ in $\A$ then $\tilde f \colon Q\A(\thg, A) \rightarrow_\ell
    Q\A(\thg, B)$ is the linear map with components
    \begin{align*}
      \tilde f \colon Q\A(C, A) & \rightarrow Q\A(C,B) \\
      \spn{g_0, \dots, g_n} & \mapsto \sum_{[n] = A_1 \mid \cdots \mid
      A_k} \spn{f^{(\emptyset)}(\vec g), f^{(A_1)}(\vec g), \dots,
      f^{(A_k)}(\vec g)}\rlap{ .}
  \end{align*}
  Its differential operator is given by $(\thg) \circ
  (\chi_A \otimes 1) \colon [Y,Z](A) \rightarrow [Y,Z](A \times A)$, where $\chi_A \colon Q\A(\thg, A \times
  A)
  \rightarrow_\ell Q\A(\thg, A)$ is the linear map with components
  \begin{align*}
      \chi_A \colon Q\A(C, A \times A) & \rightarrow Q\A(C,A) \\
      \spn{(f_0,g_0), \cpdots, (f_n,g_n)} & \mapsto \spn{f_0, \cpdots,
        f_n, g_0} + \textstyle\sum_{i=1}^n \spn{f_0, \cpdots, f_{i-1},g_i,f_{i+1},
      \cpdots, f_n}\rlap{ .}
  \end{align*}

\end{itemize}
\end{Defn}
\begin{Prop}
  \label{prop:23}
  Let $\A$ be a cartesian differential category. The category
  $\cat{Psh}_\ell(\A)$ of presheaves on $\A$ \emph{qua}
  $\kmod^Q$-category is isomorphic to $\cat{D\P sh}_\ell(\A)$. Under
  this isomorphism, the symmetric monoidal closed $k$-linear structure and
  differential modality on $\cat{Psh}_\ell(\A)$ of
  Propositions~\ref{prop:14} and Proposition~\ref{prop:16} correspond
  to the pointwise symmetric monoidal closed structure and differential
  modality on $\cat{D\P sh}_\ell(\A)$.
\end{Prop}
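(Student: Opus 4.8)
The argument is a systematic unwinding: I would first establish the isomorphism of $k$-linear categories $\cat{Psh}_\ell(\A)\cong\cat{D\P sh}_\ell(\A)$, and then match each piece of structure --- tensor, unit, differential modality, internal hom --- by comparing the explicit descriptions of Remarks~\ref{rk:3} and~\ref{rk:4} against Definition~\ref{def:45}. On objects, Proposition~\ref{prop:22} already identifies $\kmod^Q$-presheaves with differential presheaves, and recall precisely how the two presentations translate: a $\kmod^Q$-action $m\colon XB\otimes Q\A(A,B)\to XA$, written $\xi\otimes\spn{f_0,\dots,f_n}\mapsto\xi^{(n)}(f_0,\dots,f_n)$, corresponds to the functor $X\colon\A^{\mathrm{op}}\to\kmod$ with $\xi\cdot f=\xi^{(0)}(f)$ together with the differential $\mathrm{D}\xi=\xi^{(1)}(\pi_0,\pi_1)$. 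On morphisms, a $\V$-linear presheaf map (Definition~\ref{def:17}) is a family $\alpha_A\colon XA\to YA$ in $\kmod$ commuting with the $\A$-action; transcribing the action into the form~\eqref{eq:60} shows this is equivalent to $\alpha$ commuting with every higher-derivative operator $(\thg)^{(n)}$, and since --- by the proof of Proposition~\ref{prop:22}, transcribing Corollary~\ref{cor:2} --- each $(\thg)^{(n)}$ is reconstructed from the functorial structure and $\mathrm{D}$, this is in turn equivalent to $\alpha$ being natural and commuting with $\mathrm{D}$ alone. As the $k$-module structures on homs are componentwise on both sides, this yields the desired isomorphism of $k$-linear categories.

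Next I would match the symmetric monoidal structure. The tensor product of $\cat{Psh}_\ell(\A)$ has components $XA\otimes YA$ and action~\eqref{eq:22} (with $! = Q$); to read off the corresponding differential presheaf I evaluate~\eqref{eq:22} at $n=0$, using $\Delta\spn{f}=\spn{f}\otimes\spn{f}$, to obtain the functor $A\mapsto XA\otimes YA$, $f\mapsto Xf\otimes Yf$, and at $\spn{\pi_0,\pi_1}$, using $\Delta\spn{\pi_0,\pi_1}=\spn{\pi_0}\otimes\spn{\pi_0,\pi_1}+\spn{\pi_0,\pi_1}\otimes\spn{\pi_0}$, to obtain the differential $\xi\otimes\upsilon\mapsto\mathrm{D}\xi\otimes(\upsilon\cdot\pi_0)+(\xi\cdot\pi_0)\otimes\mathrm{D}\upsilon$ of~\eqref{eq:9}. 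The unit presheaf --- constant at $k$ with action $1\otimes e$ followed by $\rho$, cf.~\eqref{eq:24} --- has zero differential, since $e\spn{\pi_0,\pi_1}=0$, matching Definition~\ref{def:45}.

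For the differential modality, Remark~\ref{rk:4} says the lifted $Q$ applies $Q$ to all components, equips them with the action~\eqref{eq:38}, and has all remaining data $(\varepsilon,\delta,e,\Delta,m_I,m_\otimes,\mathsf{d})$ componentwise; the last point is immediate, so one only needs that the differential operator of the differential presheaf $QX$ is the one in Definition~\ref{def:45}. This is the computation of $\mathrm{D}\spn{\xi_0,\dots,\xi_n}=m\bigl(\spn{\xi_0,\dots,\xi_n}\otimes\spn{\pi_0,\pi_1}\bigr)$ via~\eqref{eq:38}, using Lemma~\ref{lem:1} for $H$ and the facts that under the presheaf action $m$ one has $\xi\otimes\spn{\pi_0}\mapsto\xi\cdot\pi_0$ and $\xi\otimes\spn{\pi_0,\pi_1}\mapsto\mathrm{D}\xi$; the partial isomorphisms $[n]\simeq[1]$ involved are the empty one and the $n$ singletons, and these reproduce exactly the $(n+1)$-term formula of Definition~\ref{def:45}. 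This is essentially the calculation already carried out in the proof of Proposition~\ref{prop:4}.

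The internal hom requires the most care, and I expect it to be the main obstacle. By Remark~\ref{rk:3}, $[Y,Z]A$ is the $\V$-presheaf hom $\phom{Q\A(\thg,A)\otimes Y,Z}$ computed in $\cat{Psh}_\ell(Q_\ast(\A))=\cat{Psh}_\ell(\A)$ via Lemma~\ref{lem:19}, which under the isomorphism of the first step is the $k$-module of linear differential-presheaf maps $Q\A(\thg,A)\otimes Y\to Z$, exactly as in Definition~\ref{def:45}. It then remains to identify the functorial action $[Y,Z]f$ and the differential operator, which Remark~\ref{rk:3} presents as transposes of presheaf maps with $C$-component~\eqref{eq:44}. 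Unwinding~\eqref{eq:44} together with the description of $m^\sharp$ from~\eqref{eq:36} and of composition in $\cat{Psh}(\A)$ from Definition~\ref{def:21}, and evaluating at $\spn{\pi_0,\pi_1}$ for the differential, one finds that $[Y,Z]f$ is precomposition with $\tilde f\otimes 1$, where $\tilde f\colon Q\A(\thg,A)\to_\ell Q\A(\thg,B)$ has the Fa\`a di Bruno components~\eqref{eq:8} applied to the representable $yf$, and that the differential operator is precomposition with $\chi_A\otimes 1$, where $\chi_A$ has the components displayed in Definition~\ref{def:45} --- these being precisely the storage map~\eqref{eq:3} for $Q$ on representables, equivalently the formula for the induced differential~\eqref{eq:17} of $Q$, as computed in the proof of Proposition~\ref{prop:4}. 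This completes the identification of all the structure.
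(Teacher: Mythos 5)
Your proposal is correct and follows essentially the same route as the paper: identify objects via Proposition~\ref{prop:22}, then match tensor, unit, lifted modality and internal hom by evaluating the explicit action maps of Remarks~\ref{rk:3} and~\ref{rk:4} on generators of the form $\spn{f}$ and $\spn{\pi_0,\pi_1}$, using Lemma~\ref{lem:1} for the fusion map. Your extra paragraph identifying the $\V$-linear presheaf maps with differential-preserving natural transformations fills in a step the paper leaves implicit, and your only slight imprecision is calling $\chi_A$ ``the storage map'' when it is really the whole composite~\eqref{eq:17} minus its final factor --- but your parenthetical appeal to Proposition~\ref{prop:4} gets this right.
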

\begin{proof}
  The non-trivial points are verifying the descriptions of the tensor
  product, tensor unit, action of $Q$, and internal hom for
  differential presheaves. For the first of these, the tensor product
  of $X,Y \in \cat{Psh}_\ell(\A)$ has $(X \otimes Y)A = XA \otimes YA$
  with action maps
  $m \colon XB \otimes YB \otimes Q\A(A,B) \rightarrow XA \otimes YA$
  given by the composite in~\eqref{eq:22}. Tracing an element of the
  form $\xi \otimes \nu \otimes \spn{f}$ through this composite we get
  \begin{equation*}
    \xi \otimes \nu \otimes \spn{f} \ \mapsto\  \xi \otimes \nu \otimes
    \spn{f} \otimes \spn{f} \ \mapsto\  \xi \otimes \nu \otimes
    \spn{f} \otimes \spn{f} \ \mapsto\  (\xi \cdot f) \otimes (\nu \cdot f)
  \end{equation*}
  so that the corresponding differential presheaf satisfies $(X
  \otimes Y)f = Xf \otimes Yf$; while tracing through $\xi \otimes \nu \otimes
  \spn{\pi_0, \pi_1}$, we get
  \begin{multline*}
    \xi \otimes \nu \otimes \spn{\pi_0, \pi_1} \ \mapsto \
    \xi \otimes \nu \otimes (\spn{\pi_0, \pi_1} \otimes \spn{\pi_0} +
    \spn{\pi_0} \otimes \spn{\pi_0, \pi_1}) \\ \ \mapsto \
    \xi \otimes \spn{\pi_0, \pi_1} \otimes \nu \otimes \spn{\pi_0} +
    \xi \otimes \spn{\pi_0} \otimes \nu \otimes \spn{\pi_0, \pi_1} \
    \mapsto \
    \mathrm{D}\xi \otimes \nu \pi_0 + \xi \pi_0 \otimes \mathrm{D}\nu 
  \end{multline*}
  so that the corresponding differential presheaf has differential
  operator~\eqref{eq:9}.

  Similarly, the unit presheaf in $\cat{Psh}_\ell(\A)$ is constant at
  $k$, and its action maps~\eqref{eq:24} act on elements
  $1 \otimes \spn{f}$ and $1 \otimes \spn{\pi_0, \pi_1}$ via
  $1 \otimes \spn{f} \mapsto 1$ and
  $1 \otimes \spn{\pi_0, \pi_1} \mapsto 0$, so that the corresponding
  differential presheaf is constant at $I$, with zero
  differential.

  Next, for any $X \in \cat{Psh}_\ell(\A)$, the corresponding $QX \in
  \cat{Psh}_\ell(\A)$ has components $QXA$, and action maps $QXB
  \otimes Q\A(A,B) \rightarrow QXA$ given as
  in~\eqref{eq:38}. Tracing an element of the form $\spn{\xi_0, \dots,
    \xi_n} \otimes \spn{f}$ through this composite, we get
  \begin{align*}
    \spn{\xi_0, \dots,
      \xi_n} \otimes \spn{f} \ \mapsto \!\!\!\smash{\sum_{\substack{[0] = A_1 \mid \dots \mid A_k \\ \theta \colon [n] \simeq [k]}}} \!\!\!\spn{
      \xi_{\theta_{(1)}} \otimes
      \spn{f_{A_{\theta_{(2)}}}}} &= \spn{\xi_0 \otimes \spn{f},
      \dots, \xi_n \otimes \spn{f}} \\ &\mapsto \spn{\xi_0 \cdot
      f, \dots, \xi_0 \cdot f}\rlap{ ,}
  \end{align*}
  so that the corresponding differential presheaf satisfies $(QX)f =
  Q(Xf)$. On the other hand, tracing through $\spn{\xi_0, \dots,
    \xi_n} \otimes \spn{\pi_0, \pi_1}$ yields
\begin{align*}
    \ \ & \spn{\xi_0, \cpdots,
      \xi_n} \otimes \spn{\pi_0, \pi_1} \ \mapsto \!\!\!\sum_{\substack{[1] = A_1 \mid \dots \mid A_k \\ \theta \colon [n] \simeq [k]}} \!\!\!\spn{
      \xi_{\theta_{(1)}} \otimes
      \spn{\pi_{A_{\theta_{(2)}}}}} = \!\!\!\sum_{\theta \colon [n] \simeq [1]} \!\!\!\spn{
      \xi_{\theta_{(1)}} \otimes
      \spn{\pi_{\theta_{(2)}}}}\\
    &= \spn{\xi_0 \otimes \spn{\pi_0}, \dots, \xi_n \otimes
      \spn{\pi_0}, \xi_0 \otimes \spn{\pi_0, \pi_1}} \\ & \qquad \ 
    + \sum_{1
      \leqslant i \leqslant n} \spn{\xi_0 \otimes \spn{\pi_0}, \dots,
      \xi_{i-1} \otimes \spn{\pi_0}, \xi_i \otimes \spn{\pi_0, \pi_1},
    \xi_{i+1} \otimes \spn{\pi_0}, \dots, \xi_n \otimes \spn{\pi_0}}
  \\ &
  \mapsto \spn{\xi_0 \cdot \pi_0, \dots,
        \xi_n \cdot \pi_0, \mathrm{D}\xi_0} + \sum_{1 \leqslant i
        \leqslant n}\! \spn{\xi_0 \cdot \pi_0, \dots, \xi_{i-1} \cdot \pi_0,
        \mathrm{D}\xi_{i}, \xi_{i+1} \cdot \pi_0, \dots, \xi_n \cdot \pi_0}
    \end{align*}
    so that the corresponding differential presheaf $QX$ has the 
    differential operator of Definition~\ref{def:45}, as desired.

    Finally, we consider the internal hom $[Y,Z]$ in
    $\cat{Psh}_\ell(\A)$. By Remark~\ref{rk:3}, this has components
    $[Y,Z]A = \phom{Q\A(\thg, A) \otimes Y Z} = \cat{D\P
      sh}_\ell(Q\A(\thg, A) \otimes Y, Z)$ with the action maps
    $[Y,Z]B \otimes Q\A(A,B) \rightarrow [Y,Z]A$ obtained by
    transposing the composites in~\eqref{eq:44}. In particular, this
    means that for any $f \colon A \rightarrow B$ in $\A$, the
    reindexing map $[Y,Z]f$ of the corresponding differential presheaf
    sends $\alpha \in [Y,Z]B$ to the element of $[Y,Z]A$ whose
    components are obtained by partially evaluating~\eqref{eq:44} at
    $\alpha$ and $\spn{f}$ in its first two parameters. We thus find the value of
    the linear map $Q\A(\thg, A) \otimes Y \rightarrow Z$ so induced
    at
    $\spn{g_0, \dots, g_n} \otimes \gamma \in {!}\A(C,A) \otimes YC$
    to be given by
    \begin{align*}
      \alpha \otimes \spn{f} \otimes \spn{g_0, \dots, g_n} \otimes
      \gamma &\mapsto \!\!\!\sum_{[n] = A_1 \mid \cdots \mid
      A_k} \alpha \otimes \spn{f \otimes \spn{g_0}, f \otimes
      \spn{g_{A_1}}, \dots, f \otimes \spn{g_{A_k}}} \otimes \gamma \\
      & \mapsto \!\!\!\sum_{[n] = A_1 \mid \cdots \mid
      A_k} \alpha \otimes \spn{f^{(\emptyset)}(\vec g), f^{(A_1)}(\vec
      g), \dots, f^{(A_k)(\vec g)}} \otimes \gamma \\
      & \mapsto \!\!\!\sum_{[n] = A_1 \mid \cdots \mid
      A_k} \alpha (\spn{f^{(\emptyset)}(\vec g), f^{(A_1)}(\vec
      g), \dots, f^{(A_k)}(\vec g)} \otimes \gamma) 
  \end{align*}
  so that $([Y,Z]f)(\alpha)$ is precisely the composite
  \begin{equation*}
    Q\A(\thg, A) \otimes Y \xrightarrow{\tilde f \otimes 1} Q\A(\thg,
    B) \otimes Y \xrightarrow{\alpha} Z
  \end{equation*}
  as desired. Similarly, for any $\alpha \in [Y,Z]A$, its
  differential $\mathrm{D}\alpha \in [Y,Z](A \times A)$ is obtained by partially evaluating~\eqref{eq:44} at
    $\alpha$ and $\spn{\pi_0, \pi_1}$ in its first two parameters. So the value of
    $\mathrm{D}\alpha \colon Q\A(\thg, A \times A) \otimes Y
    \rightarrow Z$ at an element
    $\spn{(f_0, g_0), \dots, (f_n, g_n)} \otimes \gamma = \spn{h_0,
      \dots, h_n} \otimes \gamma $ of $Q\A(C,A \times A) \otimes YC$
    is given by
\begin{align*}
    & \ \ \ \ \ \ \ \alpha \otimes \spn{\pi_0, \pi_1} \otimes
    \spn{h_0, \dots, h_n} 
    \ \ \ \ \ \mapsto \sum_{\substack{[n] = A_1 \mid \dots \mid A_k \\ \theta \colon [1] \simeq [k]}} \alpha \otimes \spn{
      \pi_{\theta_{(1)}} \otimes
      \spn{h_{A_{\theta_{(2)}}}}} \otimes \gamma \\
    & \mapsto\  \sum_{\substack{[n] = A_1 \mid \dots \mid A_k \\ \theta \colon [1] \simeq [k]}} \alpha\Bigl(\spn{\,
      \pi_{\theta_{(1)}}^{(A_{\theta_{(2)}})}(
      \vec h)\,}\Bigr) 
    \ = \sum_{\theta \colon [1] \simeq [n]} \alpha\Bigl(\spn{\,
      \pi_{\theta_{(1)}}(g_{\theta_{(2)}})\,} \otimes \gamma\Bigr) \\
    & = \ \alpha(\spn{f_0, _1, \dots, f_n, g_0} \otimes \gamma) + \textstyle \sum_{1 \leqslant i \leqslant
      n} \alpha^{(n)}(\spn{f_0, \dots, f_{i-1}, g_i, f_{i+1}, \dots,
      f_n} \otimes \gamma)\rlap{ .}
  \end{align*}
  as desired.
\end{proof}

We can now read off from the above a description of the cartesian closed
differential category associated to the pointwise differential
modality on $\cat{D\P sh}_\ell(\A)$---which, in light of the preceding
proposition, is equally well the cartesian closed differential category
associated to the $\kmod^Q$-category $\cat{Psh}(\A)$---together with
its embedding of $\A$.

\begin{Defn}
  \label{def:47}
  Let $\A$ be a cartesian differential category. 
  \begin{itemize}[itemsep=0.25\baselineskip]
  \item A \emph{Fa\`a di Bruno map}
    $\alpha^{(\bullet)} \colon X \rightsquigarrow Y$ of differential
    presheaves on $\A$ comprises a family of
    maps $\alpha_A^{(\bullet)} \colon XA \rightsquigarrow YA$ in
    $\faa(\kmodw)$ which are natural in $A$, i.e., each square of the following
    form commutes:
    \begin{equation*}
      \cd{
        {XA \times (XA)^n} \ar[r]^-{\alpha^{(n)}_A} \ar[d]_{Xf \times
        (Xf)^n} &
        {YA} \ar[d]^{Yf} \\
        {XB \times (XB)^n} \ar[r]^-{\alpha^{(n)}_B} &
        {YB}
      }
    \end{equation*}
    and which respect the differential, in the sense that
    \begin{multline*}
      \mathrm{D}\bigl(\alpha_A^{(n)}(\xi_0, \dots, \xi_n)\bigr) =
      \alpha_{A \times A}^{(n+1)}(\xi_0 \pi_0, \dots,
        \xi_n \pi_0, \mathrm{D}\xi_0) \\ {} + \textstyle\sum_{1 \leqslant i
        \leqslant n} \alpha_{A \times A}^{(n)}(\xi_0 \pi_0, \dots, \xi_{i-1} \pi_0,
        \mathrm{D}\xi_{i}, \xi_{i+1}\pi_0, \dots, \xi_n \pi_0)\rlap{ .}
      \end{multline*}
      We write $\cat{D\P sh}_{f}(\A)$ for the left-$k$-linear 
    category of differential presheaves and Fa\`a di Bruno maps, with
    $k$-module structure on the homs and composition given pointwise
    as in $\faa(\kmodw)$.
  \item The \emph{cartesian product} $X \times Y$ of differential
    presheaves $X,Y$ has values gives by $(X \times Y)A = XA \times YA$ and
    $(X \times Y)f = Xf \times Xf$, and componentwise differential;
    the projection maps
    $\pi_0 \colon X \leftarrow X \times Y \rightarrow Y \colon \pi_1$
    are given pointwise as in $\faa(\kmodw)$. The \emph{terminal}
    differential presheaf is constant at the terminal object $1 \in
    \kmodw$, with the only possible differential.
  \item For each $A \in \A$, the representable differential presheaf
    $yA$ has underlying functor
    $\A(\thg, A) \colon \A^\mathrm{op} \rightarrow \kmod$, and 
    differential operator inherited from $\A$;
  \item For each $f \colon A \rightarrow B$ in $\A$, the Fa\`a di
  Bruno map $yf \colon yA \rightsquigarrow yB$ has
  components given by the higher-order derivatives in $\A$:
  \begin{align*}
    yf_X^{(n)} \colon \A(X,A) \times \A(X,A)^n & \rightarrow \A(X,B)
    \\
    (g_0, \dots, g_n) & \mapsto f^{(n)}(g_0, \dots, g_n)\rlap{ ;}
  \end{align*}
  we write $y \colon \A \rightarrow \cat{DPsh}_f(\A)$ for the functor
  so induced.

\item The \emph{pointwise cartesian differential structure} on
  $\cat{D\P sh}_f(\A)$ has the differential
  $\mathrm{D}f \colon X \times X \rightsquigarrow Y$ of a Fa\`a di
  Bruno map $f \colon X \rightsquigarrow Y$ given pointwise as in
  $\faa(\kmodw)$.
  
  \item The \emph{exponential} $Z^Y$ of differential presheaves $Y,Z$
    has values $(Z^Y)A = \cat{D\P sh}_f(\A)(yA \times Y, Z)$ and
    $(Z^Y)f = (\thg) \circ (yf \times 1)$. Its differential operator
    sends $\alpha \colon yA \times Y \rightsquigarrow Z$ to the
    composite
    \begin{equation*}
      y(A \times A) \times Y \xrightarrow{\cong} yA \times yA \times Y
      \xrightarrow{\mathrm{D}_1 f} Z
    \end{equation*}
    whose second component is the partial derivative in the pointwise
    cartesian differential structure.
  \end{itemize}
\end{Defn}
\begin{Prop}
  \label{prop:24}
  Let $\A$ be a cartesian differential category. The cartesian closed
  differential category $\cat{D\P sh}_f(\A)$ is induced by the
  pointwise differential modality on $\cat{D\P sh}_\ell(\A)$, and so
  isomorphic to the cartesian closed differential category of
  $\kmod^Q$-presheaves $\cat{Psh}(\A)$. Under this
  isomorphism, the Yoneda embedding
  $y \colon \A \rightarrow \cat{DPsh}_f(\A)$ corresponds to the
  enriched Yoneda embedding $\A \rightarrow \cat{Psh}(\A)$.
\end{Prop}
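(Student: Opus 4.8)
The plan is to combine the structural results already established with an explicit pointwise identification. By Proposition~\ref{prop:23}, the category $\cat{Psh}_\ell(\A)$ of presheaves on $\A$ \emph{qua} $\kmod^Q$-category is isomorphic, as a symmetric monoidal closed $k$-linear category with differential modality, to $\cat{D\P sh}_\ell(\A)$ carrying its pointwise structure; and by Proposition~\ref{prop:11} together with the proof of Theorem~\ref{thm:4}, the underlying ordinary category $\cat{Psh}_0(\A)$ of $\cat{Psh}(\A)$ is, up to the invertible unit constraints of $\kmod$, the co-Kleisli category of the pointwise differential modality $Q$, its cartesian closed differential structure being the one induced by $Q$ in the sense of Proposition~\ref{prop:2}. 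It therefore suffices to identify $\cat{Kl}(Q)$ over $\cat{D\P sh}_\ell(\A)$, with its induced cartesian closed differential structure, with $\cat{D\P sh}_f(\A)$ carrying the pointwise structure of Definition~\ref{def:47}; and to check that under this identification the enriched Yoneda embedding of Definition~\ref{def:33} becomes the functor $y\colon\A\rightarrow\cat{D\P sh}_f(\A)$ of Definition~\ref{def:47}. Both tasks are carried out essentially componentwise, by transcribing Proposition~\ref{prop:4} and the explicit calculations recorded in the proof of Proposition~\ref{prop:23}.

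For the core identification: a morphism $X\rightarrow Y$ of $\cat{Kl}(Q)$ is a linear map $f\colon QX\rightarrow Y$ in $\cat{D\P sh}_\ell(\A)$, i.e.\ a natural family of $k$-linear maps $f_A\colon QXA\rightarrow YA$ compatible with the differentials on $QX$ and $Y$. Applying the bijection of Proposition~\ref{prop:4} to each component---sending $f_A$ to $f_A^{(\bullet)}\colon XA\rightsquigarrow YA$ with $f_A^{(n)}(x_0,\dots,x_n)=f_A(\spn{x_0,\dots,x_n})$---one sees that naturality of $f$ in $A$ becomes the naturality squares of Definition~\ref{def:47}, while compatibility of $f$ with the differentials, once the differential on $QX$ is written out as in Definition~\ref{def:45}, becomes exactly the differential-compatibility condition on a Fa\`a di Bruno map of differential presheaves. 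Hence the morphisms of $\cat{Kl}(Q)$ are precisely Fa\`a di Bruno maps of differential presheaves; identities and composition then correspond by the pointwise content of Proposition~\ref{prop:4} (identities read off from the counit $\varepsilon$ of $Q$, and the co-Kleisli composite, computed via $\delta$ and $Qf$, equal to the pointwise Fa\`a di Bruno composite~\eqref{eq:8}). Since the $k$-module structure on homs is pointwise on both sides, this is an isomorphism of left-$k$-linear categories $\cat{Kl}(Q)\cong\cat{D\P sh}_f(\A)$.

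It remains to match the cartesian closed differential structure. Finite products in $\cat{Kl}(Q)$ are induced from those of $\cat{D\P sh}_\ell(\A)$, which are formed componentwise; comparing with Definition~\ref{def:47} (using Proposition~\ref{prop:4} on the projections) these are the pointwise Fa\`a di Bruno products. The differential operator of $\cat{Kl}(Q)$ is given by formula~\eqref{eq:17}; evaluating it on a generating element of $Q(X\times X)$ exactly as at the end of the proof of Proposition~\ref{prop:4} yields, componentwise, the Fa\`a di Bruno differential of Definition~\ref{def:47}. For the exponential, since the pointwise $Q$ is a \emph{monoidal} differential modality and $\cat{D\P sh}_\ell(\A)$ is monoidal closed, Proposition~\ref{prop:2} gives $Z^Y=[QY,Z]$; combining the storage isomorphism $Q(yA\times Y)\cong Q(yA)\otimes QY$ of~\eqref{eq:3} with the equality of underlying functors $Q(yA)=Q\A(\thg,A)$ and the description of the internal hom from Remark~\ref{rk:3} and Proposition~\ref{prop:23}, one identifies $[QY,Z]A$ with $\cat{D\P sh}_f(\A)(yA\times Y,Z)$; and the reindexing maps and the differential operator on $Z^Y$ are checked to agree with those of Definition~\ref{def:47} by transcribing the corresponding computations in the proof of Proposition~\ref{prop:23}.

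Finally, the enriched Yoneda embedding $y\colon\A\rightarrow\cat{Psh}(\A)$ of Definition~\ref{def:33} sends $A$ to $yA$, acts on $\V$-linear maps via the identification $\A(B,C)\cong\phom{yB,yC}$ of Example~\ref{ex:17}, and hence sends a map $f\colon A\rightarrow B$ of $\A$ to the morphism $yA\rightarrow yB$ of $\cat{Psh}_0(\A)=\cat{Kl}(Q)$ induced by composition in $\A$ on representables. Tracing this through the correspondences above, and using Theorem~\ref{thm:3} to rewrite the $\kmod^Q$-composition on representables in terms of higher-order derivatives, one finds that the resulting Fa\`a di Bruno map $yA\rightsquigarrow yB$ has $n$th component $(g_0,\dots,g_n)\mapsto f^{(n)}(g_0,\dots,g_n)$, which is exactly the map $yf$ of Definition~\ref{def:47}. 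The main obstacle throughout is the exponential: one must pin down precisely what $Q\A(\thg,A)$ is \emph{as a differential presheaf}, and verify that the somewhat opaque internal-hom action maps~\eqref{eq:44} of $\cat{Psh}_\ell(\A)$ really do reduce, under the identification, to the clean reindexing and differential formulas of Definition~\ref{def:47}; all the rest is routine, if lengthy, transcription of material already established in Propositions~\ref{prop:4} and~\ref{prop:23}.
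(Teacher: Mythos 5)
Your proposal is correct and follows essentially the same route as the paper's proof: identify co-Kleisli maps for the pointwise modality with Fa\`a di Bruno maps of differential presheaves via the componentwise use of Proposition~\ref{prop:4}, transfer the remaining cartesian differential structure pointwise, and handle exponentials through $Z^Y = [QY,Z]$ together with the storage isomorphism $Q(yA\times Y)\cong QyA\otimes QY$ and the internal-hom computations of Proposition~\ref{prop:23}. Your explicit tracing of the Yoneda embedding through the correspondences is a welcome elaboration of a step the paper leaves implicit, but it is not a departure from the paper's argument.
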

\begin{proof}
  A map $X \rightarrow Y$ in the co-Kleisli category of the pointwise
  differential modality is a linear map of differential presheaves
  $QX \rightarrow_\ell Y$, and we can read off from
  Definition~\ref{def:45} that these are precisely Fa\`a di Bruno maps
  from $X$ to $Y$. Aside from the exponentials, the identification of
  the remaining structure of the co-Kleisli category with that of
  $\cat{D\P sh}_f(\A)$ follows since the differential modality on
  $\cat{D\P sh}_\ell(\A)$ is induced pointwise from the initial
  differential modality $Q$ on $\kmod$, and since by
  Proposition~\ref{prop:4} we have $\cat{Kl}(Q) \cong \faa(\kmodw)$ as
  cartesian differential categories.

  As for the exponentials: recall that these are obtained from the
  internal homs in $\cat{D\P sh}_\ell$ via the formula
  $Z^Y \defeq [QY, Z]$. Expanding this definition out, we see that
  $(Z^Y)A = \cat{D\P sh}_\ell(QyA \otimes QY, Z) \cong \cat{D\P
    sh}_\ell(Q(yA \times Y), Z) \cong \cat{D\P sh}_f(yA \times Y, Z)$,
  using the storage isomorphisms~\eqref{eq:3} at the second step.
  Transporting the action on maps and differential operator on
  $[QY, Z]$ across these isomorphisms yields, by an easy argument, the
  formula indicated above.
\end{proof}
Putting all of the above together, we get the following concrete form
of the embedding theorem.

\begin{Thm}
  \label{thm:2}
  Let $\A$ be a small cartesian differential category. The Yoneda
  embedding $y \colon \A \rightarrow \cat{D\P sh}_f(\A)$ of
  Definition~\ref{def:47} is a full structure-preserving embedding of
  $\A$ into the cartesian closed differential category induced
  by the monoidal differential modality described in
  Definition~\ref{def:45}.
\end{Thm}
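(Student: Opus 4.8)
The plan is to assemble the statement from the work already done in Sections~\ref{sec:skew-enrichment}--\ref{sec:appl-an-embedd}, with no fresh computation required. First I would recall from Theorem~\ref{thm:3} that the small cartesian differential category $\A$ is the very same data as a small cartesian $\kmod^Q$-category; since $\kmod$ is complete and monoidal closed, $\kmod^Q$ is left closed, so the enriched presheaf category $\cat{Psh}(\A)$ exists, together with its tight Yoneda embedding $y \colon \A \rightarrow_t \cat{Psh}(\A)$. By Lemma~\ref{lem:14}, $\cat{Psh}(\A)$ is cartesian and $y$ is a cartesian, fully faithful tight $\kmod^Q$-functor; combining this with the explicit description of $\kmod^Q$-functors between cartesian differential categories in Example~\ref{ex:23}, we conclude that $y$ is a full, structure-preserving embedding of cartesian differential categories.

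Next I would reuse the argument of Theorem~\ref{thm:4} to pin down the cartesian differential category underlying $\cat{Psh}(\A)$. By Proposition~\ref{prop:11} it is the co-Kleisli category $\cat{Psh}_0(\A)$ of the comonad $I \ast (\thg)$ on $\cat{Psh}_\ell(\A)$; by Propositions~\ref{prop:14}, \ref{prop:15} and~\ref{prop:16} this comonad underlies a monoidal differential modality on a symmetric monoidal closed $k$-linear category; and, as checked in the proof of Theorem~\ref{thm:4}, the induced cartesian closed differential structure on $\cat{Psh}_0(\A)$ agrees with the one coming from the $\kmod^Q$-enrichment. At this point the abstract form of the theorem is already in hand; what remains is only to attach the concrete names advertised in Definitions~\ref{def:45} and~\ref{def:47}.

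For that final translation I would invoke Proposition~\ref{prop:23}, which identifies $\cat{Psh}_\ell(\A)$ --- together with its symmetric monoidal closed $k$-linear structure and its monoidal differential modality --- with the category $\cat{D\P sh}_\ell(\A)$ of differential presheaves and linear maps bearing the pointwise structure of Definition~\ref{def:45}; and then Proposition~\ref{prop:24}, which shows that the cartesian closed differential category induced by that pointwise differential modality is exactly $\cat{D\P sh}_f(\A)$, and that under the resulting isomorphism $\cat{D\P sh}_f(\A) \cong \cat{Psh}_0(\A)$ the functor $y \colon \A \rightarrow \cat{D\P sh}_f(\A)$ of Definition~\ref{def:47} is carried to the enriched Yoneda embedding. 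Chaining these identifications yields precisely the asserted statement.

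The only step that genuinely needs care --- and the one I would treat as the crux of the write-up --- is verifying that all of these identifications are \emph{mutually compatible}: that the isomorphism $\cat{Psh}_\ell(\A) \cong \cat{D\P sh}_\ell(\A)$ intertwines not just the underlying categories but the full package of symmetric monoidal, closed, $k$-linear, coalgebra-modality and deriving-transformation data, so that passing to co-Kleisli categories on either side produces the \emph{same} cartesian closed differential category, and so that the Yoneda embedding is transported to the Yoneda embedding rather than merely to some equivalent functor. But this compatibility is exactly what Propositions~\ref{prop:23} and~\ref{prop:24} assert, so the proof of Theorem~\ref{thm:2} reduces to citing these, together with Lemma~\ref{lem:14} and the proof of Theorem~\ref{thm:4}, in the order above.
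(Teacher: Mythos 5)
Your proposal is correct and follows exactly the route the paper takes: the paper gives Theorem~\ref{thm:2} no separate proof, presenting it as the assembly of Theorem~\ref{thm:4} (itself built from Lemma~\ref{lem:14}, Example~\ref{ex:23} and Propositions~\ref{prop:11}, \ref{prop:14}--\ref{prop:16}) with the concrete identifications of Propositions~\ref{prop:23} and~\ref{prop:24}. Your identification of the compatibility of these identifications as the only point needing care is also precisely where the paper locates the work, namely in the proofs of those two propositions.
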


\begin{Rk}
  \label{rk:2}
  We know from the general theory that
  $y \colon \A \rightarrow \cat{D\P sh}_f(\A)$ is a fully faithful
  embedding of cartesian differential categories, but this may not be
  immediately apparent from the concrete description. As a sanity
  check, let us conclude by giving a direct argument for the full
  fidelity.

  A Fa\`a di Bruno map
  $\alpha^{(\bullet)} \colon yA \rightsquigarrow yB$ is a linear map
  $QyA \rightarrow_\ell yB$ of differential presheaves, i.e., a
  natural transformation
  $\alpha \colon Q\A(\thg, A) \Rightarrow \A(\thg, B) \colon
  \A_0^\mathrm{op} \rightarrow \kmod$ which commutes with the
  differentials. Forgetting about the differentials for the moment,
  just to give a natural transformation of this form is to give a map
  of $\cat{Kl}_\A(Q)$ as in Definition~\ref{def:23}, which is by
  Proposition~\ref{prop:5} the same as a map
  $f^{(\bullet)} \colon A \rightsquigarrow B$ in $\faa(\A)$;
  concretely, the correspondence is given by
  \begin{equation*}
    f^{(n)} = \alpha^{(n)}_{A \times A^n}(\pi_0, \dots, \pi_n) \in
    yB(A \times A^n) = \A(A \times A^n \rightarrow B)\rlap{ .}
  \end{equation*}
  Now
  adding back in the condition that $\alpha$ preserves the
  differential is, by a short calculation, the same as requiring that
  each $f^{(n)}$ is, in fact, the $n$th derivative of $f^{(0)}$, so
  that the Fa\`a di Bruno map $\alpha^{(\bullet)} \colon yA
  \rightsquigarrow yB$ is necessarily given by
  \begin{align*}
    \alpha_X^{(n)} \colon \A(X,A) \times \A(X,A)^n &\rightarrow \A(X,B)
    \\
    (g_0, \dots, g_n) & \mapsto f^{(n)}(g_0, \dots, g_n)
  \end{align*}
  for a unique map $f \colon A \rightarrow B$ in $\A$.
\end{Rk}

\paragraph{\textbf{Acknowledgements:}} The second author would like to thank Geoff Cruttwell, Rory Lucyshyn-Wright, Robin Cockett, Jonathan Gallagher, and Ben MacAdam for useful discussions regarding the embedding theorem.

\bibliographystyle{acm}
\bibliography{bibdata}

\end{document}